\newtheorem{assumption}{Assumption}
\newtheorem{proposition}{Proposition}
\newtheorem{remark}{Remark}
\newtheorem{theorem}{Theorem}
\newtheorem*{theorem*}{Theorem}
\newtheorem*{proposition*}{Proposition}
\newtheorem{lemma}{Lemma}
\newtheorem{corollary}{Corollary}
\newcommand\independent{\protect\mathpalette{\protect\independenT}{\perp}}
\def\independenT#1#2{\mathrel{\rlap{$#1#2$}\mkern2mu{#1#2}}}
\date{\today}
\title{Parametrization, Prior Independence, and the Semiparametric Bernstein-von Mises Theorem for the Partially Linear Model}
\author{Christopher D. Walker\footnote{Harvard University, Department of Economics: \href{mailto:cwalker@g.harvard.edu}{cwalker@g.harvard.edu}. I thank the Editor, an Associate Editor, and three anonymous referees for providing comments that have significantly improved my paper. I also thank Isaiah Andrews, Nick Cao, Kevin Chen, Anna Mikusheva, Bas Sanders, Neil Shephard, James Stratton, Rahul Singh, and Elie Tamer for their helpful comments. All errors are mine.}}
\begin{document}
\maketitle

\begin{abstract}
I prove a semiparametric Bernstein-von Mises theorem for a partially linear regression model with independent priors for the low-dimensional parameter of interest and the infinite-dimensional nuisance parameters. My result avoids a challenging prior invariance condition that arises from a loss of information associated with not knowing the nuisance parameter. The key idea is to employ a feasible reparametrization of the partially linear regression model that reflects the semiparametric structure of the model. This allows a researcher to assume independent priors for the model parameters while automatically accounting for the loss of information associated with not knowing the nuisance parameters. The theorem is verified for uniform wavelet series priors and Mat\'{e}rn Gaussian process priors.
\end{abstract}

\section{Introduction}
\subsection{Overview}
This paper is concerned with Bayesian inference for a partially linear regression model. The partially linear model states that the outcome $Y \in \mathbb{R}$ is related to covariates $X \in \mathbb{R}$ and $W \in \mathbb{R}^{d_{w}}$ via the regression model
\begin{align}\label{eq:original}
    Y = X\beta_{0} + \eta_{0}(W) + U,
\end{align}
where $\beta_{0}$ is a scalar parameter, $\eta_{0}$ is a function-valued parameter, and $U$ is a scalar unobservable that satisfies $E[U|X,W] = 0$ and $Var[U|X,W] = \sigma_{01}^{2}$ for some scalar $\sigma_{01}^{2}\in (0,\infty)$. Partially linear models appear in a variety of applications including modeling the relationship between temperature and electricity demand \citep{engle1986semiparametric}, economic models of production with endogenous input choices \citep{olley1996dynamics,levinsohn2003estimating,wooldridge2009estimating,ackerberg2015identification}, and sample selection models \citep{ahn1993semiparametric,das2003nonparametric}. Moreover, the model has also gained recent theoretical attention as a leading example for which debiased machine learning methods are applied (see, for example, \cite{chernozhukov2018double}).

My analysis views $\beta_{0}$ as the parameter of interest while treating $\eta_{0}$ as an infinite-dimensional nuisance parameter. The goal is to derive conditions under which the \textit{Bernstein-von Mises theorem} holds (i.e., the marginal posterior for $\beta$ is asymptotically normal and matches an efficient frequentist estimator). A key challenge with Bernstein-von Mises theory for the partially linear model is that there is loss of information associated with not knowing the nuisance parameter $\eta_{0}$, and, as a result, the natural assumption that $\beta$ and $\eta$ are independent a priori leads to \textit{prior invariance}. Prior invariance refers to constructing a change of variables that respects the model's semiparametric structure and leaves the prior for the nuisance function $\eta$ roughly unchanged. It was first emphasized as a general condition for semiparametric Bernstein-von Mises theorems in the important papers of \cite{castillo2012semiparametric,castillo2012semiparametricBVM}, and it introduces both practical and technical issues.\footnote{Prior invariance is also encountered in statistical functional estimation \citep{rivoirard2012bernstein,castillo2015bernstein,ray2020semiparametric}. Moreover, the condition has been shown to be an almost necessary condition for the Bernstein-von Mises theorem in some models (e.g., the curve alignment model in \cite{castillo2012semiparametricBVM}).} From a practical standpoint, prior invariance stipulates that the prior for $\eta$ must be adequate for estimating of the true control function $\eta_{0}$ \textit{and} the conditional expectation $m_{02}$ of $X$ given $W$, a condition that restricts the smoothness $m_{02}$ relative to $\eta_{0}$, and, as a result, limits the data-generating processes for which the Bernstein-von Mises theorem can be applied. From a technical perspective, showing the prior for $\eta$ is roughly unchanged after the change of variables can be difficult to verify for some priors because it involves performing an infinite-dimensional change of measure (e.g., this is a key reason that \cite{castillo2012semiparametric,castillo2012semiparametricBVM} focuses on Gaussian priors).

The main contribution of this paper is a Bernstein-von Mises theorem for the partially linear model that bypasses prior invariance. Theorem \ref{thm:BVM} in Section \ref{sec:generalbvm} establishes this result, and the theorem is verified for two examples in Section \ref{sec:examples}. A common theme in the examples is that there are no cross-restrictions on the smoothness of nuisance functions, which demonstrates that my approach can eliminate the aforementioned practical challenge of requiring a prior for a \textit{single} nuisance parameter (i.e., $\eta$) be suitable for estimation of \textit{multiple} quantities (i.e., $\eta_{0}$ and $m_{02}$). To make things concrete, my proposal is to embed an interest-respecting reparametrization of (\ref{eq:original}) given by
\begin{align}\label{eq:struc}
Y = m_{01}(W) + [X-m_{02}(W)]\beta_{0} + U,  
\end{align}
where $m_{01}(W)=E_{P_{0}}[Y|W]$ and $m_{02}(W)=E_{P_{0}}[Y|W]$, within a (quasi-)likelihood model that is suitable for identification of $(\beta_{0},m_{01},m_{02})$, and then perform Bayesian inference by placing independent priors on $\beta$ and $m=(m_{1},m_{2})$. The transformed regression model (\ref{eq:struc}) is commonly referred to as the \textit{\cite{robinson1988root} transformation}, and is observationally equivalent to (\ref{eq:original}) because $\eta_{0}$ is identified as $m_{01}-\beta_{0}m_{02}$. A key property of the (quasi-)likelihood model is that there is no loss of information associated with not knowing the nuisance parameter $m_{0}=(m_{01},m_{02})$ (i.e., it is \textit{adaptive} in the sense of \cite{bickel1982adaptive}). Consequently, there is no need to change variables because ordinary/parametric local asymptotic normality expansions respect the semiparametric structure of the model. For terminology, I refer to (\ref{eq:original}) as the \textit{$(\beta,\eta)$-parametrization} and (\ref{eq:struc}) as the \textit{$(\beta,m)$-parametrization} throughout.

To my knowledge, Theorem \ref{thm:BVM} is the first Bernstein-von Mises theorem for the $(\beta,m)$-parametrization of the partially linear model. It has several important takeaways. The first takeaway is that the findings demonstrate that the choice of parametrization can influence the conditions under which the semiparametric Bernstein-von Mises theorem holds. This is demonstrated explicitly in Section \ref{sec:comparison} in which the $(\beta,\eta)$- and $(\beta,m)$-parametrizations are compared for Mat\'{e}rn Gaussian process priors, a class of Gaussian process priors popular in applications (see, for example, \cite{williams2006gaussian}). Since the $(\beta,m)$-parametrization is orthogonal (and the $(\beta,\eta)$-parametrization is not), this takeaway is similar to the recent frequentist debiased machine learning literature \citep{chernozhukov2018double} and is reminiscent of classical results on improving parametric likelihood estimators via orthogonal parametrizations \citep{cox1973parameter}. The second takeaway is that the use of reparametrization to eliminate prior invariance conditions avoids data-driven prior/posterior adjustments. For this reason, my proposal offers a conceptually distinct approach to obtaining debiased Bernstein-von Mises theorems relative to those encountered in a growing literature on data-driven prior/posterior adjustments \citep{yang2015semiparametric,ray2019debiased,ray2020semiparametric,breunig2022double,yiu2025semiparametric}. Determining whether the approach of this paper is more generally applicable is an important area for future research. A final takeaway is that the Bernstein-von Mises theorem can be valid even if the (quasi-)likelihood model is misspecified. This reflects classical results on quasi-maximum likelihood estimation \citep{gourieroux1984pseudo,white1994estimation}, and Section \ref{sec:waveletex} makes the claim precise by verifying Theorem \ref{thm:BVM} for uniform wavelet series priors without requiring the sampling model be correctly specified.
\subsection{Related Literature}
This paper connects to several active literatures in statistics and econometrics. First, it is related to the literature on semiparametric Bernstein-von Mises theorems. A Bernstein-von Mises theorem for the $(\beta,\eta)$-parametrization with independent priors for $\beta$ and $\eta$ is studied in \cite{shen2002asymptotic}, \cite{bickel2012semiparametric}, \cite{yang2015semiparametric}, and \cite{xie2020adaptive}. \cite{castillo2012semiparametric,castillo2012semiparametricBVM} are also relevant to my paper because these important papers emphasize the role of the prior invariance condition in separated semiparametric models with information loss. Relatedly, \cite{rivoirard2012bernstein} and \cite{castillo2015bernstein} demonstrated the importance of prior invariance for general smooth functionals of nonparametric models (with the latter treating separated semiparametric models as a special case). \cite{yang2015semiparametric,ray2020semiparametric,breunig2022double}, and \cite{yiu2025semiparametric} propose data-driven prior/posterior corrections to prove Bernstein-von Mises theorems under weaker conditions than their unmodified counterparts. A key difference between these papers and my proposal is that I do not require any data-driven adjustment because the lack of prior invariance follows from the adaptivity of the $(\beta,m)$-parametrization. \cite{yang2019posterior} derives Bernstein-von Mises theorem for a single coordinate in a high-dimensional linear regression model under sparsity using a different type of reparametrization and a prior correction. Although I do not explicitly study high-dimensional linear regression, Section \ref{sec:conclusion} outlines how my approach can be extended to this setting and an interesting subject for future research would be determining whether my approach leads to results similar to \cite{yang2019posterior} in this setting. Other papers on semiparametric Bernstein-von Mises theorems include \cite{kim2006bernstein}, \cite{dejonge2013semiparametric}, \cite{norets2015bayesian}, \cite{chae2019semi}, \cite{nickl2019bernstein}, \cite{nickl2020bernstein}, \cite{monard2021statistical}, and \cite{l2023semiparametric}. 

Second, some of the auxiliary results in Section \ref{sec:examples} connect this paper to the literature on posterior contraction rates in nonparametric models. Specifically, Propositions \ref{prop:suffgeneral} and \ref{prop:suffgeneralsieve} provide a general approach to deriving posterior contraction rates for nuisance functions $m_{01}$ and $m_{02}$. These propositions are based on empirical process conditions that arise from direct expansions of the (quasi-)likelihood ratio process, and, for this reason, are conceptually similar to the convergence rate framework of \cite{shen2001rates} in which empirical process-type conditions for likelihood ratios (e.g., bracketing integrals) are used to derive posterior contraction rates. Since Propositions \ref{prop:suffgeneral} and \ref{prop:suffgeneralsieve} concern posterior contraction rates in (possibly) misspecified Gaussian nonparametric regression, the auxiliary results in Section \ref{sec:examples} are also related to the theory presented in Section 4 of \cite{kleijn2006misspecification}. Other papers on posterior contraction rates in general nonparametric models include \cite{ghosal2000convergence,ghosal2007convergence,castillo2008lower,vaart2008rates,10.1214/08-AOS678,van2011information,gine2011rates,dejonge2012adaptive,castillo2014supremum}, and \cite{shen2015adaptive}.

Finally, since the Bernstein-von Mises theorem establishes asymptotic equivalence between Bayesian and frequentist estimators, my paper also dovetails with the literature on frequentist semiparametric estimation. The $(\beta,m)$-parametrization was first utilized in \cite{robinson1988root} to derive asymptotically normal least squares estimators of $\beta_{0}$ based on kernel estimators of $m_{01}$ and $m_{02}$. \cite{DONALD199430} also apply the partialling out technique of \cite{robinson1988root} to show that asymptotic normality of the least squares estimators of $\beta_{0}$ can be achieved under very mild conditions when series regression methods are used to estimate the nuisance functions. My paper provides a Bayesian analogue to these papers because it shows the $(\beta,m)$-parametrization can be used to derive a Bernstein-von Mises theorem that bypasses prior invariance. The proposed Bayesian inference framework is also related to the literature on profile likelihood estimation \citep{severini1992profile, murphy2000profile} because the transformed regression model (\ref{eq:struc}) mimics the Gaussian profile maximum likelihood problem for (\ref{eq:original}). Since $(\beta,m)$-parametrization is orthogonal, my paper is also related to the literature on two-step semiparametric estimation based on Neyman orthogonal moment conditions (e.g., \cite{andrews1994asymptotics, newey1994asymptotic,chernozhukov2018double}). Using a (quasi-)likelihood to perform Bayesian inference a low-dimensional parameter of interest defined by conditional moment restrictions also connects my proposal to the quasi-maximum likelihood estimation literature \citep{gourieroux1984pseudo,white1994estimation,KOMUNJER2005137}.
\subsection{Outline of Paper}
The rest of the paper is organized as follows. Section \ref{sec:DGP} formalizes the data-generating process and puts forward the Bayesian inference framework. Section \ref{sec:BVM} derives conditions under which the marginal posterior for $\beta$ satisfies a Bernstein-von Mises theorem and verifies the conditions for two important classes of priors. Section \ref{sec:discuss} provides an extensive discussion the findings of Section \ref{sec:BVM}. Section \ref{sec:conclusion} concludes and proposes some directions for future research. All proofs are in the Appendix and notation is introduced when appropriate.
\section{Data Generating Process and Bayesian Inference}\label{sec:DGP}
\subsection{Data Generating Process}
Let $Y \in \mathcal{Y} \subseteq \mathbb{R}$ be an outcome variable, let $X \in \mathcal{X} \subseteq \mathbb{R}$ be a covariate of interest, let $W \in \mathcal{W} \subseteq \mathbb{R}^{d_{w}}$, $d_{w} < \infty$, be some control variables, and let $P_{0}$ be the joint distribution of $(Y,X,W')'$. The observed data $\{(Y_{i},X_{i},W_{i}')'\}_{i=1}^{n}$ consists of the first $n$ elements of a sequence $\{(Y_{i},X_{i},W_{i}')'\}_{i \geq 1}$ of independent and identically distributed (i.i.d) random vectors drawn from $P_{0}$. Assumption \ref{as:dgp} states the restrictions on the data distribution $P_{0}$.
\begin{assumption}\label{as:dgp} 
The distribution $P_{0}$ of $(Y,X,W')'$ satisfies the following restrictions:
\begin{enumerate}
\item The distribution of $Y$ given $X$ and $W$ satisfies $E_{P_{0}}[Y|X,W]= X\beta_{0}+\eta_{0}(W)$ and $Var_{P_{0}}[Y|X,W] = \sigma_{01}^{2}$ for some $(\beta_{0},\eta_{0}, \sigma_{01}^{2}) \in \mathcal{B} \times \mathcal{H} \times \mathbb{R}_{++}$ with $\mathcal{B} \subseteq \mathbb{R}$ and $\mathcal{H} \subseteq L^{2}(\mathcal{W})$, where $f \in L^{2}(\mathcal{W})$ iff $E_{P_{0}}[f^{2}(W)] < \infty$.
\item There exists constants $0<\underline{c}< \overline{c}<\infty$ such that $E_{P_{0}}[(X-E_{P_{0}}[X|W])^{2}]\in [\underline{c},\overline{c}]$.
\item The conditional distribution $P_{0,YX|W}$ of $(Y,X)'$ given $W$ has a density $p_{0,YX|W}$ with respect to the Lebesgue measure and $E_{P_{0}}|\log p_{0,YX|W}(Y,X|W)| \in (0,\infty)$
\item The projection errors $Y- E_{P_{0}}[Y|W]$ and $X-E_{P_{0}}[X|W]$ satisfy $E_{P_{0}}[(Y-E_{P_{0}}[Y|W])^{2}] \in (0,\infty)$ and $E_{P_{0}}[(X-E_{P_{0}}[X|W])^{4}]\in (0,\infty)$.
\end{enumerate} 
\end{assumption}
I briefly discuss Assumption \ref{as:dgp}. Part 1 of Assumption \ref{as:dgp} imposes that the conditional moment restriction (\ref{eq:original}) is satisfied at $P_{0}$ and that the projection error $Y - X\beta_{0}-\eta_{0}(W)$ is homoskedastic. I treat $\sigma_{01}^{2}$ as known for exposition, however, Appendix \ref{ap:multivariate} extends the main results to accommodate unknown $\sigma_{01}^{2}$ and multivariate $X$. Part 2 of Assumption \ref{as:dgp} is a strong identification condition that is necessary and sufficient for regular estimation of $\beta_{0}$ \citep{robinson1988root}. The remaining parts are technical conditions that enable the application of stochastic limit theorems and ensure certain population objective functions are well-defined.
\subsection{Bayesian Inference}
Bayesian inference starts with a conditional model for the data (\textit{sampling model}) and a distribution over the model parameters (\textit{prior}). I describe each of these in turn. The sampling model is
\begin{align}
    &Y_{i}|\{(X_{i},W_{i}')'\}_{i=1}^{n},\beta,m \overset{ind}{\sim} \mathcal{N}(m_{1}(W_{i})+ \beta(X_{i}-m_{2}(W_{i})),\sigma_{01}^{2}) \label{eq:samplingmodelY} \\
    &X_{i}|\{W_{i}\}_{i=1}^{n},\beta,m\overset{ind}{\sim} \mathcal{N}(m_{2}(W_{i}),\sigma_{02}^{2})\label{eq:samplingmodelX}
\end{align}
for $i=1,...,n$, where $\beta \in \mathcal{B}$, $m = (m_{1},m_{2}) \in \mathcal{M}$ with $\mathcal{M} = \mathcal{M}_{1} \times \mathcal{M}_{2}$ and $\mathcal{M}_{j} \subseteq L^{2}(\mathcal{W})$ for $j \in \{1,2\}$, and $\sigma_{01}^{2},\sigma_{02}^{2} \in (0,\infty)$ are known scalars. Display (\ref{eq:samplingmodelY}) is a model for the conditional distribution of $Y$ given $X$ and $W$ that imposes the \cite{robinson1988root} transformation of the partially linear model, a result that establishes the observational equivalence between (\ref{eq:original}) and the ‘partialed out' regression model $Y = m_{01}(W) + (X-m_{02}(W))\beta_{0} + U$, where $m_{01}(W) = E_{P_{0}}[Y|W]$, $m_{02}(W) = E_{P_{0}}[X|W]$, and $U$ satisfies $E[U|X,W] = 0$ and $Var[U|X,W] = \sigma_{01}^{2}$. Display (\ref{eq:samplingmodelX}) is a model for the conditional distribution of $X$ given $W$ that is included to account for the fact that $m_{01}$ and $m_{02}$ are not separately identified from (\ref{eq:samplingmodelY}).\footnote{Since (\ref{eq:samplingmodelX}) is for identification of $m_{02}$ only, $\sigma_{02}^{2}$ has no intrinsic meaning and can be set at an arbitrary fixed value (e.g., $\sigma_{02}^{2}=1$).} Together, (\ref{eq:samplingmodelY}) and (\ref{eq:samplingmodelX}) imply a (quasi-)likelihood $L_{n}(\beta,m)$ given by
\begin{align*}
L_{n}(\beta,m) = \prod_{i=1}^{n}p_{\beta,m}(Y_{i},X_{i}|W_{i}),    
\end{align*} 
where $p_{\beta,m}(\cdot|w)$ is the probability density function of a bivariate Gaussian distribution with mean vector $m(w)=(m_{1}(w),m_{2}(w))'$ and covariance matrix
\begin{align*}
    V(\beta)  = \begin{pmatrix} \sigma_{01}^{2}+\sigma_{02}^{2}\beta^{2} & \beta \sigma_{02}^{2} \\
    \beta \sigma_{02}^{2} & \sigma_{02}^{2}
    \end{pmatrix}.
\end{align*}
The term `(quasi-)likelihood' is used because the set of conditional distributions for $(Y,X)$ given $W$ compatible with Assumption \ref{as:dgp} is much larger than those that are compatible with (\ref{eq:samplingmodelY})--(\ref{eq:samplingmodelX}). Sections \ref{sec:BVM} and \ref{sec:discuss} elaborate on the sampling model's suitability for estimating $\beta_{0}$ despite it being much more restrictive than Assumption \ref{as:dgp}.

Since the model parameters are $(\beta,m)$, the prior $\Pi$ is a probability distribution over $\mathcal{B} \times \mathcal{M}$. Throughout, I maintain that $\beta \sim \Pi_{\mathcal{B}}$, $m \sim \Pi_{\mathcal{M}}$, and $\beta \independent m$, where $\Pi_{\mathcal{B}}$ and $\Pi_{\mathcal{M}}$ are probability measures over $\mathcal{B}$ and $\mathcal{M}$, respectively. This means that $\Pi = \Pi_{\mathcal{B}} \otimes \Pi_{\mathcal{M}}$. Prior independence between the low-dimensional target parameter and the infinite-dimensional nuisance parameters is standard in semiparametric Bayesian inference \citep{bickel2012semiparametric,castillo2012semiparametric,castillo2012semiparametricBVM,ghosal2017fundamentals}. Assumption \ref{as:prior} states some regularity conditions for $\Pi$.
\begin{assumption}\label{as:prior}
The following conditions hold:
\begin{enumerate}
\item The prior is such that $P_{0}^{\infty}(\int_{\mathcal{B}\times \mathcal{M}}L_{n}(\beta,m)d \Pi(\beta,m) \in (0,\infty))=1$, where $P_{0}^{\infty}$ is the law of $\{(Y_{i},X_{i},W_{i}')'\}_{i\geq 1}$.
\item The prior $\Pi_{\mathcal{B}}$ has a Lebesgue density $\pi_{\mathcal{B}}$ that is continuous and positive over a neighborhood $\mathcal{B}_{0}$ of $\beta_{0}$.
\end{enumerate}
\end{assumption}
The sampling model (\ref{eq:samplingmodelY})--(\ref{eq:samplingmodelX}) and prior $\Pi$ implies a conditional distribution $\Pi((\beta,m) \in \cdot | \{(Y_{i},X_{i},W_{i}')'\}_{i=1}^{n})$ for the model parameters $(\beta,m)$ given the data $\{(Y_{i},X_{i},W_{i}')'\}_{i=1}^{n}$. This is known as the \textit{posterior}, and, under Assumption \ref{as:prior}, posterior probabilities are computed via Bayes rule. That is, for any event $A$,
\begin{align*}
\Pi((\beta,m) \in A|\{(Y_{i},X_{i},W_{i}')'\}_{i=1}^{n}) = \frac{\int_{A} L_{n}(\beta,m) d \Pi(\beta,m)}{\int_{\mathcal{B}\times \mathcal{M}}L_{n}(\beta,m)d\Pi(\beta,m)}.   
\end{align*}
Bayesian inference revolves around the posterior distribution. As examples, researchers interested in a point estimator for $\beta$ can report the posterior median or posterior mean, while researchers interested in quantifying uncertainty about $\beta$ report a \textit{ credible set}, a set estimator $CS_{n}(1-\alpha)$ that satisfies $\Pi(\beta \in CS_{n}(1-\alpha)|\{(Y_{i},X_{i},W_{i}')'\}_{i=1}^{n}) \geq 1-\alpha$. 
\begin{remark}[Posterior Computation] Although this paper focuses on theoretical properties of the posterior, I provide some brief remarks on posterior computation. An approach to sampling from the posterior is Gibbs sampling. That is, the elements of $(\beta,m_{1},m_{2})$ are updated component-wise while fixing the remaining parameters. Holding $\beta$ and $m_{2}$ fixed, (\ref{eq:samplingmodelY}) reveals that Bayesian updating of $m_{1}$ is that of a Gaussian nonparametric regresion of $Y-(X-m_{2}(W))\beta$ on $W$. Similarly, factorizing $p_{\beta,m}$ into the product of the conditional distribution of $X|Y,W$ and $Y|W$ indicates that Bayesian updating of $m_{2}$ (while holding $\beta$ and $m_{1}$ fixed) is that of a Gaussian nonparametric regression of $X-(\beta\sigma_{2}^{2}/(\sigma_{01}^{2}+\beta^{2}\sigma_{02}^{2}))(Y-m_{1}(W))$ on $W$. Finally, holding $m_{1}$ and $m_{2}$ fixed, the first part of (\ref{eq:samplingmodelY}) reveals that Bayesian updating of $\beta$ conforms to that of a Gaussian linear regression of $Y-m_{1}(W)$ on $X-m_{2}(W)$. Section \ref{sec:matern} provides some theoretical guarantees for the case where $m_{1} \independent m_{2}$ and $m_{j}$, $j \in \{1,2\}$, follows a Gaussian process. This is relevant for computation because these priors are conjugate for the first two steps.
\end{remark}
\section{Bernstein-von Mises Theorem}\label{sec:BVM}
This section provides Bernstein-von Mises theory for the marginal posterior for $\beta$. It has two subsections. The first establishes a general Bernstein-von Mises theorem based on high-level posterior consistency and empirical process conditions, and the second verifies the assumptions for two classes of priors.
\subsection{General Theorem}\label{sec:generalbvm}
I prove a general Bernstein-von Mises theorem for the marginal posterior of $\beta$. The asymptotic analysis is conducted conditional on the realizations of $\{W_{i}\}_{i \geq 1}$, meaning that $\{(Y_{i},X_{i})\}_{i=1}^{n}$ should be viewed as a sequence of independent but not identically distributed random vectors (i.e., $(Y_{i},X_{i}) \independent (Y_{j},X_{j})|\{W_{i}\}_{i \geq 1}$ for $i \neq j$). For notation, $P_{0,W}^{\infty}$ is the joint law of the i.i.d sequence $\{W_{i}\}_{i \geq 1}$, $\langle f, g \rangle_{n,2} = n^{-1}\sum_{i=1}^{n}f(w_{i})g(w_{i})$ and $||f||_{n,2}= \sqrt{\langle f,f \rangle_{n,2}}$ are the empirical $L^{2}$ inner product and induced norm, respectively, for functions $f,g: \mathcal{W} \rightarrow \mathbb{R}$ and  for a given realization $\{w_{i}\}_{i \geq 1}$ of $\{W_{i}\}_{i \geq 1}$, $B_{n,\mathcal{M}}(m_{0},\delta)$ is a ball centered at $m_{0}$ with radius $\delta > 0$ in the product empirical $L^{2}$ norm $\max\{||m_{1}||_{n,2},||m_{2}||_{n,2}\}$, and $P_{0,YX|W}^{(n)} = \bigotimes_{i=1}^{n}P_{0,YX|w_{i}}$ with $P_{0,YX|w_{i}}$ denoting the probability distribution associated with the probability density function $p_{0,YX|W}(\cdot|w_{i})$ for $i=1,...,n$. The next two assumptions concern the limiting behavior of the posterior.

\begin{assumption}\label{as:consistency}
There exists a sequence $\{\delta_{n}\}_{n \geq 1}$ such that 
\begin{align*}
\delta_{n} = o(n^{-1/4})    
\end{align*}
and 
\begin{align*}
\Pi(B_{n,\mathcal{M}}(m_{0},D\delta_{n})|\{(Y_{i},X_{i},w_{i}')'\}_{i=1}^{n}) \overset{P_{0,YX|W}^{(n)}}{\longrightarrow} 1    
\end{align*}
for $P_{0,W}^{\infty}$-almost every fixed realization $\{w_{i}\}_{i \geq 1}$ of $\{W_{i}\}_{i \geq 1}$ as $n\rightarrow \infty$ for some constant $D>0$ large.
\end{assumption}
\begin{assumption}\label{as:emp_process}
There are sets $\{\mathcal{M}_{n}\}_{n \geq 1} \subseteq \mathcal{M}$ such that 
\begin{align*}
\Pi(\mathcal{M}_{n}|\{(Y_{i},X_{i},w_{i}')'\}_{i=1}^{n}) &\overset{P_{0,YX|W}^{(n)}}{\longrightarrow} 1 \\    
\sup_{m \in \mathcal{M}_{n}\cap B_{n,\mathcal{M}}(m_{0},D\delta_{n})}\left| G_{n}^{(1)}(m) \right|&\overset{P_{0,YX|W}^{(n)}}{\longrightarrow} 0  \\  
\sup_{m \in \mathcal{M}_{n}\cap B_{n,\mathcal{M}}(m_{0},D\delta_{n})}\left| G_{n}^{(2)}(m) \right|&\overset{P_{0,YX|W}^{(n)}}{\longrightarrow} 0    
\end{align*}
for $P_{0,W}^{\infty}$-almost every fixed sequence $\{w_{i}\}_{i \geq 1}$ of $\{W_{i}\}_{i \geq 1}$ as $n\rightarrow \infty$, where $G_{n}^{(1)},G_{n}^{(2)}: \mathcal{M} \rightarrow \mathbb{R}$ satistify
\begin{align*}
    G_{n}^{(1)}(m) = \frac{1}{\sqrt{n}}\sum_{i=1}^{n}\varepsilon_{i2}(m_{1}(w_{i})-m_{01}(w_{i}))
\end{align*}
and
\begin{align*}
G_{n}^{(2)}(m) = \frac{1}{\sqrt{n}}\sum_{i=1}^{n}(U_{i}-\beta_{0}\varepsilon_{i2})(m_{2}(w_{i})-m_{02}(w_{i})),    
\end{align*}
with $\varepsilon_{i2} = X_{i}-m_{02}(w_{i})$ and $U_{i} = Y_{i}-m_{01}(w_{i})-(X_{i}-m_{02}(w_{i}))\beta_{0}$.
\end{assumption}
Assumptions \ref{as:consistency} and \ref{as:emp_process} are high-level conditions about the the limiting behavior of the posterior. Assumption \ref{as:consistency} enforces that the marginal posterior for $m$ concentrates around $m_{0}$ in the empirical $L^{2}$-norm and the rate of convergence $\delta_{n}$ is faster than $n^{-1/4}$. Assumption \ref{as:emp_process} imposes that there are sets $\{\mathcal{M}_{n}\}_{n \geq 1}$ that the posterior concentrates on such that, when intersected with $B_{n,\mathcal{M}}(m_{0},\delta_{n})$, are structured enough to ensure that multiplier empirical processes $\{G_{n}^{(1)}\}_{n\geq 1}$ and $\{G_{n}^{(2)}\}_{n \geq 1}$ satisfy stochastic equicontinuity-type conditions. These requirements are conceptually similar to those encountered in the literature on asymptotic normality of plug-in method of moments estimators based on orthogonal moment conditions \citep{andrews1994asymptotics,chernozhukov2018double} and semiparametric maximum likelihood estimators \citep{murphy2000profile}. Combined with Assumptions \ref{as:dgp} and \ref{as:prior}, Assumptions \ref{as:consistency} and \ref{as:emp_process} imply that the marginal posterior for $\beta$ satisfies a Bernstein-von Mises theorem.
\begin{theorem}\label{thm:BVM}
Let $\tilde{I}_{n}(m_{0})$ be given by 
\begin{align*}
\tilde{I}_{n}(m_{0}) = \frac{1}{n}\sum_{i=1}^{n}\left(\frac{X_{i}-m_{02}(w_{i})}{\sigma_{01}}\right)^{2}   
\end{align*} 
and let $\tilde{\Delta}_{n,0}$ be given by 
\begin{align*}
\tilde{\Delta}_{n,0} = \tilde{I}_{n}(m_{0})^{-1}\frac{1}{\sqrt{n}}\tilde{\ell}_{n}(\beta_{0},m_{0}),
\end{align*}
where
\begin{align*}
\tilde{\ell}_{n}(\beta_{0},m_{0}) = \frac{1}{\sigma_{01}^{2}}\sum_{i=1}^{n}(Y_{i}-m_{01}(w_{i})-(X_{i}-m_{02}(w_{i}))\beta_{0})(X_{i}-m_{02}(w_{i})).
\end{align*}
If Assumptions \ref{as:dgp}, \ref{as:prior}, \ref{as:consistency}, and \ref{as:emp_process} hold and $\beta_{0} \in \text{int}(\mathcal{B})$, then
\begin{align*}
  \left | \left |\Pi\left(\beta \in \cdot |\{(Y_{i},X_{i},w_{i}')'\}_{i=1}^{n}\right) - \mathcal{N}\left(\beta_{0}+\frac{\tilde{\Delta}_{n,0}}{\sqrt{n}},\frac{1}{n}\tilde{I}_{n}(m_{0})^{-1} \right) \right | \right |_{TV} \overset{P_{0,YX|W}^{(n)}}{\longrightarrow} 0
\end{align*}
for $P_{0,W}^{\infty}$-almost every fixed realization $\{w_{i}\}_{i \geq 1}$ of $\{W_{i}\}_{i \geq 1}$ as $n\rightarrow \infty$, where $||\cdot||_{TV}$ is total variation distance.\footnote{The total variation distance between probability measures $P$ and $Q$ is $||P-Q||_{TV} = \sup_{A}|P(A)-Q(A)|$.}
\end{theorem}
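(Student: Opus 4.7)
The plan is to establish the theorem via a direct local asymptotic normality (LAN) expansion of the quasi-log-likelihood ratio, exploiting the fact that the Robinson-transformed parametrization is adaptive so that the expansion separates cleanly into a $\beta$-quadratic term and a term depending only on $m$. Because of this adaptivity, no prior invariance change of measure will be required; the independence of $\beta$ and $m$ under $\Pi$ can be used directly.

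First, I would derive the log-likelihood ratio by direct algebra. Writing $t = \beta - \beta_{0}$, $a_{i} = m_{1}(w_{i}) - m_{01}(w_{i})$, $b_{i} = m_{2}(w_{i}) - m_{02}(w_{i})$, and using the residual decomposition $Y_{i} - m_{1}(w_{i}) - \beta(X_{i} - m_{2}(w_{i})) = U_{i} - (a_{i} - \beta_{0} b_{i}) - t(\varepsilon_{i2} - b_{i})$ together with $X_{i} - m_{2}(w_{i}) = \varepsilon_{i2} - b_{i}$, expansion of the Gaussian quadratic forms yields
\begin{align*}
\log \frac{L_{n}(\beta, m)}{L_{n}(\beta_{0}, m_{0})} &= F_{n}(m) + t\, \tilde{\ell}_{n}(\beta_{0}, m_{0}) - \tfrac{t^{2} n}{2}\tilde{I}_{n}(m_{0}) \\
&\quad - \tfrac{t\sqrt{n}}{\sigma_{01}^{2}}\bigl(G_{n}^{(1)}(m) + G_{n}^{(2)}(m)\bigr) + R_{n}(t, m),
\end{align*}
where $F_{n}(m)$ collects all $\beta$-free terms and $R_{n}(t,m)$ collects higher-order cross-products such as $t \sum_{i} a_{i} b_{i}$, $t \sum_{i} b_{i}^{2}$, $t^{2}\sum_{i} b_{i}^{2}$, and $t^{2}\sum_{i}\varepsilon_{i2} b_{i}$. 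Tracking the coefficients will show that the only first-order multiplier processes that appear are exactly those named in Assumption \ref{as:emp_process}, which explains the particular form of $G_{n}^{(1)}$ and $G_{n}^{(2)}$.

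Second, I would localize to the good set $\mathcal{M}_{n} \cap B_{n, \mathcal{M}}(m_{0}, D\delta_{n})$. Assumption \ref{as:consistency} gives that this set carries posterior mass tending to one and Assumption \ref{as:emp_process} makes $G_{n}^{(1)}, G_{n}^{(2)}$ uniformly $o_{p}(1)$ on it. Combining the rate $\delta_{n} = o(n^{-1/4})$ with Cauchy–Schwarz bounds on $R_{n}$ then gives $R_{n}(s/\sqrt{n}, m) = o_{p}(1)$ uniformly in $m$ on this set and in $s$ over compacta. Exploiting prior independence, for Borel $A \subseteq \mathcal{B}$ I would write
\begin{align*}
\Pi(\beta \in A \mid \mathrm{data}) = \frac{\int_{A} \pi_{\mathcal{B}}(\beta) \int_{\mathcal{M}} L_{n}(\beta,m)\, d\Pi_{\mathcal{M}}(m)\, d\beta}{\int_{\mathcal{B}} \pi_{\mathcal{B}}(\beta) \int_{\mathcal{M}} L_{n}(\beta,m)\, d\Pi_{\mathcal{M}}(m)\, d\beta},
\end{align*}
restrict inner integrals to the good set, and observe that $\exp(F_{n}(m))$ cancels between numerator and denominator because it does not depend on $\beta$. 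After the change of variable $\beta = \beta_{0} + s/\sqrt{n}$, the marginal posterior density of $s$ becomes, uniformly on compact $s$-sets and up to $1 + o_{p}(1)$ multiplicative errors, proportional to $\pi_{\mathcal{B}}(\beta_{0} + s/\sqrt{n})\exp\{s\, \tilde{\ell}_{n}(\beta_{0}, m_{0})/\sqrt{n} - s^{2}\tilde{I}_{n}(m_{0})/2\}$.

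Finally, completing the square in $s$ identifies the limit as $\mathcal{N}(\tilde{\Delta}_{n,0}, \tilde{I}_{n}(m_{0})^{-1})$. Assumption \ref{as:prior}(2) makes the prior factor $(1 + o_{p}(1))$ on compacta; Assumption \ref{as:dgp}(2)–(4) yields $\tilde{I}_{n}(m_{0})$ bounded away from $0$ and $\tilde{\Delta}_{n,0} = O_{p}(1)$ via a CLT for $\tilde{\ell}_{n}(\beta_{0}, m_{0})$. Upgrading the density convergence to total variation convergence on $\mathcal{B}$ I would do by a Scheffé-style argument, splitting the TV distance between the posterior and the target Gaussian into a compact-$s$ piece (controlled by pointwise density convergence and dominated convergence) and a tail piece (controlled by the quadratic LAN upper bound, which forces subgaussian posterior tails matching the limit). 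The main obstacle is the uniform-in-$m$ control of $R_{n}$ on the good set while simultaneously integrating against the otherwise unrestricted $\Pi_{\mathcal{M}}$; the adaptivity of the $(\beta,m)$-parametrization is what makes $F_{n}(m)$ cancel exactly and so avoids the infinite-dimensional change of measure that would otherwise force a prior invariance condition.
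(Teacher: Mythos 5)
Your proposal is correct and follows essentially the same route as the paper: an exact-in-$\beta$ quadratic expansion of the Gaussian quasi-log-likelihood around $(\beta_{0},m)$, localization to $\mathcal{M}_{n}\cap B_{n,\mathcal{M}}(m_{0},D\delta_{n})$ via Assumptions \ref{as:consistency}--\ref{as:emp_process}, Cauchy--Schwarz control of the cross terms using $\sqrt{n}\delta_{n}^{2}=o(1)$, cancellation of the $\beta$-free factor, completion of the square, and subgaussian tail control of the $\beta$-posterior from the quadratic upper bound (the content of Lemma \ref{lem:rootn}). The only cosmetic difference is that the paper conditions on $m$ and integrates the conditional posteriors via the law of total probability, whereas you apply Fubini to the joint integral directly; these are equivalent.
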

\begin{remark}[Unconditional Asymptotics]
Theorem \ref{thm:BVM} conditions on the sequence of control variables $\{W_{i}\}_{i \geq 1}$. The same conclusion holds in the \textit{unconditional} asymptotic thought experiment in which $P_{0,YX|W}^{(n)}$ is replaced by the $n$-fold product measure $P_{0}^{n}$ (i.e., the sequence $\{W_{i}\}_{i \geq 1}$ is also treated as random). This follows from an application of the Bounded Convergence Theorem and is stated as a corollary below.
\end{remark}
\begin{corollary}\label{cor:BVM_unconditional}
If Assumptions \ref{as:dgp}, \ref{as:prior}, \ref{as:consistency}, and \ref{as:emp_process} hold and $\beta_{0} \in \text{int}(\mathcal{B})$, then
\begin{align*}
  \left | \left |\Pi\left(\beta \in \cdot |\{(Y_{i},X_{i},W_{i}')'\}_{i=1}^{n}\right) - \mathcal{N}\left(\beta_{0}+\frac{\tilde{\Delta}_{n,0}}{\sqrt{n}},\frac{1}{n}\tilde{I}_{n}(m_{0})^{-1} \right) \right | \right |_{TV} \overset{P_{0}^{n}}{\longrightarrow} 0
\end{align*}
as $n\rightarrow \infty$.
\end{corollary}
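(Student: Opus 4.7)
The plan is to transfer the conditional convergence of Theorem \ref{thm:BVM} to the unconditional setting by averaging over the marginal distribution of the covariate sequence and applying the Bounded Convergence Theorem, exactly as suggested by the remark preceding the corollary. Let $Z_n$ denote the total variation distance appearing on the left-hand side of the Bernstein-von Mises display; because $Z_n$ is a distance between probability measures it satisfies $0 \leq Z_n \leq 1$ uniformly in $n$ and in the data, which is the only quantitative fact about $Z_n$ that the argument will use.

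Fix $\varepsilon > 0$. Using the factorization of the joint law of $\{(Y_i, X_i, W_i')'\}_{i \geq 1}$ into the marginal law $P_{0,W}^{\infty}$ of the covariate sequence and the conditional law $P_{0,YX|W}^{(n)}$ of $\{(Y_i, X_i)\}_{i=1}^{n}$ given $\{W_i\}_{i \geq 1}$, I would write
\[
P_0^{n}(Z_n > \varepsilon) \;=\; \int P_{0,YX|W}^{(n)}(Z_n > \varepsilon)\, dP_{0,W}^{\infty}\bigl(\{w_i\}_{i \geq 1}\bigr).
\]
This identity is valid because the event $\{Z_n > \varepsilon\}$ depends measurably only on the first $n$ observations, so the integrand, viewed as a function on the sequence space $\mathcal{W}^{\infty}$, is measurable and its integral against $P_{0,W}^{\infty}$ agrees with its integral against the finite marginal $P_{0,W}^{n}$, which by Fubini equals $P_0^{n}(Z_n > \varepsilon)$.

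By Theorem \ref{thm:BVM}, the integrand converges to $0$ as $n \to \infty$ for $P_{0,W}^{\infty}$-almost every realization of $\{w_i\}_{i \geq 1}$, and it is uniformly bounded above by $1$. The Bounded Convergence Theorem therefore yields $P_0^{n}(Z_n > \varepsilon) \to 0$ for every $\varepsilon > 0$, which is precisely the required convergence in $P_0^{n}$-probability. There is no genuine obstacle beyond Theorem \ref{thm:BVM} itself: the only ingredients that must be verified are the boundedness of total variation distance and the joint measurability of $Z_n$ as a functional of the data (through the posterior together with the explicit statistics $\tilde{\Delta}_{n,0}$ and $\tilde{I}_n(m_0)$), both of which are routine. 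The corollary is, in effect, a one-line consequence of Theorem \ref{thm:BVM} combined with bounded convergence.
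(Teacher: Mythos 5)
Your proposal is correct and is essentially identical to the paper's own argument: both fix $\varepsilon>0$, note that the conditional probability $P_{0,YX|W}^{(n)}(Z_n>\varepsilon)$ tends to zero for $P_{0,W}^{\infty}$-almost every realization by Theorem \ref{thm:BVM}, is bounded by one, and apply the Bounded Convergence Theorem together with the law of iterated expectations to identify the integral with $P_{0}^{n}(Z_n>\varepsilon)$. No gaps.
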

\begin{remark}[Efficiency]
Under i.i.d sampling, the probability limit of $\tilde{I}_{n}^{-1}(m_{0})$ is the \cite{chamberlain1992efficiency} semiparametric efficiency bound under homoskedasticity and $\hat{\beta}_{n} =\beta_{0}+\tilde{\Delta}_{n,0}/\sqrt{n}$ is an estimator that achieves Chamberlain's bound. Consequently, the above results provide conditions under which the marginal posterior for $\beta$ is first-order asymptotically equivalent to a (locally) efficient frequentist estimator of the regression coefficient $\beta_{0}$. Efficiency is local because \cite{chamberlain1992efficiency} does not constrain the conditional variance of $U$ given $X$ and $W$ to be constant when deriving the information bound (see Section 4.3 of \cite{newey1990semiparametric} for more on local efficiency).
\end{remark}
\begin{remark}[Uncertainty Quantification]
Theorem \ref{thm:BVM} implies Bayesian credible sets are asymptotically (locally) efficient frequentist confidence sets for data-generating processes compatible with Assumption \ref{as:dgp}. To see why, let $c_{n}(q)$, $q \in (0,1)$, be the $q$-quantile of the marginal posterior $\Pi(\beta \in \cdot |\{(Y_{i},X_{i},W_{i}')'\}_{i=1}^{n})$ and consider the equitailed probability interval $[c_{n}(\alpha/2),c_{n}(1-\alpha/2)]$, where $\alpha \in (0,1/2)$. Corollary \ref{cor:quantile} establishes that the endpoints of these intervals are asymptotically equivalent to the endpoints of Wald confidence intervals based on the estimator $\hat{\beta}_{n}=\beta_{0}+\tilde{\Delta}_{n,0}/\sqrt{n}$ that achieves the \cite{chamberlain1992efficiency} efficiency bound under homoskedasticity. Consequently, the credible interval $[c_{n}(\alpha/2),c_{n}(1-\alpha/2)]$ achieves asymptotic frequentist coverage of $1-\alpha$. Note that the result is stated conditionally (i.e., $P_{0,YX|W}^{(n)}$), however, the same argument can be applied using Corollary \ref{cor:BVM_unconditional} to obtain it unconditionally (i.e., $P_{0}^{n}$).
\begin{corollary}\label{cor:quantile}
If Assumptions \ref{as:dgp}, \ref{as:prior}, \ref{as:consistency}, and \ref{as:emp_process} hold and $\beta_{0} \in \text{int}(\mathcal{B})$, then the $q$-quantile $c_{n}(q)$, $q \in (0,1)$, of the marginal posterior $\Pi(\beta \in \cdot| \{(Y_{i},X_{i},w_{i}')'\}_{i=1}^{n})$ satisfies
\begin{align*}
        c_{n}(q) = \hat{\beta}_{n} + \Phi^{-1}(q)\sqrt{\frac{\tilde{I}_{n}^{-1}(m_{0})}{n}} + o_{P_{0,YX|W}^{(n)}}\left(\frac{1}{\sqrt{n}}\right)
\end{align*}
for $P_{0,W}^{\infty}$-almost every fixed realization $\{w_{i}\}_{i \geq 1}$ of $\{W_{i}\}_{i \geq 1}$ as $n\rightarrow \infty$, where $\Phi$ is the cumulative distribution function of $\mathcal{N}(0,1)$, $\hat{\beta}_{n} = \beta_{0}+\tilde{\Delta}_{n,0}/\sqrt{n}$, and $\tilde{I}_{n}(m_{0})=\sigma_{01}^{-2}n^{-1}\sum_{i=1}^{n}(X_{i}-m_{02}(w_{i}))^{2}$.
\end{corollary}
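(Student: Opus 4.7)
The plan is to derive the quantile expansion from the total variation convergence established in Theorem \ref{thm:BVM} via a standard CDF inversion. Let $F_n$ denote the CDF of the marginal posterior $\Pi(\beta \in \cdot | \{(Y_i,X_i,w_i')'\}_{i=1}^n)$, let $\sigma_n = \sqrt{\tilde{I}_n^{-1}(m_0)/n}$, and let $\Phi_n$ denote the CDF of $\mathcal{N}(\hat{\beta}_n, \sigma_n^2)$. Because total variation distance dominates the Kolmogorov distance, Theorem \ref{thm:BVM} gives $\sup_x |F_n(x) - \Phi_n(x)| \overset{P_{0,YX|W}^{(n)}}{\longrightarrow} 0$. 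Since $\Phi_n$ is Gaussian, its $q$-quantile equals $\hat{\beta}_n + \sigma_n \Phi^{-1}(q)$, so it suffices to show $c_n(q) - \hat{\beta}_n - \sigma_n \Phi^{-1}(q) = o_{P_{0,YX|W}^{(n)}}(n^{-1/2})$.

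For the inversion, fix $\delta > 0$ and pick $\epsilon > 0$ small enough that $\Phi(\Phi^{-1}(q) + \delta) > q + \epsilon$ and $\Phi(\Phi^{-1}(q) - \delta) < q - \epsilon$; this is possible by strict monotonicity and continuity of $\Phi$. On the event $A_n = \{\sup_x |F_n(x) - \Phi_n(x)| < \epsilon\}$, which has probability tending to one, using $\Phi_n(\hat{\beta}_n + t\sigma_n) = \Phi(t)$ for any $t$, one obtains
\[
F_n\bigl(\hat{\beta}_n + (\Phi^{-1}(q)+\delta)\sigma_n\bigr) > \Phi(\Phi^{-1}(q)+\delta) - \epsilon > q,
\]
and, symmetrically, $F_n(\hat{\beta}_n + (\Phi^{-1}(q)-\delta)\sigma_n) < q$. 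By monotonicity of $F_n$ and the definition of the quantile, $c_n(q)$ is sandwiched in $[\hat{\beta}_n + (\Phi^{-1}(q)-\delta)\sigma_n, \hat{\beta}_n + (\Phi^{-1}(q)+\delta)\sigma_n]$, so $|c_n(q) - \hat{\beta}_n - \sigma_n\Phi^{-1}(q)| \leq \delta \sigma_n$ on $A_n$.

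To convert this $o_P(\sigma_n)$ bound into the desired $o_P(n^{-1/2})$ bound, I would apply a (conditional) law of large numbers to $\tilde{I}_n(m_0) = \sigma_{01}^{-2} n^{-1} \sum_{i=1}^n (X_i - m_{02}(w_i))^2$, using the fourth-moment condition in Assumption \ref{as:dgp}(4) to justify a Markov-type SLLN for independent non-identically distributed summands. This yields $\tilde{I}_n(m_0) \to \sigma_{01}^{-2} E_{P_0}[(X - m_{02}(W))^2]$, a constant in $[\underline{c}/\sigma_{01}^2, \overline{c}/\sigma_{01}^2]$ by Assumption \ref{as:dgp}(2), so $\sqrt{n}\sigma_n = O_P(1)$. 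Since $\delta > 0$ is arbitrary, the claim follows. The argument poses no substantive obstacle beyond invoking Theorem \ref{thm:BVM}; the only subtlety is upgrading the rate from the data-dependent $\sigma_n$ to the deterministic $n^{-1/2}$, which is handled by the strong identification condition.
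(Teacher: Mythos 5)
Your proposal is correct and follows essentially the same route as the paper: both start from the total variation convergence in Theorem \ref{thm:BVM}, pass to uniform convergence of distribution functions, and invert to get convergence of quantiles, with the $O_{P}(1)$ behavior of $\tilde{I}_{n}^{-1}(m_{0})$ (via Parts 2 and 4 of Assumption \ref{as:dgp}) upgrading the bound to $o_{P_{0,YX|W}^{(n)}}(n^{-1/2})$. The only cosmetic difference is that the paper works in the local parametrization $\sqrt{n}(\beta-\hat{\beta}_{n})$ and cites Lemma 21.2 of \cite{van1998asymptotic} for the quantile inversion, whereas you prove that inversion by hand with an explicit sandwich argument.
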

\end{remark}
\subsection{Sufficient Conditions and Examples}\label{sec:examples}
This section has two subsections. Section \ref{sec:lowlevelsuff} presents sufficient conditions for the assumptions in Section \ref{sec:BVM}. Sections \ref{sec:waveletex} and \ref{sec:matern} verify Theorem \ref{thm:BVM} for two classes of priors using these sufficient conditions.
\subsubsection{Useful Preliminary Results}\label{sec:lowlevelsuff}
I start with a proposition that enables finding sequences $\{\delta_{n}\}_{ n\geq 1}$ for which Assumption \ref{as:consistency} holds. It has similarities with Theorem 2 of \cite{shen2001rates} in that it establishes posterior consistency by directly analyzing the (quasi-)likelihood ratio process. For notation, let $\varepsilon = (Y-m_{01}(W),X-m_{02}(W))'$, and let $\lambda_{min}(A)$ and $\lambda_{max}(A)$ denote the minimum and maximum eigenvalues of a matrix $A$, respectively.
\begin{proposition}\label{prop:suffgeneral}
Suppose that $\mathcal{B}$ is compact, $\sup_{w \in \mathcal{W}}\lambda_{max}(E(\varepsilon\varepsilon'|W=w))< \infty$, and there is a sequence $\{\delta_{n}\}_{n \geq 1}$ such that the following conditions hold:
\begin{enumerate}
    \item There exists a constant $C>0$ such that $\Pi_{\mathcal{M}}(m \in B_{n,\mathcal{M}}(m_{0},\delta_{n})) \gtrsim \exp(-Cn\delta_{n}^{2})$ conditionally given $P_{0,W}^{\infty}$-almost every realization $\{w_{i}\}_{i \geq 1}$ of $\{W_{i}\}_{i \geq 1}$.
    \item There exists increasing function $\delta \mapsto \omega_{n}(\delta)$ for which $\omega_{n}(\delta_{n}) \leq \sqrt{n}\delta_{n}^{2}$, $\delta \mapsto \omega_{n}(\delta)/\delta^{\upsilon}$ is decreasing for some $\upsilon \in (0, 2)$, and 
\begin{align}\label{eq:modcontinuity}
E_{P_{0,YX|W}^{(n)}}\left[\sup_{(\beta,m) \in \mathcal{B} \times B_{n,\mathcal{M}}(m_{0},\delta)}\left|\frac{1}{\sqrt{n}}\sum_{i=1}^{n}(m(w_{i})-m_{0}(w_{i}))'V^{-1}(\beta)\varepsilon_{i}\right|\right] \lesssim \omega_{n}(\delta)   
\end{align}
for $P_{0,W}^{\infty}$-almost every fixed realization $\{w_{i}\}_{i \geq 1}$ of $\{W_{i}\}_{i \geq 1}$.
\item The sequence $\{\delta_{n}\}_{n \geq 1}$ satisfies $n\delta_{n}^{2} \rightarrow \infty$ as $n\rightarrow \infty$.
\end{enumerate}Then the marginal posterior for $m$ satisfies
\begin{align*}
\Pi\left(m \in B_{n,\mathcal{M}}(m_{0},D\delta_{n})^{c}\middle |\{(Y_{i},X_{i},w_{i}')'\}_{i =1}^{n}\right) \overset{P_{0,YX|W}^{(n)}}{\longrightarrow} 0     
\end{align*} 
for $P_{0,W}^{\infty}$-almost every fixed realization $\{w_{i}\}_{i \geq 1}$ of $\{W_{i}\}_{i \geq 1}$ as $n\rightarrow \infty$ for some large constant $D>0$.
\end{proposition}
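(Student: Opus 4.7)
The plan is to establish the contraction rate by a direct analysis of the (quasi-)likelihood ratio in the spirit of Shen's (2001) modulus-of-continuity approach, tailored to the Gaussian quasi-likelihood at hand. Write the posterior probability of the bad event as a ratio $N_n/D_n$, where $N_n$ and $D_n$ integrate $L_n(\beta,m)/L_n(\beta_0,m_0)$ against $\Pi$ over $\mathcal{B}\times B_{n,\mathcal{M}}(m_0,D\delta_n)^c$ and $\mathcal{B}\times\mathcal{M}$ respectively. The main structural input is an explicit expansion of the log-quasi-likelihood ratio, obtained from the factorization $(z-m)'V(\beta)^{-1}(z-m)=\sigma_{01}^{-2}[(y-m_1)-\beta(x-m_2)]^2+\sigma_{02}^{-2}(x-m_2)^2$ together with the identity $Y_i-m_{01}(w_i)=U_i+\beta_0\varepsilon_{i2}$: writing $\Delta_j=m_{0j}-m_j$,
\begin{align*}
\log\frac{L_n(\beta,m)}{L_n(\beta_0,m_0)}=-Q_n(\beta,m)+L_n^\Delta(\beta,m)+L_n^\beta(\beta),
\end{align*}
where $Q_n=\frac{(\beta-\beta_0)^2}{2\sigma_{01}^2}\sum\varepsilon_{i2}^2+\frac{n}{2\sigma_{01}^2}\|\Delta_1-\beta\Delta_2\|_{n,2}^2+\frac{n}{2\sigma_{02}^2}\|\Delta_2\|_{n,2}^2\geq0$, the cross term $L_n^\Delta$ coincides \emph{exactly} with $\sum_{i=1}^n(m(w_i)-m_0(w_i))'V(\beta)^{-1}\varepsilon_i$ appearing in condition~2, and $L_n^\beta(\beta)=-\sigma_{01}^{-2}(\beta_0-\beta)\sum_{i=1}^nU_i\varepsilon_{i2}$ is a scalar piece. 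Two observations are crucial: (i) since the associated $2\times2$ Gram matrix is positive definite, $\|\Delta_1-\beta\Delta_2\|_{n,2}^2+\|\Delta_2\|_{n,2}^2\gtrsim\|m-m_0\|_{n,2}^2$ uniformly for $\beta$ in the compact set $\mathcal{B}$, so $Q_n\gtrsim n\|m-m_0\|_{n,2}^2$ on the high-probability event that $n^{-1}\sum\varepsilon_{i2}^2$ is bounded (by Chebyshev and the hypothesized bound on $\lambda_{\max}(E(\varepsilon\varepsilon'|W=w))$); (ii) completing the square in $\beta$ gives $\sup_{\beta\in\mathcal{B}}[-Q_n^\beta(\beta)+L_n^\beta(\beta)]=O_{P_{0,YX|W}^{(n)}}(1)$, where $Q_n^\beta$ is the $\beta$-only piece of $Q_n$, since $\sum U_i\varepsilon_{i2}=O_{P_{0,YX|W}^{(n)}}(\sqrt n)$ by the moment hypothesis and the conditional mean-zero property of $U$.

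To lower bound $D_n$, I restrict integration to $A_n=[\beta_0-\delta_n,\beta_0+\delta_n]\times B_{n,\mathcal{M}}(m_0,\delta_n)$, whose prior mass is at least $c\delta_n\exp(-Cn\delta_n^2)$ by condition~1 and the continuity/positivity of $\pi_\mathcal{B}$ at $\beta_0$. On $A_n$, condition~2 at $\delta=\delta_n$ with $\omega_n(\delta_n)\leq\sqrt n\delta_n^2$ and Markov give $|L_n^\Delta|=O_{P_{0,YX|W}^{(n)}}(n\delta_n^2)$; the quadratic is trivially $|Q_n|=O_{P_{0,YX|W}^{(n)}}(n\delta_n^2)$; and $|L_n^\beta|=O_{P_{0,YX|W}^{(n)}}(\sqrt n\delta_n)=o_{P_{0,YX|W}^{(n)}}(n\delta_n^2)$ using $n\delta_n^2\to\infty$. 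These bounds give $D_n\geq\exp(-Kn\delta_n^2)$ on an event of probability tending to one, the prefactor $c\delta_n$ contributing a harmless $|\log\delta_n|=o(n\delta_n^2)$ correction in the standard regime $n\delta_n^2\gg\log n$.

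To upper bound $N_n$, I peel $B_{n,\mathcal{M}}(m_0,D\delta_n)^c=\bigcup_{j\geq1}S_j$ with $S_j=\{jD\delta_n\leq\|m-m_0\|_{n,2}<(j+1)D\delta_n\}$ and use the crude bound $\int_{\mathcal{B}\times S_j}L_n/L_n(\beta_0,m_0)\,d\Pi\leq\exp\bigl(\sup_{(\beta,m)\in\mathcal{B}\times S_j}\log L_n/L_n(\beta_0,m_0)\bigr)$. On $\mathcal{B}\times S_j$, observation (i) gives $-Q_n\leq-c_1j^2D^2n\delta_n^2$; condition~2 applied at $\delta=(j+1)D\delta_n$ together with the decreasing-$\omega_n(\delta)/\delta^\upsilon$ hypothesis and Markov yield $\sup|L_n^\Delta|\lesssim(jD)^\upsilon n\delta_n^2$ with high probability; and observation (ii) gives the uniform-in-$j$ contribution $\sup_\beta[-Q_n^\beta+L_n^\beta]=O_{P_{0,YX|W}^{(n)}}(1)$. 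Because $\upsilon<2$, choosing $D$ large makes $c_1j^2D^2-c_2(jD)^\upsilon\geq c_1j^2D^2/2$ for every $j\geq1$; summing the resulting geometric-type tail gives $N_n\leq\exp(-c_3D^2n\delta_n^2+O_{P_{0,YX|W}^{(n)}}(1))$. Combining, $\Pi(B_{n,\mathcal{M}}(m_0,D\delta_n)^c|\cdot)\leq\exp(-(c_3D^2-K)n\delta_n^2+O_{P_{0,YX|W}^{(n)}}(1))\to0$ in $P_{0,YX|W}^{(n)}$-probability for $D$ large.

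The hard part is the bookkeeping forced by the $\beta$-dependence of $V(\beta)^{-1}$: the expansion must be organized so that (a) condition~2 is applied exactly to $L_n^\Delta$, (b) $Q_n$ retains its positive-definite structure uniformly over $\beta\in\mathcal{B}$ (so the quadratic-in-$m$ part dominates on every shell), and (c) the scalar fluctuation $L_n^\beta$ is absorbed by a completion-of-squares argument rather than a shrinking-$\beta$-neighborhood argument (which would contaminate the shell-by-shell bound). A secondary, routine technical point is the $|\log\delta_n|$ factor from the $\beta$-neighborhood's prior mass in the denominator bound, handled in the standard way.
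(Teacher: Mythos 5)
Your decomposition of $\log(L_n(\beta,m)/L_n(\beta_0,m_0))$ is correct, and your identification of the cross term with the multiplier process of condition~2 is exactly right; the overall architecture (prior-mass lower bound for the denominator, peeling for the numerator) matches the paper's. But three concrete steps do not go through as written. First, the numerator peeling: with arithmetic shells $S_j=\{jD\delta_n\le\|m-m_0\|_{n,2}<(j+1)D\delta_n\}$, the Markov failure probability on shell $j$ is of order $\omega_n((j+1)D\delta_n)/(j^2D^2\sqrt n\delta_n^2)\lesssim D^{\upsilon-2}j^{\upsilon-2}$, and the union bound over $j$ requires $\sum_j j^{\upsilon-2}<\infty$, which fails for $\upsilon\in[1,2)$. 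The paper uses dyadic shells $\{D2^j\delta_n\le\cdot<D2^{j+1}\delta_n\}$ precisely so that the corresponding sum is $\sum_j 2^{(\upsilon-2)j}<\infty$ for every $\upsilon<2$. Second, your denominator bound converts $\sup_{A_n}|L_n^\Delta|=O_{P}(n\delta_n^2)$ into ``$\le Kn\delta_n^2$ on an event of probability tending to one'' for a fixed $K$; Markov's inequality only gives probability $1-O(1/K)$, bounded away from one. The paper avoids this by applying Jensen's inequality to replace the supremum over the prior ball with the prior average of the linear term, a single random variable whose deviation below $-n\delta_n^2$ has probability $O(1/(n\delta_n^2))\to0$ by Chebyshev.

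Third, and most substantively, your denominator restricts to the $\beta$-slab $|\beta-\beta_0|\le\delta_n$ and therefore pays a prior-mass factor $\asymp\delta_n$, i.e.\ an additive $\log(1/\delta_n)$ in the exponent. You dismiss this as harmless ``in the standard regime $n\delta_n^2\gg\log n$,'' but the proposition only assumes $n\delta_n^2\to\infty$, so this is an extra hypothesis your proof needs and the statement does not supply. The paper sidesteps all $\beta$-fluctuation terms at the outset: it bounds the marginal posterior by $\sup_{\beta\in\mathcal{B}}\Pi(m\in B_{n,\mathcal{M}}(m_0,D\delta_n)^c\mid\text{data},\beta)$ and works with the ratio $L_n(\beta,m)/L_n(\beta,m_0)$ at the \emph{same} $\beta$, which is a pure Gaussian location shift in $m$ — no $(\beta-\beta_0)$ terms, no $\beta$-prior mass in the denominator, and no need for your completion-of-squares device to absorb $L_n^\beta$. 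If you adopt that conditioning step, replace Markov-on-the-sup by Jensen in the denominator, and switch to dyadic peeling, your argument closes under exactly the stated hypotheses (modulo the $D=D_n\to\infty$ caveat, which the paper's own proof also requires).
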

Proposition \ref{prop:suffgeneral} states that $\delta_{n}$ is determined by the 1. mass the nuisance prior $\Pi_{\mathcal{M}}$ assigns to empirical $L^{2}$ neighborhoods of $m_{0}$, and 2. the rate of convergence of the least squares estimator of $m_{0}$. Since $||\cdot||_{n,2} \leq ||\cdot||_{\infty}$ with $||f||_{\infty}= \sup_{w \in \mathcal{W}}|f(w)|$ denoting the supremum norm, the prior mass condition can be checked by analyzing the probability mass that $\Pi_{\mathcal{M}}$ assigns to uniform balls centered at $m_{0}$, a quantity that has been studied for many different priors \citep{ghosal2017fundamentals}. The least squares connection is through the second condition because $\delta \mapsto \omega_{n}(\delta)$ can be identified as the `continuity modulus' of the multiplier empirical process that determines the least squares rate of convergence (see, for example, Chapter 9 of \cite{geer2000empirical} and Section 3.2 of \cite{vaartwellner96book}). Given a valid $\delta_{n}$, conditions that ensure the rate restriction $\delta_{n} = o(n^{-1/4})$ can be established.
\begin{remark}[Relaxing Part 2 of Proposition \ref{prop:suffgeneral}]\label{remark:sieved}
The supremum in (\ref{eq:modcontinuity}) may be infinite for some choices of $\mathcal{M}$ (e.g., if $\mathcal{M}$ is not compact).\footnote{I do not relax compactness of $\mathcal{B}$ because, unlike infinite-dimensional parameter spaces, a compactness condition for a finite-dimensional parameter space is not overly restrictive.} The following result modifies Proposition \ref{prop:suffgeneral} to hold along sieves $\{\mathcal{M}_{n}\}_{n \geq 1}$ such that the posterior $\Pi(m \in \mathcal{M}_{n}|\{(Y_{i},X_{i},w_{i}')'\}_{i=1}^{n}) \rightarrow 1$ in $P_{0,YX|W}^{(n)}$-probability for $P_{0,W}^{\infty}$-almost every fixed realization $\{w_{i}\}_{i \geq 1}$ of $\{W_{i}\}_{i \geq 1}$ as $n\rightarrow \infty$. This relaxation is relevant for the Gaussian process example in Section \ref{sec:matern}, and, like Proposition \ref{prop:suffgeneral}, has some similarities with \cite{shen2001rates} (i.e., Theorem 4 of their paper).
\end{remark}
\begin{proposition}\label{prop:suffgeneralsieve}
Suppose that $\mathcal{B}$ is compact, $\sup_{w \in \mathcal{W}}\lambda_{max}(E(\varepsilon\varepsilon'|W=w))< \infty$, and there is a sequences $\{\delta_{n}\}_{n \geq 1}$ and $\{\mathcal{M}_{n}\}_{n \geq 1}$ such that the following conditions hold:
\begin{enumerate}
    \item There exists a constant $C>0$ such that $\Pi_{\mathcal{M}}(m \in B_{n,\mathcal{M}}(m_{0},\delta_{n})) \gtrsim \exp(-Cn\delta_{n}^{2})$ conditionally given $P_{0,W}^{\infty}$-almost every realization $\{w_{i}\}_{i \geq 1}$ of $\{W_{i}\}_{i \geq 1}$.
    \item The posterior satisfies $\Pi(\mathcal{M}_{n}^{c}|\{(Y_{i},X_{i},w_{i}')'\}_{i=1}^{n}) \rightarrow 0$ in $P_{0,YX|W}^{(n)}$-probability for $P_{0,W}^{\infty}$-almost every fixed realization $\{w_{i}\}_{i \geq 1}$ of $\{W_{i}\}_{i \geq 1}$ as $n\rightarrow \infty$.
    \item There exists an increasing function $\delta \mapsto \omega_{n}(\delta)$ for which $\omega_{n}(\delta_{n}) \leq \sqrt{n}\delta_{n}^{2}$, $\delta \mapsto \omega_{n}(\delta)/\delta^{\upsilon}$ is decreasing for some $\upsilon \in (0, 2)$, and 
\begin{align*}
E_{P_{0,YX|W}^{(n)}}\left[\sup_{(\beta,m) \in \mathcal{B} \times (\mathcal{M}_{n}\cap B_{n,\mathcal{M}}(m_{0},\delta))}\left|\frac{1}{\sqrt{n}}\sum_{i=1}^{n}(m(w_{i})-m_{0}(w_{i}))'V^{-1}(\beta)\varepsilon_{i}\right|\right] \lesssim \omega_{n}(\delta)   
\end{align*}
for $P_{0,W}^{\infty}$ almost-every fixed realization $\{w_{i}\}_{i \geq 1}$ of $\{W_{i}\}_{i \geq 1}$.
\item The sequence satisfies $n\delta_{n}^{2} \rightarrow \infty$ as $n\rightarrow \infty$.
\end{enumerate}
Then the marginal posterior for $m$ satisfies
\begin{align*}
\Pi\left(m \in B_{n,\mathcal{M}}(m_{0},D\delta_{n})^{c}\middle |\{(Y_{i},X_{i},w_{i}')'\}_{i =1}^{n}\right) \overset{P_{0,YX|W}^{(n)}}{\longrightarrow} 0     
\end{align*} 
for $P_{0,W}^{\infty}$-almost every fixed realization $\{w_{i}\}_{i \geq 1}$ of $\{W_{i}\}_{i \geq 1}$ as $n\rightarrow \infty$ for some large constant $D>0$.
\end{proposition}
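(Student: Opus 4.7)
The approach closely parallels the proof of Proposition \ref{prop:suffgeneral}, with one additional decomposition step to handle the sieve $\mathcal{M}_n$. First, I would split
$$\Pi\bigl(B_{n,\mathcal{M}}(m_0,D\delta_n)^c \mid \{(Y_i,X_i,w_i')'\}_{i=1}^n\bigr) \le \Pi\bigl(B_{n,\mathcal{M}}(m_0,D\delta_n)^c \cap \mathcal{M}_n \mid \text{data}\bigr) + \Pi(\mathcal{M}_n^c \mid \text{data}).$$
The second summand vanishes in $P_{0,YX|W}^{(n)}$-probability by condition 2 of the proposition, so the entire argument reduces to controlling the sieve-intersected posterior probability.

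Second, I would write the sieve-intersected posterior probability as a ratio $N_n/D_n$ of integrals of the (quasi-)likelihood ratio $L_n(\beta,m)/L_n(\beta_0,m_0)$ against $d\Pi_{\mathcal{B}}(\beta)\,d\Pi_{\mathcal{M}}(m)$, with the numerator restricted to $\mathcal{B}\times(B_{n,\mathcal{M}}(m_0,D\delta_n)^c\cap\mathcal{M}_n)$. Expanding the Gaussian log-likelihood ratio gives
$$\log\frac{L_n(\beta,m)}{L_n(\beta_0,m_0)} = R_n(\beta) + \sum_{i=1}^n \Delta_i(m)'V(\beta)^{-1}\varepsilon_i - \tfrac12\sum_{i=1}^n \Delta_i(m)'V(\beta)^{-1}\Delta_i(m),$$
where $\Delta_i(m)=m(w_i)-m_0(w_i)$, $\varepsilon_i=(Y_i,X_i)'-m_0(w_i)$, and $R_n(\beta)$ is an $m$-independent remainder controlled uniformly over compact $\mathcal{B}$ via the eigenvalue bound on $E[\varepsilon\varepsilon'|W]$.

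Third, I would lower bound $D_n$ exactly as in the proof of Proposition \ref{prop:suffgeneral}, combining the prior mass condition 1 with Markov-based control of the linear empirical process on a shrinking neighborhood of $(\beta_0,m_0)$, yielding $D_n \gtrsim \exp(-C'n\delta_n^2)$ with $P_{0,YX|W}^{(n)}$-probability tending to 1. For the numerator, I would peel the sieve-intersected bad set into shells $S_j = \{m\in\mathcal{M}_n : 2^jD\delta_n \le \|m-m_0\|_{n,2}<2^{j+1}D\delta_n\}$, $j\ge 0$. On $S_j$, the eigenvalue bound on $V(\beta)^{-1}$ shows the quadratic term is at most $-cn(2^jD\delta_n)^2$, while Markov combined with condition 3 at scale $2^{j+1}D\delta_n$ shows the linear term is at most $\sqrt{n}\omega_n(2^{j+1}D\delta_n) \le (2^{j+1}D)^\upsilon\sqrt{n}\omega_n(\delta_n) \le (2^{j+1}D)^\upsilon n\delta_n^2$, using monotonicity of $\omega_n(\delta)/\delta^\upsilon$ and $\omega_n(\delta_n)\le\sqrt{n}\delta_n^2$. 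For $D$ chosen large enough, the quadratic dominates on every shell, giving $N_n/D_n \lesssim \sum_{j\ge 0}\exp\bigl(-c''n\delta_n^2(2^jD)^2 + C'n\delta_n^2\bigr) \to 0$ by condition 4.

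The main technical obstacle is the peeling step: uniform-in-$j$ control of the empirical process must be obtained by summing Markov bounds across shells, and the condition $\upsilon<2$ is precisely what makes the quadratic contribution dominate the linear one on every shell. A secondary technicality is reconciling the prior mass condition 1 (stated on the full ball) with the sieve-restricted empirical process of condition 3 when lower bounding $D_n$; this is typically handled either by verifying that the sieve contains a neighborhood of $m_0$ carrying enough prior mass, or by an alternative Gaussian KL-based lower bound that avoids the empirical process altogether. The $\beta$-integration contributes only minor technicalities because $\mathcal{B}$ is compact and $\beta\mapsto V(\beta)$ is smooth, so $R_n(\beta)$ and the eigenvalues of $V(\beta)^{-1}$ are uniformly controlled.
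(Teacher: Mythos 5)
Your proposal is correct and follows essentially the same route as the paper: split off $\Pi(\mathcal{M}_{n}^{c}\mid\text{data})$ via Condition 2, then rerun the peeling argument of Proposition \ref{prop:suffgeneral} with the numerator shells intersected with $\mathcal{M}_{n}$, leaving the denominator lower bound untouched. Your secondary worry about the denominator is resolved exactly as you suggest — the paper's lower bound integrates over the full prior ball and controls the linear term by Chebyshev on the prior-averaged process (using only the eigenvalue bound on $E[\varepsilon\varepsilon'|W]$), so the sieve-restricted entropy condition is never needed there.
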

\begin{remark}[Propositions \ref{prop:suffgeneral} and \ref{prop:suffgeneralsieve} Can Help Verify Assumption \ref{as:emp_process}]\label{remark:checkingA4}
Verifying Proposition \ref{prop:suffgeneral} can also lead to the verification of Assumption \ref{as:emp_process}. Since $\beta \mapsto V^{-1}(\beta)$ is continuous and $\mathcal{B}$ is compact, the verification of (\ref{eq:modcontinuity}) can be reduced to a collection of univariate problems of finding increasing functions $\delta \mapsto \omega_{n,j_{1},j_{2}}(\delta)$, $j_{1},j_{2} \in \{1,2\}$, such that $\delta \mapsto \omega_{n,j_{1},j_{2}}(\delta)/\delta^{\upsilon_{j_{1},j_{2}}}$ is decreasing for some $\upsilon_{j_{1},j_{2}} \in (0,2)$ and
\begin{align}\label{eq:componentwise}
E_{P_{0,YX|W}^{(n)}}\left[\sup_{m_{j_{1}} \in \mathcal{M}_{j_{1}}: ||m_{j_{1}}-m_{0,j_{1}}||_{n,2} < D \delta_{n}}\left|\frac{1}{\sqrt{n}}\sum_{i=1}^{n}(m_{j_{1}}(w_{i})-m_{0,j_{1}}(w_{i}))'\varepsilon_{i,j_{2}}\right|\right]  \lesssim \omega_{n,j_{1},j_{2}}(\delta).   
\end{align}
Indeed, given $\{\omega_{n,j_{1},j_{2}}: j_{1},j_{2} \in \{1,2\}\}$, I can check the inequality $\max_{j_{1},j_{2} \in \{1,2\}}\omega_{n,j_{1},j_{2}}(\delta_{n}) \leq \sqrt{n}\delta_{n}^{2}$ to verify that (\ref{eq:modcontinuity}) holds (ignoring multiplicative constants). Since it is required that $\delta_{n} = o(n^{-1/4})$ and the error $U$ satisfies $U = \varepsilon_{1} - \beta_{0}\varepsilon_{2}$, the inequality $$\max_{j_{1},j_{2} \in \{1,2\}}\omega_{n,j_{1},j_{2}}(\delta_{n}) \leq \sqrt{n}\delta_{n}^{2}$$ immediately implies Assumption \ref{as:emp_process} holds for $\mathcal{M}_{n} = \mathcal{M}$. For the case where the suprema in (\ref{eq:componentwise}) may be unbounded, a symmetric argument may be applied except that (\ref{eq:componentwise}) is modified to incorporate $\mathcal{M}_{n,j_{1}}$, $j_{1} \in \{1,2\}$. That is,
\begin{align*}
    E_{P_{0,YX|W}^{(n)}}\left[\sup_{m_{j_{1}} \in \mathcal{M}_{n,j_{1}}: ||m_{j_{1}}-m_{0,j_{1}}||_{n,2} < D \delta_{n}}\left|\frac{1}{\sqrt{n}}\sum_{i=1}^{n}(m_{j_{1}}(w_{i})-m_{0,j_{1}}(w_{i}))'\varepsilon_{i,j_{2}}\right|\right]  \lesssim \omega_{n,j_{1},j_{2}}(\delta). 
\end{align*}
In such case, Assumption \ref{as:emp_process} holds for a candidate $\mathcal{M}_{n}$ used to verify Proposition \ref{prop:suffgeneralsieve}. I use the technique outlined in this remark for the examples in Sections \ref{sec:waveletex} and \ref{sec:matern}, so that Assumptions \ref{as:consistency} and \ref{as:emp_process} are simultaneously verified.
\end{remark}
\subsubsection{Example 1 -- Uniform Wavelet Series Priors}\label{sec:waveletex}
I verify Theorem \ref{thm:BVM} for a class of priors based on untruncated uniform wavelet series. These priors have feature regularly in the Bayesian nonparametrics literature \citep{gine2011rates,10.1214/13-AOS1133,castillo2014supremum,l2023semiparametric}, however, as far as I am aware, the Bernstein-von Mises theorem for the partially linear model has not been verified using these priors. To make things formal, suppose that $\mathcal{W}= [0,1]$, the marginal distribution $P_{0,W}$ of $W$ has a Lebesgue density $p_{0,W}$ on $[0,1]$ that is bounded away from zero and infinity, and that there are known constants $M>0$ and $\alpha_{0,1},\alpha_{0,2}>1/2$ such that $||m_{0,j}||_{\infty,\infty,\alpha_{0,j}} \leq M$ for all $j \in \{1,2\}$, where $||\cdot||_{\infty,\infty,\alpha}$ denotes the norm of a H\"{o}lder-Zygmund space $B_{\infty,\infty}^{\alpha}([0,1])$, $\alpha > 0$.\footnote{For $\alpha \notin \mathbb{N}$, H\"{o}lder-Zygmund spaces coincide with the usual H\"{o}lder space $C^{\alpha}([0,1])$ of functions that are $\lfloor \alpha \rfloor$-times continuously differentiable and $\lfloor \alpha \rfloor$th derivative is $(\alpha-\lfloor \alpha \rfloor)$-Lipschitz and the norm $||\cdot||_{\infty,\infty,\alpha}$ is equivalent to the usual H\"{o}lder norm $||\cdot||_{\alpha}$ (i.e., given by (4.111) in \cite{Gine_Nickl_2015}). However, for $\alpha \in \mathbb{N}$, these spaces are no longer the same. See Section 4.3.3 of \cite{Gine_Nickl_2015} for more details.} Given these assumptions, I set the nuisance parameter space to be $\mathcal{M}_{j}=\{f \in L^{2}([0,1]): ||f||_{\infty,\infty,\alpha_{0,j}} \leq M\}$ for each $j \in \{1,2\}$ and a prior $\Pi_{\mathcal{M}}$ over $\mathcal{M}_{1} \times \mathcal{M}_{2}$ is constructed as follows. I take a boundary adapted wavelet basis $\{\psi_{lk}: l \geq 0, 0 \leq k \leq 2^{l}-1\}$ for $L^{2}([0,1])$ that is sufficiently regular to characterize the H\"{o}lder-Zygmund spaces $(B_{\infty,\infty}^{\alpha}([0,1]),||\cdot||_{\infty,\infty,\alpha})$, $\alpha \in \{\alpha_{0,1},\alpha_{0,2}\}$, and set the marginal prior $\Pi_{\mathcal{M}_{j}}$ for $m_{j}$ to be the law of the random function
\begin{align*}
m_{j} = \sum_{l = 0}^{\infty}\sum_{k = 0}^{2^{l}-1}2^{-l(\alpha_{0,j}+1/2)}\nu_{lk,j}\psi_{lk},    
\end{align*} 
where $\nu_{lk,j}$ are independent and identically distributed (across $l$, $k$, and $j$) uniform random variables supported on $[-M,M]$.\footnote{The wavelet basis being sufficiently regular to characterize $(B^{\alpha}_{\infty,\infty}([0,1]),||\cdot||_{\alpha,\infty,\infty})$ means that the norm $||\cdot||_{\infty,\infty,\alpha}$ satisfies $||f||_{\infty,\infty,\alpha}=\sup_{l \geq 0}\max_{0 \leq k \leq 2^{l}-1}2^{l(\alpha+1/2)}|\langle f, \psi_{lk} \rangle_{2} |$. An example is the wavelet basis proposed in \cite{cohen1993wavelets}.} Since the wavelet coefficients are i.i.d across $j$, it follows that the joint law of $m=(m_{1},m_{2})$ satisfies $\Pi_{\mathcal{M}}= \Pi_{\mathcal{M}_{1}}\otimes \Pi_{\mathcal{M}_{2}}$. One can also check that the support of $\Pi_{\mathcal{M}}$ is $\mathcal{M}_{1}\times \mathcal{M}_{2}$. The next proposition verifies Theorem \ref{thm:BVM} for these priors under a subgaussian restriction on the projection errors $\varepsilon$.
\begin{proposition}\label{prop:uniform}
Suppose that Assumption \ref{as:dgp} holds, $\varepsilon$ is subgaussian conditional on $W$ and the conditional distribution is continuous in $W$, the marginal distribution $P_{0,W}$ of the control variable $W$ has a probability density function $p_{0,W}$ with respect to the Lebesgue measure on $[0,1]$ that is bounded away from zero and infinity, $m_{0} \in \mathcal{M}_{1} \times \mathcal{M}_{2}$ with $\mathcal{M}_{j} = \{f \in L^{2}([0,1]): ||f||_{\infty,\infty,\alpha_{0,j}} \leq M\}$ for each $j \in \{1,2\}$, where $\alpha_{0,1},\alpha_{0,2} > 1/2$ and $M < \infty$, and $\beta_{0} \in (-B,B)$ for some $B \in (0,\infty)$. Then Theorem \ref{thm:BVM} holds for the prior $\Pi = \Pi_{\mathcal{B}} \otimes \Pi_{\mathcal{M}}$, where $\Pi_{\mathcal{B}}$ is the uniform distribution over $\mathcal{B} = [-B,B]$ and $\Pi_{\mathcal{M}}=\Pi_{\mathcal{M}_{1}}\otimes \Pi_{\mathcal{M}_{2}}$ with $\Pi_{\mathcal{M}_{j}}$ being the law of the uniform wavelet series.
\end{proposition}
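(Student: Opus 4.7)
My plan is to verify the four hypotheses of Theorem \ref{thm:BVM} one at a time. Assumption \ref{as:dgp} is part of the hypothesis. Assumption \ref{as:prior}(2) is immediate because the uniform prior on $\mathcal{B}=[-B,B]$ has a continuous positive density at $\beta_{0} \in (-B, B)$. For Assumption \ref{as:prior}(1), note that $\det V(\beta) = \sigma_{01}^{2}\sigma_{02}^{2}$ is constant in $\beta$, so $L_{n}(\beta, m)$ is uniformly bounded above in $(\beta,m)$, giving integrability; strict positivity of the posterior denominator follows from the prior mass lower bound derived below combined with a standard lower bound on the log-likelihood ratio. The substantive work is verifying Assumptions \ref{as:consistency} and \ref{as:emp_process}, which I handle simultaneously using Proposition \ref{prop:suffgeneral} and Remark \ref{remark:checkingA4}. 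Set $\alpha_{\min} = \min\{\alpha_{0,1},\alpha_{0,2}\} > 1/2$ and take $\delta_{n}= C_{0} n^{-\alpha_{\min}/(2\alpha_{\min}+1)} (\log n)^{\gamma}$ for some $\gamma > 0$ and large $C_{0}$; since $\alpha_{\min} > 1/2$, this satisfies $\delta_{n} = o(n^{-1/4})$ and $n \delta_{n}^{2} \to \infty$.

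For the prior mass condition, expand $m_{0,j} = \sum_{l,k} 2^{-l(\alpha_{0,j}+1/2)} u_{lk,j} \psi_{lk}$ with $|u_{lk,j}| \leq M$ via the H\"older--Zygmund characterization of the wavelet basis. Choose a truncation level $L_{n}$ with $2^{L_{n}} \asymp \delta_{n}^{-1/\alpha_{\min}}$. The tail $||\sum_{l > L_{n}} \sum_{k} 2^{-l(\alpha_{0,j}+1/2)}(\nu_{lk,j}-u_{lk,j})\psi_{lk}||_{\infty}$ is $O(2^{-L_{n} \alpha_{0,j}}) \lesssim \delta_{n}$ uniformly in the $\nu_{lk,j}$, while the event $\{|\nu_{lk,j}-u_{lk,j}| \leq c : l \leq L_{n}, 0 \leq k \leq 2^{l}-1, j \in \{1,2\}\}$ has probability at least $\exp(-C' 2^{L_{n}}) \geq \exp(-C n \delta_{n}^{2})$. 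On this event $||m_{j}-m_{0,j}||_{\infty} \lesssim \delta_{n}$, and since $||\cdot||_{n,2} \leq ||\cdot||_{\infty}$, the empirical $L^{2}$ prior mass condition of Proposition \ref{prop:suffgeneral}(1) holds deterministically in the realization of $\{w_{i}\}_{i \geq 1}$.

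For the modulus of continuity, I apply Remark \ref{remark:checkingA4} to reduce (\ref{eq:modcontinuity}) to the componentwise bounds (\ref{eq:componentwise}): continuity of $\beta \mapsto V^{-1}(\beta)$ on the compact set $\mathcal{B}$, together with the constant determinant, makes the entries of $V^{-1}(\beta)$ uniformly bounded, so the $\beta$-supremum is absorbed into multiplicative constants. The H\"older ball $\mathcal{M}_{j_{1}}$ has metric entropy $\log N(\varepsilon, \mathcal{M}_{j_{1}}, ||\cdot||_{\infty}) \lesssim \varepsilon^{-1/\alpha_{0,j_{1}}}$ (Section 4.3 of \cite{Gine_Nickl_2015}). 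Combining Dudley chaining for multiplier empirical processes with the conditional subgaussianity of $\varepsilon_{i,j_{2}}$ (e.g., Theorem 2.14.1 of \cite{vaartwellner96book} adapted to subgaussian multipliers) yields
\begin{align*}
\omega_{n,j_{1},j_{2}}(\delta) \lesssim \delta^{1-1/(2\alpha_{0,j_{1}})}
\end{align*}
up to logarithmic factors. The exponent lies in $(0,1)$ since $\alpha_{0,j_{1}} > 1/2$, so $\omega_{n,j_{1},j_{2}}(\delta)/\delta^{\upsilon}$ is decreasing for any $\upsilon \in (1-1/(2\alpha_{0,j_{1}}), 2)$, and the inequality $\omega_{n,j_{1},j_{2}}(\delta_{n}) \leq \sqrt{n}\delta_{n}^{2}$ reduces to $\delta_{n} \gtrsim n^{-\alpha_{0,j_{1}}/(2\alpha_{0,j_{1}}+1)}$, which is built into the choice of $\delta_{n}$. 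Proposition \ref{prop:suffgeneral} then gives Assumption \ref{as:consistency}, and the same bounds give Assumption \ref{as:emp_process} with $\mathcal{M}_{n}=\mathcal{M}$ via Remark \ref{remark:checkingA4}.

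The main technical obstacle is making the chaining bound hold $P_{0,W}^{\infty}$-almost surely in the realization of $\{w_{i}\}_{i \geq 1}$ rather than only in expectation over a random design. Two ingredients make this work: first, the assumption that $p_{0,W}$ is bounded away from $0$ and $\infty$ makes the empirical $L^{2}$ metric uniformly comparable to the population $L^{2}(P_{0,W})$ metric on the H\"older ball with $P_{0,W}^{\infty}$-probability one eventually (via Glivenko--Cantelli / chaining arguments on uniformly bounded H\"older classes), so entropy integrals computed under the population norm transfer to the empirical one. Second, subgaussianity of $\varepsilon \mid W$ with a subgaussian proxy that is continuous in $w$ and $\mathcal{W}=[0,1]$ compact gives a uniform-in-$w$ subgaussian constant, so the chaining constants can be bounded uniformly in the realized design. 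Together these deliver the almost-sure version of (\ref{eq:modcontinuity}) required by Proposition \ref{prop:suffgeneral}, completing the proof.
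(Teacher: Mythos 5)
Your proposal is correct in outline and follows essentially the same route as the paper: the prior-mass bound via the wavelet-coefficient argument of Gin\'{e} and Nickl, the entropy-integral bound $\omega_{n}(\delta)\lesssim \delta^{1-1/(2\alpha_{0,j})}$ from the subgaussian maximal inequality (the paper uses Corollary 2.2.9 of \cite{vaartwellner96book} with Theorem 4.3.36 of \cite{Gine_Nickl_2015}), and the simultaneous verification of Assumptions \ref{as:consistency} and \ref{as:emp_process} through Proposition \ref{prop:suffgeneral} and Remark \ref{remark:checkingA4}. One imprecision in your prior-mass step: on the event $\{|\nu_{lk,j}-u_{lk,j}|\leq c\}$ with $c$ a fixed constant the low-frequency block contributes $O(c)$ to $\|m_{j}-m_{0,j}\|_{\infty}$, not $O(\delta_{n})$; you need $c\asymp\delta_{n}$, which turns the probability into $\exp(-C2^{L_{n}}\log(1/\delta_{n}))$, and the extra $\log(1/\delta_{n})$ is exactly what the logarithmic factor in $\delta_{n}$ is there to absorb -- this is the bookkeeping the paper does explicitly. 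Your final paragraph addresses a non-issue: since $\|\cdot\|_{n,2}\leq\|\cdot\|_{\infty}$ for every realization of the design, the empirical-norm covering numbers are dominated by the sup-norm covering numbers deterministically, so no Glivenko--Cantelli comparison of empirical and population $L^{2}$ metrics is needed (though your observation that continuity of the conditional law on compact $\mathcal{W}$ yields a uniform subgaussian constant is indeed the point of that hypothesis).
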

\begin{remark}[Model Specification and Role of Subgaussianity] 
Proposition \ref{prop:uniform} indicates that the uniform wavelet series priors are a class of priors for which the Bernstein-von Mises theorem holds even when the Gaussian model (\ref{eq:samplingmodelY})--(\ref{eq:samplingmodelX}) is misspecified (i.e., it only requires $\varepsilon$ be subgaussian conditional on $W$). The subgaussian restriction is only imposed to relate $\omega_{n}(\delta)$ to the metric entropy integrals $\int_{0}^{\delta} \sqrt{\log N(\tau,\mathcal{M}_{j},||\cdot||_{n,2})}d\tau$, $j \in \{1,2\}$, so that well-known metric entropy bounds for Besov balls (e.g., Theorem 4.3.36 of \cite{Gine_Nickl_2015}) can be applied to find sequences that satisfy Part 2 of Proposition \ref{prop:suffgeneral}.\footnote{The notation $N(\tau,\mathcal{F},||\cdot||)$ refers to the \textit{$\tau$-covering number} of the set $\mathcal{F}$ under the (semi)norm $||\cdot||$. That is, the smallest number of $||\cdot||$-balls with radius $\tau$ required to cover $\mathcal{F}$. The logarithm of the covering number is known as the \textit{metric entropy}.} For this reason, I consider subgaussianity to be a technical restriction and it may be that heavier tailed distributions for $\varepsilon$ can be accommodated using a more refined complexity measure for $\mathcal{M}_{j}$.
\end{remark}
\begin{remark}[No Cross Restrictions on Regularity of $m_{01}$ and $m_{02}$]\label{remark:waveletreg}
There are no restrictions on the relative magnitude of $\alpha_{0,1}$ and $\alpha_{0,2}$ because it is only required that $\alpha_{0,1} > 1/2$ and $\alpha_{0,2}>1/2$. In other words, the uniform wavelet series example demonstrates that the $(\beta,m)$-parametrization can allow for $m_{01}$ to be very smooth relative to $m_{02}$ (and vice versa), so long as this baseline smoothness condition is met (i.e., regularity greater than $1/2$)
\end{remark}
\subsubsection{Example 2 -- Mat\'{e}rn Gaussian Process Priors}\label{sec:matern}
This section verifies Theorem \ref{thm:BVM} for a popular class of Gaussian process priors. Suppose that $\mathcal{W} = [0,1]^{d_{w}}$ for some $1 \leq d_{w} < \infty$. The prior for $m=(m_{1},m_{2})$ is $\Pi_{\mathcal{M}} = \Pi_{\mathcal{M}_{1}} \otimes \Pi_{\mathcal{M}_{2}}$, where $\Pi_{\mathcal{M}_{j}}$ is the law of a centered Mat\'{e}rn Gaussian process on $[0,1]^{d_{w}}$ with regularity parameter $\alpha_{j} > 0$. Specifically, $m_{j} \sim \Pi_{\mathcal{M}_{j}}$ if and only if, for any set of indices $w_{1},...,w_{k} \in [0,1]^{d_{w}}$, the vector of function values $(m_{j}(w_{1}),...,m_{j}(w_{k}))'$ follows a centered multivariate normal distribution with covariance matrix elements given by 
\begin{align*}
\kappa_{\alpha_{j}}(w_{s},w_{t}) = \int_{\mathbb{R}^{d_{w}}}\exp(-i\lambda'(w_{s}-w_{t}))(1+||\lambda||_{2}^{2})^{-\alpha_{j}-\frac{d_{w}}{2}}d\lambda, \quad s,t \in \{1,...,k\}.  
\end{align*} 
The regularity parameter $\alpha_{j}$ determines the smoothness of the sample paths because it can be shown that $m_{j}$ takes values in the H\"{o}lder space $(C^{a}([0,1]^{d_{w}}),||\cdot||_{a})$ for any $a < \alpha_{j}$ (see Section 3.1 of \cite{van2011information}).\footnote{The H\"{o}lder space $C^{\alpha}([0,1])$ is the space of functions on $[0,1]^{d_{w}}$ that are $\lfloor \alpha \rfloor$-times continuously differentiable and $\lfloor \alpha \rfloor$th derivative is $(\alpha-\lfloor \alpha \rfloor)$-Lipschitz. The H\"{o}lder norm $||\cdot||_{\alpha}$ is given by (4.111) in \cite{Gine_Nickl_2015}, however, its specific form is not important for Proposition \ref{prop:materngp} (nor Proposition \ref{prop:materninformationloss} in Section \ref{sec:comparison} that establishes the analogous result for the $(\beta,\eta)$-parametrization).} The next proposition verifies Theorem \ref{thm:BVM} for the Mat\'{e}rn prior in the case where (\ref{eq:samplingmodelY})--(\ref{eq:samplingmodelX}) is correctly specified (see Remark \ref{remark:correcspec}). For notation, $H^{\alpha}([0,1]^{d_{w}})$ denotes the Sobolev space of functions $f$ on $[0,1]^{d_{w}}$ that can be extended to a function on $\mathbb{R}^{d}$ with Fourier transform $\hat{f}$ that satisfies $\int_{\mathbb{R}^{d_{w}}}|\hat{f}(\lambda)|^{2}(1+||\lambda||^{2}_{2})^{\alpha}d \lambda < \infty$.
\begin{proposition}\label{prop:materngp}
Suppose that $\Pi = \Pi_{\mathcal{B}} \otimes \Pi_{\mathcal{M}}$, where $\Pi_{\mathcal{B}}$ is the uniform distribution over $\mathcal{B} =[-B,B]$ and $\Pi_{\mathcal{M}} = \Pi_{\mathcal{M}_{1}} \otimes \Pi_{\mathcal{M}_{2}}$ with $\Pi_{\mathcal{M}_{j}}$ being the law of a Mat\'{e}rn process with regularity $\alpha_{j} > 0$. Further, suppose that the true conditional distribution of $(Y,X)'$ given $W$ satisfies $(Y,X)'|W \sim \mathcal{N}(m_{0}(W),V(\beta_{0}))$ for $\beta_{0} \in (-B,B)$ and $m_{0}= (m_{01},m_{02})$ with $m_{0,j} \in C^{\alpha_{0,j}}([0,1]^{d_{w}}) \cap H^{\alpha_{0,j}}([0,1]^{d_{w}})$ for each $j \in \{1,2\}$. Theorem \ref{thm:BVM} holds if the regularity parameters $\alpha_{1},\alpha_{0,1},\alpha_{2},\alpha_{0,2}>0$ satisfy the following inequalities:
\begin{align}\label{eq:betamineq}
\alpha_{1} > \frac{d_{w}}{2}, \quad \alpha_{0,1} > \frac{\alpha_{1}}{2}+ \frac{d_{w}}{4} \quad \alpha_{2} > \frac{d_{w}}{2}, \quad \alpha_{0,2} > \frac{\alpha_{2}}{2}+ \frac{d_{w}}{4} . 
\end{align}
\end{proposition}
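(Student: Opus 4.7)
The plan is to verify Theorem \ref{thm:BVM} by invoking Proposition \ref{prop:suffgeneralsieve} in combination with the componentwise reduction in Remark \ref{remark:checkingA4}. The sieved version is required because the support of a Mat\'ern process in the supremum norm is all of $C([0,1]^{d_{w}})$, so a fixed bounded nuisance space is unavailable. Because $\Pi_{\mathcal{M}}=\Pi_{\mathcal{M}_{1}}\otimes \Pi_{\mathcal{M}_{2}}$, the analysis decouples into two parallel problems indexed by $j\in\{1,2\}$ and I may take $\mathcal{M}_{n}=\mathcal{M}_{n,1}\times \mathcal{M}_{n,2}$. The data-generating process also satisfies Assumption \ref{as:dgp} (by the correct-specification hypothesis and $\beta_{0}\in(-B,B)$), and Assumption \ref{as:prior} is immediate from the uniform prior on $\mathcal{B}$.

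For the contraction rate, I would exploit that the RKHS of the Mat\'ern process with regularity $\alpha_{j}$ is the Sobolev space $H^{\alpha_{j}+d_{w}/2}([0,1]^{d_{w}})$ and apply the concentration-function calculus of \cite{van2011information}. With $m_{0,j}\in H^{\alpha_{0,j}}\cap C^{\alpha_{0,j}}$, this gives a supremum-norm small-ball bound $\Pi_{\mathcal{M}_{j}}(\|m_{j}-m_{0,j}\|_{\infty}<\delta_{n,j})\gtrsim \exp(-Cn\delta_{n,j}^{2})$ with $\delta_{n,j}\asymp n^{-\alpha_{j}/(2\alpha_{j}+d_{w})}$ when $\alpha_{0,j}\geq \alpha_{j}$ and $\delta_{n,j}\asymp n^{-\alpha_{0,j}/(2\alpha_{j}+d_{w})}$ otherwise. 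Since $\|\cdot\|_{n,2}\leq \|\cdot\|_{\infty}$ for every realization $\{w_{i}\}_{i\geq 1}$ and $p_{0,W}$ is bounded away from zero and infinity, the bound transfers to the empirical $L^{2}$-ball $B_{n,\mathcal{M}_{j}}(m_{0,j},\delta_{n,j})$ on a $P_{0,W}^{\infty}$-measure-one set of designs, verifying Part 1 of Proposition \ref{prop:suffgeneralsieve}. Elementary algebra then shows that the inequalities in (\ref{eq:betamineq}) are exactly what forces $\delta_{n,j}=o(n^{-1/4})$: the condition $\alpha_{j}>d_{w}/2$ covers the oversmoothing regime $\alpha_{0,j}\geq\alpha_{j}$, while $\alpha_{0,j}>\alpha_{j}/2+d_{w}/4$ covers the undersmoothing regime $\alpha_{0,j}<\alpha_{j}$.

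For the sieve, I would set $\mathcal{M}_{n,j}=M_{n}\mathbb{H}_{1,j}+\delta_{n,j}\mathbb{B}_{\infty}$, where $\mathbb{H}_{1,j}$ is the unit ball of $H^{\alpha_{j}+d_{w}/2}$, $\mathbb{B}_{\infty}$ is the unit ball of $C([0,1]^{d_{w}})$, and $M_{n}\asymp\sqrt{n}\,\delta_{n,j}$. The Borell--Sudakov--Tsirelson inequality paired with the small-ball lower bound gives $\Pi_{\mathcal{M}_{j}}(\mathcal{M}_{n,j}^{c})\leq e^{-C'n\delta_{n,j}^{2}}$ for arbitrarily large $C'$, and the standard posterior-sieve argument then verifies Part 2 of Proposition \ref{prop:suffgeneralsieve}. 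On the sieve I bound the metric entropy by $\log N(\tau, M_{n}\mathbb{H}_{1,j}, \|\cdot\|_{\infty})\lesssim (M_{n}/\tau)^{d_{w}/\alpha_{j}}$, which is the standard estimate for Sobolev balls embedded into $C([0,1]^{d_{w}})$ when $\alpha_{j}>d_{w}/2$, and pass to $\|\cdot\|_{n,2}$ via $\|\cdot\|_{n,2}\leq \|\cdot\|_{\infty}$. Under correct specification, $\varepsilon_{i2}$ and $U_{i}-\beta_{0}\varepsilon_{i2}$ are Gaussian conditional on $\{w_{i}\}_{i\geq 1}$, so Dudley's entropy integral together with a chaining argument delivers the componentwise modulus $\omega_{n,j_{1},j_{2}}(\delta)\lesssim M_{n}^{d_{w}/(2\alpha_{j_{1}})}\,\delta^{\,1-d_{w}/(2\alpha_{j_{1}})}$, whose quotient by $\delta^{\upsilon}$ is decreasing for some $\upsilon\in(0,2)$ because $\alpha_{j_{1}}>d_{w}/2$. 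Evaluated at $\delta=\delta_{n}:=\max_{j}\delta_{n,j}$, the $o(n^{-1/4})$ rate yields $\omega_{n,j_{1},j_{2}}(\delta_{n})\leq \sqrt{n}\,\delta_{n}^{2}$, verifying Part 3 of Proposition \ref{prop:suffgeneralsieve}. By Remark \ref{remark:checkingA4} the same componentwise bound produces Assumption \ref{as:emp_process} on $\mathcal{M}_{n}$, and Proposition \ref{prop:suffgeneralsieve} supplies Assumption \ref{as:consistency}, so Theorem \ref{thm:BVM} applies.

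The hard part will be the calibration of the sieve radius $M_{n}$: it must grow quickly enough that the Borell escape probability is dominated by the $e^{-Cn\delta_{n,j}^{2}}$ denominator from the prior-mass bound, yet slowly enough that the Dudley integral still produces $\omega_{n}(\delta_{n})\leq \sqrt{n}\,\delta_{n}^{2}$. The tension between these two requirements, tracked uniformly across the four cases $(\alpha_{0,j}\gtrless\alpha_{j})_{j\in\{1,2\}}$, is what pins down both sides of (\ref{eq:betamineq}) and absorbs the bulk of the technical work.
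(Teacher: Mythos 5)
Your overall route is the same as the paper's (Proposition \ref{prop:suffgeneralsieve} plus the componentwise reduction of Remark \ref{remark:checkingA4}, concentration functions for the small-ball bound, Borell's inequality for the sieve, and Dudley-type maximal inequalities for the modulus), and your rate calibration correctly identifies that $\delta_{n,j}=n^{-\min\{\alpha_{j},\alpha_{0,j}\}/(2\alpha_{j}+d_{w})}=o(n^{-1/4})$ is equivalent to (\ref{eq:betamineq}). However, there are two concrete gaps in the sieve construction and the entropy calculus. First, your sieve $\mathcal{M}_{n,j}=M_{n}\mathbb{H}_{1,j}+\delta_{n,j}\mathbb{B}_{\infty}$ uses the unit ball of $C([0,1]^{d_{w}})$ as the second summand. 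That set has infinite $\tau$-metric entropy in $\|\cdot\|_{\infty}$ for every $\tau<\delta_{n,j}$ (and entropy of order $n\log(\delta_{n,j}/\tau)$ even in the finite-dimensional $\|\cdot\|_{n,2}$ seminorm), so the Dudley integral over the sieve is not finite, or at best of order $\sqrt{n}\,\delta_{n,j}$, which violates $\omega_{n}(\delta_{n})\leq\sqrt{n}\,\delta_{n}^{2}$. Your stated modulus silently drops this summand and only accounts for $M_{n}\mathbb{H}_{1,j}$. The fix, which is what the paper does, is to view the Mat\'{e}rn process as a Gaussian element of the H\"{o}lder space $C^{a_{j}}([0,1]^{d_{w}})$ for some $a_{j}\in(d_{w}/2,\alpha_{j})$, apply Borell's inequality in that Banach space, and take the second summand to be the shrinking H\"{o}lder ball $\gamma_{n,j}C_{1}^{a_{j}}$ with $\gamma_{n,j}=n^{-(\alpha_{j}-a_{j})/(2\alpha_{j}+d_{w})}$; that ball has entropy exponent $d_{w}/a_{j}<2$, so its square-root entropy is integrable and its contribution $\gamma_{n,j}^{d_{w}/(2a_{j})}\delta_{n}^{1-d_{w}/(2a_{j})}$ is $\lesssim\sqrt{n}\,\delta_{n}^{2}$.

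Second, your entropy bound $\log N(\tau,M_{n}\mathbb{H}_{1,j},\|\cdot\|_{\infty})\lesssim(M_{n}/\tau)^{d_{w}/\alpha_{j}}$ is not the correct estimate for the RKHS ball: the RKHS is (a ball in) $H^{\alpha_{j}+d_{w}/2}$, so the Birman--Solomyak/Edmunds--Triebel exponent is $d_{w}/(\alpha_{j}+d_{w}/2)=2d_{w}/(2\alpha_{j}+d_{w})$, not $d_{w}/\alpha_{j}$ (the latter is the exponent for an $\alpha_{j}$-smooth ball, i.e., for the sample paths, not the RKHS). This is not a harmless overestimate: with your exponent and $M_{n}\asymp\sqrt{n}\,\delta_{n}$, the requirement $\omega_{n}(\delta_{n})\leq\sqrt{n}\,\delta_{n}^{2}$ reduces in the regime $\alpha_{0,j}\geq\alpha_{j}$ to $4\alpha_{j}^{2}-d_{w}^{2}\geq4\alpha_{j}^{2}$, which is false, so your verification of Part 3 of Proposition \ref{prop:suffgeneralsieve} (and hence of Assumption \ref{as:emp_process}) breaks down exactly under the hypotheses of the proposition. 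With the correct exponent $2d_{w}/(2\alpha_{j}+d_{w})$ the same computation reduces to $\alpha_{j}\geq\min\{\alpha_{j},\alpha_{0,j}\}$, which holds, and the argument closes as in the paper.
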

\begin{remark}[Role of Correct Specification]\label{remark:correcspec}
Proposition \ref{prop:materngp} assumes that the sampling model (\ref{eq:samplingmodelY})--(\ref{eq:samplingmodelX}) is correctly specified. This is restrictive relative to Theorem \ref{thm:BVM} (and Proposition \ref{prop:uniform}). Correct specification is used at only one point of the proof of Proposition \ref{prop:materngp} and that is to verify Part 2 of Proposition \ref{prop:suffgeneralsieve}. Specifically, I use the property $E_{P_{0,YX|W}^{(n)}}[L_{n}(\beta,m)/L_{n}(\beta_{0},m_{0})] = 1$ to show the expected value of the numerator of the posterior satisfies
\begin{align*}
    E_{P_{0,YX|W}^{(n)}}\int_{[-B,B] \times \mathcal{M}_{n}^{c}}\frac{L_{n}(\beta,m)}{L_{n}(\beta_{0},m_{0})}d \Pi(\beta,m) \leq \Pi_{\mathcal{M}}(\mathcal{M}_{n}^{c}).
\end{align*}This allows $\Pi_{\mathcal{M}}(\mathcal{M}_{c}^{c}) \leq \exp(-Mn\delta_{n}^{2})$ for $M>0$ large to verify $\Pi(\mathcal{M}_{n}^{c}|\{(Y_{i},X_{i},w_{i}')'\}_{i=1}^{n})\rightarrow 0$ in $P_{0,YX|W}^{(n)}$-probability, an inequality that the sieves $\{\mathcal{M}_{n}\}_{n \geq 1}$ introduced in the Step 2 of the proof satisfy. If it could be shown that $\Pi(\mathcal{M}_{n}^{c}|\{(Y_{i},X_{i},w_{i}')'\}_{i=1}^{n})\rightarrow 0$ in $P_{0,YX|W}^{(n)}$-probability without correct specification of (\ref{eq:samplingmodelY})--(\ref{eq:samplingmodelX}) for the sieves $\{\mathcal{M}_{n}\}_{n \geq 1}$ (or for some modification of $\{\mathcal{M}_{n}\}_{n \geq 1}$ that does not change the metric entropy in a meaningful way), then Theorem \ref{thm:BVM} could be verified only under the restriction that $\varepsilon$ is subgaussian conditional on $W$ because the maximal inequalities used to verify Part 3 of Proposition \ref{prop:suffgeneralsieve} (and Assumption \ref{as:emp_process}) only require subgaussianity (i.e., they apply Corollary 2.2.9 of \cite{vaartwellner96book}).
\end{remark}
\begin{remark}[No Cross Restrictions on Regularity of $m_{01}$ and $m_{02}$]\label{remark:maternreg}
Proposition \ref{prop:materngp} is similar to the uniform wavelet series example from Section \ref{sec:waveletex} in the sense that the result does not impose cross restrictions on the nuisance function. The inequalities of the proposition only compare the regularity of the prior draws $m_{j}$ to the function $m_{0,j}$ they are targeting. Specifically, the restriction $\alpha_{0,j} > \alpha_{j}/2 + d_{w}/4$ means draws $m_{j}$ from the marginal $\Pi_{\mathcal{M}_{j}}$ cannot be too smooth relative to the true nuisance parameter $m_{0,j}$, however, this does not restrict the smoothness of the draws from $\Pi_{\mathcal{M}_{1}}$ and $m_{02}$ (and vice versa). I will return to this point in Section \ref{sec:comparison}.
\end{remark}
\section{Discussion}\label{sec:discuss}
This section presents a discussion of Theorem \ref{thm:BVM} and compares the Bernstein-von Mises theorem for the $(\beta,m)$-parametrization and the $(\beta,\eta)$-parametrization of the partially linear model.
\subsection{Why Theorem \ref{thm:BVM} Works}\label{sec:whyitworks}
Theorem \ref{thm:BVM} proves that the marginal posterior for the coefficient of interest $\beta$ is asymptotically normal. The main requirements are that the marginal posterior for $m$ concentrates around $m_{0}$ at a rate faster than $n^{-1/4}$ and certain multiplier empirical processes satisfy asymptotic equicontinuity conditions. These conditions are mild and mirror those required for the asymptotic normality of method of moments estimators based on orthogonal estimating equations \citep{andrews1994asymptotics,chernozhukov2018double}. Moreover, Theorem \ref{thm:BVM} can be valid when the sampling model (\ref{eq:samplingmodelY})--(\ref{eq:samplingmodelX}) is misspecified. Specifically, the uniform wavelet series prior example in Section \ref{sec:waveletex} satisfies Theorem \ref{thm:BVM} without requiring that the true conditional distribution $P_{0,YX|W}$ of $(Y,X)$ given $W$ be compatible with (\ref{eq:samplingmodelY})--(\ref{eq:samplingmodelX}). The next result is important for explaining these features.
\begin{theorem}\label{thm:KL}
If Assumption \ref{as:dgp} holds, then
\begin{enumerate}
    \item For any $\beta \in \mathcal{B}$ fixed, $m_{0}=(m_{01},m_{02})$ is the unique solution to the minimization problem
\begin{align}\label{eq:profile}
    \min_{m \in \mathcal{M}}E_{P_{0}}\left[KL\left(p_{0,YX|W}(\cdot|W),p_{\beta,m}(\cdot|W)\right)\right].
\end{align}
\item The true parameter $(\beta_{0},m_{0})$ is the unique solution to the minimization problem
\begin{align}\label{eq:full}
    \min_{(\beta,m) \in \mathcal{B} \times \mathcal{M}}E_{P_{0}}\left[KL\left(p_{0,YX|W}(\cdot|W),p_{\beta,m}(\cdot|W)\right)\right],
\end{align}
\end{enumerate}
where $KL(p,q)$ denotes the Kullback-Leibler divergence between two densities $p$ and $q$.
\end{theorem}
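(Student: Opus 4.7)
\textbf{Proof proposal for Theorem \ref{thm:KL}.}

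The plan is to exploit the Gaussian form of $p_{\beta,m}(\cdot|w)$ and reduce both minimization problems to explicit quadratic expressions in $(\beta,m)$. Since the entropy term $E_{P_{0}}[\log p_{0,YX|W}(Y,X|W)]$ does not depend on $(\beta,m)$ and is finite by part 3 of Assumption \ref{as:dgp}, minimizing $E_{P_0}[KL(p_{0,YX|W},p_{\beta,m})]$ over any set is equivalent to minimizing the cross-entropy $-E_{P_0}[\log p_{\beta,m}(Y,X|W)]$. Writing $Z=(Y,X)'$, the log of the bivariate Gaussian density $p_{\beta,m}(\cdot|w)$ gives
\begin{align*}
-E_{P_0}[\log p_{\beta,m}(Y,X|W)] = \log(2\pi) + \tfrac{1}{2}\log\det V(\beta) + \tfrac{1}{2}E_{P_0}\!\left[(Z-m(W))'V^{-1}(\beta)(Z-m(W))\right].
\end{align*}
The central observation is that $m_{0}(W)=(E_{P_0}[Y|W],E_{P_0}[X|W])'$ implies $E_{P_0}[Z-m_{0}(W)\mid W]=0$, so decomposing $Z-m(W)=(Z-m_{0}(W))+(m_{0}(W)-m(W))$ the cross term vanishes in expectation, yielding
\begin{align*}
E_{P_0}\!\left[(Z-m(W))'V^{-1}(\beta)(Z-m(W))\right] = E_{P_0}\!\left[\operatorname{tr}\!\left(V^{-1}(\beta)\Sigma_{0}(W)\right)\right] + E_{P_0}\!\left[(m_{0}(W)-m(W))'V^{-1}(\beta)(m_{0}(W)-m(W))\right],
\end{align*}
where $\Sigma_{0}(W)=E_{P_0}[(Z-m_{0}(W))(Z-m_{0}(W))'\mid W]$.

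For part 1, fix $\beta\in\mathcal{B}$. Only the last term above depends on $m$, and since $\det V(\beta)=\sigma_{01}^{2}\sigma_{02}^{2}>0$ makes $V^{-1}(\beta)$ positive definite, this term is nonnegative and vanishes if and only if $m(W)=m_{0}(W)$ for $P_{0,W}$-almost every $W$. Hence $m_{0}$ is the unique minimizer in $L^{2}(\mathcal{W})\times L^{2}(\mathcal{W})$, which proves part 1.

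For part 2, substitute the profile minimizer $m=m_{0}$ into the cross-entropy and analyze the $\beta$-dependence of $\tfrac{1}{2}\log\det V(\beta)+\tfrac{1}{2}E_{P_0}[\operatorname{tr}(V^{-1}(\beta)\Sigma_{0}(W))]$. The determinant is $\sigma_{01}^{2}\sigma_{02}^{2}$, independent of $\beta$, so only the trace remains. Using the identities $E_{P_0}[U\mid X,W]=0$ and $\operatorname{Var}_{P_{0}}[U\mid X,W]=\sigma_{01}^{2}$ from Assumption \ref{as:dgp}, together with $Y-m_{01}(W)=\beta_{0}\varepsilon_{2}+U$ and $X-m_{02}(W)=\varepsilon_{2}$, a direct computation yields
\begin{align*}
\Sigma_{0}(W) = \begin{pmatrix}\beta_{0}^{2}\sigma_{22}(W)+\sigma_{01}^{2} & \beta_{0}\sigma_{22}(W)\\ \beta_{0}\sigma_{22}(W) & \sigma_{22}(W)\end{pmatrix},
\end{align*}
where $\sigma_{22}(W)=\operatorname{Var}_{P_0}[X\mid W]$. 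Multiplying by the explicit form of $V^{-1}(\beta)$ and taking the trace, the cross terms telescope and one obtains
\begin{align*}
\operatorname{tr}\!\left(V^{-1}(\beta)\Sigma_{0}(W)\right) = 1 + \frac{\sigma_{22}(W)}{\sigma_{01}^{2}}(\beta_{0}-\beta)^{2} + \frac{\sigma_{22}(W)}{\sigma_{02}^{2}}.
\end{align*}
Taking expectations and absorbing $\beta$-free constants, the objective becomes $\tfrac{E_{P_0}[\sigma_{22}(W)]}{2\sigma_{01}^{2}}(\beta_{0}-\beta)^{2}$. By part 2 of Assumption \ref{as:dgp}, $E_{P_0}[\sigma_{22}(W)]\geq \underline{c}>0$, so this quadratic is strictly convex with unique minimizer $\beta=\beta_{0}$. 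Combined with part 1, $(\beta_{0},m_{0})$ is the unique joint minimizer, proving part 2. The only nonroutine step is the trace calculation in the displayed cancellation, but this is a finite linear-algebraic identity requiring no asymptotic or empirical-process machinery.
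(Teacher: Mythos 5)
Your proof is correct and follows essentially the same route as the paper's: reduce the KL objective to a quadratic via the orthogonal decomposition $Z-m(W)=(Z-m_{0}(W))+(m_{0}(W)-m(W))$ with the cross term killed by $E_{P_{0}}[Z-m_{0}(W)\mid W]=0$, use positive-definiteness of $V^{-1}(\beta)$ for Part 1, and profile out $m$ before minimizing a strictly convex quadratic in $\beta$ for Part 2. The only cosmetic difference is in Part 2, where the paper factorizes the bivariate Gaussian into the conditional of $Y$ given $(X,W)$ times the marginal of $X$ given $W$ to arrive at $\tfrac{1}{2\sigma_{01}^{2}}(\beta-\beta_{0})^{2}E_{P_{0}}[(X-m_{02}(W))^{2}]$, whereas you reach the identical expression by computing $\operatorname{tr}(V^{-1}(\beta)\Sigma_{0}(W))$ explicitly; both computations are valid and both invoke Part 2 of Assumption \ref{as:dgp} for uniqueness in the same way.
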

The connection between Theorem \ref{thm:KL} and Theorem \ref{thm:BVM} is as follows. Part 1 shows that the true nuisance function $m_{0}$ maximizes the population log-likelihood function when $\beta$ is fixed, and, since the Gaussian distributions with unknown means and known variances form linear exponential families, it may be viewed as a nonparametric application of Theorem 1 of \cite{gourieroux1984pseudo} (and Theorem 5.4 of \cite{white1994estimation}).\footnote{Since $E_{P_{0}}[\log p_{\beta,m}(Y,X|W)] = -E_{P_{0}}[KL(p_{0,YX|W}(\cdot|W),p_{\beta,m}(\cdot|W))] + E_{P_{0}}[\log p_{0,YX|W}(Y,X|W)]$, the solutions to (\ref{eq:profile}) and (\ref{eq:full}) coincide with that obtained when maximimizing $E_{P_{0}}[\log p_{\beta,m}(Y,X|W)]$ with $m$ and $(\beta,m)$, respectively.} The result is important for Theorem \ref{thm:BVM} because the independence of the solution $m_{0}$ from $\beta$ reveals that there is no loss of information associated with not knowing the nuisance parameter $m_{0}$. This means the $(\beta,m)$-parametrization is an example of an \textit{adaptive semiparametric model} \citep{bickel1982adaptive}. Adaptive semiparametric models have the property that the ordinary score (i.e., the derivative of $\log p_{\beta,m}$ with respect to $\beta$ at $(\beta_{0},m_{0})$) coincides with the \textit{efficient score} (i.e., the ordinary score of \textit{least favorable parametric submodel} $\beta \mapsto p_{\beta,m_{0}}$ at $\beta_{0}$). Consequently, the asymptotic analysis of the semiparametric model is essentially that of a regular parametric model because ordinary local asymptotic normality (LAN) expansions along parametric submodels $\beta \mapsto p_{\beta,m}$ respect the semiparametric structure of the model. This makes the primary concern whether $(\beta_{0},m_{0})$ is identifiable from the sampling model (\ref{eq:samplingmodelY})--(\ref{eq:samplingmodelX}). Part 2 confirms identifiability because it establishes that the true parameter $(\beta_{0},m_{0})$ uniquely maximizes the population log-likelihood even though the sampling model may be misspecified, and, similar to Part 1, may be viewed as a nonparametric extension of Theorem 6 of \cite{gourieroux1984pseudo} since Gaussian distributions with unknown means and variances form quadratic exponential families. The sole purpose of Assumptions \ref{as:consistency} and \ref{as:emp_process} is to ensure that remainders of these ordinary LAN expansions (i.e., due to estimation of $m_{0}$) vanish appropriately as the sample size grows.
\begin{remark}[Efficiency]
The achievement of the \cite{chamberlain1992efficiency} efficiency bound under homoskedasticity despite (\ref{eq:samplingmodelY})--(\ref{eq:samplingmodelX}) being possibly misspecified reflects the fact that maximum likelihood estimation in the least favorable parametric submodel $\{p_{\beta,m_{0}}: \beta \in \mathcal{B}\}$ is that of a linear exponential family, and, as a result, the maximum likelihood estimator has the same asymptotic distribution as a weighted least squares estimator based on the conditional moment restrictions $E_{P_{0}}[Y|X,W] = m_{01}(W)+(X-m_{02}(W))\beta_{0}$ with weights $1/\sigma_{01}^{2}$ (see, for example, Theorem 4 of \cite{gourieroux1984pseudo} and Theorem 6.13 of \cite{white1994estimation}).    
\end{remark}
\begin{remark}[Connection to \cite{castillo2012semiparametric}]
Theorem \ref{thm:KL} establishes that the $(\beta,m)$-parametrization is an example of an adaptive semiparametric model, and, as a result, it falls within the class of semiparametric models \cite{castillo2012semiparametric} refers to as `the case without loss of information' (see Section 1.3 of his paper). Consequently, it is foreseeable that Theorem 1 of his paper could be verified to establish the Bernstein-von Mises theorem (at least in the correctly specified case). The structure of the Gaussian (quasi-)likelihood makes it convenient to apply direct arguments rather than employ his more abstract, yet more generally applicable, LAN conditions.
\end{remark}
\subsection{Comparison with the Original Parametrization}\label{sec:comparison}
I compare my results with the Bernstein-von Mises theory for the $(\beta,\eta)$-parametrization. Bayesian inference in the $(\beta,\eta)$-parametrization is typically based on the following model for the conditional distribution of $Y$ given $X$ and $W$ (for example, Section 7 of \cite{bickel2012semiparametric}):
\begin{align}\label{eq:betaeta}
    Y_{i}|\{(X_{i},W_{i}')'\}_{i=1}^{n},\beta,\eta \overset{ind}{\sim} \mathcal{N}(X_{i}\beta+\eta(W_{i}),\sigma_{01}^{2}), \ i=1,..., n, \quad \beta \sim \Pi_{\mathcal{B}}, \ \eta \sim \Pi_{\mathcal{H}}, \ \beta \independent \eta,
\end{align}
where $\beta \in \mathcal{B} \subseteq \mathbb{R}$, $\eta \in \mathcal{H} \subseteq L^{2}(\mathcal{W})$, $\Pi_{\mathcal{B}}$ is a probability distribution over $\mathcal{B}$, $\Pi_{\mathcal{H}}$ is a probability distribution over $\mathcal{H}$, and $\sigma_{01}^{2} \in (0,\infty)$ is a known constant. Since the parameters $(\beta_{0},\eta_{0})$ are identifiable from the conditional distribution of $Y$ given $X$ and $W$, there is no need to include a model for the conditional distribution of $X$ given $W$.

The key distinction between the sampling model (\ref{eq:betaeta}) and the sampling model (\ref{eq:samplingmodelY})--(\ref{eq:samplingmodelX}) is that the former is not adaptive. Indeed, for $\beta \in \mathcal{B}$ fixed, the maximizer of the population log-likelihood function with respect to $\eta$ is $\eta_{\beta} = \eta_{0} - (\beta-\beta_{0})m_{02}$, which means there is loss of information associated with not knowing the nuisance parameter $\eta$ (unless $m_{02} = 0$). This makes finding priors/data generating processes that satisfy the Bernstein-von Mises theorem more challenging because the prior $\Pi_{\mathcal{H}}$ must be chosen so it is suitable for both estimation of $\eta_{0}$ \textit{and} $m_{02}$. Proposition \ref{prop:materninformationloss} demonstrates this for the case where $\Pi_{\mathcal{H}}$ is the law of a centered Mat\'{e}rn Gaussian process.\footnote{The proposition also imposes the same restrictions on the data-generating process as Proposition \ref{prop:materngp} to ensure fair comparison.} A discussion follows.
\begin{proposition}\label{prop:materninformationloss}
Suppose that $Y|X,W \sim \mathcal{N}(X\beta_{0}+\eta_{0}(W),\sigma_{01}^{2})$ and $X|W \sim \mathcal{N}(m_{02}(W),\sigma_{02}^{2})$, where $\beta_{0} \in (-B,B)$, $\eta_{0} \in C^{\alpha_{0,\eta}}([0,1]^{d_{w}}) \cap H^{\alpha_{0,\eta}}([0,1]^{d_{w}})$, $\alpha_{0,\eta} > 0$, $m_{02} \in C^{\alpha_{0,2}}([0,1]^{d_{w}}) \cap H^{\alpha_{0,2}}([0,1]^{d_{w}})$, $\alpha_{0,2}>0$, and $\sigma_{01}^{2},\sigma_{02}^{2} \in (0,\infty)$ are known. Suppose further that Bayesian inference is based on (\ref{eq:betaeta}) with $\Pi_{\mathcal{B}}$ equal to the uniform distribution over $\mathcal{B} =[-B,B]$ and $\Pi_{\mathcal{H}}$ equal to the probability law of a centered Mat\'{e}rn Gaussian process with regularity parameter $\alpha_{\eta} > 0$. If $\alpha_{\eta},\alpha_{0,\eta}$, and $\alpha_{0,2}$ satisfy the following inequalities
\begin{align}\label{eq:betaetaineq}
\alpha_{\eta} > \frac{d_{w}}{2}, \quad \alpha_{0,\eta} > \frac{\alpha_{\eta}}{2} + \frac{d_{w}}{4}, \quad \alpha_{0,2} > \frac{\alpha_{\eta}}{2} + \frac{d_{w}}{4},    
\end{align}
then the marginal posterior $\Pi\left(\beta \in \cdot |\{(Y_{i},X_{i},W_{i}')'\}_{i=1}^{n}\right)$ satisfies
\begin{align*}
  \left | \left |\Pi\left(\beta \in \cdot |\{(Y_{i},X_{i},w_{i}')'\}_{i=1}^{n}\right) - \mathcal{N}\left(\beta_{0}+\frac{\tilde{\Delta}_{n,0}}{\sqrt{n}},\frac{1}{n}\tilde{I}_{n}(m_{0})^{-1} \right) \right | \right |_{TV} \overset{P_{0,YX|W}^{(n)}}{\longrightarrow} 0
\end{align*}
for $P_{0,W}^{\infty}$-almost every fixed realization $\{w_{i}\}_{i \geq 1}$ of $\{W_{i}\}_{i \geq 1}$ as $n\rightarrow \infty$.
\end{proposition}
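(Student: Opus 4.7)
The plan is to adapt the proof strategy of Proposition \ref{prop:materngp} to the $(\beta,\eta)$-parametrization, taking care of the information loss that is absent in the $(\beta,m)$-parametrization. Since (\ref{eq:betaeta}) is not adaptive, the efficient score for $\beta$ is the ordinary score projected off the tangent space for $\eta$, and the least favorable direction in $\eta$ is exactly the conditional expectation $m_{02}$. The first step is therefore the reparametrization $\eta = \eta^{*} - (\beta-\beta_{0})m_{02}$, which in the $(\beta,\eta^{*})$-coordinates renders the quadratic form in the log-likelihood expansion at $(\beta_{0},\eta_{0})$ diagonal, producing the efficient score $\tilde{\ell}_{n}(\beta_{0},m_{0})/\sqrt{n}$ together with information $\tilde{I}_{n}(m_{0})$. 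The remainder terms of the resulting LAN expansion reduce to multiplier empirical processes in $\eta^{*}-\eta_{0}$ against $\varepsilon_{1}$ of exactly the form of $G_{n}^{(1)}$ in Assumption \ref{as:emp_process}.

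With this set-up, the three ingredients to verify are: (i) posterior contraction of $\eta$ around $\eta_{0}$ in the empirical $L^{2}$-norm at rate $\epsilon_{n}=o(n^{-1/4})$; (ii) multiplier empirical process bounds along a sieve containing most of the Mat\'{e}rn posterior mass; and (iii) a prior-invariance / change-of-measure condition that handles the shift $(\beta-\beta_{0})m_{02}$ in the conditional prior of $\eta^{*}$ given $\beta$. For (i), the van der Vaart--van Zanten contraction rate for a Mat\'{e}rn process with regularity $\alpha_{\eta}$ targeting $\eta_{0} \in C^{\alpha_{0,\eta}}([0,1]^{d_{w}}) \cap H^{\alpha_{0,\eta}}([0,1]^{d_{w}})$ is $\epsilon_{n}\asymp n^{-(\alpha_{0,\eta}\wedge\alpha_{\eta})/(2\alpha_{\eta}+d_{w})}$, and $\alpha_{\eta}>d_{w}/2$ together with $\alpha_{0,\eta}>\alpha_{\eta}/2+d_{w}/4$ is exactly what forces this rate below $n^{-1/4}$. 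For (ii), the sieve-and-entropy-integral argument used in Proposition \ref{prop:materngp} transfers directly, since Mat\'{e}rn sample paths still lie in a H\"{o}lder ball with well-understood metric entropy.

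The main obstacle is (iii). After reparametrization, the conditional law of $\eta^{*}$ given $\beta$ is the Mat\'{e}rn prior shifted by $(\beta-\beta_{0})m_{02}$, and one needs this shifted Gaussian measure to agree with the unshifted measure uniformly over $\beta$ in an $n^{-1/2}$-neighborhood of $\beta_{0}$. The Cameron--Martin theorem provides absolute continuity precisely when $m_{02}$ lies in the RKHS of the Mat\'{e}rn process, which (up to norm equivalence) is $H^{\alpha_{\eta}+d_{w}/2}$; however, the hypothesis $\alpha_{0,2}>\alpha_{\eta}/2+d_{w}/4$ is strictly weaker than $\alpha_{0,2}\geq\alpha_{\eta}+d_{w}/2$, so direct Cameron--Martin does not apply. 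My plan is to approximate $m_{02}$ by a projection $\tilde{m}_{02,n}$ onto a finite-dimensional subspace of the RKHS (e.g., via truncated Karhunen--Lo\`{e}ve or a wavelet sieve) and to track two error channels: (a) the Cameron--Martin change of measure along $\tilde{m}_{02,n}$, whose log-density can be controlled via its RKHS norm, and (b) the bias from using $\tilde{m}_{02,n}$ in place of $m_{02}$, which enters the LAN expansion through a multiplier empirical process of $(m_{02}-\tilde{m}_{02,n})$ against $\varepsilon_{1}$. The condition $\alpha_{0,2}>\alpha_{\eta}/2+d_{w}/4$ is exactly the threshold at which $\|m_{02}-\tilde{m}_{02,n}\|_{n,2}$ can be made smaller than $n^{-1/4}$, which in turn makes the bias in (b) $o_{P}(n^{-1/2})$.

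Once (i)--(iii) are in place, I would assemble them into the $(\beta,\eta)$-analogue of Theorem \ref{thm:BVM} by following the LAN/posterior-contraction argument used in that proof, or, equivalently, by invoking Theorem 1 of \cite{castillo2012semiparametricBVM} for separated semiparametric models with information loss, with the verification of the Gaussian prior invariance condition provided by the projection-plus-Cameron--Martin argument just described.
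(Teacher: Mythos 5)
Your proposal is correct and takes essentially the same route as the paper's proof: the paper also reparametrizes along the least favorable direction via $\tilde{\eta}_{n}(\beta,\eta)=\eta+(\beta-\beta_{0})m_{n,2}$, where $m_{n,2}$ is precisely the RKHS approximant you describe (with $||m_{n,2}-m_{02}||_{\infty}\leq \rho_{n}$ and $||m_{n,2}||_{\mathbb{H}_{\eta}}\leq 2\sqrt{n}\rho_{n}$, $\rho_{n}=n^{-\min\{\alpha_{\eta},\alpha_{0,2}\}/(2\alpha_{\eta}+d_{w})}$), applies Cameron--Martin to that approximant, controls the approximation bias in the LAN remainder exactly where you predict the condition $\alpha_{0,2}>\alpha_{\eta}/2+d_{w}/4$ is needed, and assembles the result via Theorem 12.9 of \cite{ghosal2017fundamentals} (the extension of \cite{castillo2012semiparametric} you cite as the alternative).
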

I compare Proposition \ref{prop:materngp} and \ref{prop:materninformationloss} to elaborate on the aforementioned challenges associated with Bayesian inference for the $(\beta,\eta)$-parametrization. Since both propositions assume correct specification and are based on the same class of Gaussian process priors, the inequalities for the regularity parameters is the natural point of comparison. The first two inequalities of (\ref{eq:betaetaineq}) are comparable with the inequalities (\ref{eq:betamineq}) in Proposition \ref{prop:materngp} in that they require the draws from the prior to not be too smooth relative to the function \textit{they are targeting} (i.e., draws from $\Pi_{\mathcal{H}},\Pi_{\mathcal{M}_{1}}$, and $\Pi_{\mathcal{M}_{2}}$ cannot be too smooth relative to $\eta_{0}$, $m_{01}$, and $m_{02}$ respectively). However, as mentioned earlier, Proposition \ref{prop:materninformationloss} also restricts the regularity of the draws from the prior $\Pi_{\mathcal{H}}$ \textit{relative} to $m_{02}$ because the third part of (\ref{eq:betaetaineq}) states that $\eta\sim \Pi_{\mathcal{H}}$ cannot be too smooth relative to $m_{02}$. This relates to the nonadaptivity of the $(\beta,\eta)$-parametrization. Indeed, verifying the appropriate LAN expansions requires a change of parametrization $(\beta,\eta)\mapsto (\beta_{0},\tilde{\eta}_{n}(\beta,\eta))$, where $\tilde{\eta}_{n}(\beta,\eta) = \eta + (\beta-\beta_{0})m_{n,2}$ for some sequence $\{m_{n,2}\}_{n \geq 1}$ in the reproducing kernel Hilbert space $(\mathbb{H}_{\eta},||\cdot||_{\mathbb{H}_{\eta}})$ of the Gaussian process $\eta$ for which $||m_{n,2}||_{\mathbb{H}_{\eta}} \leq 2\sqrt{n}\rho_{n}$ and $||m_{n,2}-m_{02}||_{\infty} \leq \rho_{n}$ for $\rho_{n} = n^{-\min\{\alpha_{\eta},\alpha_{0,2}\}/(2\alpha_{\eta}+d_{w})}$. Control of the LAN remainder requires $\rho_{n} = o(n^{-1/4})$, a condition that holds if and only if $\alpha_{\eta} > d_{w}/2$ and $\alpha_{0,2} > \alpha_{\eta}/2 + d_{w}/4$ (i.e., the first and third parts of (\ref{eq:betaetaineq})). This indicates that the Bernstein-von Mises theorem may fail for the $(\beta,\eta)$-parametrization in situations where $m_{02}$ is not smooth relative to $\eta \sim \Pi_{\mathcal{H}}$. In contrast, Remark \ref{remark:maternreg} points out that there are no cross-restrictions between the regularity of the nuisance functions and the draws from the prior $\Pi_{\mathcal{M}}$ in Proposition \ref{prop:materngp} (i.e., $m_{02}$ could potentially be much less smooth than $m_{1} \sim \Pi_{\mathcal{M}_{1}}$ without violating Proposition \ref{prop:materngp}). Put differently, the $(\beta,m)$-parametrization enables the researcher to direct nuisance priors towards the specific functions they are estimating without concern that this will adversely affect estimation the regression coefficient $\beta_{0}$.

A more subtle point of comparison relates to the requirement that the sequence $\{m_{n,2}\}_{n \geq 1}$ be in the reproducing kernel Hilbert space $(\mathbb{H}_{\eta},||\cdot||_{\mathbb{H}_{\eta}})$ of the Gaussian process $\eta$. In applying the change of parametrization $(\beta,m)\mapsto (\beta_{0},\tilde{\eta}_{n}(\beta,\eta))$, it must also be shown that the prior $\Pi_{\mathcal{H}}$ is roughly unchanged under the mapping $\eta \mapsto \tilde{\eta}_{n}(\beta,\eta)$ for $\beta$ fixed. Since, for $\beta$ fixed, $\tilde{\eta}_{n}(\beta,\eta)$ concerns a shift of a Gaussian process, it is necessary and sufficient that $\{m_{n,2}\}_{ n\geq 1}$ be in $(\mathbb{H}_{\eta},||\cdot||_{\mathbb{H}_{\eta}})$ so that the Cameron-Martin theorem (Proposition I.20 of \cite{ghosal2017fundamentals}) can be applied to establish the required stability of $\Pi_{\mathcal{H}}$ via an infinite-dimensional change of measure. Since there is no need to change variables in the $(\beta,m)$-parametrization (i.e., ordinary LAN expansions respect the semiparametric structure), this prior stability condition does not arise in my proposed framework. This is important because it can be technically difficult to check this condition for non-Gaussian priors supported on infinite-dimensional spaces (i.e., Cameron-Martin is a tool specific to Gaussian priors), whereas Propositions \ref{prop:suffgeneral} and \ref{prop:suffgeneralsieve} indicate that the key assumptions of Theorem \ref{thm:BVM} can be verified using standard small ball probability conditions for the prior and maximal inequalities for multiplier empirical processes.\footnote{Readers should not be concerned about whether the sets $\{\mathcal{M}_{n}\}_{n \geq 1}$ in Proposition \ref{prop:suffgeneralsieve} pose additional challenges for the $(\beta,m)$-parametrization relative to the $(\beta,\eta)$-parametrization. The proof of Proposition \ref{prop:materninformationloss} illustrates that exact analogs of the sets $\{\mathcal{M}_{n}\}_{n \geq 1}$ constructed in the proof of Proposition \ref{prop:materngp} are utilized to verify corresponding asymptotic equicontinuity conditions in the LAN expansions for the $(\beta,\eta)$-parametrization (specifically, see $\{\tilde{\mathcal{H}}_{n}\}_{n \geq 1}$ in Step 1 of the proof of Proposition \ref{prop:materninformationloss}).}  This allows me to check Theorem \ref{thm:BVM} for other priors (e.g., untruncated uniform wavelet series prior) using similar technical devices to the Gaussian case. The points raised in this paragraph and the previous paragraph illustrate that there may be benefits from the $(\beta,m)$-parametrization.
\section{Conclusion}\label{sec:conclusion}
This paper proves a Bernstein-von Mises theorem for the partially linear model. The theorem is based on a feasible adaptive parametrization of the model that is based on \cite{robinson1988root} transformation. An important feature of the result is that it avoids the prior invariance condition. The examples in Sections \ref{sec:waveletex} and \ref{sec:matern} indicate that this is beneficial because they show that my approach does not impose cross-restrictions on the regularity of unknown functions. In contrast, the original parametrization with independent priors requires that the prior for the nuisance function $\eta$ be simultaneously appropriate for estimating $\eta_{0}$ \textit{and} $m_{02}$, a condition that leads to restrictions on smoothness of the conditional expectation $m_{02}$ relative to the prior draws $\eta \sim \Pi_{\mathcal{M}}$. Moreover, the avoidance of a change of variables in the $(\beta,m)$-parametrization means that my approach can bypass an infinite-dimensional change of measure, a technical condition that may be hard to verify for complicated priors.

There are a number of extensions that would be interesting to pursue. First, it would be interesting to consider rate-adaptive priors for $m_{1}$ and $m_{2}$. I conjecture Theorem \ref{thm:BVM} could be verified for rate adaptive priors (at least in the case of correct specification). The Mat\'{e}rn prior example in Section \ref{sec:matern} follows a similar argument as Theorem 3.1 of \cite{dejonge2013semiparametric}, and, as a result, it is reasonable that the argument of Theorem 4.1 in their paper could also be modified to verify Theorem \ref{thm:BVM} in the case of hierarchical spline priors, an example of a prior that can yield adaptive, rate-optimal posterior rates of contraction \citep{dejonge2012adaptive}. However, the extent to which adaptive priors can be accommodated should be formally investigated. Second, it would be interesting to consider the case where $\eta(W) = W'\eta$ but the dimension of $W$ is large relative to the sample size $n$. In this case, the sampling model (\ref{eq:samplingmodelY})--(\ref{eq:samplingmodelX}) coincides with the setup of \cite{hahn2018regularization} and their simulations indicate favorable performance when using shrinkage priors for the nuisance parameters. \cite{hahn2018regularization} do not provide any formal asymptotic analysis, and, for this reason, it would be interesting to examine whether an asymptotic normality result could be established in this setting. Finally, the general idea in this paper is to use knowledge of the profile likelihood \citep{severini1992profile,murphy2000profile} to derive an adaptive parametrization of the partially linear model. An important extension is determining a more general class of models for which this technique could be applied.
\bibliographystyle{abbrvnat}
\bibliography{PartialLinear} 

\begin{thebibliography}{65}
\providecommand{\natexlab}[1]{#1}
\providecommand{\url}[1]{\texttt{#1}}
\expandafter\ifx\csname urlstyle\endcsname\relax
  \providecommand{\doi}[1]{doi: #1}\else
  \providecommand{\doi}{doi: \begingroup \urlstyle{rm}\Url}\fi

\bibitem[Ackerberg et~al.(2015)Ackerberg, Caves, and
  Frazer]{ackerberg2015identification}
D.~A. Ackerberg, K.~Caves, and G.~Frazer.
\newblock Identification properties of recent production function estimators.
\newblock \emph{Econometrica}, 83\penalty0 (6):\penalty0 2411--2451, 2015.

\bibitem[Ahn and Powell(1993)]{ahn1993semiparametric}
H.~Ahn and J.~L. Powell.
\newblock Semiparametric estimation of censored selection models with a
  nonparametric selection mechanism.
\newblock \emph{Journal of Econometrics}, 58\penalty0 (1-2):\penalty0 3--29,
  1993.

\bibitem[Andrews(1994)]{andrews1994asymptotics}
D.~W.~K. Andrews.
\newblock Asymptotics for semiparametric econometric models via stochastic
  equicontinuity.
\newblock \emph{Econometrica: Journal of the Econometric Society}, pages
  43--72, 1994.

\bibitem[Bickel(1982)]{bickel1982adaptive}
P.~J. Bickel.
\newblock {On Adaptive Estimation}.
\newblock \emph{The Annals of Statistics}, 10\penalty0 (3):\penalty0 647 --
  671, 1982.
\newblock \doi{10.1214/aos/1176345863}.
\newblock URL \url{https://doi.org/10.1214/aos/1176345863}.

\bibitem[Bickel and Kleijn(2012)]{bickel2012semiparametric}
P.~J. Bickel and B.~J.~K. Kleijn.
\newblock {The semiparametric Bernstein–von Mises theorem}.
\newblock \emph{The Annals of Statistics}, 40\penalty0 (1):\penalty0 206 --
  237, 2012.
\newblock \doi{10.1214/11-AOS921}.
\newblock URL \url{https://doi.org/10.1214/11-AOS921}.

\bibitem[Breunig et~al.(2022)Breunig, Liu, and Yu]{breunig2022double}
C.~Breunig, R.~Liu, and Z.~Yu.
\newblock Double robust bayesian inference on average treatment effects.
\newblock \emph{arXiv preprint arXiv:2211.16298}, 2022.

\bibitem[Castillo(2008)]{castillo2008lower}
I.~Castillo.
\newblock {Lower bounds for posterior rates with Gaussian process priors}.
\newblock \emph{Electronic Journal of Statistics}, 2\penalty0 (none):\penalty0
  1281 -- 1299, 2008.
\newblock \doi{10.1214/08-EJS273}.
\newblock URL \url{https://doi.org/10.1214/08-EJS273}.

\bibitem[Castillo(2012{\natexlab{a}})]{castillo2012semiparametric}
I.~Castillo.
\newblock A semiparametric bernstein--von mises theorem for gaussian process
  priors.
\newblock \emph{Probability Theory and Related Fields}, 152:\penalty0 53--99,
  2012{\natexlab{a}}.

\bibitem[Castillo(2012{\natexlab{b}})]{castillo2012semiparametricBVM}
I.~Castillo.
\newblock Semiparametric bernstein--von mises theorem and bias, illustrated
  with gaussian process priors.
\newblock \emph{Sankhya A}, 74:\penalty0 194--221, 2012{\natexlab{b}}.

\bibitem[Castillo(2014)]{castillo2014supremum}
I.~Castillo.
\newblock {On Bayesian supremum norm contraction rates}.
\newblock \emph{The Annals of Statistics}, 42\penalty0 (5):\penalty0 2058 --
  2091, 2014.
\newblock \doi{10.1214/14-AOS1253}.
\newblock URL \url{https://doi.org/10.1214/14-AOS1253}.

\bibitem[Castillo and Nickl(2013)]{10.1214/13-AOS1133}
I.~Castillo and R.~Nickl.
\newblock {Nonparametric Bernstein–von Mises theorems in Gaussian white
  noise}.
\newblock \emph{The Annals of Statistics}, 41\penalty0 (4):\penalty0 1999 --
  2028, 2013.
\newblock \doi{10.1214/13-AOS1133}.
\newblock URL \url{https://doi.org/10.1214/13-AOS1133}.

\bibitem[Castillo and Rousseau(2015)]{castillo2015bernstein}
I.~Castillo and J.~Rousseau.
\newblock {A Bernstein–von Mises theorem for smooth functionals in
  semiparametric models}.
\newblock \emph{The Annals of Statistics}, 43\penalty0 (6):\penalty0 2353 --
  2383, 2015.
\newblock \doi{10.1214/15-AOS1336}.
\newblock URL \url{https://doi.org/10.1214/15-AOS1336}.

\bibitem[Chae et~al.(2019)Chae, Kim, and Kleijn]{chae2019semi}
M.~Chae, Y.~Kim, and B.~J. Kleijn.
\newblock The semi-parametric bernstein-von mises theorem for regression models
  with symmetric errors.
\newblock \emph{Statistica Sinica}, 29\penalty0 (3):\penalty0 1465--1487, 2019.

\bibitem[Chamberlain(1992)]{chamberlain1992efficiency}
G.~Chamberlain.
\newblock Efficiency bounds for semiparametric regression.
\newblock \emph{Econometrica: Journal of the Econometric Society}, pages
  567--596, 1992.

\bibitem[Chernozhukov et~al.(2018)Chernozhukov, Chetverikov, Demirer, Duflo,
  Hansen, Newey, and Robins]{chernozhukov2018double}
V.~Chernozhukov, D.~Chetverikov, M.~Demirer, E.~Duflo, C.~Hansen, W.~Newey, and
  J.~Robins.
\newblock Double/debiased machine learning for treatment and structural
  parameters.
\newblock \emph{Econometrics Journal}, 2018.

\bibitem[Cohen et~al.(1993)Cohen, Daubechies, and Vial]{cohen1993wavelets}
A.~Cohen, I.~Daubechies, and P.~Vial.
\newblock Wavelets on the interval and fast wavelet transforms.
\newblock \emph{Applied and computational harmonic analysis}, 1993.

\bibitem[Cox and Reid(1987)]{cox1973parameter}
D.~R. Cox and N.~Reid.
\newblock Parameter orthogonality and approximate conditional inference.
\newblock \emph{Journal of the Royal Statistical Society: Series B
  (Methodological)}, 49\penalty0 (1):\penalty0 1--18, 1987.
\newblock \doi{https://doi.org/10.1111/j.2517-6161.1987.tb01422.x}.
\newblock URL
  \url{https://rss.onlinelibrary.wiley.com/doi/abs/10.1111/j.2517-6161.1987.tb01422.x}.

\bibitem[Das et~al.(2003)Das, Newey, and Vella]{das2003nonparametric}
M.~Das, W.~K. Newey, and F.~Vella.
\newblock Nonparametric estimation of sample selection models.
\newblock \emph{The Review of Economic Studies}, 70\penalty0 (1):\penalty0
  33--58, 2003.

\bibitem[de~Jonge and van Zanten(2013)]{dejonge2013semiparametric}
R.~de~Jonge and H.~van Zanten.
\newblock {Semiparametric Bernstein–von Mises for the error standard
  deviation}.
\newblock \emph{Electronic Journal of Statistics}, 7\penalty0 (none):\penalty0
  217 -- 243, 2013.
\newblock \doi{10.1214/13-EJS768}.
\newblock URL \url{https://doi.org/10.1214/13-EJS768}.

\bibitem[de~Jonge and van Zanten(2012)]{dejonge2012adaptive}
R.~de~Jonge and J.~van Zanten.
\newblock {Adaptive estimation of multivariate functions using conditionally
  Gaussian tensor-product spline priors}.
\newblock \emph{Electronic Journal of Statistics}, 6\penalty0 (none):\penalty0
  1984 -- 2001, 2012.
\newblock \doi{10.1214/12-EJS735}.
\newblock URL \url{https://doi.org/10.1214/12-EJS735}.

\bibitem[Donald and Newey(1994)]{DONALD199430}
S.~Donald and W.~Newey.
\newblock Series estimation of semilinear models.
\newblock \emph{Journal of Multivariate Analysis}, 50\penalty0 (1):\penalty0
  30--40, 1994.
\newblock ISSN 0047-259X.
\newblock \doi{https://doi.org/10.1006/jmva.1994.1032}.
\newblock URL
  \url{https://www.sciencedirect.com/science/article/pii/S0047259X84710323}.

\bibitem[Engle et~al.(1986)Engle, Granger, Rice, and
  Weiss]{engle1986semiparametric}
R.~F. Engle, C.~W. Granger, J.~Rice, and A.~Weiss.
\newblock Semiparametric estimates of the relation between weather and
  electricity sales.
\newblock \emph{Journal of the American statistical Association}, 81\penalty0
  (394):\penalty0 310--320, 1986.

\bibitem[Ghosal and van~der Vaart(2007)]{ghosal2007convergence}
S.~Ghosal and A.~van~der Vaart.
\newblock {Convergence rates of posterior distributions for noniid
  observations}.
\newblock \emph{The Annals of Statistics}, 35\penalty0 (1):\penalty0 192 --
  223, 2007.
\newblock \doi{10.1214/009053606000001172}.
\newblock URL \url{https://doi.org/10.1214/009053606000001172}.

\bibitem[Ghosal and Van~der Vaart(2017)]{ghosal2017fundamentals}
S.~Ghosal and A.~Van~der Vaart.
\newblock \emph{Fundamentals of nonparametric Bayesian inference}, volume~44.
\newblock Cambridge University Press, 2017.

\bibitem[Ghosal et~al.(2000)Ghosal, Ghosh, and Van
  Der~Vaart]{ghosal2000convergence}
S.~Ghosal, J.~K. Ghosh, and A.~W. Van Der~Vaart.
\newblock Convergence rates of posterior distributions.
\newblock \emph{Annals of Statistics}, pages 500--531, 2000.

\bibitem[Gin{\'e} and Nickl(2011)]{gine2011rates}
E.~Gin{\'e} and R.~Nickl.
\newblock {Rates of contraction for posterior distributions in $L^{r}$-metrics,
  $1 \leq r \leq \infty$}.
\newblock \emph{The Annals of Statistics}, 39\penalty0 (6):\penalty0 2883 --
  2911, 2011.
\newblock \doi{10.1214/11-AOS924}.
\newblock URL \url{https://doi.org/10.1214/11-AOS924}.

\bibitem[Giné and Nickl(2015)]{Gine_Nickl_2015}
E.~Giné and R.~Nickl.
\newblock \emph{Mathematical Foundations of Infinite-Dimensional Statistical
  Models}.
\newblock Cambridge Series in Statistical and Probabilistic Mathematics.
  Cambridge University Press, 2015.

\bibitem[Gourieroux et~al.(1984)Gourieroux, Monfort, and
  Trognon]{gourieroux1984pseudo}
C.~Gourieroux, A.~Monfort, and A.~Trognon.
\newblock Pseudo maximum likelihood methods: Theory.
\newblock \emph{Econometrica}, 52\penalty0 (3):\penalty0 681--700, 1984.
\newblock ISSN 00129682, 14680262.
\newblock URL \url{http://www.jstor.org/stable/1913471}.

\bibitem[Hahn et~al.(2018)Hahn, Carvalho, Puelz, and
  He]{hahn2018regularization}
P.~R. Hahn, C.~M. Carvalho, D.~Puelz, and J.~He.
\newblock {Regularization and Confounding in Linear Regression for Treatment
  Effect Estimation}.
\newblock \emph{Bayesian Analysis}, 13\penalty0 (1):\penalty0 163 -- 182, 2018.
\newblock \doi{10.1214/16-BA1044}.
\newblock URL \url{https://doi.org/10.1214/16-BA1044}.

\bibitem[Kim(2006)]{kim2006bernstein}
Y.~Kim.
\newblock {The Bernstein–von Mises theorem for the proportional hazard
  model}.
\newblock \emph{The Annals of Statistics}, 34\penalty0 (4):\penalty0 1678 --
  1700, 2006.
\newblock \doi{10.1214/009053606000000533}.
\newblock URL \url{https://doi.org/10.1214/009053606000000533}.

\bibitem[Kleijn and van~der Vaart(2006)]{kleijn2006misspecification}
B.~J.~K. Kleijn and A.~W. van~der Vaart.
\newblock {Misspecification in infinite-dimensional Bayesian statistics}.
\newblock \emph{The Annals of Statistics}, 34\penalty0 (2):\penalty0 837 --
  877, 2006.
\newblock \doi{10.1214/009053606000000029}.
\newblock URL \url{https://doi.org/10.1214/009053606000000029}.

\bibitem[Komunjer(2005)]{KOMUNJER2005137}
I.~Komunjer.
\newblock Quasi-maximum likelihood estimation for conditional quantiles.
\newblock \emph{Journal of Econometrics}, 128\penalty0 (1):\penalty0 137--164,
  2005.
\newblock ISSN 0304-4076.
\newblock \doi{https://doi.org/10.1016/j.jeconom.2004.08.010}.
\newblock URL
  \url{https://www.sciencedirect.com/science/article/pii/S0304407604001526}.

\bibitem[Levinsohn and Petrin(2003)]{levinsohn2003estimating}
J.~Levinsohn and A.~Petrin.
\newblock Estimating production functions using inputs to control for
  unobservables.
\newblock \emph{The review of economic studies}, 70\penalty0 (2):\penalty0
  317--341, 2003.

\bibitem[L'Huillier et~al.(2023)L'Huillier, Travis, Castillo, and
  Ray]{l2023semiparametric}
A.~L'Huillier, L.~Travis, I.~Castillo, and K.~Ray.
\newblock Semiparametric inference using fractional posteriors.
\newblock \emph{Journal of Machine Learning Research}, 24\penalty0
  (389):\penalty0 1--61, 2023.

\bibitem[Li and Linde(1999)]{li1999approximation}
W.~V. Li and W.~Linde.
\newblock {Approximation, Metric Entropy and Small Ball Estimates for Gaussian
  Measures}.
\newblock \emph{The Annals of Probability}, 27\penalty0 (3):\penalty0 1556 --
  1578, 1999.
\newblock \doi{10.1214/aop/1022677459}.
\newblock URL \url{https://doi.org/10.1214/aop/1022677459}.

\bibitem[Monard et~al.(2021)Monard, Nickl, and
  Paternain]{monard2021statistical}
F.~Monard, R.~Nickl, and G.~P. Paternain.
\newblock {Statistical guarantees for Bayesian uncertainty quantification in
  nonlinear inverse problems with Gaussian process priors}.
\newblock \emph{The Annals of Statistics}, 49\penalty0 (6):\penalty0 3255 --
  3298, 2021.
\newblock \doi{10.1214/21-AOS2082}.
\newblock URL \url{https://doi.org/10.1214/21-AOS2082}.

\bibitem[Murphy and Van~der Vaart(2000)]{murphy2000profile}
S.~A. Murphy and A.~W. Van~der Vaart.
\newblock On profile likelihood.
\newblock \emph{Journal of the American Statistical Association}, 95\penalty0
  (450):\penalty0 449--465, 2000.

\bibitem[Newey(1990)]{newey1990semiparametric}
W.~K. Newey.
\newblock Semiparametric efficiency bounds.
\newblock \emph{Journal of Applied Econometrics}, 5\penalty0 (2):\penalty0
  99--135, 1990.
\newblock \doi{https://doi.org/10.1002/jae.3950050202}.
\newblock URL
  \url{https://onlinelibrary.wiley.com/doi/abs/10.1002/jae.3950050202}.

\bibitem[Newey(1994)]{newey1994asymptotic}
W.~K. Newey.
\newblock The asymptotic variance of semiparametric estimators.
\newblock \emph{Econometrica}, 62\penalty0 (6):\penalty0 1349--1382, 1994.
\newblock ISSN 00129682, 14680262.
\newblock URL \url{http://www.jstor.org/stable/2951752}.

\bibitem[Nickl(2020)]{nickl2020bernstein}
R.~Nickl.
\newblock Bernstein--von mises theorems for statistical inverse problems i:
  Schr{\"o}dinger equation.
\newblock \emph{Journal of the European Mathematical Society}, 22\penalty0
  (8):\penalty0 2697--2750, 2020.

\bibitem[Nickl and S{\"o}hl(2019)]{nickl2019bernstein}
R.~Nickl and J.~S{\"o}hl.
\newblock {Bernstein–von Mises theorems for statistical inverse problems II:
  compound Poisson processes}.
\newblock \emph{Electronic Journal of Statistics}, 13\penalty0 (2):\penalty0
  3513 -- 3571, 2019.
\newblock \doi{10.1214/19-EJS1609}.
\newblock URL \url{https://doi.org/10.1214/19-EJS1609}.

\bibitem[Norets(2015)]{norets2015bayesian}
A.~Norets.
\newblock Bayesian regression with nonparametric heteroskedasticity.
\newblock \emph{Journal of econometrics}, 185\penalty0 (2):\penalty0 409--419,
  2015.

\bibitem[Olley and Pakes(1996)]{olley1996dynamics}
G.~S. Olley and A.~Pakes.
\newblock The dynamics of productivity in the telecommunications equipment
  industry.
\newblock \emph{Econometrica}, 64\penalty0 (6):\penalty0 1263--1297, 1996.
\newblock ISSN 00129682, 14680262.
\newblock URL \url{http://www.jstor.org/stable/2171831}.

\bibitem[Rasmussen and Williams(2005)]{williams2006gaussian}
C.~E. Rasmussen and C.~K.~I. Williams.
\newblock \emph{{Gaussian Processes for Machine Learning}}.
\newblock The MIT Press, 11 2005.
\newblock ISBN 9780262256834.
\newblock \doi{10.7551/mitpress/3206.001.0001}.
\newblock URL \url{https://doi.org/10.7551/mitpress/3206.001.0001}.

\bibitem[Ray and Szab{\'o}(2019)]{ray2019debiased}
K.~Ray and B.~Szab{\'o}.
\newblock Debiased bayesian inference for average treatment effects.
\newblock \emph{Advances in Neural Information Processing Systems}, 32, 2019.

\bibitem[Ray and van~der Vaart(2020)]{ray2020semiparametric}
K.~Ray and A.~van~der Vaart.
\newblock {Semiparametric Bayesian causal inference}.
\newblock \emph{The Annals of Statistics}, 48\penalty0 (5):\penalty0 2999 --
  3020, 2020.
\newblock \doi{10.1214/19-AOS1919}.
\newblock URL \url{https://doi.org/10.1214/19-AOS1919}.

\bibitem[Rivoirard and Rousseau(2012)]{rivoirard2012bernstein}
V.~Rivoirard and J.~Rousseau.
\newblock {Bernstein–von Mises theorem for linear functionals of the
  density}.
\newblock \emph{The Annals of Statistics}, 40\penalty0 (3):\penalty0 1489 --
  1523, 2012.
\newblock \doi{10.1214/12-AOS1004}.
\newblock URL \url{https://doi.org/10.1214/12-AOS1004}.

\bibitem[Robinson(1988)]{robinson1988root}
P.~M. Robinson.
\newblock Root-n-consistent semiparametric regression.
\newblock \emph{Econometrica: Journal of the Econometric Society}, pages
  931--954, 1988.

\bibitem[Severini and Wong(1992)]{severini1992profile}
T.~A. Severini and W.~H. Wong.
\newblock Profile likelihood and conditionally parametric models.
\newblock \emph{The Annals of statistics}, pages 1768--1802, 1992.

\bibitem[Shen and Ghosal(2015)]{shen2015adaptive}
W.~Shen and S.~Ghosal.
\newblock Adaptive bayesian procedures using random series priors.
\newblock \emph{Scandinavian Journal of Statistics}, 42\penalty0 (4):\penalty0
  1194--1213, 2015.

\bibitem[Shen(2002)]{shen2002asymptotic}
X.~Shen.
\newblock Asymptotic normality of semiparametric and nonparametric posterior
  distributions.
\newblock \emph{Journal of the American Statistical Association}, 97\penalty0
  (457):\penalty0 222--235, 2002.

\bibitem[Shen and Wasserman(2001)]{shen2001rates}
X.~Shen and L.~Wasserman.
\newblock {Rates of convergence of posterior distributions}.
\newblock \emph{The Annals of Statistics}, 29\penalty0 (3):\penalty0 687 --
  714, 2001.
\newblock \doi{10.1214/aos/1009210686}.
\newblock URL \url{https://doi.org/10.1214/aos/1009210686}.

\bibitem[van~de Geer(2000)]{geer2000empirical}
S.~van~de Geer.
\newblock \emph{Empirical Processes in M-estimation}, volume~6.
\newblock Cambridge university press, 2000.

\bibitem[Van~der Vaart(1998)]{van1998asymptotic}
A.~W. Van~der Vaart.
\newblock \emph{Asymptotic statistics}, volume~3.
\newblock Cambridge university press, 1998.

\bibitem[van~der Vaart and van Zanten(2008{\natexlab{a}})]{vaart2008rates}
A.~W. van~der Vaart and J.~H. van Zanten.
\newblock {Rates of contraction of posterior distributions based on Gaussian
  process priors}.
\newblock \emph{The Annals of Statistics}, 36\penalty0 (3):\penalty0 1435 --
  1463, 2008{\natexlab{a}}.
\newblock \doi{10.1214/009053607000000613}.
\newblock URL \url{https://doi.org/10.1214/009053607000000613}.

\bibitem[van~der Vaart and van Zanten(2008{\natexlab{b}})]{van2008reproducing}
A.~W. van~der Vaart and J.~H. van Zanten.
\newblock Reproducing kernel hilbert spaces of gaussian priors.
\newblock \emph{IMS Collections}, 3:\penalty0 200--222, 2008{\natexlab{b}}.

\bibitem[van~der Vaart and van Zanten(2009)]{10.1214/08-AOS678}
A.~W. van~der Vaart and J.~H. van Zanten.
\newblock {Adaptive Bayesian estimation using a Gaussian random field with
  inverse Gamma bandwidth}.
\newblock \emph{The Annals of Statistics}, 37\penalty0 (5B):\penalty0 2655 --
  2675, 2009.
\newblock \doi{10.1214/08-AOS678}.
\newblock URL \url{https://doi.org/10.1214/08-AOS678}.

\bibitem[van~der Vaart and van Zanten(2011)]{van2011information}
A.~W. van~der Vaart and J.~H. van Zanten.
\newblock Information rates of nonparametric gaussian process methods.
\newblock \emph{Journal of Machine Learning Research}, 12\penalty0 (6), 2011.

\bibitem[van~der Vaart and Wellner(2023)]{vaartwellner96book}
A.~W. van~der Vaart and J.~A. Wellner.
\newblock \emph{Weak Convergence and Empirical Process: With Applications to
  Statistics}.
\newblock Springer, 2023.

\bibitem[White(1994)]{white1994estimation}
H.~White.
\newblock \emph{Estimation, inference and specification analysis}.
\newblock Number~22. Cambridge university press, 1994.

\bibitem[Wooldridge(2009)]{wooldridge2009estimating}
J.~M. Wooldridge.
\newblock On estimating firm-level production functions using proxy variables
  to control for unobservables.
\newblock \emph{Economics letters}, 104\penalty0 (3):\penalty0 112--114, 2009.

\bibitem[Xie and Xu(2020)]{xie2020adaptive}
F.~Xie and Y.~Xu.
\newblock {Adaptive Bayesian Nonparametric Regression Using a Kernel Mixture of
  Polynomials with Application to Partial Linear Models}.
\newblock \emph{Bayesian Analysis}, 15\penalty0 (1):\penalty0 159 -- 186, 2020.
\newblock \doi{10.1214/19-BA1148}.
\newblock URL \url{https://doi.org/10.1214/19-BA1148}.

\bibitem[Yang(2019)]{yang2019posterior}
D.~Yang.
\newblock {Posterior asymptotic normality for an individual coordinate in
  high-dimensional linear regression}.
\newblock \emph{Electronic Journal of Statistics}, 13\penalty0 (2):\penalty0
  3082 -- 3094, 2019.
\newblock \doi{10.1214/19-EJS1605}.
\newblock URL \url{https://doi.org/10.1214/19-EJS1605}.

\bibitem[Yang et~al.(2015)Yang, Cheng, and Dunson]{yang2015semiparametric}
Y.~Yang, G.~Cheng, and D.~B. Dunson.
\newblock Semiparametric bernstein-von mises theorem: Second order studies.
\newblock \emph{arXiv preprint arXiv:1503.04493}, 2015.

\bibitem[Yiu et~al.(2025)Yiu, Fong, Holmes, and
  Rousseau]{yiu2025semiparametric}
A.~Yiu, E.~Fong, C.~Holmes, and J.~Rousseau.
\newblock Semiparametric posterior corrections.
\newblock \emph{Journal of the Royal Statistical Society Series B: Statistical
  Methodology}, page qkaf005, 02 2025.
\newblock ISSN 1369-7412.
\newblock \doi{10.1093/jrsssb/qkaf005}.
\newblock URL \url{https://doi.org/10.1093/jrsssb/qkaf005}.

\end{thebibliography}
\newpage
\appendix
\section{Proof of Theorem \ref{thm:BVM}, Corollary \ref{cor:BVM_unconditional}--\ref{cor:quantile}, and Associated Lemmas}
\subsection{Proof of Theorem \ref{thm:BVM}}
I state two important lemmas relevant for Theorem \ref{thm:BVM}. Their proofs are found in Appendix \ref{ap:lemmas}.
\begin{lemma}\label{lem:technical}
Let $\{\mathcal{M}_{n} \cap B_{n,\mathcal{M}}(m_{0},D\delta_{n})\}_{n=1}^{\infty}$ be the sets defined in Assumption \ref{as:consistency} and \ref{as:emp_process}. If Assumptions \ref{as:dgp}, \ref{as:consistency}, and \ref{as:emp_process} hold, then the following holds:
\begin{enumerate}
\item The collection of functions $\{\tilde{\ell}_{n}(\beta_{0},m): m \in \mathcal{M}\}$ satisfies
\begin{align*}
  \sup_{m \in B_{n,\mathcal{M}}(m_{0},D\delta_{n})\cap \mathcal{M}_{n}}\left| \frac{1}{\sqrt{n}}\tilde{\ell}_{n}(\beta_{0},m) - \frac{1}{\sqrt{n}}\tilde{\ell}_{n}(\beta_{0},m_{0}) \right|  \overset{P_{0,YX|W}^{(n)}}{\longrightarrow} 0
\end{align*}
 for $P_{0,W}^{\infty}$-almost every fixed realization $\{w_{i}\}_{i \geq 1}$ of $\{W_{i}\}_{i \geq 1}$ as $n\rightarrow \infty$.
\item The collection of functions $\{\tilde{I}_{n}(m): m \in \mathcal{M}\}$ satisfies
\begin{align*}
 \sup_{m \in B_{n,\mathcal{M}}(m_{0},D\delta_{n})\cap \mathcal{M}_{n}}\left| \tilde{I}_{n}(m)-\tilde{I}_{n}(m_{0}) \right| \overset{P_{0,YX|W}^{(n)}}{\longrightarrow} 0   
\end{align*}
 for $P_{0,W}^{\infty}$-almost every fixed realization $\{w_{i}\}_{i \geq 1}$ of $\{W_{i}\}_{i \geq 1}$ as $n\rightarrow \infty$.
\item Let $\underline{c}_{0}=\underline{c}/\sigma_{01}^{2}$ and $\overline{c}_{0} = \overline{c}/\sigma_{01}^{2}$, where $\overline{c}$ and $\underline{c}$ are the constants Part 2 of Assumption \ref{as:dgp}. The following statements hold:
\begin{align*}
   P_{0,YX|W}^{(n)}\left( \inf_{m \in \mathcal{M}_{n} \cap B_{\mathcal{M},\infty}(m_{0},D\delta_{n})}\tilde{I}_{n}(m)\geq \underline{c}_{0} \right) \rightarrow 1
   \end{align*}
   and
   \begin{align*}
    P_{0,YX|W}^{(n)}\left(\sup_{m \in \mathcal{M}_{n} \cap B_{\mathcal{M},\infty}(m_{0},D\delta_{n})}\tilde{I}_{n}(m) \leq \overline{c}_{0}\right) \rightarrow 
\end{align*}
 for $P_{0,W}^{\infty}$-almost every fixed realization $\{w_{i}\}_{i \geq 1}$ of $\{W_{i}\}_{i \geq 1}$ as $n\rightarrow \infty$.
\item Let $R_{n}(\beta,m)$ be given by
\begin{align*}
R_{n}(\beta,m) = \ell_{n}(\beta,m)-\ell_{n}(\beta_{0},m)- \frac{1}{\sqrt{n}}\tilde{\ell}_{n}(\beta_{0},m_{0})\sqrt{n}(\beta-\beta_{0})+\frac{n}{2}(\beta-\beta_{0})^{2}\tilde{I}_{n}(m_{0})     
\end{align*}
for each $(\beta,m) \in \mathcal{B}\times \mathcal{M}$ and let $B_{\mathcal{B}}(\beta_{0},M_{n}/\sqrt{n}) = \{\beta \in \mathcal{B}: |\beta-\beta_{0}| < M_{n}/\sqrt{n}\}$. If $M_{n}\rightarrow \infty$ arbitrarily slowly as $n\rightarrow \infty$, then
\begin{align*}
\sup_{(\beta,m) \in B_{\mathcal{B}}(\beta_{0},M_{n}/\sqrt{n})\times (B_{\mathcal{M},\infty}(m_{0},D\delta_{n})\cap \mathcal{M}_{n})}\left | R_{n}(\beta,m) \right| \overset{P_{0,YX|W}^{(n)}}{\longrightarrow} 0
\end{align*}
 for $P_{0,W}^{\infty}$-almost every fixed realization $\{w_{i}\}_{i \geq 1}$ of $\{W_{i}\}_{i \geq 1}$ as $n\rightarrow \infty$.
\end{enumerate}
\end{lemma}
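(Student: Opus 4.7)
The plan is to establish each of the four parts sequentially, exploiting the fact that the Gaussian (quasi-)log-likelihood is exactly quadratic in $\beta$. Writing $\Delta_j := m_j - m_{0,j}$, the preparatory identities are $X_i - m_2(w_i) = \varepsilon_{i2} - \Delta_2(w_i)$ and $Y_i - m_1(w_i) - \beta_0(X_i - m_2(w_i)) = U_i - \Delta_1(w_i) + \beta_0\Delta_2(w_i)$. Substituting these into $\tilde{\ell}_n(\beta_0, m)$, subtracting $\tilde{\ell}_n(\beta_0, m_0) = \sigma_{01}^{-2}\sum_i U_i\varepsilon_{i2}$, and collecting terms yields
\[
\frac{1}{\sqrt{n}}\bigl[\tilde{\ell}_n(\beta_0, m) - \tilde{\ell}_n(\beta_0, m_0)\bigr] = \frac{1}{\sigma_{01}^2}\Bigl[-G_n^{(1)}(m) - G_n^{(2)}(m) + \sqrt{n}\langle\Delta_1, \Delta_2\rangle_{n,2} - \beta_0\sqrt{n}\|\Delta_2\|_{n,2}^2\Bigr].
\]
The first two summands vanish uniformly over $m \in \mathcal{M}_n \cap B_{n,\mathcal{M}}(m_0, D\delta_n)$ by Assumption \ref{as:emp_process}, while Cauchy--Schwarz combined with $\|\Delta_j\|_{n,2} \leq D\delta_n$ bounds the remaining two quadratic summands by a multiple of $\sqrt{n}\delta_n^2$, which is $o(1)$ because $\delta_n = o(n^{-1/4})$. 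This delivers Part 1.

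For Part 2, expanding $(X_i - m_2(w_i))^2 - (X_i - m_{02}(w_i))^2 = -2\varepsilon_{i2}\Delta_2(w_i) + \Delta_2(w_i)^2$ gives $\tilde{I}_n(m) - \tilde{I}_n(m_0) = \sigma_{01}^{-2}\bigl[-(2/n)\sum_i\varepsilon_{i2}\Delta_2(w_i) + \|\Delta_2\|_{n,2}^2\bigr]$. The quadratic term is at most $(D\delta_n)^2/\sigma_{01}^2$, and Cauchy--Schwarz bounds the cross term by $2\sigma_{01}^{-2}(n^{-1}\sum_i \varepsilon_{i2}^2)^{1/2}\|\Delta_2\|_{n,2} = O_p(1)\cdot D\delta_n$, using a SLLN for the average of squared projection errors (whose finite expectation is supplied by Assumption \ref{as:dgp}). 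Both bounds vanish uniformly in $m$ on the indicated set. Part 3 follows immediately: the same SLLN gives $\tilde{I}_n(m_0) \to E_{P_0}[\varepsilon_2^2]/\sigma_{01}^2 \in [\underline{c}_0,\overline{c}_0]$ (with the limit in the interior after the standard relaxation of $\underline{c}$ and $\overline{c}$ permitted by Assumption \ref{as:dgp}), which combined with Part 2 yields the claimed two-sided bounds with probability tending to one.

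Part 4 is streamlined by the exact quadratic-in-$\beta$ structure of $\ell_n$: since $\partial_\beta^2\ell_n(\beta, m) = -n\tilde{I}_n(m)$ does not depend on $\beta$, Taylor expansion in $\beta$ is exact and $\ell_n(\beta, m) - \ell_n(\beta_0, m) = (\beta - \beta_0)\tilde{\ell}_n(\beta_0, m) - \tfrac{n}{2}(\beta - \beta_0)^2\tilde{I}_n(m)$. Substituting this into the definition of $R_n(\beta, m)$ produces the telescoping identity
\[
R_n(\beta, m) = (\beta - \beta_0)\bigl[\tilde{\ell}_n(\beta_0, m) - \tilde{\ell}_n(\beta_0, m_0)\bigr] - \frac{n}{2}(\beta - \beta_0)^2\bigl[\tilde{I}_n(m) - \tilde{I}_n(m_0)\bigr].
\]
On $|\beta - \beta_0| \leq M_n/\sqrt{n}$ the first summand is dominated by $M_n\cdot\sup_m\tfrac{1}{\sqrt{n}}|\tilde{\ell}_n(\beta_0, m) - \tilde{\ell}_n(\beta_0, m_0)|$ and the second by $\tfrac{1}{2}M_n^2\cdot\sup_m|\tilde{I}_n(m) - \tilde{I}_n(m_0)|$; picking $M_n \to \infty$ sufficiently slowly relative to the rates furnished by Parts 1 and 2 forces both to be $o_{P_{0,YX|W}^{(n)}}(1)$ uniformly over $(\beta,m)$ in the indicated set.

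I expect the main obstacle to be the bookkeeping in Part 1 --- cleanly recognizing that the leading cross terms in the raw expansion are precisely the two multiplier processes $G_n^{(1)}$ and $G_n^{(2)}$ regulated by Assumption \ref{as:emp_process}, and isolating the remaining quadratic pieces whose vanishing depends on the sharp rate condition $\delta_n = o(n^{-1/4})$. Everything else is essentially algebra: the absence of any nontrivial Taylor remainder in $\beta$, thanks to the exact quadratic Gaussian structure, makes Part 4 a direct consequence of Parts 1 and 2, and Parts 2--3 reduce to elementary expansions together with Cauchy--Schwarz and the SLLN.
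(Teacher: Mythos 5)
Your proposal is correct and follows essentially the same route as the paper: the same algebraic decomposition isolating $G_{n}^{(1)}$, $G_{n}^{(2)}$ and the quadratic terms $\sqrt{n}\langle \Delta_{1},\Delta_{2}\rangle_{n,2}$, $\beta_{0}\sqrt{n}\|\Delta_{2}\|_{n,2}^{2}$ in Part 1, the same expansion of $\tilde{I}_{n}(m)-\tilde{I}_{n}(m_{0})$ in Parts 2--3, and the same observation that the Gaussian log-likelihood is exactly quadratic in $\beta$, which telescopes $R_{n}$ into the Part 1 and Part 2 differences in Part 4. The only (cosmetic, and if anything slightly cleaner) difference is that in Part 2 you control the cross term $n^{-1}\sum_{i}\varepsilon_{i2}\Delta_{2}(w_{i})$ via Cauchy--Schwarz and the SLLN rather than by citing Assumption \ref{as:emp_process} directly, which is appropriate since that term is not literally one of the processes $G_{n}^{(1)},G_{n}^{(2)}$.
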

\begin{lemma}\label{lem:rootn}
Let $\{\mathcal{M}_{n} \cap B_{n,\mathcal{M}}(m_{0},D\delta_{n})\}_{n=1}^{\infty}$ be the sets defined in Assumptions \ref{as:consistency} and \ref{as:emp_process}. If Assumptions \ref{as:dgp}, \ref{as:prior}, \ref{as:consistency}, and \ref{as:emp_process} hold, then for every $M_{n}\rightarrow \infty$,
\begin{align*}
    \sup_{m \in \mathcal{M}_{n} \cap B_{n,\mathcal{M}}(m_{0},D\delta_{n})}\Pi\left(\beta \in B_{\mathcal{B}}(\beta_{0},M_{n}/\sqrt{n})^{c}|\{(Y_{i},X_{i},W_{i}')'\}_{i=1}^{n},m\right) \overset{P_{0,YX|W}^{(n)}}{\rightarrow} 0
\end{align*}
for $P_{0,W}^{\infty}$-almost every fixed realization $\{w_{i}\}_{i \geq 1}$ of $\{W_{i}\}_{i \geq 1}$ as $n\rightarrow \infty$.
\end{lemma}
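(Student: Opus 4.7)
My plan rests on the fact that the Gaussian quasi-likelihood is \emph{exactly} quadratic in $\beta$ for any fixed $m$. A direct expansion of the bivariate Gaussian log-density yields the identity
\[
\ell_n(\beta, m) - \ell_n(\beta_0, m) = (\beta - \beta_0)\tilde{\ell}_n(\beta_0, m) - \frac{n(\beta - \beta_0)^2}{2}\,\tilde{I}_n(m)
\]
for every $(\beta, m) \in \mathcal{B}\times\mathcal{M}$, where $\tilde{\ell}_n(\beta_0, m) = \sigma_{01}^{-2}\sum_i (Y_i - m_1(w_i) - (X_i - m_2(w_i))\beta_0)(X_i - m_2(w_i))$. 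Under the prior independence $\beta \independent m$, the conditional posterior density of $\beta$ given $m$ and the data takes the explicit Gaussian-times-prior form
\[
q_n(\beta \mid m) \propto \pi_{\mathcal{B}}(\beta) \exp\!\left(-\frac{n\tilde{I}_n(m)}{2}(\beta - \hat{\beta}_n(m))^2\right), \qquad \hat{\beta}_n(m) := \beta_0 + \frac{\tilde{\ell}_n(\beta_0, m)}{n\tilde{I}_n(m)}.
\]
Proving Lemma \ref{lem:rootn} therefore reduces to a Laplace-type analysis of this kernel, uniformly over $m$ in the good set $\mathcal{G}_n := \mathcal{M}_n \cap B_{n,\mathcal{M}}(m_0, D\delta_n)$, rather than an asymptotic LAN expansion.

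I would next construct a high-$P_{0,YX|W}^{(n)}$-probability event $E_n$ on which, uniformly in $m \in \mathcal{G}_n$, (i) $\tilde{I}_n(m) \in [\underline{c}_0, \overline{c}_0]$ and (ii) $\sqrt{n}|\hat{\beta}_n(m) - \beta_0| \leq K_n$ for a sequence $K_n \to \infty$ with $K_n = o(M_n)$ (e.g.\ $K_n = \sqrt{\log\log n}$). Item (i) is exactly Lemma \ref{lem:technical} Part 3. For (ii), Lemma \ref{lem:technical} Part 1 reduces $\sup_{m \in \mathcal{G}_n}|\tilde{\ell}_n(\beta_0, m)|/\sqrt{n}$ to $|\tilde{\ell}_n(\beta_0, m_0)|/\sqrt{n}$ up to an $o_{P_{0,YX|W}^{(n)}}(1)$ term, and the latter is a normalized sum of independent mean-zero random variables with bounded variance under Assumption \ref{as:dgp}, hence $O_{P_{0,YX|W}^{(n)}}(1)$ by Chebyshev's inequality. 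Dividing by $\tilde{I}_n(m) \geq \underline{c}_0$ preserves the uniform bound.

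On $E_n$, bounding the tail probability is a deterministic calculation. Choose $\epsilon_0 > 0$ small enough that $\pi_{\mathcal{B}}$ is continuous on $[\beta_0 - \epsilon_0, \beta_0 + \epsilon_0]$ with $p_{\mathcal{B},*} \leq \pi_{\mathcal{B}} \leq p_{\mathcal{B}}^* < \infty$ there (possible by Assumption \ref{as:prior}). Split the numerator of $\Pi(|\beta - \beta_0| > M_n/\sqrt{n} \mid \text{data}, m)$ into a moderate piece $\{M_n/\sqrt{n} < |\beta - \beta_0| \leq \epsilon_0\}$, where pulling out $p_{\mathcal{B}}^*$ and applying the Gaussian tail bound (using $|\beta - \hat{\beta}_n(m)| \geq (M_n - K_n)/\sqrt{n}$ on $E_n$) yields a contribution of order $\exp(-\underline{c}_0(M_n - K_n)^2/2)/\sqrt{n}$, and a remote piece $\{|\beta - \beta_0| > \epsilon_0\}$, where $|\beta - \hat{\beta}_n(m)| \geq \epsilon_0/2$ forces $\phi_m(\beta) := \exp(-\tfrac{n\tilde{I}_n(m)}{2}(\beta - \hat{\beta}_n(m))^2) \leq \exp(-n\underline{c}_0\epsilon_0^2/8)$ pointwise, so the integral is bounded by this factor times $\int \pi_{\mathcal{B}} = 1$. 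The denominator is bounded below by restricting to an interval of length $\epsilon_0/2$ around $\hat{\beta}_n(m)$ (which sits within $\epsilon_0/2$ of $\beta_0$ on $E_n$); over this interval $\pi_{\mathcal{B}}(\beta) \geq p_{\mathcal{B},*}$ and a change of variable gives a Gaussian integral of order $1/\sqrt{n\overline{c}_0}$. Taking the quotient, both the moderate and remote contributions vanish uniformly over $m \in \mathcal{G}_n$.

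The only real obstacle is step (ii) in the construction of $E_n$: establishing uniform stochastic boundedness of $\hat{\beta}_n(m)$ over $\mathcal{G}_n$. This is the sole place where a nontrivial probabilistic input is needed, and it is handled by combining Lemma \ref{lem:technical} Part 1 with a univariate central limit theorem for $\tilde{\ell}_n(\beta_0, m_0)/\sqrt{n}$. Once $E_n$ is in hand, the remaining argument is purely deterministic, and the exact quadratic form of the Gaussian quasi-likelihood finishes the proof without invoking the restricted-range LAN remainder bound of Lemma \ref{lem:technical} Part 4.
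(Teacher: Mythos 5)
Your proposal is correct and follows essentially the same route as the paper's proof: both exploit the exact quadratic form of $\ell_n(\beta,m)-\ell_n(\beta_0,m)$ in $\beta$, control $\sup_m|\tilde{\ell}_n(\beta_0,m)|/\sqrt{n}$ and $\tilde{I}_n(m)$ uniformly over $\mathcal{M}_n\cap B_{n,\mathcal{M}}(m_0,D\delta_n)$ via Parts 1 and 3 of Lemma \ref{lem:technical} together with Chebyshev's inequality, and then bound the numerator above and the denominator below on a high-probability event (the paper maximizes the concave quadratic in the exponent and uses Jensen's inequality for the denominator, whereas you complete the square and use Gaussian tail and central-interval bounds, but these are interchangeable elementary estimates). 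One small caveat: since the lemma must hold for \emph{every} $M_n\to\infty$, your auxiliary sequence must be chosen relative to $M_n$ (e.g.\ $K_n=\sqrt{M_n}$), not the fixed example $\sqrt{\log\log n}$, which need not be $o(M_n)$ when $M_n$ diverges very slowly.
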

 \begin{proof}[Proof of Theorem \ref{thm:BVM}]
The proof has four steps. Step 1 and Step 2 utilize Assumption \ref{as:consistency} and Lemma \ref{lem:rootn} to conclude it suffices that $\sup_{m \in \mathcal{M}_{n} \cap B_{\mathcal{M},\infty}(m_{0},D\delta_{n})}\Pi(\beta \in \cdot |\{(Y_{i},X_{i},W_{i}')'\}_{i=1}^{n},\beta \in B_{\mathcal{B}}(\beta_{0},M_{n}/\sqrt{n}),m)$ and $\mathcal{N}(\tilde{\Delta}_{n,0},\tilde{I}_{0}(m_{0})^{-1})$ are first-order asymptotically equivalent. Step 3 uses the assumption about the density of $\Pi_{\mathcal{B}}$ and Part 2 of Lemma \ref{lem:technical} to further simplify the problem to one in which $\beta$ is the only free parameter. Step 4 shows asymptotic normality.

\textit{Step 1.} Applying the law of total probability,
\begin{align*}
&\Pi(\beta \in A|\{(Y_{i},X_{i},w_{i}')'\}_{i=1}^{n}) \\
&\quad = \int_{\mathcal{M}_{n} \cap B_{n,\mathcal{M}}(m_{0},D\delta_{n})}\Pi(\beta \in A|\{(Y_{i},X_{i},w_{i}')'\}_{i=1}^{n},m) d \Pi(m|\{(Y_{i},X_{i},w_{i}')'\}_{i=1}^{n}) \\
&\quad \quad  + \int_{\mathcal{M}_{n} \cap B_{n,\mathcal{M}}(m_{0},D\delta_{n})^{c}}\Pi(\beta \in A|\{(Y_{i},X_{i},w_{i}')'\}_{i=1}^{n},m) d \Pi(m|\{(Y_{i},X_{i},w_{i}')'\}_{i=1}^{n}) \\
&\quad \quad + \int_{\mathcal{M}_{n}^{c}}\Pi(\beta \in A|\{(Y_{i},X_{i},w_{i}')'\}_{i=1}^{n},m) d \Pi(m|\{(Y_{i},X_{i},w_{i}')'\}_{i=1}^{n}).
\end{align*}
for any event $A$. As $0\leq \Pi(\beta \in \cdot|\{(Y_{i},X_{i},w_{i}')'\}_{i=1}^{n},m) \leq 1$, it follows that 
\begin{align*}
    &\sup_{A }\int_{\mathcal{M}_{n} \cap B_{n,\mathcal{M}}(m_{0},D\delta_{n})^{c}}\Pi(\beta \in A|\{(Y_{i},X_{i},w_{i}')'\}_{i=1}^{n},m) d \Pi(m|\{(Y_{i},X_{i},w_{i}')'\}_{i=1}^{n}) \\
    &\quad \leq \Pi(m \in B_{\mathcal{M},\infty}(m_{0},D\delta_{n})^{c}|\{(Y_{i},X_{i},w_{i}')'\}_{i=1}^{n})
\end{align*}
and
\begin{align*}
 \sup_{A }\int_{\mathcal{M}_{n}^{c}}\Pi(\beta \in A|\{(Y_{i},X_{i},w_{i}')'\}_{i=1}^{n},m) d \Pi(m|\{(Y_{i},X_{i},w_{i}')'\}_{i=1}^{n}) \leq \Pi(m \in \mathcal{M}_{n}^{c}|\{(Y_{i},X_{i},w_{i}')'\}_{i=1}^{n}).    
\end{align*}
Applying Assumptions \ref{as:consistency} and \ref{as:emp_process}, it then follows that
\begin{align*}
\sup_{A }\int_{\mathcal{M}_{n} \cap B_{n,\mathcal{M}}(m_{0},D\delta_{n})^{c}}\Pi(\beta \in A|\{(Y_{i},X_{i},w_{i}')'\}_{i=1}^{n},m) d \Pi(m|\{(Y_{i},X_{i},w_{i}')'\}_{i=1}^{n})  \overset{P_{0,YX|W}^{(n)}}{\longrightarrow} 0    
\end{align*}
and
\begin{align*}
\sup_{A }\int_{\mathcal{M}_{n}^{c}}\Pi(\beta \in A|\{(Y_{i},X_{i},w_{i}')'\}_{i=1}^{n},m) d \Pi(m|\{(Y_{i},X_{i},W_{i}')'\}_{i=1}^{n})  \overset{P_{0,YX|W}^{(n)}}{\longrightarrow} 0.    
\end{align*}
for $P_{0,W}^{\infty}$-almost every fixed realization $\{w_{i}\}_{i\geq 1}$ of $\{W_{i}\}_{i \geq 1}$ as $n\rightarrow \infty$. As a result,
\begin{align*}
    \sup_{A }\left | \Pi(\beta \in A|\{(Y_{i},X_{i},w_{i}')'\}_{i=1}^{n})-F_{n,1}(A) \right | \overset{P_{0,YX|W}^{(n)}}{\longrightarrow} 0
\end{align*}
for $P_{0,W}^{\infty}$-almost every fixed realization $\{w_{i}\}_{i\geq 1}$ of $\{W_{i}\}_{i \geq 1}$ as $n\rightarrow \infty$, where $A \mapsto F_{n,1}(A)$ is a set function that satisfies
\begin{align*}
F_{n,1}(A) = \int_{\mathcal{M}_{n} \cap B_{n,\mathcal{M}}(m_{0},D\delta_{n})}\Pi(\beta \in A|\{(Y_{i},X_{i},w_{i}')'\}_{i=1}^{n},m) d \Pi(m|\{(Y_{i},X_{i},w_{i}')'\}_{i=1}^{n}).    
\end{align*}
\textit{Step 2.} Let $M_{n}\rightarrow \infty$ arbitrarily slowly. Since $A\cap B_{\mathcal{B}}(\beta_{0},M_{n}/\sqrt{n})$ and $A \cap B_{\mathcal{B}}(\beta_{0},M_{n}/\sqrt{n})^{c}$ define a partition of $A$, it follows that
\begin{align*}
    &\sup_{A }|F_{n,1}(A)-F_{n,1}(A \cap B_{\mathcal{B}}(\beta_{0},M_{n}/\sqrt{n}))|  \\
    &\quad = \sup_{A }F_{n,1}(A \cap B_{\mathcal{B}}(\beta_{0},M_{n}/\sqrt{n})^{c}) \\
       &\quad \leq \sup_{A }\sup_{m \in \mathcal{M}_{n} \cap B_{n,\mathcal{M}}(m_{0},D\delta_{n})}\Pi(A \cap B_{\mathcal{B}}(\beta_{0},M_{n}/\sqrt{n})^{c}|\{(Y_{i},X_{i},w_{i}')'\}_{i=1}^{n},m)  \\
       &\quad \quad \times \Pi(\mathcal{M}_{n} \cap B_{n,\mathcal{M}}(m_{0},D\delta_{n})|\{(Y_{i},X_{i},w_{i}')'\}_{i=1}^{n}) \\
    &\quad \leq \sup_{m \in \mathcal{M}_{n} \cap B_{n,\mathcal{M}}(m_{0},D\delta_{n})}\Pi(B_{\mathcal{B}}(\beta_{0},M_{n}/\sqrt{n})^{c}|\{(Y_{i},X_{i},w_{i}')'\}_{i=1}^{n},m)
\end{align*}
with the first inequality is H\"{o}lder's inequality, and the second uses that $A \cap B_{\mathcal{B}}(\beta_{0},M_{n}/\sqrt{n})^{c} \subseteq B_{\mathcal{B}}(\beta_{0},M_{n}/\sqrt{n})^{c}$ for all $A $ and that probabilities are bounded from above by one. Applying Lemma \ref{lem:rootn}, I then obtain
\begin{align*}
    \sup_{m \in \mathcal{M}_{n} \cap B_{n,\mathcal{M}}(m_{0},D\delta_{n})}\Pi(B_{\mathcal{B}}(\beta_{0},M_{n}/\sqrt{n})^{c}|\{(Y_{i},X_{i},w_{i}')'\}_{i=1}^{n},m) \overset{P_{0,YX|W}^{(n)}}{\longrightarrow} 0
\end{align*}
for $P_{0,W}^{\infty}$-almost every fixed realization $\{w_{i}\}_{i\geq 1}$ of $\{W_{i}\}_{i \geq 1}$ as $n\rightarrow \infty$, and, as a result,
\begin{align*}
    \sup_{A }|F_{n,1}(A)-F_{n,1}(A \cap B_{\mathcal{B}}(\beta_{0},M_{n}/\sqrt{n}))| \overset{P_{0,YX|W}^{(n)}}{\longrightarrow} 0
\end{align*}
for $P_{0,W}^{\infty}$-almost every fixed realization $\{w_{i}\}_{i\geq 1}$ of $\{W_{i}\}_{i \geq 1}$ as $n\rightarrow \infty$. Moreover, I know that
\begin{align*}
&\left |\int_{\mathcal{M}_{n} \cap B_{n,\mathcal{M}}(m_{0},D\delta_{n})}\left(\frac{\int_{B_{\mathcal{B}}(\beta_{0},M_{n}/\sqrt{n})}L_{n}(\beta,m)d \Pi_{\mathcal{B}}(\beta)}{\int_{\mathcal{B}}L_{n}(\beta,m)d \Pi_{\mathcal{B}}(\beta)} -1 \right) 
 d \Pi(m|\{(Y_{i},X_{i},w_{i}')'\}_{i=1}^{n})\right| \\
&\quad = \int_{\mathcal{M}_{n} \cap B_{n,\mathcal{M}}(m_{0},D\delta_{n})}\Pi(\beta \in B_{\mathcal{B}}(\beta_{0},M_{n}/\sqrt{n})^{c}|\{(Y_{i},X_{i},w_{i}')'\}_{i=1}^{n},m) d \Pi(m|\{(Y_{i},X_{i},w_{i}')'\}_{i=1}^{n}) \\
&\quad \leq \sup_{m \in \mathcal{M}_{n} \cap B_{n,\mathcal{M}}(m_{0},D\delta_{n})}\Pi(\beta \in B_{\mathcal{B}}(\beta_{0},M_{n}/\sqrt{n})^{c}|\{(Y_{i},X_{i},w_{i}')'\}_{i=1}^{n},m) 
\end{align*}
with the inequality applying H\"{o}lder's inequality and that probabilities are bounded by one. Lemma \ref{lem:rootn} then implies that
\begin{align*}
\sup_{m \in \mathcal{M}_{n} \cap B_{n,\mathcal{M}}(m_{0},D\delta_{n})}\Pi(\beta \in B_{\mathcal{B}}(\beta_{0},M_{n}/\sqrt{n})^{c}|\{(Y_{i},X_{i},w_{i}')'\}_{i=1}^{n},m) \overset{P_{0,YX|W}^{(n)}}{\longrightarrow} 0    
\end{align*}
for $P_{0,W}^{\infty}$-almost every fixed realization $\{w_{i}\}_{i\geq 1}$ of $\{W_{i}\}_{i \geq 1}$ as $n\rightarrow \infty$, and, as a result,
\begin{align*}
    \left |\int_{\mathcal{M}_{n} \cap B_{n,\mathcal{M}}(m_{0},D\delta_{n})}\left(\frac{\int_{B_{\mathcal{B}}(\beta_{0},M_{n}/\sqrt{n})}L_{n}(\beta,m)d \Pi_{\mathcal{B}}(\beta)}{\int_{\mathcal{B}}L_{n}(\beta,m)d \Pi_{\mathcal{B}}(\beta)} -1 \right)d \Pi(m|\{(Y_{i},X_{i},w_{i}')'\}_{i=1}^{n} \right|  \overset{P_{0,YX|W}^{(n)}}{\longrightarrow} 0 
\end{align*}
for $P_{0,W}^{\infty}$-almost every fixed realization $\{w_{i}\}_{i\geq 1}$ of $\{W_{i}\}_{i \geq 1}$ as $n\rightarrow \infty$. Consequently, I can conclude that
\begin{align*}
  \sup_{A }\left|F_{n,1}(A)-F_{n,2}(A)\right| \overset{P_{0,YX|W}^{(n)}}{\longrightarrow} 0  
\end{align*}
for $P_{0,W}^{\infty}$-almost every fixed realization $\{w_{i}\}_{i\geq 1}$ of $\{W_{i}\}_{i \geq 1}$ as $n\rightarrow \infty$, where $A \mapsto F_{n,2}(A)$ is a set function given by
\begin{align*}
F_{n,2}(A) = \int_{\mathcal{M}_{n} \cap B_{n,\mathcal{M}}(m_{0},D\delta_{n})}\left(\frac{\int_{A \cap B_{\mathcal{B}}(\beta_{0},M_{n}/\sqrt{n})}L_{n}(\beta,m) d \Pi_{\mathcal{B}}(\beta)}{\int_{B_{\mathcal{B}}(\beta_{0},M_{n}/\sqrt{n})}L_{n}(\beta,m) d \Pi_{\mathcal{B}}(\beta)}\right)d \Pi(m|\{(Y_{i},X_{i},w_{i}')'\}_{i=1}^{n})
\end{align*}
for all events $A $.

\textit{Step 3.} Exponentiating and adding/subtracting $\ell_{n}(\beta_{0},m)$, we have that
\begin{align*}
\frac{\int_{A \cap B_{\mathcal{B}}(\beta_{0},M_{n}/\sqrt{n})}L_{n}(\beta,m) d \Pi_{\mathcal{B}}(\beta)}{\int_{B_{\mathcal{B}}(\beta_{0},M_{n}/\sqrt{n})}L_{n}(\beta,m) d \Pi_{\mathcal{B}}(\beta)}=\frac{\int_{A \cap B_{\mathcal{B}}(\beta_{0},M_{n}/\sqrt{n})}\exp(\ell_{n}(\beta,m)-\ell_{n}(\beta_{0},m)) d \Pi_{\mathcal{B}}(\beta)}{\int_{B_{\mathcal{B}}(\beta_{0},M_{n}/\sqrt{n})}\exp(\ell_{n}(\beta,m)-\ell_{n}(\beta_{0},m)) d \Pi_{\mathcal{B}}(\beta)}
\end{align*}
for all events $A $. Recall the definition
\begin{align*}
R_{n}(\beta,m) = \ell_{n}(\beta,m)-\ell_{n}(\beta_{0},m) - \frac{1}{\sqrt{n}}\tilde{\ell}_{n}(\beta_{0},m_{0})\sqrt{n}(\beta-\beta_{0})+\frac{1}{2}\tilde{I}_{n}(m_{0})n(\beta-\beta_{0})^{2}
\end{align*}
for $(\beta,m) \in \mathcal{B}\times \mathcal{M}$. Part 4 of Lemma \ref{lem:technical} implies that 
\begin{align*}
    \overline{R}_{n}:=\sup_{(\beta,m) \in B_{\mathcal{B}}(\beta_{0},M_{n}/\sqrt{n})\times B_{n,\mathcal{M}}(m_{0},D\delta_{n})}\exp(R_{n}(\beta,m)) \overset{P_{0,YX|W}^{(n)}}{\longrightarrow} 1
\end{align*}
and
\begin{align*}
\underline{R}_{n}:=\inf_{(\beta,m) \in B_{\mathcal{B}}(\beta_{0},M_{n}/\sqrt{n})\times B_{n,\mathcal{M}}(m_{0},D\delta_{n})}\exp(R_{n}(\beta,m)) \overset{P_{0,YX|W}^{(n)}}{\longrightarrow} 1.
\end{align*}
for $P_{0,W}^{\infty}$-almost every fixed realization $\{w_{i}\}_{i\geq 1}$ of $\{W_{i}\}_{i \geq 1}$ as $n\rightarrow \infty$. Consequently, there exists a positive sequence $\{c_{n,1}\}_{n=1}^{\infty}$ such that $c_{n,1}\rightarrow 0$ and
\begin{align*}
   P_{0,YX|W}^{(n)}\left( \left | \frac{\overline{R}_{n}}{\underline{R}_{n}} - 1 \right | \leq c_{n,1}\right) \longrightarrow 1
\end{align*}
for $P_{0,W}^{\infty}$-almost every fixed realization $\{w_{i}\}_{i\geq 1}$ of $\{W_{i}\}_{i \geq 1}$ as $n\rightarrow \infty$. Next, the assumption that $\Pi_{\mathcal{B}}$ has a density that is continuous and positive in a neighborhood of $\beta_{0}$ guarantees that there exists a positive sequence $c_{n,2}$ such that $c_{n,2} \rightarrow 0$ and
\begin{align*}
\left | \frac{\sup_{\beta \in B_{\mathcal{B}}(\beta_{0},M_{n}/\sqrt{n})}\pi_{\mathcal{B}}(\beta) }{\inf_{\beta \in B_{\mathcal{B}}(\beta_{0},M_{n}/\sqrt{n})}\pi_{\mathcal{B}}(\beta)} - 1 \right | \leq c_{n,2}.
\end{align*}
As a result, with $P_{0,YX|W}^{(n)}$-probability approaching one, the following quantities
\begin{align*}
    (1+c_{n,1})(1+c_{n,2})\frac{\int_{A \cap B_{\mathcal{B}}(\beta_{0},M_{n}/\sqrt{n})}\exp\left(\frac{1}{\sqrt{n}}\tilde{\ell}_{n}(\beta_{0},m_{0})\sqrt{n}(\beta-\beta_{0})-\frac{1}{2}\tilde{I}_{n}(m_{0})n(\beta-\beta_{0})^{2}\right) d \beta}{\int_{B_{\mathcal{B}}(\beta_{0},M_{n}/\sqrt{n})}\exp\left(\frac{1}{\sqrt{n}}\tilde{\ell}_{n}(\beta_{0},m_{0})\sqrt{n}(\beta-\beta_{0})-\frac{1}{2}\tilde{I}_{n}(m_{0})n(\beta-\beta_{0})^{2}\right) d \beta}
\end{align*}
and
\begin{align*}
   [(1+c_{n,1})(1+c_{n,2})]^{-1}\frac{\int_{A \cap B_{\mathcal{B}}(\beta_{0},M_{n}/\sqrt{n})}\exp\left(\tilde{\ell}_{n}(\beta_{0},m_{0})\sqrt{n}(\beta-\beta_{0})-\frac{1}{2}\tilde{I}_{n}(m_{0})n(\beta-\beta_{0})^{2}\right) d \beta}{\int_{B_{\mathcal{B}}(\beta_{0},M_{n}/\sqrt{n})}\exp\left(\tilde{\ell}_{n}(\beta_{0},m_{0})\sqrt{n}(\beta-\beta_{0})-\frac{1}{2}\tilde{I}_{n}(m_{0})n(\beta-\beta_{0})^{2}\right) d \beta}
\end{align*}
are upper and lower bounds, respectively, for $F_{n,2}(A)$, conditionally given $P_{0,W}^{\infty}$-almost every realization $\{w_{i}\}_{i \geq 1}$ of $\{W_{i}\}_{i \geq 1}$. Since $c_{n,1},c_{n,2} \rightarrow 0$ as $n\rightarrow \infty$, it follows that
\begin{align*}
\sup_{A }\left | F_{n,2}(A)- \frac{\int_{A \cap B_{\mathcal{B}}(\beta_{0},M_{n}/\sqrt{n})}\exp\left(\frac{1}{\sqrt{n}}\tilde{\ell}_{n}(\beta_{0},m_{0})\sqrt{n}(\beta-\beta_{0})-\frac{1}{2}\tilde{I}_{n}(m_{0})n(\beta-\beta_{0})^{2}\right) d \beta}{\int_{B_{\mathcal{B}}(\beta_{0},M_{n}/\sqrt{n})}\exp\left(\frac{1}{\sqrt{n}}\tilde{\ell}_{n}(\beta_{0},m_{0})\sqrt{n}(\beta-\beta_{0})-\frac{1}{2}\tilde{I}_{n}(m_{0})n(\beta-\beta_{0})^{2}\right) d \beta} \right| \overset{P_{0,YX|W}^{(n)}}{\longrightarrow} 0
\end{align*}
for $P_{0,W}^{\infty}$-almost every fixed realization $\{w_{i}\}_{i\geq 1}$ of $\{W_{i}\}_{i \geq 1}$ as $n\rightarrow \infty$.

\textit{Step 4.} Recalling that $\tilde{\Delta}_{n,0}= \tilde{I}_{n}(m_{0})^{-1}\tilde{\ell}_{n}(\beta_{0},m_{0})/\sqrt{n}$, I complete the square to obtain
\begin{align*}
    &\exp\left(\frac{1}{\sqrt{n}}\tilde{\ell}_{n}(\beta_{0},m_{0})\sqrt{n}(\beta-\beta_{0})-\frac{1}{2}\tilde{I}_{n}(m_{0})n(\beta-\beta_{0})^{2}\right)\\
    &\quad = \exp\left( -\frac{1}{2}\tilde{I}_{n}(m_{0})\left(\sqrt{n}(\beta-\beta_{0}) -\tilde{\Delta}_{n,0}\right)^{2} \right)\exp\left(\frac{1}{2}\tilde{I}_{n}(m_{0})\tilde{\Delta}_{n,0}^{2}\right) \\
    &=\exp\left( -\frac{1}{2}n\tilde{I}_{n}(m_{0})\left(\beta-\beta_{0} -\frac{\tilde{\Delta}_{n,0}}{\sqrt{n}}\right)^{2} \right)\exp\left(\frac{1}{2}\tilde{I}_{n}(m_{0})\tilde{\Delta}_{n,0}^{2}\right).
\end{align*}
It then follows that
\begin{align}\label{eq:gauss}
&\frac{\int_{A \cap B_{\mathcal{B}}(\beta_{0},M_{n}/\sqrt{n})}\exp\left(\frac{1}{\sqrt{n}}\tilde{\ell}_{n}(\beta_{0},m_{0})\sqrt{n}(\beta-\beta_{0})-\frac{1}{2}\tilde{I}_{n}(m_{0})n(\beta-\beta_{0})^{2}\right) d \beta}{\int_{B_{\mathcal{B}}(\beta_{0},M_{n}/\sqrt{n})}\exp\left(\frac{1}{\sqrt{n}}\tilde{\ell}_{n}(\beta_{0},m_{0})\sqrt{n}(\beta-\beta_{0})-\frac{1}{2}\tilde{I}_{n}(m_{0})n(\beta-\beta_{0})^{2}\right)d \beta} \nonumber \\
&\quad =\frac{\int_{A \cap B_{\mathcal{B}}(\beta_{0},M_{n}/\sqrt{n})}\exp\left(-\frac{1}{2}n\tilde{I}_{n}(m_{0})\left(\beta-\beta_{0} -\frac{\tilde{\Delta}_{n,0}}{\sqrt{n}}\right)^{2}\right) d \beta}{\int_{ B_{\mathcal{B}}(\beta_{0},M_{n}/\sqrt{n})}\exp\left(-\frac{1}{2}n\tilde{I}_{n}(m_{0})\left(\beta-\beta_{0} -\frac{\tilde{\Delta}_{n,0}}{\sqrt{n}}\right)^{2}\right) d \beta} \nonumber \\
&\quad =\frac{\int_{A \cap B_{\mathcal{B}}(\beta_{0},M_{n}/\sqrt{n})}\phi_{\beta_{0}+\frac{\tilde{\Delta}_{n,0}}{\sqrt{n}},\frac{1}{n}\tilde{I}_{n}(m_{0})^{-1}}(\beta)d\beta}{\int_{B_{\mathcal{B}}(\beta_{0},M_{n}/\sqrt{n})}\phi_{\beta_{0}+\frac{\tilde{\Delta}_{n,0}}{\sqrt{n}},\frac{1}{n}\tilde{I}_{n}(m_{0})^{-1}}(\beta)d\beta} 
\end{align}
for all events $A $, where $\phi_{\beta_{0}+\frac{\tilde{\Delta}_{n,0}}{\sqrt{n}},\frac{1}{n}\tilde{I}_{n}(m_{0})^{-1}}$ is the density of $\mathcal{N}(\beta_{0}+\tilde{\Delta}_{n,0}/\sqrt{n},(n\tilde{I}_{n}(m_{0}))^{-1})$. The denominator satisfies
\begin{align*}
    \int_{B_{\mathcal{B}}(\beta_{0},M_{n}/\sqrt{n})}\phi_{\beta_{0}+\frac{\tilde{\Delta}_{n,0}}{\sqrt{n}},\frac{1}{n}\tilde{I}_{n}(m_{0})^{-1}}(\beta)d\beta = \Phi\left(\frac{M_{n}-\tilde{\Delta}_{n,0}}{\sqrt{\tilde{I}_{n}(m_{0})^{-1}}}\right)-\Phi\left(\frac{-M_{n}-\tilde{\Delta}_{n,0}}{\sqrt{\tilde{I}_{n}(m_{0})^{-1}}}\right)
\end{align*}
where $\Phi(\cdot)$ is standard normal cumulative distribution function. Following Part 4 of Assumption \ref{as:dgp}, I can apply Chebyshev's inequality the Strong Law of Large Numbers to conclude that
\begin{align*}
 \tilde{I}_{n}(m_{0}) \overset{P_{0,YX|W}^{(n)}}{\longrightarrow} \tilde{I}_{0}(m_{0})
\end{align*}
as $n\rightarrow \infty$, where $\tilde{I}_{0}(m_{0}) = \sigma_{01}^{-2}E_{P_{0}}[(X-m_{02}(W))^{2}]$. An application of the Continuous Mapping Theorem (valid by Part 2 of Assumption \ref{as:dgp}) then leads to the conclusion that
\begin{align*}
    \tilde{I}_{n}^{-1}(m_{0}) \overset{P_{0,YX|W}^{(n)}}{\longrightarrow} \tilde{I}_{0}^{-1}(m_{0})
\end{align*}
for $P_{0,W}^{\infty}$-almost every fixed realization $\{w_{i}\}_{i\geq 1}$ of $\{W_{i}\}_{i \geq 1}$ as $n\rightarrow \infty$. Moreover, Chebyshev's inequality, Assumption \ref{as:dgp}, and the Strong Law of Large Numbers implies that 
\begin{align*}
\frac{1}{\sqrt{n}}\tilde{\ell}_{n}(\beta_{0},m_{0}) = O_{P_{0,YX|W}^{(n)}}(1)
\end{align*}
for $P_{0,W}^{\infty}$-almost every fixed realization $\{w_{i}\}_{i\geq 1}$ of $\{W_{i}\}_{i \geq 1}$ as $n\rightarrow \infty$, and, as a result, the Continuous Mapping Theorem implies that
\begin{align*}
    \tilde{\Delta}_{n,0} = O_{P_{0,YX|W}^{(n)}}(1)
\end{align*}
for $P_{0,W}^{\infty}$-almost every fixed realization $\{w_{i}\}_{i\geq 1}$ of $\{W_{i}\}_{i \geq 1}$ as $n\rightarrow \infty$. Since $M_{n}\rightarrow \infty$, it then follows that
\begin{align*}
        \int_{B_{\mathcal{B}}(\beta_{0},M_{n}/\sqrt{n})}\phi_{\beta_{0}+\frac{\tilde{\Delta}_{n,0}}{\sqrt{n}},\frac{1}{n}\tilde{I}_{n}(m_{0})^{-1}}(\beta)d\beta  = 1+o_{P_{0,YX|W}^{(n)}}(1).
\end{align*}
for $P_{0,W}^{\infty}$-almost every fixed realization $\{w_{i}\}_{i\geq 1}$ of $\{W_{i}\}_{i \geq 1}$ as $n\rightarrow \infty$. Consequently,
\begin{align*}
\sup_{A} \left |\frac{\int_{A \cap B_{\mathcal{B}}(\beta_{0},M_{n}/\sqrt{n})}\phi_{\beta_{0}+\frac{\tilde{\Delta}_{n,0}}{\sqrt{n}},\frac{1}{n}\tilde{I}_{n}(m_{0})^{-1}}(\beta)d\beta}{\int_{B_{\mathcal{B}}(\beta_{0},M_{n}/\sqrt{n})}\phi_{\beta_{0}+\frac{\tilde{\Delta}_{n,0}}{\sqrt{n}},\frac{1}{n}\tilde{I}_{n}(m_{0})^{-1}}(\beta)d\beta} -\int_{A \cap B_{\mathcal{B}}(\beta_{0},M_{n}/\sqrt{n})}\phi_{\beta_{0}+\frac{\tilde{\Delta}_{n,0}}{\sqrt{n}},\frac{1}{n}\tilde{I}_{n}(m_{0})^{-1}}(\beta)d\beta \right | \overset{P_{0,YX|W}^{(n)}}{\longrightarrow} 0    
\end{align*}
for $P_{0,W}^{\infty}$-almost every fixed realization $\{w_{i}\}_{i \geq 1}$ of $\{W_{i}\}_{i \geq 1}$ as $n\rightarrow \infty$. Since
\begin{align*}
    \int_{A \cap B_{\mathcal{B}}(\beta_{0},M_{n}/\sqrt{n})}\phi_{\beta_{0}+\frac{\tilde{\Delta}_{n,0}}{\sqrt{n}},\frac{1}{n}\tilde{I}_{n}(m_{0})^{-1}}(\beta)d\beta \leq  \int_{A }\phi_{\beta_{0}+\frac{\tilde{\Delta}_{n,0}}{\sqrt{n}},\frac{1}{n}\tilde{I}_{n}(m_{0})^{-1}}(\beta)d\beta
\end{align*}
for all events $A$, I need to show that
\begin{align}\label{eq:probabilityBVM}
 P_{0,YX|W}^{(n)}\left(    \int_{A \cap B_{\mathcal{B}}(\beta_{0},M_{n}/\sqrt{n})}\phi_{\beta_{0}+\frac{\tilde{\Delta}_{n,0}}{\sqrt{n}},\frac{1}{n}\tilde{I}_{n}(m_{0})^{-1}}(\beta)d\beta \geq  \int_{A }\phi_{\beta_{0}+\frac{\tilde{\Delta}_{n,0}}{\sqrt{n}},\frac{1}{n}\tilde{I}_{n}(m_{0})^{-1}}(\beta)d\beta \ \forall \ A \right) \longrightarrow 1
\end{align}
for $P_{0,W}^{\infty}$-almost every fixed realization $\{w_{i}\}_{i\geq 1}$ of $\{W_{i}\}_{i \geq 1}$ as $n\rightarrow \infty$. Indeed, this implies that
\begin{align*}
    \sup_{A}\left | \int_{A \cap B_{\mathcal{B}}(\beta_{0},M_{n}/\sqrt{n})}\phi_{\beta_{0}+\frac{\tilde{\Delta}_{n,0}}{\sqrt{n}},\frac{1}{n}\tilde{I}_{n}(m_{0})^{-1}}(\beta)d\beta- \int_{A }\phi_{\beta_{0}+\frac{\tilde{\Delta}_{n,0}}{\sqrt{n}},\frac{1}{n}\tilde{I}_{n}(m_{0})^{-1}}(\beta)d\beta\right | \overset{P_{0,YX|W}^{(n)}}{\longrightarrow} 0, 
\end{align*}
for $P_{0,W}^{\infty}$-almost every fixed realization $\{w_{i}\}_{i\geq 1}$ of $\{W_{i}\}_{i \geq 1}$ as $n\rightarrow \infty$, and, as a result, an application of the triangle inequality leads to the conclusion that
\begin{align*}
    \left | \left | \Pi(\beta \in A|\{(Y_{i},X_{i},W_{i}')'\}_{i=1}^{n}) - \mathcal{N}\left(\beta_{0}+\frac{\tilde{\Delta}_{n,0}}{\sqrt{n}},\frac{1}{n}\tilde{I}_{n}(m_{0})^{-1}\right) \right| \right |_{TV} \overset{P_{0,YX|W}^{(n)}}{\longrightarrow} 0
\end{align*}
for $P_{0,W}^{\infty}$-almost every fixed realization $\{w_{i}\}_{i\geq 1}$ of $\{W_{i}\}_{i \geq 1}$ as $n\rightarrow \infty$. For (\ref{eq:probabilityBVM}), the events $A \cap B_{\mathcal{B}}(\beta_{0},M_{n}/\sqrt{n})$ and $A \cap B_{\mathcal{B}}(\beta_{0},M_{n}/\sqrt{n})^{c}$ define a partition of $A$ meaning that
\begin{align*}
    &\int_{A \cap B_{\mathcal{B}}(\beta_{0},M_{n}/\sqrt{n})}\phi_{\beta_{0}+\frac{\tilde{\Delta}_{n,0}}{\sqrt{n}},\frac{1}{n}\tilde{I}_{n}(m_{0})^{-1}}(\beta)d\beta\\
    &\quad =\int_{A }\phi_{\beta_{0}+\frac{\tilde{\Delta}_{n,0}}{\sqrt{n}},\frac{1}{n}\tilde{I}_{n}(m_{0})^{-1}}(\beta)d\beta -  \int_{A 
 \cap B_{\mathcal{B}}(\beta_{0},M_{n}/\sqrt{n})^{c}}\phi_{\beta_{0}+\frac{\tilde{\Delta}_{n,0}}{\sqrt{n}},\frac{1}{n}\tilde{I}_{n}(m_{0})^{-1}}(\beta)d\beta \\
 &\quad \geq \int_{A }\phi_{\beta_{0}+\frac{\tilde{\Delta}_{n,0}}{\sqrt{n}},\frac{1}{n}\tilde{I}_{n}(m_{0})^{-1}}(\beta)d\beta -  \int_{B_{\mathcal{B}}(\beta_{0},M_{n}/\sqrt{n})^{c}}\phi_{\beta_{0}+\frac{\tilde{\Delta}_{n,0}}{\sqrt{n}},\frac{1}{n}\tilde{I}_{n}(m_{0})^{-1}}(\beta)d\beta
\end{align*}
for all events $A$, where the lower bound holds because
\begin{align*}
    \int_{A \cap B_{\mathcal{B}}(\beta_{0},M_{n}/\sqrt{n})^{c}}\phi_{\beta_{0}+\frac{\tilde{\Delta}_{n,0}}{\sqrt{n}},\frac{1}{n}\tilde{I}_{n}(m_{0})^{-1}}(\beta)d\beta \leq \int_{B_{\mathcal{B}}(\beta_{0},M_{n}/\sqrt{n})^{c}}\phi_{\beta_{0}+\frac{\tilde{\Delta}_{n,0}}{\sqrt{n}},\frac{1}{n}\tilde{I}_{n}(m_{0})^{-1}}(\beta)d\beta
\end{align*}
for all events $A$ using the monotonicity of probability measures. An earlier argument showed that
\begin{align*}
    \int_{B_{\mathcal{B}}(\beta_{0},M_{n}/\sqrt{n})^{c}}\phi_{\beta_{0}+\frac{\tilde{\Delta}_{n,0}}{\sqrt{n}},\frac{1}{n}\tilde{I}_{n}(m_{0})^{-1}}(\beta)d\beta = 1+o_{P_{0,YX|W}^{(n)}}(1),
\end{align*}
for $P_{0,W}^{\infty}$-almost every fixed realization $\{w_{i}\}_{i\geq 1}$ of $\{W_{i}\}_{i \geq 1}$ as $n\rightarrow \infty$, so the complement rule for probability measures implies that
\begin{align*}
    \int_{B_{\mathcal{B}}(\beta_{0},M_{n}/\sqrt{n})^{c}}\phi_{\beta_{0}+\frac{\tilde{\Delta}_{n,0}}{\sqrt{n}},\frac{1}{n}\tilde{I}_{n}(m_{0})^{-1}}(\beta)d\beta = o_{P_{0,YX|W}^{(n)}}(1)
\end{align*}
as $n\rightarrow \infty$. As such, I conclude that (\ref{eq:probabilityBVM}) holds and complete the proof.
\end{proof}
\subsection{Proof of Corollaries \ref{cor:BVM_unconditional} and \ref{cor:quantile}}
\begin{proof}[Proof of Corollary \ref{cor:BVM_unconditional}]
Theorem \ref{thm:BVM} implies that for every $\zeta> 0$, $P_{0,YX|W}^{(n)}(A_{n}(\zeta)) \rightarrow 0$ for $P_{0,W}^{\infty}$-almost every fixed realization $\{w_{i}\}_{i \geq 1}$ of $\{W_{i}\}_{i \geq 1}$ as $n\rightarrow \infty$, where
\begin{align*}
A_{n}(\zeta) = \left\{\left | \left | \Pi(\beta \in A|\{(Y_{i},X_{i},w_{i}')'\}_{i=1}^{n}) - \mathcal{N}\left(\beta_{0}+\frac{\tilde{\Delta}_{n,0}}{\sqrt{n}},\frac{1}{n}\tilde{I}_{n}(m_{0})^{-1}\right) \right| \right |_{TV} \geq \zeta \right\}.
\end{align*}
Since $\{w_{i}\}_{i \geq 1} \mapsto P_{0,YX|W}^{(n)}(A_{n}(\xi)) $ is bounded, the bounded convergence theorem implies that, for each $\zeta>0$, $E_{P_{0,W}^{\infty}}[P_{0,YX|W}^{(n)}(A_{n}(\zeta))] \rightarrow 0$ as $n\rightarrow \infty$. Since i.i.d sampling and the law of iterated expectations implies $E_{P_{0,W}^{\infty}}[P_{0,YX|W}^{(n)}(A_{n}(\zeta))] = P^{n}_{0}(A_{n}(\zeta))$, I am able to conclude the claim of Corollary \ref{cor:BVM_unconditional}.
\end{proof}
\begin{proof}[Proof of Corollary \ref{cor:quantile}]
Let $F_{\mathcal{N}(0,\tilde{I}_{n}^{-1}(m_{0}))}(\cdot)$ be the cumulative distribution function of $\mathcal{N}(0,\tilde{I}_{n}^{-1}(m_{0}))$ and let $\hat{\beta}_{n} = \beta_{0} + \tilde{\Delta}_{n,0}/\sqrt{n}$. Theorem \ref{thm:BVM} implies that
\begin{align*}
    \sup_{b \in \mathbb{R}}\left|\Pi(\sqrt{n}(\beta-\hat{\beta}_{n}) \leq b|\{(Y_{i},X_{i},w_{i}')'\}_{i=1}^{n})- F_{\mathcal{N}(0,\tilde{I}_{n}^{-1}(m_{0}))}( b)\right| \overset{P_{0,YX|W}^{(n)}}{\longrightarrow} 0
\end{align*}
for $P_{0,W}^{\infty}$-almost every fixed realization $\{w_{i}\}_{i \geq 1}$ of $\{W_{i}\}_{i \geq 1}$ as $n\rightarrow \infty$. Since the set of continuity points of the Gaussian distribution $\mathcal{N}(0,\tilde{I}_{n}^{-1}(m_{0}))$ is $\mathbb{R}$, I can apply Lemma 21.2 of \cite{van1998asymptotic} and the equivariance of quantiles under monotonic transformations to conclude that
\begin{align*}
    c_{n}(q) = \hat{\beta}_{n} + \Phi^{-1}\left(q\right)\sqrt{\frac{\tilde{I}_{n}^{-1}(m_{0})}{n}} + o_{P_{0,YX|W}^{(n)}}\left(\frac{1}{\sqrt{n}}\right)
\end{align*}
holds for $P_{0,W}^{\infty}$-almost every fixed realization $\{w_{i}\}_{i \geq 1}$ of $\{W_{i}\}_{i \geq 1}$ as $n\rightarrow \infty$.
\end{proof}
\subsection{Proofs of Lemmas \ref{lem:technical} and \ref{lem:rootn}}\label{ap:lemmas}
\begin{proof}[Proof of Lemma \ref{lem:technical}]
The proof has four steps with each step corresponding to the appropriate part of the lemma (i.e., Step 1 verifies Part 1 of the lemma and so on).

\textit{Step 1.} Let $\tilde{\ell}(Y,X,W,\beta,m) = (Y-m_{1}(W)-(X-m_{2}(W))\beta)(X-m_{2}(W))/\sigma_{01}^{2}$ for each $(\beta,m)$. Since $Y_{i} = m_{01}(w_{i})+(X_{i}-m_{02}(w_{i}))\beta_{0}+U_{i}$ and $X_{i} = m_{02}(w_{i})+\varepsilon_{i2}$, I know that
\begin{align*}
\tilde{\ell}(Y_{i},X_{i},w_{i},\beta_{0},m) &= \frac{1}{\sigma_{01}^{2}}\left(Y_{i}-m_{1}(w_{i})-(X_{i}-m_{2}(w_{i}))\beta_{0}\right)\left(X_{i}-m_{2}(w_{i})\right) \\
&=\frac{1}{\sigma_{01}^{2}}\left(m_{01}(w_{i})-m_{1}(w_{i})-(m_{02}(w_{i})-m_{2}(w_{i}))\beta_{0}+U_{i}\right)\left(m_{02}(w_{i})-m_{2}(w_{i})+\varepsilon_{i2}\right) \\
&=\frac{1}{\sigma_{01}^{2}}\left\{(m_{01}(w_{i})-m_{1}(w_{i}))(m_{02}(w_{i})-m_{2}(w_{i}))-(m_{02}(w_{i})-m_{2}(w_{i}))^{2}\beta_{0} \right. \\
&\quad \quad \quad  \left. +(m_{02}(w_{i})-m_{2}(w_{i}))(U_{i}-\beta_{0}\varepsilon_{i2})+(m_{01}(w_{i})-m_{1}(w_{i}))\varepsilon_{i2} + U_{i}\varepsilon_{i2}\right\}.
\end{align*}
Observing that $\tilde{\ell}(Y_{i},X_{i},w_{i},\beta_{0},m_{0}) = U_{i}\varepsilon_{i2}/\sigma_{01}^{2}$, the previous derivation implies that
\begin{align*}
    &\frac{\sigma_{01}^{2}}{\sqrt{n}}\left(\tilde{\ell}_{n}(\beta_{0},m)- \tilde{\ell}_{n}(\beta_{0},m_{0})\right)\\
    &\quad = \sqrt{n}\langle m_{1}-m_{01},m_{2}-m_{02} \rangle_{n,2} - \sqrt{n}||m_{2}-m_{02}||_{n,2}^{2}\beta_{0} -G_{n}^{(1)}(m) - G_{n}^{(2)}(m),
\end{align*}
where I recall that the empirical processes $\{G_{n}^{(1)}\}_{n=1}^{\infty}$ and $\{G_{n}^{(2)}\}_{n=1}^{\infty}$ are given by
\begin{align*}
    G_{n}^{(1)}(m) = \frac{1}{\sqrt{n}}\sum_{i=1}^{n}(m_{1}(w_{i})-m_{01}(w_{i}))\varepsilon_{i2}
\end{align*}
and
\begin{align*}
    G_{n}^{(2)}(m) = \frac{1}{\sqrt{n}}\sum_{i=1}^{n}\left\{\left(m_{2}(w_{i})-m_{02}(w_{i})\right)\left(U_{i}-\beta_{0}\varepsilon_{i2}\right)\right\}
\end{align*}
for $m \in \mathcal{M}$. Applying Assumption \ref{as:emp_process}, I know that
\begin{align*}
    \sup_{m \in \mathcal{M}_{n} \cap B_{n,\mathcal{M}}(m_{0},D\delta_{n})}|G_{n}^{(1)}(m)| = o_{P_{0,YX|W}^{(n)}}(1)
\end{align*}
and
\begin{align*}
       \sup_{m \in \mathcal{M}_{n} \cap B_{n,\mathcal{M}}(m_{0},D\delta_{n})}|G_{n}^{(2)}(m)| = o_{P_{0,YX|W}^{(n)}}(1)
\end{align*}
for $P_{0,W}^{\infty}$-almost every fixed realization $\{w_{i}\}_{i \geq 1}$ of $\{W_{i}\}_{i \geq 1}$. Moreover, the Cauchy-Schwarz inequality and the definition of $B_{n,\mathcal{M}}(m_{0},D\delta_{n})$ reveals that
\begin{align*}
&\sup_{m \in \mathcal{M}_{n} \cap B_{n,\mathcal{M}}(m_{0},D\delta_{n})}\left|\sqrt{n}\langle m_{1}-m_{01},m_{2}-m_{02} \rangle_{n,2} \right| \lesssim \sqrt{n}\delta_{n}^{2}
\end{align*}
and
\begin{align*}
\sup_{m \in\mathcal{M}_{n} \cap B_{n,\mathcal{M}}(m_{0},D\delta_{n})}\sqrt{n}||m_{2}-m_{02}||_{n,2}^{2}\lesssim \sqrt{n}\delta_{n}^{2}.    
\end{align*}
Using Assumption \ref{as:consistency}, I know that $\sqrt{n}\delta_{n}^{2} = o(1)$. Consequently, when combined with the arguments above, I am able to conclude that 
\begin{align*}
\sup_{m \in \mathcal{M}_{n} \cap B_{n,\mathcal{M}}(m_{0},D\delta_{n})}\left|\frac{1}{\sqrt{n}}\tilde{\ell}_{n}(\beta_{0},m)-\frac{1}{\sqrt{n}}\tilde{\ell}_{n}(\beta_{0},m_{0}) \right| \overset{P_{0,YX|W}^{(n)}}{\longrightarrow} 0 
\end{align*}
for $P_{0,W}^{\infty}$-almost every fixed realization $\{w_{i}\}_{i \geq 1}$ of $\{W_{i}\}_{i \geq 1}$ as $n\rightarrow \infty$.

\textit{Step 2.} Since $X_{i} = m_{02}(w_{i})+\varepsilon_{i2}$, it follows that
\begin{align*}
    (X_{i}-m_{2}(w_{i}))^{2} = \left(m_{2}(w_{i})-m_{02}(w_{i})\right)^{2} - 2(m_{2}(w_{i})-m_{02}(w_{i}))\varepsilon_{i2} + \varepsilon_{i2}^{2},
\end{align*}
and, as a result, I obtain
\begin{align*}
\tilde{I}_{n}(m) - \tilde{I}_{n}(m_{0}) = \frac{1}{\sigma_{01}^{2}}||m_{2}-m_{02}||_{n,2}^{2} - \frac{2}{\sigma_{01}^{2}}\frac{1}{n}\sum_{i=1}^{n}(m_{2}(w_{i})-m_{02}(w_{i}))\varepsilon_{i2}.
\end{align*}
Applying Assumption \ref{as:emp_process}, I know that
\begin{align*}
    \sup_{m \in \mathcal{M}_{n} \cap B_{n,\mathcal{M}}(m_{0},D,\delta_{n})}\left|\frac{1}{n}\sum_{i=1}^{n}(m_{2}(w_{i})-m_{02}(w_{i}))\varepsilon_{i2}\right| \overset{P_{0,YX|W}^{(n)}}{\longrightarrow} 0
\end{align*}
for $P_{0,W}^{\infty}$-almost every realization $\{w_{i}\}_{i \geq 1}$ of $\{W_{i}\}_{i \geq 1}$ as $n\rightarrow \infty$. Moreover, using the definition of $B_{n,\mathcal{M}}(m_{0},D\delta_{n})$ and Assumption \ref{as:consistency}, I know that
\begin{align*}
\sup_{m \in \mathcal{M}_{n} \cap B_{n,\mathcal{M}}(m_{0},D\delta_{n})}||m_{2}-m_{02}||_{n,2}^{2}  \lesssim \delta_{n}^{2} = o(1)   
\end{align*}
as $n\rightarrow \infty$. Consequently, I conclude that
 \begin{align*}
\sup_{m \in\mathcal{M}_{n} \cap B_{n,\mathcal{M}}(m_{0},D\delta_{n})} \left|\tilde{I}_{n}(m) - \tilde{I}_{n}(m_{0})\right| \overset{P_{0,YX|W}^{(n)}}{\longrightarrow} 0
\end{align*}
for $P_{0,W}^{\infty}$-almost every realization $\{w_{i}\}_{i \geq 1}$ of $\{W_{i}\}_{i \geq 1}$ as $n\rightarrow \infty$.

\textit{Step 3.} I add and subtract $\tilde{I}_{n}(m_{0})$ and apply the law of total probability to conclude that
\begin{align*}
    &P_{0,YX|W}^{(n)}\left(\inf_{m \in \mathcal{M}_{n} \cap B_{\mathcal{M},\infty}(m_{0},D\delta_{n})}\tilde{I}_{n}(m) < \underline{c}_{0}\right) \\
    &\quad= P_{0,YX|W}^{(n)}\left(\inf_{m \in \mathcal{M}_{n} \cap B_{\mathcal{M},\infty}(m_{0},D\delta_{n})}\tilde{I}_{n}(m) < \underline{c}_{0}, \ \tilde{I}_{n}(m_{0})\geq \overline{c}_{0}\right) \\
    &\quad \quad + P_{0,YX|W}^{(n)}\left(\inf_{m \in \mathcal{M}_{n} \cap B_{\mathcal{M},\infty}(m_{0},D\delta_{n})}\tilde{I}_{n}(m) < \underline{c}_{0}, \ \tilde{I}_{n}(m_{0})< \overline{c}_{0}\right) \\
    &\quad \leq   P_{0,YX|W}^{(n)}\left(\inf_{m \in \mathcal{M}_{n} \cap B_{\mathcal{M},\infty}(m_{0},D\delta_{n})}(\tilde{I}_{n}(m)-\tilde{I}_{n}(m_{0})) < 0\right) +  P_{0,YX|W}^{(n)}\left(\tilde{I}_{n}(m_{0}) < \underline{c}_{0}\right) 
\end{align*}
Applying Chebyshev's inequality and the conditional independence, I know that, for $P_{0,W}^{\infty}$-almost every fixed realization $\{w_{i}\}_{i \geq 1}$ of $\{W_{i}\}_{i \geq 1}$,
\begin{align*}
P_{0,YX|W}^{(n)}\left(\tilde{I}_{n}(m_{0}) \geq \underline{c}_{0}\right) \lesssim \frac{1}{n^{2}}\sum_{i=1}^{n}E_{P_{0,YX|W}}[(X-m_{02}(W))^{4}|W=w_{i}] = o(1),
\end{align*}
where the second equality applies the Strong Law of Large Numbers to the sample average
\begin{align*}
\frac{1}{n}\sum_{i=1}^{n}E_{P_{0,YX|W}}[(X-m_{02}(W))^{4}|W=w_{i}],    
\end{align*}
which is valid because of Part 4 of Assumption \ref{as:dgp} and the assumption that $\{W_{i}\}_{i \geq 1}$ is an i.i.d sequence. Applying Part 2 of the lemma, I deduce that
\begin{align*}
    P_{0,YX|W}^{(n)}\left(\inf_{m \in \mathcal{M}_{n} \cap B_{\mathcal{M},\infty}(m_{0},D\delta_{n})}(\tilde{I}_{n}(m)-\tilde{I}_{n}(m_{0})) < 0\right) \longrightarrow 0
\end{align*}
for $P_{0,W}^{\infty}$-almost every fixed realization $\{w_{i}\}_{i \geq 1}$ of $\{W_{i}\}_{i \geq 1}$ as $n\rightarrow \infty$. It then follows that
\begin{align*}
     P_{0,YX|W}^{(n)}\left(\inf_{m \in \mathcal{M}_{n} \cap B_{\mathcal{M},\infty}(m_{0},D\delta_{n})}\tilde{I}_{n}(m) < \underline{c}_{0}\right) \longrightarrow 0
\end{align*}
for $P_{0,W}^{\infty}$-almost every fixed realization $\{w_{i}\}_{i \geq 1}$ of $\{W_{i}\}_{i \geq 1}$ as $n\rightarrow \infty$. A symmetric argument establishes
\begin{align*}
 P_{0,YX|W}^{(n)}\left(\sup_{m \in \mathcal{M}_{n} \cap B_{\mathcal{M},\infty}(m_{0},D\delta_{n})}\tilde{I}_{n}(m) > \overline{c}_{0}\right) \longrightarrow 0 
\end{align*}
for $P_{0,W}^{\infty}$-almost every fixed realization $\{w_{i}\}_{i \geq 1}$ of $\{W_{i}\}_{i \geq 1}$ as $n\rightarrow \infty$.

\textit{Step 4.} Since $p_{\beta,m}(Y_{i},X_{i}|w_{i})$ is the density of $\mathcal{N}(m(w_{i}),V(\beta))$, I obtain the following expansion of the log-likelihood function around the true parameter $\beta_{0}$:
\begin{align*}
  &\ell_{n}(\beta,m) - \ell_{n}(\beta_{0},m) = \frac{1}{\sqrt{n}}\tilde{\ell}_{n}(\beta_{0},m)\sqrt{n}(\beta-\beta_{0}) - \frac{1}{2}n(\beta-\beta_{0})^{2}\tilde{I}_{n}(m),
    \end{align*}
and, as a result, I apply the triangle inequality to conclude
\begin{align*}
&\left | \ell_{n}(\beta,m) - \ell_{n}(\beta_{0},m) -  \frac{1}{\sqrt{n}}\tilde{\ell}_{n}(\beta_{0},m_{0})\sqrt{n}(\beta-\beta_{0}) + \frac{1}{2}n(\beta-\beta_{0})^{2}\tilde{I}_{n}(m_{0})  \right| \\
&\quad \leq \left | \frac{1}{\sqrt{n}}\tilde{\ell}_{n}(\beta_{0},m)-\frac{1}{\sqrt{n}}\tilde{\ell}_{n}(\beta_{0},m_{0}) \right| \cdot \left | \sqrt{n}(\beta-\beta_{0})\right | + \frac{n}{2}(\beta-\beta_{0})^{2}\left | \tilde{I}_{n}(m)-\tilde{I}_{n}(m_{0}) \right| \\
&\quad \leq  \left | \frac{1}{\sqrt{n}}\tilde{\ell}_{n}(\beta_{0},m)-\frac{1}{\sqrt{n}}\tilde{\ell}_{n}(\beta_{0},m_{0}) \right| \cdot M_{n} + \frac{M_{n}^{2}}{2}\left | \tilde{I}_{n}(m)-\tilde{I}_{n}(m_{0}) \right|
\end{align*}
Since $M_{n}\rightarrow \infty$ arbitrarily slowly as $n\rightarrow \infty$, it follows from Part 1 and Part 2 of the lemma that
\begin{align*}
\sup_{m \in \mathcal{M}_{n} \cap B_{n,\mathcal{M}}(m_{0},D\delta_{n})}\left | \frac{1}{\sqrt{n}}\tilde{\ell}_{n}(\beta_{0},m)-\frac{1}{\sqrt{n}}\tilde{\ell}_{n}(\beta_{0},m_{0}) \right| \cdot M_{n} \overset{P_{0,YX|W}^{(n)}}{\longrightarrow} 0
\end{align*}
and
\begin{align*}
\frac{M_{n}^{2}}{2}\sup_{m \in \mathcal{M}_{n} \cap B_{n,\mathcal{M}}(m_{0},D\delta_{n})}\left | \tilde{I}_{n}(m)-\tilde{I}_{n}(m_{0}) \right| \overset{P_{0,YX|W}^{(n)}}{\longrightarrow} 0 
\end{align*}
conditionally given $P_{0,W}^{\infty}$ almost every realization $\{w_{i}\}_{i \geq 1}$ of $\{W_{i}\}_{i \geq 1}$ as $n\rightarrow \infty$. This concludes the proof because it implies that
\begin{align*}
\sup_{(\beta,m) \in B_{\mathcal{B}}(\beta_{0},M_{n}/\sqrt{n})\times (B_{\mathcal{M},\infty}(m_{0},D\delta_{n})\cap \mathcal{M}_{n})}\left | R_{n}(\beta,m) \right| \overset{P_{0,YX|W}^{(n)}}{\longrightarrow} 0
\end{align*}
for $P_{0,W}^{\infty}$-almost every realization $\{w_{i}\}_{i \geq 1}$ of $\{W_{i}\}_{i \geq 1}$ as $n\rightarrow \infty$.
\end{proof}
\begin{proof}[Proof of Lemma \ref{lem:rootn}] The proof is comprised of three steps. Step 1 obtains an expression for the posterior based on a quadratic expansion of the log-likelihood function. Step 2 shows that the numerator is uniformly bounded from above by $\exp(-(\underline{c}_{0}/4) M_{n}^{2})$ with $P_{0,YX|W}^{(n)}$-probability approaching $1$, where $\underline{c}_{0}>0$ is the constant defined in Part 3 of Lemma \ref{lem:technical}. Step 3 shows that the denominator is uniformly bounded from below by $\exp(-(\underline{c}_{0}/8)M_{n}^{2})$ with $P_{0,YX|W}^{(n)}$-probability approaching $1$. The arguments for Step 2 and Step 3 are similar to those encountered on pp. 233-234 and the proof of Lemma 6.3, respectively, in \cite{bickel2012semiparametric} with appropriate modifications.

\textit{Step 1.} I start with the definition of the posterior:
\begin{align*}
&\Pi\left(\beta \in B_{\mathcal{B}}(\beta_{0},M_{n}/\sqrt{n})^{c}|\{(Y_{i},X_{i},w_{i})\}_{i=1}^{n},m\right) \\
&\quad =  \frac{\int_{B_{\mathcal{B}}(\beta_{0},M_{n}/\sqrt{n})^{c}}\exp\left(\ell_{n}(\beta,m) - \ell_{n}(\beta_{0},m)\right)d\Pi_{\mathcal{B}}(\beta)}{\int_{\mathcal{B}}\exp\left(\ell_{n}(\beta,m) - \ell_{n}(\beta_{0},m)\right)d\Pi_{\mathcal{B}}(\beta)}.   
\end{align*}
As $p_{\beta,m}(Y_{i},X_{i}|w_{i})$ is the density of $\mathcal{N}(m(w_{i}),V(\beta))$, I obtain the following expansion of the log-likelihood function around the true parameter $\beta_{0}$
\begin{align*}
    &\ell_{n}(\beta,m) - \ell_{n}(\beta_{0},m)  = \frac{1}{\sqrt{n}}\tilde{\ell}_{n}(\beta_{0},m)\sqrt{n}(\beta-\beta_{0}) - \frac{1}{2}n(\beta-\beta_{0})^{2}\tilde{I}_{n}(m).
\end{align*}
Consequently,
\begin{align*}
    &\Pi\left(\beta \in B_{\mathcal{B}}(\beta_{0},M_{n}/\sqrt{n})^{c}|\{(Y_{i},X_{i},w_{i}')'\}_{i=1}^{n},m\right)\\
    &=\frac{\int_{B_{\mathcal{B}}(\beta_{0},M_{n}/\sqrt{n})^{c}}\exp\left(\frac{1}{\sqrt{n}}\tilde{\ell}_{n}(\beta_{0},m)\sqrt{n}(\beta-\beta_{0}) - \frac{1}{2}n(\beta-\beta_{0})^{2}\tilde{I}_{n}(m)\right)d\Pi_{\mathcal{B}}(\beta)}{\int_{\mathcal{B}}\exp\left(\frac{1}{\sqrt{n}}\tilde{\ell}_{n}(\beta_{0},m)\sqrt{n}(\beta-\beta_{0}) - \frac{1}{2}n(\beta-\beta_{0})^{2}\tilde{I}_{n}(m)\right)d\Pi_{\mathcal{B}}(\beta)}.
\end{align*}

\textit{Step 2.} Let $A_{n,1}$ be the event that
\begin{align*}
    \sup_{m \in B_{n,\mathcal{M}}(m_{0},D\delta_{n}) \cap \mathcal{M}_{n}}\int_{B_{\mathcal{B}}(\beta_{0},M_{n}/\sqrt{n})^{c}}\exp\left(\frac{1}{\sqrt{n}}\tilde{\ell}_{n}(\beta_{0},m)\sqrt{n}(\beta-\beta_{0}) - \frac{1}{2}n(\beta-\beta_{0})^{2}\tilde{I}_{n}(m)\right)d\Pi_{\mathcal{B}}(\beta) \leq \exp\left(-\frac{\underline{c}_{0}M_{n}^{2}}{4}\right)
    \end{align*}
The first task is to show that $P_{0,YX|W}^{(n)}(A_{n,1}^{c}) \rightarrow 0$ for $P_{0,W}^{\infty}$-almost every fixed realization $\{w_{i}\}_{i \geq 1}$ of $\{W_{i}\}_{i \geq 1}$ as $n\rightarrow \infty$. Using the definition of the supremum and that probabilities are bounded from above by $1$, I obtain the following string of inequalities
\begin{align}
&\int_{B_{\mathcal{B}}(\beta_{0},M_{n}/\sqrt{n})^{c}}\exp\left(\frac{1}{\sqrt{n}}\tilde{\ell}_{n}(\beta_{0},m)\sqrt{n}(\beta-\beta_{0}) - \frac{1}{2}n(\beta-\beta_{0})^{2}\tilde{I}_{n}(m)\right)d\Pi_{\mathcal{B}}(\beta) \nonumber \\
&\quad \leq \sup_{\beta \in B_{\mathcal{B}}(\beta_{0},M_{n}/\sqrt{n})^{c}}\exp\left(\frac{1}{\sqrt{n}}\tilde{\ell}_{n}(\beta_{0},m)\sqrt{n}(\beta-\beta_{0}) - \frac{1}{2}n(\beta-\beta_{0})^{2}\tilde{I}_{n}(m)\right) \nonumber \\
&\quad \leq \exp\left(\sup_{\beta \in B_{\mathcal{B}}(\beta_{0},M_{n}/\sqrt{n})^{c}}\left(\frac{1}{\sqrt{n}}\tilde{\ell}_{n}(\beta_{0},m)\sqrt{n}(\beta-\beta_{0}) - \frac{1}{2}n(\beta-\beta_{0})^{2}\tilde{I}_{n}(m)\right)\right) \nonumber \\
&\quad \leq \exp\left(\sup_{\beta \in B_{\mathcal{B}}(\beta_{0},M_{n}/\sqrt{n})^{c}}\left(\left|\frac{1}{\sqrt{n}}\tilde{\ell}_{n}(\beta_{0},m)\right|\cdot|\sqrt{n}(\beta-\beta_{0})| - \frac{1}{2}n(\beta-\beta_{0})^{2}\tilde{I}_{n}(m)\right)\right) \label{eq:control0}
\end{align}
for all $m \in \mathcal{M}_{n} \cap B_{n,\mathcal{M}}(m_{0},D\delta_{n})$. From Part 1 of Lemma \ref{lem:technical}, I know that 
\begin{align*}
\sup_{m \in \mathcal{M}_{n} \cap B_{n,\mathcal{M}}(m_{0},D\delta_{n})} \left |\frac{1}{\sqrt{n}}\tilde{\ell}_{n}(\beta_{0},m)-\frac{1}{\sqrt{n}}\tilde{\ell}_{n}(\beta_{0},m_{0})\right | \overset{P_{0,XY|W}^{(n)}}{\longrightarrow} 0   
\end{align*}
for $P_{0,W}^{\infty}$-almost every fixed realization $\{w_{i}\}_{i \geq 1}$ of $\{W_{i}\}_{i \geq 1}$ as $n\rightarrow \infty$. Moreover, an application of Chebyshev's inequality and the Strong Law of Large Numbers implies that  
\begin{align*}
 \frac{1}{\sqrt{n}}\tilde{\ell}_{n}(\beta_{0},m_{0})   = O_{P_{0,YX|W}^{(n)}}(1)
\end{align*}
for $P_{0,W}^{\infty}$-almost every fixed realization $\{w_{i}\}_{i \geq 1}$ of $\{W_{i}\}_{i \geq 1}$ as $n\rightarrow \infty$. Indeed, for any $\tau> 0$, independence across $i$, Chebyshev's inequality, Part 1 of Assumption \ref{as:dgp}, and the Law of Iterated Expectations implies that
\begin{align*}
 P_{0,YX|W}^{(n)}\left(\left| \frac{1}{\sqrt{n}}\tilde{\ell}_{n}(\beta_{0},m_{0})   \right| \geq \tau \right) \leq \frac{1}{\tau^{2}}\frac{1}{n}\sum_{i=1}^{n}E[U^{2}\varepsilon_{2}^{2}|W=w_{i}] = \frac{\sigma_{01}^{2}}{\tau^{2}}\frac{1}{n}\sum_{i=1}^{n}Var(\varepsilon_{2}|W=w_{i}),
\end{align*}
which implies 
\begin{align*}
\frac{1}{\sqrt{n}}\tilde{\ell}_{n}(\beta_{0},m_{0})   = O_{P_{0,YX|W}^{(n)}}(1)    
\end{align*}
for $P_{0,W}^{\infty}$-almost every fixed realization $\{w_{i}\}_{i \geq 1}$ of $\{W_{i}\}_{i \geq 1}$ as $n\rightarrow \infty$ after applying the Strong Law of Large Numbers to the sample average $n^{-1}\sum_{i=1}^{n}Var(\varepsilon_{2}|W=w_{i})$, which is valid by Part 2 of Assumption \ref{as:dgp} and the fact that $\{W_{i}\}_{i \geq 1}$ is i.i.d. Consequently, I am able to conclude that
\begin{align}\label{eq:control1}
    P_{0,YX|W}^{(n)}\left(\sup_{m \in B_{\mathcal{M},\infty}(m_{0},D\delta_{n})\cap \mathcal{M}_{n}}\left|\frac{1}{\sqrt{n}}\tilde{\ell}_{n}(\beta_{0},m)\right| < \frac{\underline{c}_{0}}{4}M_{n}\right) \longrightarrow 1.
\end{align}
for $P_{0,W}^{\infty}$-almost every fixed realization $\{w_{i}\}_{i \geq 1}$ of $\{W_{i}\}_{i \geq 1}$ as $n\rightarrow \infty$. Further, Part 3 of Lemma \ref{lem:technical} implies that
\begin{align}\label{eq:control2}
    P_{0,YX|W}^{(n)}\left(\inf_{m \in \mathcal{M}_{n} \cap B_{\mathcal{M},\infty}(m_{0},D\delta_{n})}\tilde{I}_{n}(m) \geq \underline{c}_{0}  \right) \longrightarrow 1
\end{align}
for $P_{0,W}^{\infty}$-almost every fixed realization $\{w_{i}\}_{i \geq 1}$ of $\{W_{i}\}_{i \geq 1}$ as $n\rightarrow \infty$. Hence, I can combine (\ref{eq:control0}), (\ref{eq:control1}), and (\ref{eq:control2}) to conclude that 
\begin{align*}
 P_{0,YX|W}^{(n)}(A_{n,1}^{c}) \leq  \mathbf{1}\left\{\underline{c}_{0}\sup_{\beta \in B_{\mathcal{B}}(\beta_{0},M_{n}/\sqrt{n})^{c}}\left(\frac{1}{4}M_{n}\cdot|\sqrt{n}(\beta-\beta_{0})|- \frac{1}{2}n(\beta-\beta_{0})^{2}\right) > -\frac{\underline{c}_{0}}{4}M_{n}^{2} \right\} + o(1).
\end{align*}
as $n\rightarrow \infty$. Defining $t = \sqrt{n}|\beta-\beta_{0}|$, it follows that
\begin{align*}
\sup_{\beta \in B_{\mathcal{B}}(\beta_{0},M_{n}/\sqrt{n})^{c}}\left[\frac{M_{n}}{4}\sqrt{n}|\beta-\beta_{0}|- \frac{1}{2}n(\beta-\beta_{0})^{2}\right] = \sup_{t \in B_{\mathcal{B}}(0,M_{n})^{c}}\left[\frac{M_{n}}{4}t- \frac{1}{2}t^{2}\right] = - \frac{M_{n}^{2}}{4}  
\end{align*}
as a concave quadratic function maximized subject to the constraint that $t \geq M_{n}$. This implies that
\begin{align*}
\mathbf{1}\left\{\underline{c}_{0}\sup_{\beta \in B_{\mathcal{B}}(\beta_{0},M_{n}/\sqrt{n})^{c}}\left(\frac{1}{4}M_{n}\cdot|\sqrt{n}(\beta-\beta_{0})|- \frac{1}{2}n(\beta-\beta_{0})^{2}\right) > -\frac{\underline{c}_{0}}{4}M_{n}^{2} \right\} = 0    
\end{align*}
and leads to the conclusion that $P_{0,YX|W}^{(n)}(A_{n,1}^{c}) \longrightarrow 0$ for $P_{0,W}^{\infty}$-almost every fixed realization $\{w_{i}\}_{i \geq 1}$ of $\{W_{i}\}_{i \geq 1}$ as $n\rightarrow \infty$, and, as a result, the complement rule implies $P_{0,YX|W}^{(n)}(A_{n,1}) \rightarrow 1$ as $n\rightarrow \infty$ conditionally given $P_{0,W}^{\infty}$-almost every realization $\{w_{i}\}_{i \geq 1}$ of $\{W_{i}\}_{i \geq 1}$ as $n\rightarrow \infty$.

\textit{Step 3.} Let $A_{n,2}$ be the event that
\begin{align*}
    \inf_{m \in B_{n,\mathcal{M}}(m_{0},D\delta_{n})\cap \mathcal{M}_{n}}\int_{\mathcal{B}}\exp\left(\frac{1}{\sqrt{n}}\tilde{\ell}_{n}(\beta_{0},m)\sqrt{n}(\beta-\beta_{0})- \frac{1}{2}n(\beta-\beta_{0})^{2}\tilde{I}_{n}(m)\right)d\Pi_{\mathcal{B}}(\beta) \geq \exp\left(-\underline{c}_{0}M_{n}^{2}/{8}\right).
\end{align*}
I want to show that $P_{0,YX|W}^{(n)}(A_{n,2}^{c}) \rightarrow 0$ for $P_{0,W}^{\infty}$-almost every fixed realization $\{w_{i}\}_{i \geq 1}$ of $\{W_{i}\}_{i \geq 1}$ as $n\rightarrow \infty$ because, combined with the result from \textit{Step 2}, it is sufficient to conclude the result of the lemma because it implies 
\begin{align*}
\sup_{m \in \mathcal{M}_{n} \cap B_{n,\mathcal{M}}(m_{0},D\delta_{n})}\Pi\left(\beta \in B_{\mathcal{B}}(\beta_{0},M_{n}/\sqrt{n})^{c}|\{(Y_{i},X_{i},w_{i})\}_{i=1}^{n},m\right) \leq \exp\left(-\frac{\underline{c}_{0}}{8}M_{n}^{2}\right)
\end{align*}
with $P_{0,YX|W}^{(n)}$-probability approaching one for $P_{0,W}^{\infty}$-almost every fixed realization $\{w_{i}\}_{i \geq 1}$ of $\{W_{i}\}_{i \geq 1}$, as $n\rightarrow \infty$
as $n\rightarrow \infty$. Let $C = \underline{c}_{0}/8\overline{c}_{0}$ with $0<\underline{c}_{0}<\overline{c}_{0}<\infty$ defined as in Part 3 of Lemma 1. For this step, I elect to work in the local parametrization $s = \sqrt{n}(\beta-\beta_{0})$. Consequently, I let $\Pi_{n}$ be the induced prior for $s $ (i.e., the pushforward of $\Pi_{\mathcal{B}}$ through the map $\beta \mapsto \sqrt{n}(\beta-\beta_{0})$), let $\mathcal{S}$ be the support of $\Pi_{n}$, and let $\mathcal{S}_{C} = \{s: |s| \leq \sqrt{C}\}$. Under Assumption 2, for large $n$, the induced prior $\Pi_{n}$ admits a Lebesgue density $\pi_{n}$ such that $\inf_{s \in \mathcal{S}_{C}}\pi_{n}(s) \geq \tilde{C}$ for some large constant $\tilde{C}>0$. Consequently, I apply a change of variables and conclude that for $n$ large,
\begin{align*}
 &\inf_{m \in \mathcal{M}_{n} \cap B_{n,\mathcal{M}}(m_{0},D\delta_{n})}\int_{\mathcal{B}}\exp\left(\frac{1}{\sqrt{n}}\tilde{\ell}_{n}(\beta_{0},m)\sqrt{n}(\beta-\beta_{0})- \frac{1}{2}n(\beta-\beta_{0})^{2}\tilde{I}_{n}(m)\right)d\Pi_{\mathcal{B}}(\beta) \\
  &\inf_{m \in \mathcal{M}_{n} \cap B_{n,\mathcal{M}}(m_{0},D\delta_{n})}\int_{\mathcal{S}}\exp\left(\frac{1}{\sqrt{n}}\tilde{\ell}_{n}(\beta_{0},m)s - \frac{1}{2}s^{2}\tilde{I}_{n}(m)\right)d\Pi_{n}(s) \\
 &\geq \tilde{C}\lambda(\mathcal{S}_{C})\inf_{m \in \mathcal{M}_{n} \cap B_{n,\mathcal{M}}(m_{0},D\delta_{n})} \int\exp\left(\frac{1}{\sqrt{n}}\tilde{\ell}_{n}(\beta_{0},m)s - \frac{1}{2}s^{2}\tilde{I}_{n}(m)\right)\pi_{\mathcal{S}_{C}}(s)ds,
\end{align*}
where $\lambda$ is the Lebesgue measure on $\mathbb{R}$ and $\pi_{\mathcal{S}_{C}}(s) = \mathbf{1}\{s \in \mathcal{S}_{C}\}/\lambda(\mathcal{S}_{C})$ is the density of a uniform distribution on $\mathcal{S}_{C}$. Applying Part 3 of Lemma 1, it follows that, for $P_{0,W}^{\infty}$-almost every fixed realization $\{w_{i}\}_{i \geq 1}$ of $\{W_{i}\}_{i \geq 1}$, the following holds:
\begin{align*}
P_{0,YX|W}^{(n)}\left(\sup_{m \in \mathcal{M}_{n} \cap B_{\mathcal{M},\infty}(m_{0},D\delta_{n})}\tilde{I}_{n}(m) \leq \overline{c}_{0}M_{n}^{2}\right) \rightarrow 1    
\end{align*} 
for any $M_{n}\rightarrow \infty$ as $n\rightarrow \infty$. Using that $C = \underline{c}_{0}/8\overline{c}_{0}$, it then follows that, for $P_{0,W}^{\infty}$-almost every fixed realization $\{w_{i}\}_{i \geq 1}$ of $\{W_{i}\}_{i \geq 1}$, the following is true:
\begin{align*}
-\frac{1}{2}s^{2}\tilde{I}_{n}(m) \geq -\frac{\underline{c}_{0}}{16}M_{n}^{2} \quad \forall \ (s,m) \in \mathcal{S}_{C}\times (\mathcal{M}_{n} \cap B_{n,\mathcal{M}}(m_{0},D\delta_{n}))
\end{align*}
with $P_{0,YX|W}^{(n)}$-probability approaching $1$ as $n\rightarrow \infty$. Consequently, 
\begin{align*}
P_{0,YX|W}^{(n)}\left(A_{n,2}^{c}\right) \leq P_{0,YX|W}^{(n)}(\tilde{A}_{n,2}^{c}) + o(1),
\end{align*}
for $P_{0,W}^{\infty}$-almost every fixed realization $\{w_{i}\}_{i \geq 1}$ of $\{W_{i}\}_{i \geq 1}$ as $n\rightarrow \infty$, where $\tilde{A}_{n,2}$ is the event
\begin{align*}
\inf_{m \in \mathcal{M}_{n} \cap B_{n,\mathcal{M}}(m_{0},D\delta_{n})}\int \exp\left(\frac{1}{\sqrt{n}}\tilde{\ell}_{n}(\beta_{0},m)s\right)\pi_{\mathcal{S}_{C}}(s) ds \geq  \left(\lambda(\mathcal{S}_{C})\tilde{C}\right)^{-1}\exp\left(-\frac{\underline{c}_{0}}{16}M_{n}^{2}\right).
\end{align*}
Applying Jensen's inequality,
\begin{align*}
&\int\exp\left(\frac{1}{\sqrt{n}}\tilde{\ell}_{n}(\beta_{0},m)s\right)\pi_{\mathcal{S}_{C}}(s)ds \geq \exp\left(\frac{1}{\sqrt{n}}\tilde{\ell}_{n}(\beta_{0},m)\int s\pi_{\mathcal{S}_{C}}(s)ds\right)
\end{align*}
for all $m \in \mathcal{M}_{n} \cap B_{n,\mathcal{M}}(m_{0},D\delta_{n})$.
As the infimum is the greatest lower bound, I then obtain
\begin{align*}
    &\inf_{m \in \mathcal{M}_{n} \cap B_{n,\mathcal{M}}(m_{0},D\delta_{n})}\int\exp\left(\frac{1}{\sqrt{n}}\tilde{\ell}_{n}(\beta_{0},m)s\right)\pi_{\mathcal{S}_{C}}(s)ds \\
&\quad \geq \inf_{m \in \mathcal{M}_{n} \cap B_{n,\mathcal{M}}(m_{0},D\delta_{n})}\exp\left(\frac{1}{\sqrt{n}}\tilde{\ell}_{n}(\beta_{0},m)\int s\pi_{\mathcal{S}_{C}}(s)ds\right) \\
&\quad \geq \exp\left(\inf_{m \in \mathcal{M}_{n} \cap B_{n,\mathcal{M}}(m_{0},D\delta_{n})}\frac{1}{\sqrt{n}}\tilde{\ell}_{n}(\beta_{0},m)\int s\pi_{\mathcal{S}_{C}}(s)ds\right) \\
&\quad \geq  \exp\left(\inf_{m \in \mathcal{M}_{n} \cap B_{n,\mathcal{M}}(m_{0},D\delta_{n})}\left(\left(\frac{1}{\sqrt{n}}\tilde{\ell}_{n}(\beta_{0},m)-\frac{1}{\sqrt{n}}\tilde{\ell}_{n}(\beta_{0},m_{0})\right)\int s\pi_{\mathcal{S}_{C}}(s)ds \right)\right) \\
&\quad \quad \times \exp\left( \frac{1}{\sqrt{n}}\tilde{\ell}_{n}(\beta_{0},m_{0})\int s\pi_{\mathcal{S}_{C}}(s)ds \right),   
\end{align*}
where the last inequality adds/subtracts $(1/\sqrt{n})\tilde{\ell}_{n}(\beta_{0},m_{0})$ and uses that the sum of the infima is less than or equal to the infimum of the sum. Since
\begin{align*}
    \left | \int s \pi_{\mathcal{S}_{C}}(s)ds\right| \leq \int |s|\pi_{\mathcal{S}_{C}}(s)ds \leq \sqrt{C}
\end{align*}
because $\pi_{\mathcal{S}_{C}}$ is supported on $\{s: |s| \leq \sqrt{C}\}$ and $\pi_{\mathcal{S}_{C}}$ integrates to $1$ (i.e., it is a probability density function), it follows from Part 1 of Lemma \ref{lem:technical} that
\begin{align*}
    \exp\left(\inf_{m \in \mathcal{M}_{n} \cap B_{n,\mathcal{M}}(m_{0},D\delta_{n})}\left(\left(\frac{1}{\sqrt{n}}\tilde{\ell}_{n}(\beta_{0},m)-\frac{1}{\sqrt{n}}\tilde{\ell}_{n}(\beta_{0},m_{0})\right)\int s\pi_{\mathcal{S}_{C}}(s)ds \right)\right) = 1 + o_{P_{0,YX|W}^{(n)}}(1)
\end{align*}
conditionally given $P_{0,W}^{\infty}$ almost every realization $\{w_{i}\}_{i \geq 1}$ of $\{W_{i}\}_{i \geq 1}$ as $n\rightarrow \infty$. As such, I arrive at the conclusion
\begin{align*}
&P_{0,YX|W}^{(n)}\left(\inf_{m \in \mathcal{M}_{n} \cap B_{n,\mathcal{M}}(m_{0},D\delta_{n})}\int_{\mathcal{B}}\exp(\ell_{n}(\beta,m)-\ell_{n}(\beta_{0},m))d \Pi_{\mathcal{B}}(\beta) < \exp\left(-\frac{\underline{c}_{0}}{8}M_{n}^{2}\right)\right) \\
&\quad \leq P_{0,YX|W}^{(n)}\left(\frac{1}{\sqrt{n}}\tilde{\ell}_{n}(\beta_{0},m_{0})\int s \pi_{\mathcal{S}_{C}}(s)ds< - \log \tilde{C}-\log\lambda(\mathcal{S}_{C}) - \frac{\underline{c}_{0}}{16}M_{n}^{2}\right) + o(1).
\end{align*}
for $P_{0,W}^{\infty}$-almost every fixed realization $\{w_{i}\}_{i \geq 1}$ of $\{W_{i}\}_{i \geq 1}$ as $n\rightarrow \infty$. Since the inequality 
\begin{align*}
-\log \tilde{C}-\log\lambda(\mathcal{S}_{C}) \leq \frac{\underline{c}_{0}}{32}M_{n}^{2}     
\end{align*}
holds for $n$ large, I can further conclude
\begin{align*}
&P_{0,YX|W}^{(n)}\left(A_{n,2}^{c}\right)  \leq P_{0,YX|W}^{(n)}\left(\frac{1}{\sqrt{n}}\tilde{\ell}_{n}(\beta_{0},m_{0})\int s \pi_{\mathcal{S}_{C}}(s)ds <  - \frac{\underline{c}_{0}}{32}M_{n}^{2}\right) + o(1).
\end{align*}
for $P_{0,W}^{\infty}$-almost every fixed realization $\{w_{i}\}_{i \geq 1}$ of $\{W_{i}\}_{i \geq 1}$ as $n\rightarrow \infty$. Next, the event
\begin{align*}
     \frac{1}{\sqrt{n}}\tilde{\ell}_{n}(\beta_{0},m_{0})\int s \pi_{\mathcal{S}_{C}}(s)ds<-\frac{\underline{c}_{0}}{32}M_{n}^{2}
     \end{align*}
     implies
     \begin{align*}
     \left |\frac{1}{\sqrt{n}}\tilde{\ell}_{n}(\beta_{0},m_{0})\int s \pi_{\mathcal{S}_{C}}(s)ds \right | > \frac{\underline{c}_{0}}{32}M_{n}^{2},
\end{align*}
and, as a result, I can apply Chebyshev's inequality to conclude that
\begin{align*}
&P_{0,YX|W}^{(n)}\left(\frac{1}{\sqrt{n}}\tilde{\ell}_{n}(\beta_{0},m_{0})\int s \pi_{\mathcal{S}_{C}}(s)ds<-\frac{\underline{c}_{0}}{32}M_{n}^{2} \right ) \\
&\quad \lesssim\frac{1}{M_{n}^{4}}E_{P_{0,YX|W}^{(n)}}\left|\frac{1}{\sqrt{n}}\tilde{\ell}_{n}(\beta_{0},m_{0})\right|^{2}\left |   \int s \pi_{\mathcal{S}_{C}}(s)ds \right |^{2} \\
&\quad \lesssim \frac{1}{M_{n}^{4}}E_{P_{0,YX|W}^{(n)}}\left|\frac{1}{\sqrt{n}}\tilde{\ell}_{n}(\beta_{0},m_{0})\right|^{2}
\end{align*}
with the second inequality using Jensen's inequality and that $\sup_{s \in \mathcal{S}_{C}}|s|^{2} \leq C.$ Since the random variable $\tilde{\ell}_{n}(\beta_{0},m_{0})$ satisfies $E_{P_{0,YX|W}^{(n)}}[\tilde{\ell}_{n}(\beta_{0},m_{0})] = 0$, it follows that 
\begin{align*}
E_{P_{0,YX|W}^{(n)}}\left|\frac{1}{\sqrt{n}}\tilde{\ell}_{n}(\beta_{0},m_{0}) \right|^{2} = Var_{P_{0,YX|W}^{(n)}}\left(\frac{1}{\sqrt{n}}\tilde{\ell}_{n}(\beta_{0},m_{0})\right) = \frac{1}{\sigma_{01}^{2}}\frac{1}{n}\sum_{i=1}^{n}E_{P_{0,YX|W}}[\varepsilon_{2}^{2}|W=w_{i}] =O(1) 
\end{align*} 
for $P_{0,W}^{\infty}$-almost every fixed realization $\{w_{i}\}_{i \geq 1}$ of $\{W_{i}\}_{i \geq 1}$, where the last equality reflects Part 2 of Assumption 1 and the Strong Law of Large Numbers. Consequently, I obtain
\begin{align*}
    P_{0,YX|W}^{(n)}\left(\frac{1}{\sqrt{n}}\tilde{\ell}_{n}(\beta_{0},m_{0})\int s \pi_{\mathcal{S}_{C}}(s)ds<-\frac{\underline{c}_{0}}{32}M_{n}^{2} \right ) = O(M_{n}^{-4}), 
\end{align*}
as $n\rightarrow \infty$, and, as a result, $P_{0,YX|W}^{(n)}(A_{n,2}^{c}) \rightarrow 0$ for $P_{0,W}^{\infty}$-almost every fixed realization $\{w_{i}\}_{i \geq 1}$ of $\{W_{i}\}_{i \geq 1}$ as $n\rightarrow \infty$ because $M_{n}\rightarrow \infty$ as $n\rightarrow \infty$. Since I chose the sequence $\{M_{n}\}_{n=1}^{\infty}$ with $M_{n}\rightarrow \infty$ arbitrarily, the argument holds for any such choice and I conclude the proof.
\end{proof}
\section{Proof of Propositions \ref{prop:suffgeneral}, \ref{prop:suffgeneralsieve}, \ref{prop:uniform}, \ref{prop:materngp}, and \ref{prop:materninformationloss}.}
\subsection{Proof of Proposition \ref{prop:suffgeneral} and Proposition \ref{prop:suffgeneralsieve}}\label{ap:leastsquares}
\begin{proof}[Proof of Proposition \ref{prop:suffgeneral}]
The proof has three steps. First, I show that it suffices to study the uniform behavior of the conditional posterior $\Pi(m \in B_{n,\mathcal{M}}(m_{0},D\delta_{n})^{c}|\{(Y_{i},X_{i},w_{i}')'\}_{i=1}^{n},\beta)$, where the uniformity holds over $\mathcal{B}$. Second, I use the first assumption of the proposition to derive a lower bound for the normalizing constant of the conditional posterior that holds uniformly over $\mathcal{B}$. Third, I combine the second assumption of the proposition with a `peeling' argument to establish the proposition.

\textit{Step 1.} Applying the law of total probability and using that probability measures are bounded from above by one, I obtain.
\begin{align*}
    &\Pi\left(m \in B_{n,\mathcal{M}}(m_{0},D\delta_{n})^{c}\middle |\{(Y_{i},X_{i},w_{i}')'\}_{i =1}^{n}\right) \\
    &\quad = \int_{\mathcal{B}}\Pi\left(m \in B_{n,\mathcal{M}}(m_{0},D\delta_{n})^{c}\middle |\{(Y_{i},X_{i},w_{i}')'\}_{i =1}^{n},\beta\right)d\Pi(\beta|\{(Y_{i},X_{i},w_{i}')'\}_{i=1}^{n}) \\
    &\quad \leq \sup_{\beta \in \mathcal{B}}\Pi\left(m \in B_{n,\mathcal{M}}(m_{0},D\delta_{n})^{c}\middle |\{(Y_{i},X_{i},w_{i}')'\}_{i =1}^{n},\beta\right).
\end{align*}
Consequently, it suffices to show that
\begin{align}\label{eq:keyobject}
    \sup_{\beta \in \mathcal{B}}\Pi\left(m \in B_{n,\mathcal{M}}(m_{0},D\delta_{n})^{c}\middle |\{(Y_{i},X_{i},w_{i}')'\}_{i =1}^{n},\beta\right) = o_{P_{0,YX|W}^{(n)}}(1).
\end{align}
for $P_{0,W}^{\infty}$ almost-every fixed realization $\{w_{i}\}_{i \geq 1}$ of $\{W_{i}\}_{i \geq 1}$. This is the purpose of the remaining steps.

\textit{Step 2.} I start by showing that the normalizing constant of the conditional posterior satisfies
\begin{align}\label{eq:keyeventdenominator}
     P_{0,YX|W}^{(n)}\left(\inf_{\beta \in \mathcal{B}}\int_{\mathcal{M}}\exp(\ell_{n}(\beta,m)- \ell_{n} (\beta,m_{0}))d \Pi_{\mathcal{M}}(m)\gtrsim \exp(-Cn\delta_{n}^{2})\right) \longrightarrow 1
\end{align}
for $P_{0,W}^{\infty}$-almost every fixed realization $\{w_{i}\}_{i \geq 1}$ of $\{W_{i}\}_{i \geq 1}$ as $n\rightarrow \infty$, where $C > 0$ is a constant. I first note that the log-(quasi)likelihood ratio process satisfies
\begin{align*}
    \ell_{n}(\beta,m)-\ell_{n}(\beta,m_{0}) &= \sum_{i=1}^{n}(m(w_{i})-m_{0}(w_{i}))'V^{-1}(\beta)(Z_{i}-m_{0}(w_{i})) \\
    &\quad - \frac{1}{2}\sum_{i=1}^{n}(m(w_{i})-m_{0}(w_{i}))'V^{-1}(\beta)(m(w_{i})-m_{0}(w_{i}))
\end{align*}
Since $B_{n,\mathcal{M}}(m_{0},\delta_{n}) \subseteq \mathcal{M}$, I can use the nonnegativity of the integrand and the expression for $\ell_{n}(\beta,m)-\ell_{n}(\beta,m_{0})$ to conclude that 
\begin{align*}
    &\int_{\mathcal{M}}\exp(\ell_{n}(\beta,m)- \ell_{n} (\beta,m_{0}))d \Pi_{\mathcal{M}}(m) \\
    &\quad \geq \int_{B_{n,\mathcal{M}}(m_{0},\delta_{n})} \exp(\ell_{n}(\beta,m)- \ell_{n} (\beta,m_{0}))d\Pi_{\mathcal{M}}(m) \\
    &\quad \geq \exp\left(-\frac{n\delta_{n}^{2}}{2}\sup_{\beta \in \mathcal{B}}\lambda_{max}(V^{-1}(\beta))\right)\\
    &\quad\quad \times \int_{B_{n,\mathcal{M}}(m_{0},\delta_{n})}\exp\left(\sum_{i=1}^{n}(m(w_{i})-m_{0}(w_{i}))'V^{-1}(\beta)\varepsilon_{i}\right)d\Pi_{\mathcal{M}}(m) \\
    &\quad =\exp\left(-\frac{n\delta_{n}^{2}}{2}\sup_{\beta \in \mathcal{B}}\lambda_{max}(V^{-1}(\beta))\right)\Pi_{\mathcal{M}}(B_{n,\mathcal{M}}(m_{0},\delta_{n}))\\
    &\quad \quad \times \int_{B_{n,\mathcal{M}}(m_{0},\delta_{n})}\exp\left(\sum_{i=1}^{n}(m(w_{i})-m_{0}(w_{i}))'V^{-1}(\beta)\varepsilon_{i}\right)d\Pi_{\mathcal{M},\delta_{n}}(m),
\end{align*}
where $\Pi_{\mathcal{M},\delta_{n}}$ is the renormalized restriction of $\Pi_{\mathcal{M}}$ to the event $B_{n,\mathcal{M}}(m_{0},\delta_{n})$. Note that the compactness of $\mathcal{B}$, the continuity of $\beta \mapsto V(\beta)$, and the positive-definiteness of $V(\beta)$ guarantees $\sup_{\beta \in \mathcal{B}}\lambda_{max}(V^{-1}(\beta))< \infty$. I now want to argue that
\begin{align}\label{eq:animportant}
    P_{0,YX|W}^{(n)}\left(\int_{B_{n,\mathcal{M}}(m_{0},\delta_{n})}\exp\left(\sum_{i=1}^{n}(m(w_{i})-m_{0}(w_{i}))'V^{-1}(\beta)\varepsilon_{i}\right)d\Pi_{\mathcal{M},\delta_{n}}(m) \geq \exp(-n\delta_{n}^{2})\right)\longrightarrow 1
\end{align}
for $P_{0,W}^{\infty}$-almost every fixed realization $\{w_{i}\}_{i \geq 1}$ of $\{W_{i}\}_{i \geq 1}$. Indeed, if this holds, then
\begin{align*}
 P_{0,YX|W}^{(n)}\left( \int_{\mathcal{M}}\exp(\ell_{n}(\beta,m)- \ell_{n} (\beta,m_{0}))d \Pi_{\mathcal{M}}(m) \geq \exp\left(-\bar{C}n\delta_{n}^{2}\right)\Pi_{\mathcal{M}}(B_{n,\mathcal{M}}(m_{0},\delta_{n}))\right)\longrightarrow 1   
\end{align*}
conditionally given $P_{0,W}^{\infty}$-almost every realization $\{w_{i}\}_{i \geq 1}$ of $\{W_{i}\}_{i \geq 1}$ as $n\rightarrow \infty$ for the choice of constant $\bar{C} =\frac{1}{2}\sup_{\beta \in \mathcal{B}}\lambda_{max}(V^{-1}(\beta))+ 1>0$. This implies (\ref{eq:keyeventdenominator}) following an application of Condition 1 in the statement of the proposition. To show (\ref{eq:animportant}), Jensen's inequality reveals
\begin{align*}
&\log \int_{\mathcal{B}_{n,\mathcal{M}}(m_{0},\delta_{n})}\exp\left(\sum_{i=1}^{n}(m(w_{i})-m_{0}(w_{i}))'V^{-1}(\beta)\varepsilon_{i}\right)d \Pi_{\mathcal{M},\delta_{n}}(m)    \\
&\geq \int_{\mathcal{B}_{n,\mathcal{M}}(m_{0},\delta_{n})}\left(\sum_{i=1}^{n}(m(w_{i})-m_{0}(w_{i}))'V^{-1}(\beta)\varepsilon_{i} \right)d\Pi_{\mathcal{M},\delta_{n}}(m)
\end{align*}
Consequently,
\begin{align*}
  &P_{0,YX|W}^{(n)}\left(\log \int_{\mathcal{B}_{n,\mathcal{M}}(m_{0},\delta_{n})}\exp\left(\sum_{i=1}^{n}(m(w_{i})-m_{0}(w_{i}))'V^{-1}(\beta)\varepsilon_{i}\right)d \Pi_{\mathcal{M},\delta_{n}}(m) < -n\delta_{n}^{2} \right)  \\
  &\quad \leq P_{0,YX|W}^{(n)}\left(\int_{\mathcal{B}_{n,\mathcal{M}}(m_{0},\delta_{n})}\left(\sum_{i=1}^{n}(m(w_{i})-m_{0}(w_{i}))'V^{-1}(\beta)\varepsilon_{i}\right)d \Pi_{\mathcal{M},\delta_{n}}(m) < -n\delta_{n}^{2} \right) \\
  &\quad \leq P_{0,YX|W}^{(n)}\left(\left|\int_{\mathcal{B}_{n,\mathcal{M}}(m_{0},\delta_{n})}\left(\sum_{i=1}^{n}(m(w_{i})-m_{0}(w_{i}))'V^{-1}(\beta)\varepsilon_{i}\right)d \Pi_{\mathcal{M},\delta_{n}}(m)\right| > n\delta_{n}^{2} \right) \\
  &\quad \leq \frac{1}{n^{2}\delta_{n}^{4}}E_{P_{0,YX|W}^{(n)}}\left|\int_{\mathcal{B}_{n,\mathcal{M}}(m_{0},\delta_{n})}\left(\sum_{i=1}^{n}(m(w_{i})-m_{0}(w_{i}))'V^{-1}(\beta)\varepsilon_{i}\right)d \Pi_{\mathcal{M},\delta_{n}}(m)\right|^{2},
\end{align*}
where the last inequality is Chebyshev's inequality. Applying Jensen's inequality and Fubini's theorem, I know that
\begin{align*}
    &E_{P_{0,YX|W}^{(n)}}\left|\int_{\mathcal{B}_{n,\mathcal{M}}(m_{0},\delta_{n})}\sum_{i=1}^{n}(m(w_{i})-m_{0}(w_{i}))'V^{-1}(\beta)\varepsilon_{i} d\Pi_{\mathcal{M},\delta_{n}}(m)\right|^{2}  \\
    &\quad \leq \int_{\mathcal{B}_{n,\mathcal{M}}(m_{0},\delta_{n})}\sum_{i=1}^{n}(m(w_{i})-m_{0}(w_{i}))'V^{-1}(\beta)E_{P_{0,YX|W}}(\varepsilon \varepsilon'|W=w_{i}) V^{-1}(\beta)(m(w_{i})-m_{0}(w_{i})) \\
    &\quad \lesssim n \delta_{n}^{2}, 
\end{align*}
for $P_{0,W}^{\infty}$-almost every sequence $\{w_{i}\}_{i \geq 1}$ of $\{W_{i}\}_{i \geq 1}$, where the last inequality uses the maximum eigenvalue assumption $\sup_{w \in \mathcal{W}}\lambda_{max}(E(\varepsilon\varepsilon'|W=w)) < \infty$ as well as the fact that $\sup_{\beta \in \mathcal{B}}\lambda_{max}(V^{-1}(\beta)) < \infty$ by the compactness of $\mathcal{B}$, the continuity of $\beta \mapsto V^{-1}(\beta)$, and the positive definiteness of $V(\beta)$. Since $n\delta_{n}^{2}\rightarrow \infty$, this implies that
\begin{align*}
    P_{0,YX|W}^{(n)}\left(\int_{\mathcal{B}_{n,\mathcal{M}}(m_{0},\delta_{n})}\exp\left(\sum_{i=1}^{n}(m(w_{i})-m_{0}(w_{i}))'V^{-1}(\beta)\varepsilon_{i}\right)d \Pi_{\mathcal{M},\delta_{n}}(m) < \exp(-n\delta_{n}^{2}) \right) = o(1)
\end{align*}
for $P_{0,W}^{\infty}$-almost every fixed realization $\{w_{i}\}_{i \geq 1}$ of $\{W_{i}\}_{i \geq 1}$ as $n\rightarrow \infty$. This verifies (\ref{eq:animportant}).

\textit{Step 3.} I study the numerator 
\begin{align*}
 \int_{B_{n,\mathcal{M}}(m_{0},D\delta_{n})^{c}}\exp(\ell_{n}(\beta,m)-\ell_{n}(\beta,m_{0}))d \Pi_{\mathcal{M}}(m).   
\end{align*}
Since 
\begin{align*}
B_{n,\mathcal{M}}(m_{0},D\delta_{n})^{c}= \bigcup_{j=0}^{\infty}A_{n,j}, \quad A_{n,j} = \{D2^{j}\delta_{n} \leq \max\{||m_{1}-m_{01}||_{n,2},||m_{2}-m_{02}||_{n,2}\} < D2^{j+1}\delta_{n}\},    
\end{align*} 
I can write
\begin{align*}
    &\int_{B_{n,\mathcal{M}}(m_{0},D\delta_{n})^{c}}\exp(\ell_{n}(\beta,m)-\ell_{n}(\beta,m_{0}))d \Pi_{\mathcal{M}}(m)=\sum_{j=0}^{\infty} \int_{A_{n,j}}\exp(\ell_{n}(\beta,m)-\ell_{n}(\beta,m_{0}))d\Pi_{\mathcal{M}}(m)
\end{align*}
Using the expression for $\ell_{n}(\beta,m)$, I know that
\begin{align*}
 \ell_{n}(\beta,m)-\ell_{n}(\beta,m_{0}) \leq  \sum_{i=1}^{n}(m(w_{i})-m_{0}(w_{i}))'V^{-1}(\beta)\varepsilon_{i} - D^{2}2^{2j}n\delta_{n}^{2}\inf_{\beta \in \mathcal{B}}\lambda_{min}(V^{-1}(\beta))   
\end{align*}
for all $(\beta,m) \in \mathcal{B} \times A_{n,j}$ and all $j \geq 0$. Consequently,
\begin{align*}
&\sup_{\beta \in \mathcal{B}}\sum_{j=0}^{\infty} \int_{A_{n,j}}\exp\left(\ell_{n}(\beta,m)-\ell_{n}(\beta,m_{0})\right)d\Pi_{\mathcal{M}}(m) \\
&\leq \sup_{\beta \in \mathcal{B}}\sum_{j=0}^{\infty} \int_{A_{n,j}}\exp\left(\sum_{i=1}^{n}(m(w_{i})-m_{0}(w_{i}))'V^{-1}(\beta)\varepsilon_{i}-\tilde{D}2^{2j}n\delta_{n}^{2}\right)d \Pi_{\mathcal{M}}(m) \\
&\leq \sum_{j=0}^{\infty} \sup_{\beta \in \mathcal{B}}\int_{A_{n,j}}\exp\left(\sqrt{n} \frac{1}{\sqrt{n}}\sum_{i=1}^{n}(m(w_{i})-m_{0}(w_{i}))'V^{-1}(\beta)\varepsilon_{i}-\tilde{D}2^{2j}n\delta_{n}^{2}\right)d \Pi_{\mathcal{M}}(m),
\end{align*}
where $\tilde{D}=\inf_{\beta \in \mathcal{B}}\lambda_{min}(V^{-1}(\beta))D^{2}>0$. Therefore, to complete the proof, it suffices to show that
\begin{align}\label{eq:condition}
    P_{0,YX|W}^{(n)}\left(\sup_{\beta \in \mathcal{B}}\sup_{m \in B_{n,\mathcal{M}}(m_{0},D2^{j+1}\delta_{n})}\left|\frac{1}{\sqrt{n}}\sum_{i=1}^{n}(m(w_{i})-m_{0}(w_{i}))'V^{-1}(\beta)\varepsilon_{i}\right| \leq \tilde{D}2^{2j-1}\sqrt{n}\delta_{n}^{2} \ \forall \ j\right) \rightarrow 1 
\end{align}
for $P_{0,W}^{\infty}$-almost every fixed realization $\{w_{i}\}_{i \geq 1}$ of $\{W_{i}\}_{i \geq 1}$ as $n\rightarrow \infty$. Indeed, it implies I can condition on the event given in the probability in (\ref{eq:condition}) and conclude that 
\begin{align*}
    &\sum_{j=0}^{\infty} \sup_{\beta \in \mathcal{B}}\int_{A_{n,j}}\exp\left(\sqrt{n} \frac{1}{\sqrt{n}}\sum_{i=1}^{n}(m(w_{i})-m_{0}(w_{i}))'V^{-1}(\beta)\varepsilon_{i}-\tilde{D}2^{2j}n\delta_{n}^{2}\right)d \Pi_{\mathcal{M}}(m) \\
    &\quad \leq \sum_{j=0}^{\infty}\exp\left(\tilde{D}2^{2j-1}n\delta_{n}^{2}-\tilde{D}2^{2j}n\delta_{n}^{2}\right) \\
    &\quad = \sum_{j=0}^{\infty}\exp\left(-\frac{\tilde{D}}{2}n\delta_{n}^{2}2^{2j}\right),
\end{align*}
which, when combined with (\ref{eq:keyeventdenominator}), implies that 
\begin{align*}
\sup_{\beta \in \mathcal{B}}\Pi(B_{n,\mathcal{M}}(m_{0},D\delta_{n})^{c}|\{(Y_{i},X_{i},w_{i}')'\}_{i=1}^{n}) \lesssim \sum_{j=0}^{\infty}\exp\left(\left(C_{1}-\frac{\tilde{D}}{2}2^{2j}\right)n\delta_{n}^{2}\right)
\end{align*}
with $P_{0,YX|W}^{(n)}$-probability approaching one, and, as a result, the claim of the proposition holds if $D > 0$ is sufficiently large because $n\delta_{n}^{2}\rightarrow \infty$. To verify (\ref{eq:condition}), I apply the union bound, Markov's inequality, and the assumptions on $\delta \mapsto \omega_{n}(\delta)$ to obtain that, for $D>0$ large, the following holds:
\begin{align*}
   &P_{0,YX|W}^{(n)}\left(\ \exists\ j \ s.t. \sup_{\beta \in \mathcal{B}}\sup_{m \in B_{n,\mathcal{M}}(m_{0},D2^{j+1}\delta_{n})}\left|\frac{1}{\sqrt{n}}\sum_{i=1}^{n}(m(w_{i})-m_{0}(w_{i}))'V^{-1}(\beta)\varepsilon_{i}\right| > \tilde{D}2^{2j-1}\sqrt{n}\delta_{n}^{2}\right) \\
   &\quad \leq \sum_{j=0}^{\infty}P_{0,YX|W}^{(n)}\left(\sup_{\beta \in \mathcal{B}}\sup_{m \in B_{n,\mathcal{M}}(m_{0},D2^{j+1}\delta_{n})}\left|\frac{1}{\sqrt{n}}\sum_{i=1}^{n}(m(w_{i})-m_{0}(w_{i}))'V^{-1}(\beta)\varepsilon_{i}\right| > \tilde{D}2^{2j-1}\sqrt{n}\delta_{n}^{2}\right) \\
   &\quad \leq \sum_{j=0}^{\infty}\frac{1}{\tilde{D}2^{2j-1}\sqrt{n}\delta_{n}^{2}}E_{P_{0,YX|W}^{(n)}}\left[\sup_{\beta \in \mathcal{B}}\sup_{m\in B_{n,\mathcal{M}}(m_{0},D2^{j+1}\delta_{n})}\left|\frac{1}{\sqrt{n}}\sum_{i=1}^{n}(m(w_{i})-m_{0}(w_{i}))'V^{-1}(\beta)\varepsilon_{i}\right|\right] \\
   &\quad \leq \sum_{j=0}^{\infty}\frac{\omega_{n}(D2^{j+1}\delta_{n})}{\tilde{D}2^{2j-1}\sqrt{n}\delta_{n}^{2}} \\
   &\quad \lesssim D^{\upsilon-2}\frac{\omega_{n}(\delta_{n})}{\sqrt{n}\delta_{n}^{2}}\sum_{j=0}^{\infty}2^{(\upsilon-2)j}.
\end{align*}
Since it is assumed that $\omega_{n}(\delta_{n}) \leq \sqrt{n}\delta_{n}^{2}$, I am then able to conclude that
\begin{align*}
P_{0,YX|W}^{(n)}\left(\ \exists\ j \ s.t. \sup_{\beta \in \mathcal{B}}\sup_{m \in B_{n,\mathcal{M}}(m_{0},D2^{j+1}\delta_{n})}\left|\frac{1}{\sqrt{n}}\sum_{i=1}^{n}(m(w_{i})-m_{0}(w_{i}))'V^{-1}(\beta)\varepsilon_{i}\right| > \tilde{D}2^{2j-1}\sqrt{n}\delta_{n}^{2}\right)= o(1)
\end{align*}
for $P_{0,W}^{\infty}$-almost every fixed realization $\{w_{i}\}_{i \geq 1}$ of $\{W_{i}\}_{i \geq 1}$ as $n\rightarrow \infty$ provided that $D \rightarrow \infty$. Hence, by setting $D > 0$ large (i.e., $D = D_{n}$ with $D_{n}\rightarrow \infty$ arbitrarily slowly as $n\rightarrow \infty$), I verify (\ref{eq:condition}).
\end{proof}
\begin{proof}[Proof of Proposition \ref{prop:suffgeneralsieve}]
Since $\Pi(\mathcal{M}_{n}^{c}|\{(Y_{i},X_{i},w_{i}')'\}_{i=1}^{n}) \rightarrow 0$ in $P_{0,YX|W}^{(n)}$-probability for $P_{0,W}^{\infty}$-almost every fixed realization $\{w_{i}\}_{i \geq 1}$ of $\{W_{i}\}_{i \geq 1}$ as $n\rightarrow \infty$, the law of total probability reveals that
\begin{align*}
\Pi\left(m \in B_{n,\mathcal{M}}(m_{0},D\delta_{n})^{c}\middle |\{(Y_{i},X_{i},w_{i}')'\}_{i =1}^{n}\right)  = \Pi\left(m \in  \mathcal{M}_{n} \cap B_{n,\mathcal{M}}(m_{0},D\delta_{n})^{c}\middle |\{(Y_{i},X_{i},w_{i}')'\}_{i =1}^{n}\right)  + o_{P_{0,YX}^{(n)}}(1)  
\end{align*}
for $P_{0,W}^{\infty}$ almost-every fixed realization $\{w_{i}\}_{i \geq 1}$ of $\{W_{i}\}_{i \geq 1}$ as $n\rightarrow \infty$. Consequently, the claim of the proposition holds by repeating the steps of Proposition \ref{prop:suffgeneral} with the only modification being that Step 3 uses $\mathcal{M}_{n} \cap B_{n,\mathcal{M}}(m_{0},D\delta_{n})^{c}$ instead of $B_{n,\mathcal{M}}(m_{0},D\delta_{n})^{c}$.
\end{proof}
\subsection{Proof of Proposition \ref{prop:uniform}}
\begin{proof}[Proof of Proposition \ref{prop:uniform}]
It suffices to verify Assumptions \ref{as:consistency} and \ref{as:emp_process}. Step 1 and Step 2 verify the conditions underpinning Proposition \ref{prop:suffgeneral} so that Assumption \ref{as:consistency} is verified. Step 3 verifies Assumption \ref{as:emp_process}.

\textit{Step 1.} I first verify Part 1 of Proposition \ref{prop:suffgeneral} for the sequence $\delta_{n} = \max\{\delta_{n,1},\delta_{n,2}\}$, where 
\begin{align*}
    \delta_{n,j} = \left(\frac{\log n}{n}\right)^{\frac{\alpha_{0,j}}{2\alpha_{0,j}+1}}
\end{align*}
for each $j \in \{1,2\}$. To that end, it suffices to show that, for each $j \in \{1,2\}$, there is a constant $C_{1,j}> 0$ such that
\begin{align}\label{eq:uniformsmallball}
\Pi_{\mathcal{M}_{j}}(||m_{j}-m_{0,j}||_{\infty}< \delta_{n,j}) \geq \exp(-C_{1,j}n\delta_{n,j}^{2}).    
\end{align} 
Indeed, $||\cdot||_{n,2} \leq ||\cdot||_{\infty}$ and $\Pi_{\mathcal{M}}= \Pi_{\mathcal{M}_{1}}\otimes \Pi_{\mathcal{M}_{2}}$ implies that if (\ref{eq:uniformsmallball}) holds, then, for $C_{1}=C_{1,1}+C_{1,2}$,
\begin{align*}
    \Pi_{\mathcal{M}}(m \in B_{n,\mathcal{M}}(m_{0},\delta_{n})) \geq \exp\left(-C_{1}n\delta_{n}^{2}\right)
\end{align*}
for $P_{0,W}^{\infty}$-almost every fixed realization $\{w_{i}\}_{i \geq 1}$ of $\{W_{i}\}_{i \geq 1}$, which is Part 1 of Proposition \ref{prop:suffgeneral}. To verify (\ref{eq:uniformsmallball}), I follow the argument of Proposition 1 in \cite{gine2011rates} with appropriate modifications. Let $m_{0,lk,j}$ be the coefficients in the wavelet series representation of $m_{0,j}$. Since the supremum norm of the difference $m_{0,j}-m_{j}$ satisfies $||m_{0,j}-m_{j}||_{\infty} \leq \sum_{l = 0}^{\infty}2^{l/2}\max_{0 \leq k \leq 2^{l}-1}|m_{0,lk,j}- 2^{-l(\alpha_{0,j}+1/2)}\nu_{lk,j}|$, the following inequality holds
\begin{align*}
    \Pi_{\mathcal{M}_{j}}(||m_{j}-m_{0,j}||_{\infty} < \delta_{n,j}) \geq \Pi_{\mathcal{M}_{j}}\left(\sum_{l=0}^{\infty}2^{l/2}\max_{0 \leq k \leq 2^{l}-1}|2^{-l(\alpha_{0,j}+1/2)}\nu_{lk,j}-m_{0,lk,j}| < \delta_{n,j}\right)
\end{align*}
Next, I write $\tilde{m}_{0,lk,j} = 2^{l(\alpha_{0j}+1/2)}m_{0,lk,j}$ and note that $|\tilde{m}_{0,lk,j}|\leq ||m_{0,j}||_{\infty,\infty,\alpha_{0,j}}  \leq M$ to obtain that
\begin{align*}
    &\sum_{l = 0}^{\infty}2^{l/2}\max_{0 \leq k \leq 2^{l}-1}|2^{-l(\alpha_{0,j}+1/2)}\nu_{lk,j}-m_{0,lk,j}| \\
    &\quad = \sum_{l=0}^{L}\max_{0\leq k \leq 2^{l}-1}2^{-l\alpha_{0,j}}|\nu_{lk,j}-\tilde{m}_{0,lk,j}|+\sum_{l=L+1}^{\infty}2^{-l\alpha_{0,j}}\max_{0 \leq k \leq 2^{l}-1}|\nu_{lk,j}-\tilde{m}_{0,lk,j}| \\
    &\quad \leq\sum_{l=0}^{L}\max_{0\leq k \leq 2^{l}-1}2^{-l\alpha_{0,j}}|\nu_{lk,j}-\tilde{m}_{0,lk,j}| + 2M\sum_{l=L+1}^{\infty}2^{-l\alpha_{0,j}}\\
    &\quad \leq \sum_{l=0}^{L}\max_{0\leq k \leq 2^{l}-1}2^{-l\alpha_{0,j}}|\nu_{lk,j}-\tilde{m}_{0,lk,j}| + 2^{-L\alpha_{0,j}}c_{1,j}
\end{align*}
where $c_{1,j}> 0$ is a constant that depends on $M$ and $\alpha_{0,j}$. Consequently,
\begin{align*}
 \sum_{l=0}^{L}\max_{0 \leq k \leq 2^{l}-1}2^{-l\alpha_{0,j}}|\nu_{lk,j}-\tilde{m}_{0,lk,j}| < \delta_{n,j}-2^{-L\alpha_{0,j}}c_{1,j} \implies   ||m_{j}-m_{0,j}||_{\infty} < \delta_{n,j}. 
\end{align*}
Now, if $L$ is chosen such that $2^{L}$ is of the order $(n/\log n)^{1/(2\alpha_{0,j}+1)}$, then I can use the above to conclude that, for $n$ large, there exists constants $c_{2,j},c_{3,j} > 0$ such that 
\begin{align*}
\Pi_{\mathcal{M}_{j}}\left(||m_{j}-m_{0,j}||_{\infty}< \delta_{n,j}\right)  &\geq \Pi_{\mathcal{M}_{j}}\left(\sum_{l=0}^{L}\max_{0 \leq k \leq 2^{l}-1}2^{-l\alpha_{0,j}}|\nu_{lk,j}-\tilde{m}_{0,lk,j}|< c_{2,j}\delta_{n,j}\right) \\
&\geq \Pi_{\mathcal{M}_{j}}\left(\max_{0 \leq l \leq L}\max_{0 \leq k \leq 2^{l}-1}|\nu_{lk,j}-\tilde{m}_{0,lk,j}|< c_{3,j}\delta_{n,j}\right),
\end{align*}
Let $S(L) = \sum_{l=0}^{L}\sum_{k=0}^{2^{l}-1}1 \leq 2 \cdot 2^{L}$. Since $\nu_{lk,j}$ are i.i.d $U(-M,M)$ random variables and $2^{L}$ is of the order $\delta_{n,j}^{-(1/\alpha_{0,j})}$, it follows that
\begin{align*}
    \Pi_{\mathcal{M}_{j}}\left(\max_{0 \leq l \leq L}\max_{0 \leq k \leq 2^{l}-1}|\nu_{lk,j}-\tilde{m}_{0,lk,j}|< c_{3,j}\delta_{n,j}\right) &= \left(\frac{c_{3,j}\delta_{n,j}}{2M}\right)^{S(L)} \\
    &\geq \exp(-c_{4,j}\log(1/\delta_{n,j})\delta_{n,j}^{-\frac{1}{\alpha_{0,j}}}) \\
    &\geq \exp(-C_{1,j}n\delta_{n,j}^{2}) 
\end{align*}
where $c_{4,j}, C_{1,j} > 0$ are constants. This verifies Part 1 of Assumption \ref{prop:suffgeneral}. Since $\alpha_{0,1},\alpha_{0,2} > 1/2$, $\delta_{n} = o(n^{-1/4})$.

\textit{Step 2.} I verify Part 2 of Proposition \ref{prop:suffgeneral}. Since $\mathcal{B}$ is compact and $\beta \mapsto V^{-1}(\beta)$ is continuous, an application of the triangle inequality reveals that
\begin{align*}
&E_{P_{0,YX|W}^{(n)}}\left[\sup_{(\beta,m) \in \mathcal{B} \times B_{\mathcal{M}}(m_{0},\delta)}\left|\frac{1}{\sqrt{n}}\sum_{i=1}^{n}(m(w_{i})-m_{0}(w_{i}))'V^{-1}(\beta)\varepsilon_{i}\right|\right] \\
&\quad \lesssim 
\sum_{(j_{1},j_{2}) \in \{1,2\}^{2}}E_{P_{0,YX|W}^{(n)}}\left[\sup_{m_{j_{1}} \in \mathcal{M}_{j_{1}}: ||m_{j_{1}}-m_{0,j_{1}}||_{n,2} \leq \delta}\left|\frac{1}{\sqrt{n}}\sum_{i=1}^{n}(m_{j_{1}}(w_{i})-m_{0,j_{1}}(w_{i}))\varepsilon_{i,j_{2}}\right|\right]
\end{align*}
Since $\varepsilon_{i,j_{2}}$ is subgaussian conditional on $W$ for all $j_{2} \in \{1,2\}$, Corollary 2.2.9 of \cite{vaartwellner96book} yields the following bound
\begin{align*}
  \sum_{(j_{1},j_{2}) \in \{1,2\}^{2}}  E_{P_{0,YX|W}^{(n)}}\left[\sup_{m_{j_{1}}\in \mathcal{M}_{j_{1}}: ||m_{j_{1}}-m_{0,j_{1}}||_{n,2} \leq \delta}\left|\frac{1}{\sqrt{n}}\sum_{i=1}^{n}(m_{j_{1}}(w_{i})-m_{0,j_{1}}(w_{i}))\varepsilon_{i,j_{2}}\right|\right] \lesssim \omega_{n}(\delta),
\end{align*}
for $P_{0,W}^{\infty}$-almost every fixed realization $\{w_{i}\}_{i \geq 1}$ of $\{W_{i}\}_{i \geq 1}$, where
\begin{align*}
\omega_{n}(\delta) = \int_{0}^{\delta}\sqrt{\log N(\tau,\mathcal{M}_{1},||\cdot||_{n,2})} d\tau   + \int_{0}^{\delta}\sqrt{\log N(\tau,\mathcal{M}_{2},||\cdot||_{n,2})} d\tau. 
\end{align*}
Since $\mathcal{M}_{1}$ and $\mathcal{M}_{2}$ are balls in the Besov spaces $B_{\infty,\infty}^{\alpha_{0,1}}([0,1])$ and $B_{\infty,\infty}^{\alpha_{0,2}}([0,1])$, respectively, and $||\cdot||_{n,2} \leq ||\cdot||_{\infty}$, I apply Theorem 4.3.36 of \cite{Gine_Nickl_2015} to conclude that 
\begin{align*}
  \int_{0}^{\delta}  \sqrt{\log N(\tau,\mathcal{M}_{j},||\cdot||_{n,2})} d\tau \leq \delta^{1-\frac{1}{2\alpha_{0,j}}}.
\end{align*}
for each $j \in \{1,2\}$. Consequently, any sequence $\{\bar{\delta}_{n}\}_{n \geq 1}$ for which $\bar{\delta}_{n} \geq n^{-\frac{\alpha_{0}}{(2\alpha_{0}+1)}}$ with $\alpha_{0} = \min\{\alpha_{0,1},\alpha_{0,2}\}$ satisfies Part 2 of Proposition \ref{prop:suffgeneral}. Consequently, $\delta_{n} = (n/\log n)^{-\alpha_{0}/(2\alpha_{0}+1)}$ satisfies Part 2 of Proposition \ref{prop:suffgeneral}.

\textit{Step 3.} The argument of Step 2 establishes
\begin{align*}
    E_{P_{0,YX|W}^{(n)}}\left[\sup_{m_{j_{1}}\in \mathcal{M}_{j_{1}}: ||m_{j_{1}}-m_{0,j_{1}}||_{n,2} \leq \delta}\left|\frac{1}{\sqrt{n}}\sum_{i=1}^{n}(m_{j_{1}}(w_{i})-m_{0,j_{1}}(w_{i}))\varepsilon_{i,j_{2}}\right|\right] \leq \sqrt{n}\delta_{n}^{2} \ \forall \ j_{1},j_{2} \in \{1,2\}
\end{align*}
conditionally given $P_{0,W}^{\infty}$-almost every realization $\{w_{i}\}_{i \geq 1}$ of $\{W_{i}\}_{i \geq 1}$. Consequently, Assumption \ref{as:emp_process} is verified because $U = \varepsilon_{1} - \beta_{0}\varepsilon_{2}$ is a linear combination of $\varepsilon_{1}$ and $\varepsilon_{2}$ and $\delta_{n} = o(n^{-1/4})$ implies $\sqrt{n}\delta_{n}^{2} \rightarrow 0$ as $n\rightarrow \infty$.
\end{proof}
\subsection{Proof of Propositions \ref{prop:materngp} and \ref{prop:materninformationloss}}
I start with a lemma that is used to verify the second condition in Proposition \ref{prop:suffgeneralsieve}. It is the only point where the normality assumption in Proposition \ref{prop:materngp} is used (in particular, it is used in \textit{Step 2} of the proof).
\begin{lemma}\label{lem:priorsieve}
Suppose that the following conditions hold
\begin{enumerate}
    \item The distribution of the data satisfies $(Y,X)|W \sim \mathcal{N}(m_{0}(W),V(\beta_{0}))$
    \item There exists a constant $C_{1}>0$ such that the prior $\Pi_{\mathcal{M}}$ satisfies $\Pi_{\mathcal{M}}(m \in B_{n,\mathcal{M}}(m_{0},\delta_{n})) \geq \exp(-C_{1}n\delta_{n}^{2})$ for $P_{0,W}^{\infty}$-almost every fixed realization $\{w_{i}\}_{i \geq 1}$ of $\{W_{i}\}_{i \geq 1}$.
    \item There exists sets $\{\mathcal{M}_{n}\}_{n \geq 1}$ such that $\Pi_{\mathcal{M}}(\mathcal{M}_{n}^{c}) \leq \exp(-Mn\delta_{n}^{2})$ for some $M >0$ large.
    \item The sequence $\{\delta_{n}\}_{n \geq 1}$ satisfies $n\delta_{n}^{2} \rightarrow \infty$ and $\delta_{n}\rightarrow 0$ as $n\rightarrow \infty$.
\end{enumerate}
Then, for prior $\Pi = \Pi_{\mathcal{B}} \otimes \Pi_{\mathcal{M}}$ with $\Pi_{\mathcal{B}}$ being the uniform distribution on $[-B,B]$ and $\Pi_{\mathcal{M}}$ being the prior described above, the posterior satisfies
\begin{align*}
 \Pi(\mathcal{M}_{n}^{c}|\{(Y_{i},X_{i},w_{i}')'\}_{i=1}^{n}) \overset{P_{0,YX|W}^{(n)}}{\rightarrow} 0    
\end{align*}
for $P_{0,W}^{\infty}$-almost every fixed realization $\{w_{i}\}_{i \geq 1}$ of $\{W_{i}\}_{i \geq 1}$ as $n\rightarrow \infty$.
\end{lemma}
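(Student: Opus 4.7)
The plan is a standard Bayes-rule argument: express $\Pi(\mathcal{M}_n^c\mid \text{data})$ as a ratio, handle the numerator with a first-moment bound that uses correct specification, and handle the denominator with the small-ball prior mass condition combined with the likelihood lower bound already established in Step 2 of the proof of Proposition~\ref{prop:suffgeneral}.

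I would start by normalizing by $L_n(\beta_0,m_0)$ and writing
\begin{align*}
\Pi(\mathcal{M}_n^c\mid\{(Y_i,X_i,w_i')'\}_{i=1}^n) \;=\; \frac{N_n}{D_n}, \quad N_n := \int_{\mathcal{B}\times\mathcal{M}_n^c}\frac{L_n(\beta,m)}{L_n(\beta_0,m_0)}d\Pi(\beta,m),
\end{align*}
and analogously $D_n$ with integration over $\mathcal{B}\times\mathcal{M}$. For the numerator, correct specification gives $E_{P_{0,YX|W}^{(n)}}[L_n(\beta,m)/L_n(\beta_0,m_0)]=1$, so Fubini yields $E N_n = \Pi_{\mathcal{M}}(\mathcal{M}_n^c)\le \exp(-Mn\delta_n^2)$. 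Markov's inequality then gives, for any $M_1<M$, that $N_n \le \exp(-M_1 n\delta_n^2)$ with $P_{0,YX|W}^{(n)}$-probability tending to $1$.

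For the denominator I would factor the integrand as $\exp(\ell_n(\beta,m)-\ell_n(\beta,m_0))\cdot \exp(\ell_n(\beta,m_0)-\ell_n(\beta_0,m_0))$, so that
\begin{align*}
D_n \;=\; \int_{\mathcal{B}}\exp\bigl(\ell_n(\beta,m_0)-\ell_n(\beta_0,m_0)\bigr)\Bigl[\int_{\mathcal{M}}\exp(\ell_n(\beta,m)-\ell_n(\beta,m_0))d\Pi_{\mathcal{M}}(m)\Bigr]d\Pi_{\mathcal{B}}(\beta).
\end{align*}
The inner integral is bounded below uniformly in $\beta$ by $\exp(-Cn\delta_n^2)$ with probability tending to $1$ by exactly the computation in Step~2 of the proof of Proposition~\ref{prop:suffgeneral}, which uses conditions~2 and~4 of the present lemma. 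For the outer integral, the Gaussian quadratic expansion $\ell_n(\beta,m_0)-\ell_n(\beta_0,m_0)=(\beta-\beta_0)\tilde\ell_n(\beta_0,m_0)-\tfrac{n}{2}(\beta-\beta_0)^2\tilde I_n(m_0)$ shows that on $B_{\mathcal{B}}(\beta_0,1/\sqrt{n})$ the exponent is bounded below by a random $-O_{P_{0,YX|W}^{(n)}}(1)$ quantity (using that $\tilde\ell_n(\beta_0,m_0)/\sqrt{n}=O_P(1)$ and $\tilde I_n(m_0)=O_P(1)$). Combined with Assumption~\ref{as:prior} (giving $\inf_{|\beta-\beta_0|<1/\sqrt{n}}\pi_{\mathcal{B}}(\beta)$ bounded away from $0$), the outer integral is at least of order $1/\sqrt{n}$ with high probability, hence at least $\exp(-\epsilon n\delta_n^2)$ for any $\epsilon>0$ once $n$ is large, since $n\delta_n^2\to\infty$. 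This gives $D_n\gtrsim \exp(-(C+\epsilon)n\delta_n^2)$ with probability $\to 1$.

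Combining, with high probability $N_n/D_n \le \exp(-(M_1-C-\epsilon)n\delta_n^2)$, which tends to zero provided $M$ (hence $M_1$) is chosen larger than $C+\epsilon$. The main obstacle is really the denominator bound: I would need to verify carefully that the argument of Step~2 of Proposition~\ref{prop:suffgeneral} applies uniformly in $\beta\in\mathcal{B}$ after separating the $(\beta,m)\mapsto(\beta,m_0)$ and $(\beta,m_0)\mapsto(\beta_0,m_0)$ pieces, and that the Laplace-type $1/\sqrt{n}$ factor from the $\beta$-integral is absorbed into $\exp(\epsilon n\delta_n^2)$. Once this is done, the choice of a sufficiently large $M$ closes the argument.
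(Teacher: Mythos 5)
Your proposal is correct and follows essentially the same route as the paper's proof: correct specification plus Fubini controls the numerator via $\Pi_{\mathcal{M}}(\mathcal{M}_{n}^{c})$, and the denominator is lower-bounded by factoring the likelihood ratio through $(\beta,m_{0})$, reusing Step 2 of the proof of Proposition \ref{prop:suffgeneral} for the $m$-integral and a quadratic expansion for the $\beta$-integral. The only cosmetic differences are that you localize $\beta$ to a ball of radius $1/\sqrt{n}$ rather than $\delta_{n}$ and that you conclude via Markov's inequality on the numerator rather than, as the paper does, conditioning on the denominator event and bounding the expectation of the posterior probability.
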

\begin{proof}
The proof has two steps. The first step uses Condition 2 to show that the normalizing constant of the posterior is bounded from below by $\exp(-Cn\delta_{n}^{2})$ with $P_{0,YX|W}^{(n)}$-probability approaching one as $n\rightarrow \infty$. The second step uses the first step and Conditions 1,3, and 4 to conclude the result. Note that Step 2 is the only part of the argument in which correct specification (i.e., Condition 1) is used.

\textit{Step 1.} I start with the normalizing constant in the posterior. Since
\begin{align*}
    \frac{L_{n}(\beta,m)}{L_{n}(\beta_{0},m_{0})} = \frac{L_{n}(\beta,m)}{L_{n}(\beta,m_{0})} \cdot \frac{L_{n}(\beta,m_{0})}{L_{n}(\beta_{0},m_{0})},
\end{align*}
I know that
\begin{align*}
    \frac{1}{2B}\int_{[-B,B]}\int_{\mathcal{M}}\frac{L_{n}(\beta,m)}{L_{n}(\beta_{0},m_{0})}d\Pi_{\mathcal{M}}(m)d \beta &\geq \inf_{\beta \in [-B,B]}\int_{\mathcal{M}}\frac{L_{n}(\beta,m)}{L_{n}(\beta,m_{0})}d\Pi_{\mathcal{M}}(m)\\
    &\quad \times \frac{1}{2B}\int_{[-B,B]}\frac{L_{n}(\beta,m_{0})}{L_{n}(\beta_{0},m_{0})}d \beta
\end{align*}
Step 2 of the proof of Proposition \ref{prop:suffgeneral} yields that there exists a constant $C_{1}>0$ such that
\begin{align*}
  P_{0,YX|W}^{(n)}\left(  \inf_{\beta \in [-B,B]}\int_{\mathcal{M}}\frac{L_{n}(\beta,m)}{L_{n}(\beta,m_{0})}d\Pi_{\mathcal{M}}(m)  \geq \exp(-C_{1}n\delta_{n}^{2})\right) \longrightarrow 1.
\end{align*}
for $P_{0,W}^{\infty}$-almost every fixed sequence $\{w_{i}\}_{i \geq 1}$ of $\{W_{i}\}_{i \geq 1}$ as $n\rightarrow \infty$. Now, I show that there exists a constant $C_{2} > 0$ such that
\begin{align*}
  P_{0,YX|W}^{(n)}\left(   \frac{1}{2B}\int_{[-B,B]}\frac{L_{n}(\beta,m_{0})}{L_{n}(\beta_{0},m_{0})}d \beta \geq \exp(-C_{2}n\delta_{n}^{2}) \right) \longrightarrow 1
\end{align*}
for $P_{0,W}^{\infty}$-almost every fixed sequence $\{w_{i}\}_{i \geq 1}$ of $\{W_{i}\}_{i \geq 1}$ as $n\rightarrow \infty$. By the nonnegativity of the integrand and the assumption that $\delta_{n} = o(1)$,
\begin{align*}
\int_{[-B,B]}\frac{L_{n}(\beta,m_{0})}{L_{n}(\beta_{0},m_{0})}d \beta &\geq \int_{\{\beta \in \mathbb{R}: |\beta-\beta_{0}|< \delta_{n}\}}\frac{L_{n}(\beta,m_{0})}{L_{n}(\beta_{0},m_{0})}d \beta 
\end{align*}
for large $n$. Using the expression for $\ell_{n}(\beta,m_{0})$, I know that
\begin{align*}
    \ell_{n}(\beta,m_{0})-\ell_{n}(\beta_{0},m_{0}) &= \sqrt{n}(\beta-\beta_{0})\frac{1}{\sqrt{n}}\tilde{\ell}_{n}(\beta_{0},m_{0}) - \frac{1}{2}n(\beta-\beta_{0})^{2}\tilde{I}_{n}(m_{0}) \\
    &\geq \sqrt{n}(\beta-\beta_{0})\frac{1}{\sqrt{n}}\tilde{\ell}_{n}(\beta_{0},m_{0})-\frac{n\delta_{n}^{2}}{2}\tilde{I}_{n}(m_{0})
\end{align*}
By Part 3 of Lemma \ref{lem:technical}, I know that
\begin{align*}
  P_{0,YX|W}^{(n)}\left( \int_{[-B,B]}\frac{L_{n}(\beta,m_{0})}{L_{n}(\beta_{0},m_{0})}d \beta
\geq \exp\left(-\frac{\overline{c}_{0}}{2}n\delta_{n}^{2}\right)  \int_{\{\beta \in \mathbb{R}:|\beta-\beta_{0}| < \delta_{n}\}}\exp\left(\sqrt{n}(\beta-\beta_{0})\frac{1}{\sqrt{n}}\tilde{\ell}_{n}(\beta_{0},m_{0})\right) d \beta\right)\rightarrow 1
\end{align*}
for $P_{0,W}^{\infty}$-almost every fixed realization $\{w_{i}\}_{i \geq 1}$ of $\{W_{i}\}_{i \geq 1}$ as $n\rightarrow \infty$. Now, letting $\lambda(\cdot)$ denote the Lebesgue measure on $\mathbb{R}$, I know that
\begin{align*}
    &\int_{\{\beta \in \mathbb{R}:|\beta-\beta_{0}| < \delta_{n}\}}\exp\left(\sqrt{n}(\beta-\beta_{0})\frac{1}{\sqrt{n}}\tilde{\ell}_{n}(\beta_{0},m_{0})\right) d \beta\\
    &\quad = \lambda(\{\beta \in \mathbb{R}:|\beta-\beta_{0}|< \delta_{n})\int\exp\left(\sqrt{n}(\beta-\beta_{0})\frac{1}{\sqrt{n}}\tilde{\ell}_{n}(\beta_{0},m_{0})\right)\pi_{\delta_{n}}(\beta) d \beta,
\end{align*}
where $\pi_{\delta_{n}}(\beta)$ is the uniform distribution on $\{\beta \in \mathbb{R}: | \beta-\beta_{0} | < \delta_{n}\}$. An application of Chebyshev's inequality and Jensen's inequality reveals that
\begin{align*}
    P_{0,YX|W}^{(n)}\left(\int\exp\left(\sqrt{n}(\beta-\beta_{0})\frac{1}{\sqrt{n}}\tilde{\ell}_{n}(\beta_{0},m_{0})\right)\pi_{\delta_{n}}(\beta)  d \beta  < \exp\left(-\frac{\overline{c}_{0}}{2}n\delta_{n}^{2}\right)\right) =o (1)
\end{align*}
for $P_{0,W}^{\infty}$-almost every fixed realization $\{w_{i}\}_{i \geq 1}$ of $\{W_{i}\}_{i \geq 1}$. Consequently, the following holds
\begin{align*}
    P_{0,YX|W}^{(n)}\left(\int_{[-B,B]}\frac{L_{n}(\beta,m_{0})}{L_{n}(\beta_{0},m_{0})}d \beta \geq \exp\left(-\overline{c}_{0}n\delta_{n}^{2}\right)\lambda(\{\beta \in \mathbb{R}: | \beta-\beta_{0}| < \delta_{n}\})\right) = 1 + o(1)
\end{align*}
for $P_{0,W}^{\infty}$-almost every fixed realization $\{w_{i}\}_{i \geq 1}$ of $\{W_{i}\}_{i \geq 1}$ as $n\rightarrow \infty$. Since $\lambda(\{\beta \in \mathbb{R}: | \beta-\beta_{0}| < \delta_{n}\}) = 2\delta_{n}$ and $n\delta_{n}^{2} \rightarrow \infty$, it follows that
\begin{align*}
   P_{0,YX|W}^{(n)}\left(\int_{[-B,B]}\frac{L_{n}(\beta,m_{0})}{L_{n}(\beta_{0},m_{0})}d \beta \geq \exp\left(-(\overline{c}_{0}+1)n\delta_{n}^{2}\right))\right) \longrightarrow 1
\end{align*}
conditionally given $P_{0,W}^{\infty}$-almost every fixed realization $\{w_{i}\}_{i \geq 1}$ of $\{W_{i}\}_{i \geq 1}$ as $n\rightarrow \infty$. Define $C_{2} = \overline{c}_{0}+1$ to obtain the desired inequality.

\textit{Step 2.} By Step 1, I can condition on the event
\begin{align*}
  A_{n} =\left\{  \frac{1}{2B}\int_{[-B,B]}\int_{\mathcal{M}}\frac{L_{n}(\beta,m)}{L_{n}(\beta_{0},m_{0})}d \Pi_{\mathcal{M}}(m) d \beta \geq \exp(-Cn\delta_{n}^{2})\right\} ,
\end{align*}
where $C = C_{1}+C_{2}$.
Consequently, I deduce that
\begin{align*}
&E_{P_{0,YX|W}^{(n)}}\left[\Pi_{\mathcal{M}}(\mathcal{M}_{n}^{c}|\{(Y_{i},X_{i},w_{i}')'\}_{i=1}^{n})\right]\\
&\quad \leq E_{P_{0,YX|W}^{(n)}}\left[\Pi_{\mathcal{M}}(\mathcal{M}_{n}^{c}|\{(Y_{i},X_{i},w_{i}')'\}_{i=1}^{n})\mathbf{1}\{A_{n}\}\right] + o(1) \\
&\quad \leq \exp(Cn\delta_{n}^{2})\int_{[-B,B]}\int_{\mathcal{M}_{n}^{c}}E_{P_{0,YX|W}^{(n)}}\frac{L_{n}(\beta,m)}{L_{n}(\beta_{0},m_{0})} d\Pi_{\mathcal{M}}(m) d \beta + o(1)\\
&\quad = \exp(Cn\delta_{n}^{2})\Pi_{\mathcal{M}}(\mathcal{M}_{n}^{c}) + o(1),
\end{align*}
for $P_{0,W}^{\infty}$-almost every sequence $\{w_{i}\}_{i \geq 1}$ of $\{W_{i}\}_{i \geq 1}$, where the first inequality uses the law of total probability and the conclusion of Step 1 (i.e., $P_{0,YX|W}^{(n)}(A_{n}^{c}) = o(1)$), the second inequality uses $A_{n}$ and Fubini's theorem, and the equality uses the assumption of correct specification (i.e., $P_{0,YX|W} = \mathcal{N}(m_{0}(W),V(\beta_{0}))$). Since $\Pi_{\mathcal{M}}$ satisfies $\Pi_{\mathcal{M}}(\mathcal{M}_{n}^{c}) \lesssim \exp(-Mn\delta_{n}^{2})$ for $M > 0$, I can find a constant $\tilde{C} > 0$ such that the following holds
\begin{align*}
    E_{P_{0,YX|W}^{(n)}}\left[\Pi_{\mathcal{M}}(\mathcal{M}_{n}^{c}|\{(Y_{i},X_{i},w_{i}')'\}_{i=1}^{n})\right]  \lesssim \exp(-\tilde{C}n\delta_{n}^{2}) + o(1).
\end{align*}
for $P_{0,W}^{\infty}$-almost every sequence $\{w_{i}\}_{i \geq 1}$ of $\{W_{i}\}_{i \geq 1}$. Since $n\delta_{n}^{2} \rightarrow \infty$ as $n\rightarrow \infty$ and convergence in mean implies convergence in probability, the proof is complete.
\end{proof}
\begin{proof}[Proof of Proposition \ref{prop:materngp}]
The proof has four steps. Step 1 proposes a candidate sequence $\{\delta_{n}\}_{n \geq 1}$ that satisfies Parts 1 and 4 of Proposition \ref{prop:suffgeneralsieve}. Steps 2 and 3 then verify Parts 2 and 3, respectively, to verify Assumption \ref{as:consistency}. Step 4 verifies Assumption \ref{as:emp_process}.

\textit{Step 1.} Applying Lemma 5.3 of \cite{van2008reproducing}, the marginal prior $\Pi_{\mathcal{M}_{j}}$ satisfies the inequality $\Pi_{\mathcal{M}_{j}}(||m_{j}-m_{0j}||_{\infty}< \delta) \geq \exp(-\varphi_{m_{0,j},j}(\delta/2))$ for each $j \in \{1,2\}$ and any $\delta > 0$, where $\delta \mapsto \varphi_{m_{0,j},j}(\delta)$ for $\delta> 0$ is the supremum norm concentration function of the Mat\'{e}rn Gaussian process at $m_{0,j}$. That is, $\varphi_{m_{0,j},j}(\delta) = \inf_{h_{j} \in \mathbb{H}_{j}: ||h_{j}-m_{0,j}||_{\infty} <  \delta}\frac{1}{2}||h_{j}||_{\mathbb{H}_{j}} - \log \Pi_{\mathcal{M}_{j}}(||m_{j}||_{\infty} < \delta)$, for each $\delta > 0$, where $(\mathbb{H}_{j},||\cdot||_{\mathbb{H}_{j}})$ is the reproducing kernel Hilbert space (RKHS) associated with the Matern Gaussian process $m_{j}$. Applying Lemma 3 and 4 of \cite{van2011information}, the minimal sequence $\tilde{\delta}_{n,j}$ for which $\varphi_{m_{0,j},j}(\tilde{\delta}_{n,j}) \leq n\tilde{\delta}_{n,j}^{2}$ is $\tilde{\delta}_{n,j} = n^{-\min\{\alpha_{j},\alpha_{0,j}\}/(2\alpha_{j}+d_{w})}$, however, since I only require $\delta_{n} = o(n^{-1/4})$, it is without loss of generality to set $\delta_{n,j} = \max\{\tilde{\delta}_{n,j},n^{-\alpha_{j}/(2\alpha_{j}+d_{w})}\}$ because $\alpha_{j} > d_{w}/2$ guarantees that $n^{-\alpha_{j}/(2\alpha_{j}+d_{w})} = o(n^{-1/4})$ (in addition to $\alpha_{0,j} > \alpha_{j}/2+ d_{w}/4$ guaranteeing $\tilde{\delta}_{n,j} = o(n^{-1/4})$ too). Hence, a candidate is $\delta_{n} = \max\{\delta_{n,1},\delta_{n,2}\}$ and it meets the requirement that $\delta_{n} = o(n^{-1/4})$ and Part 1 of Proposition \ref{prop:suffgeneralsieve} because $||\cdot||_{n,2} \leq ||\cdot||_{\infty}$ and $\Pi_{\mathcal{M}} = \Pi_{\mathcal{M}_{1}}\otimes \Pi_{\mathcal{M}_{2}}$. Notice that $n \delta_{n}^{2}\rightarrow \infty$ as $n\rightarrow \infty$, so Part 4 of Proposition \ref{prop:suffgeneralsieve} holds too. It remains to verify Parts 2 and 3 of Proposition \ref{prop:suffgeneralsieve} in order to check Assumption \ref{as:consistency}. This is the task of the next two steps. 

\textit{Step 2.} I construct sets $\{\mathcal{M}_{n}\}_{n \geq 1}$ such that Part 2 of Proposition \ref{prop:suffgeneralsieve} holds. These sets are similar to those encountered in Theorem 3 of \cite{dejonge2013semiparametric}. For each $n \geq 1$, let $\mathcal{M}_{n} = \mathcal{M}_{n1} \times \mathcal{M}_{n2}$ with $\mathcal{M}_{nj} = M_{j} \sqrt{n}\delta_{n} \mathbb{H}_{j,1}+ \gamma_{n,j}C_{1}^{a_{j}}([0,1]^{d_{w}})$ for each $j \in \{1,2\}$, where $\mathbb{H}_{j,1} = \{f \in \mathbb{H}_{j}: ||f||_{\mathbb{H}_{j}} \leq 1\}$ is the unit ball in $(\mathbb{H}_{j},||\cdot||_{\mathbb{H}_{j}})$, $C_{1}^{a_{j}}([0,1]^{d_{w}}) = \{f \in C^{a_{j}}([0,1]): ||f||_{a_{j}} \leq 1\}$ is the unit ball in $(C^{a_{j}}([0,1]^{d_{w}}),||\cdot||_{a_{j}})$ for some $a_{j} \in (d_{w}/2,\alpha_{j})$, $\gamma_{n,j} = n^{-(\alpha_{j}-a_{j})/(2\alpha_{j}+d_{w})}$, and $M_{j}>0$ is a sufficiently large constant for each $j \in \{1,2\}$. Since $m_{j}$ takes values in $(C^{a_{j}}([0,1]^{d_{w}}),||\cdot||_{a_{j}})$ for any $ a_{j}  < \alpha_{j}$, I can view $m_{j}$ as a Gaussian element in the Banach space $(C^{a_{j}}([0,1]^{d_{w}}),||\cdot||_{a_{j}})$ for $a_{j} \in (d_{w}/2, \alpha_{j})$. Such an $a_{j}$ exists because $\alpha_{j}>d_{w}/2$ by the assumption of the proposition. The RKHS is unchanged because $C([0,1]^{d_{w}})$ is the completion of $C^{a_{j}}([0,1]^{d_{w}})$ with respect to the uniform norm $||\cdot||_{\infty}$ and $||\cdot||_{\infty} \leq ||\cdot||_{a_{j}}$ (see Lemma 8.1 in \cite{van2008reproducing}). I then apply Theorem 5.1 in \cite{van2008reproducing} to conclude that the law $\Pi_{\mathcal{M}_{j}}$ satisfies the inequality $\Pi_{\mathcal{M}_{j}}(\mathcal{M}_{nj}^{c}) \leq 1- \Phi\left( \Phi^{-1}\left(\Pi_{\mathcal{M}_{j}}(m_{j} \in \gamma_{n,j} C_{1}^{a_{j}}([0,1]^{d_{w}}))\right)+M_{j}\sqrt{n}\delta_{n}\right)$, where $\Phi(\cdot)$ denotes the standard normal cumulative distribution function. Since the unit ball $\mathbb{H}_{1,j}$ in $(\mathbb{H}_{j},||\cdot||_{\mathbb{H}_{j}})$ satisfies the metric entropy condition $\log N(\tau,\mathbb{H}_{1,j},||\cdot||_{a_{j}}) \lesssim \tau^{-2d_{w}/(d_{w}+2(\alpha_{j}-a_{j}))}$ for any $\tau>0$ (see Section 4.3 of \cite{van2011information}), I can invoke the relationship between the metric entropy of the unit ball in RKHS and small ball probabilities established in \cite{li1999approximation} to find a constant $L_{j}>0$ such that $\Pi_{\mathcal{M}_{j}}(||m_{j}||_{a_{j}}<\gamma_{n,j}) \geq \exp(-L_{j}\gamma_{n,j}^{-d_{w}/(\alpha_{j}-a_{j})})$ for each $j \in \{1,2\}$. Since $\gamma_{n,j} = n^{-(\alpha_{j}-a_{j})/(2\alpha_{j}+d_{w})}$ satisfies $\gamma_{n,j}^{-d_{w}/(\alpha_{j}-a_{j})} = n^{d_{w}/(2\alpha_{j}+d_{w})} \leq n \delta_{n}^{2}$ because of the lower bound $\delta_{n}\geq n^{-\alpha_{j}/(2\alpha_{j}+d_{w})}$, it follows that $\Pi_{\mathcal{M}_{j}}(||m_{j}||_{a_{j}}<\gamma_{n,j}) \geq \exp(-L_{j}n\delta_{n}^{2})$. Consequently, $\Pi_{\mathcal{M}_{j}}$ satisfies $\Pi_{\mathcal{M}_{j}}(\mathcal{M}_{nj}^{c}) \leq 1- \Phi\left(\Phi^{-1}\left(\exp(-L_{j}n\delta_{n}^{2})\right)+ M_{j}\sqrt{n}\delta_{n}\right)$ for each $j \in \{1,2\}$. Using the inequality $\Phi^{-1}(p) \geq -\sqrt{2\log (1/p)}$ for any $p \in (0,1)$, I know $\Phi^{-1}\left(\exp(-L_{j}n\delta_{n}^{2})\right) \geq -\sqrt{2L_{j}n\delta_{n}^{2}} = -\tilde{L}_{j}\sqrt{n}\delta_{n}$     
for $\tilde{L}_{j} = \sqrt{2L_{j}}$. Consequently, $\Pi_{\mathcal{M}_{j}}(\mathcal{M}_{nj}^{c}) \lesssim 1 - \Phi((M_{j}-\tilde{L}_{j})\sqrt{n}\delta_{n})$. By choosing $M_{j}>\tilde{L}_{j}$ arbitrarily large and using the inequality $\Phi(z) \geq 1-\exp(-z^{2}/2)$, it follows that $\Pi_{\mathcal{M}_{j}}(\mathcal{M}_{nj}^{c}) \lesssim \exp(-\frac{1}{2}(M_{j}-\tilde{L}_{j})^{2}n \delta_{n}^{2})$ for each $j \in \{1,2\}$. Since $\Pi_{\mathcal{M}}(\mathcal{M}_{n}^{c}) \lesssim \max\{\Pi_{\mathcal{M}_{1}}(\mathcal{M}_{n1}^{c}),\Pi_{\mathcal{M}_{2}}(\mathcal{M}_{n2}^{c})\}$, it follows that there exists a large constant $M>0$ such that $\Pi_{\mathcal{M}}(\mathcal{M}_{n}^{c})\lesssim \exp(-Mn\delta_{n}^{2})$. Applying Lemma \ref{lem:priorsieve}, I then conclude that the posterior satisfies $\Pi_{\mathcal{M}}(\mathcal{M}_{n}^{c}|\{(Y_{i},X_{i},w_{i}')'\}_{i=1}^{n}) \rightarrow 0$ in $P_{0,YX|W}^{(n)}$-probability for $P_{0,W}^{\infty}$-almost every fixed realization $\{w_{i}\}_{i \geq 1}$ of $\{W_{i}\}_{i \geq 1}$ as $n\rightarrow \infty$. This verifies Part 2 of Proposition \ref{prop:suffgeneralsieve}.

\textit{Step 3.} I verify Part 3 of Proposition \ref{prop:suffgeneralsieve}. Since $\mathcal{B}$ is compact and $\beta \mapsto V^{-1}(\beta)$ is continuous, an application of the triangle inequality reveals that
\begin{align*}
&E_{P_{0,YX|W}^{(n)}}\left[\sup_{(\beta,m) \in \mathcal{B} \times (\mathcal{M}_{n} \cap B_{\mathcal{M}}(m_{0},\delta))}\left|\frac{1}{\sqrt{n}}\sum_{i=1}^{n}(m(w_{i})-m_{0}(w_{i}))'V^{-1}(\beta)\varepsilon_{i}\right|\right] \\
&\quad \lesssim 
\sum_{(j_{1},j_{2}) \in \{1,2\}^{2}}E_{P_{0,YX|W}^{(n)}}\left[\sup_{m_{j_{1}} \in \mathcal{M}_{n,j_{1}}: ||m_{j_{1}}-m_{0,j_{1}}||_{n,2} \leq \delta}\left|\frac{1}{\sqrt{n}}\sum_{i=1}^{n}(m_{j_{1}}(w_{i})-m_{0,j_{1}}(w_{i}))\varepsilon_{i,j_{2}}\right|\right]
\end{align*}
conditionally given $P_{0,W}^{\infty}$-almost every realization $\{w_{i}\}_{i \geq 1}$ of the sequence $\{W_{i}\}_{i \geq 1}$. Since $\varepsilon_{j_{2}}$ is subgaussian conditional on $W$ for all $j_{2} \in \{1,2\}$, Corollary 2.2.9 of \cite{vaartwellner96book} implies
\begin{align*}
  \sum_{(j_{1},j_{2}) \in \{1,2\}^{2}}  E_{P_{0,YX|W}^{(n)}}\left[\sup_{m_{j_{1}}\in \mathcal{M}_{n,j_{1}}: ||m_{j_{1}}-m_{0,j_{1}}||_{n,2} \leq \delta}\left|\frac{1}{\sqrt{n}}\sum_{i=1}^{n}(m_{j_{1}}(w_{i})-m_{0,j_{1}}(w_{i}))\varepsilon_{i,j_{2}}\right|\right] \lesssim \omega_{n}(\delta),
\end{align*}
conditionally given $P_{0,W}^{\infty}$-almost every realization $\{w_{i}\}_{i \geq 1}$ of the sequence $\{W_{i}\}_{i \geq 1}$, where
\begin{align*}
\omega_{n}(\delta) = \int_{0}^{\delta}\sqrt{\log N(\tau,\mathcal{M}_{n,1},||\cdot||_{n,2})} d\tau   + \int_{0}^{\delta}\sqrt{\log N(\tau,\mathcal{M}_{n,2},||\cdot||_{n,2})} d\tau 
\end{align*}
I show that $\omega_{n}(\delta_{n}) \lesssim \sqrt{n}\delta_{n}^{2}$ for the choice of $\delta_{n}$ proposed in Step 1. Using the definition of $\mathcal{M}_{nj}$ and $||\cdot||_{n,2} \leq ||\cdot||_{\infty}$, I deduce that
    \begin{align*}
    \log N(\tau, \mathcal{M}_{n,j},||\cdot||_{n,2}) \leq \log N(\tau/2, M_{j}\sqrt{n}\delta_{n}\mathbb{H}_{j,1},||\cdot||_{\infty})+ \log N(\tau/2, \gamma_{n,j}C_{1}^{a_{j}}([0,1]),||\cdot||_{\infty}),
    \end{align*}
    and, as a result, I can use the metric entropy bound of the RKHS unit ball $\mathbb{H}_{j,1}$ stated in Step 2 and Theorem 2.7.1 of \cite{vaartwellner96book} to conclude that
    \begin{align*}
    \log N(\tau, M_{j}\sqrt{n}\delta_{n}\mathbb{H}_{j,1},||\cdot||_{\infty}) \lesssim \left(\frac{\sqrt{n}\delta_{n}}{\tau} \right)^{\frac{2d_{w}}{2\alpha_{j}+d_{w}}}
    \end{align*}
    and
    \begin{align*}
     \log N(\tau, \gamma_{n,j}C_{1}^{a_{j}}([0,1]),||\cdot||_{\infty}) \lesssim \left(\frac{\gamma_{n,j}}{\tau}\right)^{\frac{d_{w}}{a_{j}}}.  
    \end{align*}
    This implies that the metric entropy integral satisfies
    \begin{align*}
        \int_{0}^{\delta_{n}}\sqrt{\log N(\tau,\mathcal{M}_{n,j},||\cdot||_{\infty})} d \tau \lesssim n^{\frac{d_{w}}{4\alpha_{j}+2d_{w}}}\delta_{n} + \gamma_{n,j}^{\frac{d_{w}}{2a_{j}}}\delta_{n}^{\frac{2a_{j} - d_{w}}{2a_{j}}}.
    \end{align*}
    Calculations reveal that  $n^{\frac{d_{w}}{4\alpha_{j}+2d_{w}}}\delta_{n} + \gamma_{n,j}^{\frac{d_{w}}{2a_{j}}}\delta_{n}^{\frac{2a_{j} - d_{w}}{2a_{j}}} \lesssim \sqrt{n}\delta_{n}^{2} \ \forall \ j \in \{1,2\}$, so $\delta_{n}$ satisfies Part 3 of Proposition {\ref{prop:suffgeneralsieve}}.

\textit{Step 4.} I now verify Assumption \ref{as:emp_process} for $\mathcal{M}_{n} \cap B(m_{0},D\delta_{n})$, where $\{\mathcal{M}_{n}\}_{n \geq 1}$ is the sequence introduced in Step 2. Since $U= \varepsilon_{1}-\beta_{0}\varepsilon_{2}$ and $\sqrt{n}\delta_{n}^{2} = o(1)$, the maximal inequalities from Step 3 also verify Assumption \ref{as:emp_process} through an application of the triangle inequality. This completes the proof.
\end{proof}
\begin{proof}[Proof of Proposition \ref{prop:materninformationloss}]
It suffices to verify the conditions of Theorem 12.9 of \cite{ghosal2017fundamentals}. This involves four steps. Step 1 proposes a subsets of the parameter space $[-B,B] \times \mathcal{H}$ for which the posterior concentrates as $n\rightarrow \infty$. Step 2 verifies a LAN condition along the sets from Step 1 to check condition (12.13) of \cite{ghosal2017fundamentals}. Step 3 uses the subsets from Step 1 to verify condition (12.14) of \cite{ghosal2017fundamentals}. Step 4 establishes the claim of the proposition by applying Theorem 12.9 of \cite{ghosal2017fundamentals}. Since Theorem 12.9 in \cite{ghosal2017fundamentals} is an extension of Theorem 2 in \cite{castillo2012semiparametric}, these steps also correspond to verifying the conditions of his theorem and I will make this connection throughout the argument.

\textit{Step 1.} In order to verify Theorem 12.9 of \cite{ghosal2017fundamentals}, I must first construct sequences $\{\zeta_{n}\}_{n \geq 1}$ and $\{\mathcal{H}_{n}\}_{n \geq 1}$ such that $\mathcal{H}_{n} \subseteq \mathcal{H}$, $\zeta_{n} \rightarrow 0$ as $n\rightarrow \infty$, $n \zeta_{n}^{2}\rightarrow \infty$ as $n\rightarrow \infty$, and
\begin{align}\label{eq:maincontraction}
\Pi\left((\beta,\eta) \in [-B,B] \times \mathcal{H}_{n}:\sqrt{|\beta-\beta_{0}|^{2}+||\eta-\eta_{0}||_{n,2}^{2}}< D\zeta_{n}\middle |\{(Y_{i},X_{i},w_{i}')'\}_{i=1}^{n}\right)\overset{P_{0,YX|W}^{(n)}}\longrightarrow 1    
\end{align}
conditionally given $P_{0,W}^{\infty}$-almost every realization $\{w_{i}\}_{i \geq 1}$ of $\{W_{i}\}_{i \geq 1}$ as $n\rightarrow \infty$, where $D > 0$ is a large constant. In relation to \cite{castillo2012semiparametric}, this corresponds to verifying the first part of condition (C'). To establish this result, I will first show that it holds conditional on the sequence $\{(X_{i},W_{i}')'\}_{i \geq 1}$ and then apply the Bounded Convergence Theorem to obtain (\ref{eq:maincontraction}). To this end, let $p_{0,Y|XW}$ be the true density of $Y$ given $X$ and $W$, let $P_{0,Y|XW}^{(n)} = \bigotimes_{i=1}^{n}P_{0,Y|x_{i},w_{i}}$ with $P_{0,Y|x_{i},w_{i}}$ denoting the probability distribution associated with conditional density $p_{0,Y|X,W}(\cdot|x_{i},w_{i})$, and let $P_{0,WX}^{\infty}$ be the joint law of the i.i.d sequence $\{(X_{i},W_{i}')'\}_{i \geq 1}$. I show that there exists a sequence $\{\zeta_{n}\}_{n \geq 1}$ such that $\zeta_{n} \rightarrow 0$ as $n\rightarrow \infty$, $n\zeta_{n}^{2}\rightarrow \infty$ as $n\rightarrow \infty$, and the following holds
\begin{align}\label{eq:betaetacontraction}
\Pi\left(\sqrt{|\beta-\beta_{0}|^{2}+||\eta-\eta_{0}||_{n,2}^{2}}\geq D\zeta_{n}\middle |\{(Y_{i},x_{i},w_{i}')'\}_{i=1}^{n}\right)\overset{P_{0,Y|XW}^{(n)}}\longrightarrow 0    
\end{align} 
conditionally given $P_{0,XW}^{\infty}$-almost every realization $\{(x_{i},w_{i}')'\}_{i\geq 1}$ of $\{(X_{i},W_{i}')'\}_{i \geq 1}$ as $n\rightarrow \infty$, where $D>0$ is a large constant. Since $\liminf_{n\rightarrow \infty}\frac{1}{n}\sum_{i=1}^{n}x_{i}^{2}> 0$ for $P_{0,XW}^{\infty}$-almost every fixed realization $\{(x_{i},w_{i}')'\}_{i\geq 1}$ of $\{(X_{i},W_{i}')'\}_{i \geq 1}$ by the strong law of large numbers, a sufficient condition for (\ref{eq:betaetacontraction}) is
\begin{align}\label{eq:fixeddesign}
\Pi\left(\sqrt{||f(\cdot,\beta,\eta)-f(\cdot,\beta_{0},\eta_{0})||_{n,2}^{2}}\geq \tilde{D}\zeta_{n}\middle |\{(Y_{i},x_{i},w_{i}')'\}_{i=1}^{n}\right)\overset{P_{0,Y|XW}^{(n)}}\longrightarrow 0
\end{align}
conditionally given $P_{0,XW}^{\infty}$-almost every realization $\{(x_{i},w_{i}')'\}_{i\geq 1}$ of $\{(X_{i},W_{i}')'\}_{i \geq 1}$ as $n\rightarrow \infty$, where $\tilde{D}>0$ is a constant, $f(x,w,\beta,\eta) = x\beta+ \eta(w)$ for each $(\beta,\eta) \in [-B,B] \times \mathcal{H}$, and, with some abuse of notation, the norm $||\cdot||_{n,2}$ is the empirical $L^{2}$ norm over the design points $\{(x_{1},w_{1}),...,(x_{n},w_{n})\}$. Display (\ref{eq:fixeddesign}) is asking for a posterior contraction rate in a fixed design regression model with independent Gaussian errors. An application Theorem 4 of \cite{ghosal2007convergence} verifies that $\zeta_{n} = n^{-\min\{\alpha_{\eta},\alpha_{0,\eta}\}/(2\alpha_{\eta}+d_{w})}$ satisfies (\ref{eq:fixeddesign}). Indeed, Lemmas 3 and 4 of \cite{van2011information} imply that the supremum norm concentration function $\varphi_{\eta_{0}}$ of the Mat\'{e}rn Gaussian process at $\eta_{0}$ satisfies $\varphi_{\eta_{0}}(\zeta_{n}) \leq n\zeta_{n}^{2}$, and, as a result, Theorem 4 of \cite{ghosal2007convergence} can be verified for $d_{n} = ||\cdot||_{n,2}$, $\varepsilon_{n} = \zeta_{n}$, and $\Theta_{n} = [-B,B] \times \mathcal{H}_{n}^{*}$ with $\{\mathcal{H}_{n}^{*}\}_{n \geq 1}$ being the sieves from Theorem 2.1 of \cite{vaart2008rates} (i.e., $\mathcal{H}_{n}^{*} = \zeta_{n}C_{1}([0,1]) + M_{n}\mathbb{H}_{\eta,1}$, where $C_{1}([0,1]) = \{f \in C([0,1]):||f||_{\infty} \leq 1\}$ is the unit ball in space of continuous functions $C([0,1])$ equipped with the supremum norm $||\cdot||_{\infty}$, $\mathbb{H}_{\eta,1}$ is the unit ball in the RKHS $(\mathbb{H}_{\eta},||\cdot||_{\mathbb{H}_{\eta}})$ associated with the Gaussian process $\eta$, and $M_{n}$ is a positive constant). This completes the construction of the candidate sequence $\{\zeta_{n}\}_{n \geq 1}$. I now propose a sequence of sets $\{\mathcal{H}_{n}\}_{n \geq 1}$ such that
\begin{align}\label{eq:sievesfixeddesign}
\Pi(\eta \in \mathcal{H}_{n}^{c}|\{(Y_{i},x_{i},w_{i}')'\}_{i=1}^{n}) \overset{P_{0,Y|XW}^{(n)}}{\longrightarrow} 0    
\end{align}
for $P_{0,XW}^{\infty}$-almost every fixed realization $\{(x_{i},w_{i}')'\}_{i \geq 1}$ of $\{(X_{i},W_{i}')'\}_{i \geq 1}$ as $n\rightarrow \infty$. For each $n \geq 1$,  let $\mathcal{H}_{n} = \tilde{\mathcal{H}}_{n} \cap \left\{\eta \in \mathcal{H}: |G(m_{n,2},\eta)| \leq 2 \sqrt{n}\zeta_{n}||m_{n,2}||_{\mathbb{H}_{\eta}} \right\}$, where $\tilde{\mathcal{H}}_{n} = M \sqrt{n}\zeta_{n} \mathbb{H}_{\eta,1}+\gamma_{n}C_{1}^{a_{\eta}}([0,1]^{d_{w}})$ with $C_{1}^{a_{\eta}}([0,1]^{d_{w}})$ being the unit ball in the H\"{o}lder space $(C^{a_{\eta}}([0,1]^{d_{w}}),||\cdot||_{a_{\eta}})$ for $a_{\eta} \in (d_{w}/2,\alpha_{\eta})$, $\mathbb{H}_{\eta,1}$ being the unit ball in the RKHS $(\mathbb{H}_{\eta},||\cdot||_{\mathbb{H}_{\eta}})$, $\gamma_{n} = n^{-(\alpha_{\eta}-a_{\eta})/(2\alpha_{\eta}+d_{w})}$, $M>0$ being a sufficiently large positive constant, $G(m_{n,2},\eta)$ being a centered Gaussian random variable with variance $||m_{n,2}||_{\mathbb{H}_{\eta}}^{2}$ under $\Pi_{\mathcal{H}}$, and $\{m_{n,2}\}_{n \geq 1}$ is a sequence in $(\mathbb{H}_{\eta},||\cdot||_{\mathbb{H}_{\eta}})$ such that $||m_{n,2}||_{\mathbb{H}_{\eta}} \leq 2 \sqrt{n}\rho_{n}$ and $||m_{n,2}-m_{02}||_{\infty} \leq \rho_{n}$ for $\rho_{n} = n^{-\min\{\alpha_{\eta},\alpha_{0,2}\}/(2\alpha_{\eta}+d_{w})}$. The sequences $\{m_{n,2}\}_{n \geq 1}$ and $\{\rho_{n}\}_{n \geq 1}$ exist because $\inf_{h \in \mathbb{H}_{\eta}:||h-m_{02}||_{\infty}<\rho_{n}}\frac{1}{2}||h||_{\mathbb{H}_{\eta}}^{2} \leq n\rho_{n}^{2}$ by Lemma 3 of \cite{van2011information}. Applying the same argument as Step 2 of Proposition \ref{prop:materngp}, I can show that $\Pi_{\mathcal{H}}(\tilde{\mathcal{H}}_{n}^{c}) \lesssim \exp(-M n \zeta_{n}^{2})$. Moreover, the Gaussian tail bound $P(Z > z) \leq \exp(-z^{2}/2)$ for $Z \sim \mathcal{N}(0,1)$ implies $\Pi_{\mathcal{H}}(|G(m_{n,2},\eta)| > 2 \sqrt{n}\zeta_{n}||m_{n,2}||_{\mathbb{H}_{\eta}}) \leq \exp(-n\zeta_{n}^{2})$. Consequently, (\ref{eq:sievesfixeddesign}) holds by an application of Lemma 1 in \cite{ghosal2007convergence}. Lemma 1 of \cite{ghosal2007convergence} is valid because Kullback-Leibler balls of the form (3.5) in their paper are contained in empirical $L^{2}$ balls $\{(\beta,\eta):||f(\cdot,\beta,\eta)-f(\cdot,\beta_{0},\eta_{0})||_{n,2} < \zeta_{n}\}$ and $\Pi(||f(\cdot,\beta,\eta)-f(\cdot,\beta_{0},\eta_{0})||_{n,2} < \zeta_{n}) \geq \exp(-n\zeta_{n}^{2})$ holds when $\Pi_{\mathcal{B}} = U([-B,B])$, $\Pi_{\mathcal{H}}$ is the law of a centered Mat\'{e}rn gaussian process, and $\varphi_{\eta_{0}}(\zeta_{n}) \leq n\zeta_{n}^{2}$. Applying the law of total probability, (\ref{eq:betaetacontraction}) and (\ref{eq:sievesfixeddesign}) combine to establish
\begin{align*}
\Pi\left((\beta,\eta) \in [-B,B] \times \mathcal{H}_{n}:\sqrt{|\beta-\beta_{0}|^{2}+||\eta-\eta_{0}||_{n,2}^{2}}< D\zeta_{n}\middle |\{(Y_{i},x_{i},w_{i}')'\}_{i=1}^{n}\right)\overset{P_{0,Y|XW}^{(n)}}\longrightarrow 1       
\end{align*}
for $P_{0,XW}^{\infty}$-almost every fixed realization $\{(x_{i},w_{i}')'\}_{i \geq 1}$ of $\{(X_{i},W_{i}')'\}_{i\geq 1}$ as $n\rightarrow \infty$. This implies (\ref{eq:maincontraction}) holds by an application of the Bounded Convergence Theorem.

\textit{Step 2.} I verify (12.13) of \cite{ghosal2017fundamentals}, a result that synthesizes the local shape condition (N') and the RKHS approximation condition (E) in \cite{castillo2012semiparametric}. In my setting, this requires showing that
\begin{align*}
  \sup_{(\beta,\eta) \in \mathcal{B} \times \mathcal{H}_{n}: \max\{|\beta-\beta_{0}|,||\eta-\eta_{0}||_{n,2}\}<D\zeta_{n}}\frac{|\bar{R}_{n}(\beta,\eta)|}{1+ n|\beta-\beta_{0}|^{2}} \overset{P_{0,YX|W}^{(n)}}{\longrightarrow} 0
\end{align*}
for $P_{0,W}^{\infty}$-almost every fixed realization $\{w_{i}\}_{i \geq 1}$ of $\{W_{i}\}_{i \geq 1}$ as $n\rightarrow \infty$, where
\begin{align*}
\bar{R}_{n}(\beta,\eta) = \bar{\ell}_{n}(\beta,\eta)-\bar{\ell}_{n}(\beta_{0},\tilde{\eta}_{n}(\beta,\eta))-\sqrt{n}(\beta-\beta_{0})\frac{1}{\sqrt{n}}\tilde{\ell}_{n}(\beta_{0},m_{0})+\frac{1}{2}n(\beta-\beta_{0})^{2}\tilde{I}_{n}(m_{0})    
\end{align*}
with
\begin{align*}
    \bar{\ell}_{n}(\beta,\eta) = -\frac{n}{2}\log 2 \pi -\frac{n}{2}\log \sigma_{01}^{2}-\frac{1}{2\sigma_{01}^{2}}\sum_{i=1}^{n}(Y_{i}-X_{i}\beta-\eta(w_{i}))^{2}
\end{align*}
denoting the log-likelihood under the $(\beta,\eta)$-parametrization and 
\begin{align*}
\tilde{\eta}_{n}(\beta,\eta) = \eta + (\beta-\beta_{0})m_{n,2}    
\end{align*}
for each $(\beta,\eta) \in [-B,B] \times \mathcal{H}$. Letting $t= \beta-\beta_{0}$, a second-order Taylor expansion of $\ell_{n}(\beta_{0}+t,\eta)-\ell_{n}(\beta_{0},\eta-tm_{n,2})$ around $t = 0$ reveals that
\begin{align*}
    \bar{\ell}_{n}(\beta_{0}+t,\eta) - \bar{\ell}_{n}(\beta_{0},\eta + t m_{n,2}) &=t\frac{1}{\sigma_{01}^{2}}\sum_{i=1}^{n}(U_{i}-(\eta(w_{i})-\eta_{0}(w_{i})))(X_{i}-m_{n,2}(w_{i}))\\
    &\quad  -\frac{t^{2}}{2}\frac{1}{\sigma_{01}^{2}}\left\{\sum_{i=1}^{n}(X_{i}-m_{n2}(w_{i}))^{2}+2\sum_{i=1}^{n}(X_{i}-m_{n,2}(w_{i}))m_{n,2}(w_{i})\right\} \\
    &= \sqrt{n}(\beta-\beta_{0})T_{n,1}-\frac{1}{2}n(\beta-\beta_{0})^{2}T_{n,2},
\end{align*}
where
\begin{align*}
    T_{n,1} = \frac{1}{\sqrt{n}}\frac{1}{\sigma_{01}^{2}}\sum_{i=1}^{n}(U_{i}-(\eta(w_{i})-\eta_{0}(w_{i})))(X_{i}-m_{n,2}(w_{i}))
\end{align*}
and
\begin{align*}
       T_{n,2} = \frac{1}{\sigma_{01}^{2}n}\sum_{i=1}^{n}(X_{i}-m_{n2}(w_{i}))^{2}+\frac{2}{n\sigma_{01}^{2}}\sum_{i=1}^{n}(X_{i}-m_{n,2}(w_{i}))m_{n,2}(w_{i}).
\end{align*}
Consequently, to verify (12.13) of \cite{ghosal2017fundamentals}, it suffices to show that
\begin{align}\label{eq:betaetaLAN1}
\sup_{(\beta,\eta) \in \mathcal{B} \times \mathcal{H}_{n}: \max\{|\beta-\beta_{0}|,||\eta-\eta_{0}||_{n,2}\}<D\zeta_{n}}\left\{\frac{\left|\sqrt{n}(\beta-\beta_{0})\right|}{1+n|\beta-\beta_{0}|^{2}}\left|T_{n,1} -   \frac{1}{\sqrt{n}}\tilde{\ell}_{n}(\beta_{0},m_{0})  \right|\right\} \overset{P_{0,YX|W}^{(n)}}{\longrightarrow} 0 
\end{align}
and
\begin{align}\label{eq:betaetaLAN2}
\sup_{(\beta,\eta) \in \mathcal{B} \times \mathcal{H}_{n}: \max\{|\beta-\beta_{0}|,||\eta-\eta_{0}||_{n,2}\}<D\zeta_{n}}\left\{\frac{n\left|\beta-\beta_{0}\right|^{2}}{1+n|\beta-\beta_{0}|^{2}}\left|T_{n,2} -  \tilde{I}_{n}(m_{0})  \right|\right\} \overset{P_{0,YX|W}^{(n)}}{\longrightarrow} 0
\end{align}
conditionally given $P_{0,W}^{\infty}$-almost every realization $\{w_{i}\}_{i \geq 1}$ of $\{W_{i}\}_{i \geq 1}$ as $n\rightarrow \infty$. I first show condition (\ref{eq:betaetaLAN1}). Since $X_{i} -m_{n,2}(w_{i}) = X_{i}-m_{02}(w_{i})+m_{02}(w_{i})-m_{n,2}(w_{i})$ for $i=1,...,n$, I know that
\begin{align*}
\left| T_{n,1}- \frac{1}{\sqrt{n}}\tilde{\ell}_{n}(\beta_{0},m_{0})\right|&\leq \left |\frac{1}{\sigma_{01}^{2}\sqrt{n}}\sum_{i=1}^{n}U_{i}(m_{02}(w_{i})-m_{n,2}(w_{i}))\right|+\left|\frac{1}{\sigma_{01}^{2}\sqrt{n}}\sum_{i=1}^{n}(\eta(w_{i})-\eta_{0}(w_{i}))\varepsilon_{i2}\right|\\
 &\quad +\left|\frac{1}{\sigma_{01}^{2}\sqrt{n}}\sum_{i=1}^{n}(\eta(w_{i})-\eta_{0}(w_{i}))(m_{n,2}(w_{i})-m_{02}(w_{i}))   \right|
\end{align*}
By Chebyshev's inequality, the fact that $Var_{P_{0}}[U|X,W]=\sigma_{01}^{2}$, and $||m_{n,2}-m_{02}||_{\infty}\rightarrow 0$ as $n\rightarrow \infty$,
\begin{align*}
    \left|\frac{1}{\sqrt{n}}\sum_{i=1}^{n}U_{i}(m_{02}(w_{i})-m_{n,2}(w_{i}))\right| = o_{P_{0,YX|W}^{(n)}}(1)
\end{align*}
for $P_{0,W}^{\infty}$-almost every fixed realization $\{w_{i}\}_{i \geq 1}$ of $\{W_{i}\}_{i \geq 1}$. Since a symmetric argument to that encountered in Proposition \ref{prop:materngp} implies that
\begin{align*}
    \int_{0}^{D\zeta_{n}}\sqrt{\log N(\tau,\mathcal{H}_{n},||\cdot||_{n,2})}d\tau \rightarrow 0,
\end{align*}
for $\zeta_{n} = n^{-\min\{\alpha_{\eta},\alpha_{0,\eta}\}/(2\alpha_{\eta}+d_{w})}$ with $\alpha_{\eta}>d_{w}/2$ and $\alpha_{0,\eta} > \alpha_{\eta}/2 + d_{w}/4$, it follows by Corollary 2.2.9 of \cite{vaartwellner96book} that
\begin{align*}
    \sup_{\eta \in \mathcal{H}_{n}:||\eta-\eta_{0}||_{n,2} < D\zeta_{n}}\left| \frac{1}{\sqrt{n}}\sum_{i=1}^{n}\varepsilon_{i2}(\eta(w_{i})-\eta_{0}(w_{i}))\right| =o_{P_{0,YX|W}^{(n)}}(1)
\end{align*}
for $P_{0,W}^{\infty}$-almost every fixed realization $\{w_{i}\}_{i \geq 1}$ of $\{W_{i}\}_{i \geq 1}$ as $n\rightarrow \infty$. Finally, an application of Cauchy-Schwarz and the fact that $||\cdot||_{n,2} \leq ||\cdot||_{\infty}$ reveals that
\begin{align*}
\sup_{\eta \in \mathcal{H}_{n}:||\eta-\eta_{0}||_{n,2} < D\zeta_{n}} \left|    \frac{1}{\sqrt{n}}\sum_{i=1}^{n}(\eta(w_{i})-\eta_{0}(w_{i}))(m_{n,2}(w_{i})-m_{02}(w_{i})) \right|
&\leq \sqrt{n}\zeta_{n}\rho_{n} =o(1)
\end{align*}
for $P_{0,W}^{\infty}$-almost every fixed realization $\{w_{i}\}_{i \geq 1}$ of $\{W_{i}\}_{i \geq 1}$ as $n\rightarrow \infty$, where convergence holds because $\alpha_{\eta} > d_{w}/2$ and $\alpha_{0,\eta} > \alpha_{\eta}/2 + d_{w}/4$ implies $\sqrt{n}\zeta_{n} = o(n^{1/4})$ and $\alpha_{0,2} > \alpha_{\eta}/2+ d_{w}/4$ implies $\rho_{n} = o(n^{-1/4})$. This verifies (\ref{eq:betaetaLAN1}). For (\ref{eq:betaetaLAN2}), $X_{i}-m_{n,2}(w_{i}) = X_{i}-m_{02}(w_{i})+m_{02}(w_{i})-m_{n,2}(w_{i})$ implies that $T_{n,2}-\tilde{I}_{n}(m_{0})$ satisfies
\begin{align*}
    \left|T_{n,2}-\tilde{I}_{n}(m_{0})\right| &\lesssim \left|\frac{1}{n}\sum_{i=1}^{n}\varepsilon_{i2}(m_{n,2}(w_{i})-m_{02}(w_{i}))\right| +\frac{1}{n}\sum_{i=1}^{n}(m_{n,2}(w_{i})-m_{02}(w_{i}))^{2} \\
    &\quad + \left|\frac{1}{n}\sum_{i=1}^{n}\varepsilon_{i2}m_{02}(w_{i})\right|+\left|\frac{1}{n}\sum_{i=1}^{n}(m_{n,2}(w_{i})-m_{02}(w_{i}))m_{02}(w_{i})\right|
\end{align*}
Since $||m_{n,2}-m_{02}||_{\infty} \rightarrow 0$ as $n\rightarrow \infty$ and $\sup_{w \in [0,1]^{d_{w}}}Var_{P_{0}}(\varepsilon_{2}|W=w) < \infty$, an application of Chebyshev's inequality reveals that
\begin{align*}
\left|\frac{1}{n}\sum_{i=1}^{n}\varepsilon_{i2}(m_{n,2}(w_{i})-m_{02}(w_{i}))\right| \overset{P_{0,YX|W}^{(n)}}{\longrightarrow} 0 \quad \text{and} \quad \left|\frac{1}{n}\sum_{i=1}^{n}\varepsilon_{i2}m_{02}(w_{i})\right| \overset{P_{0,YX|W}^{(n)}}{\longrightarrow} 0 
\end{align*}
conditionally given $P_{0,W}^{\infty}$-almost every realization $\{w_{i}\}_{i \geq 1}$ of $\{W_{i}\}_{i \geq 1}$ as $n\rightarrow \infty$. Moreover, the convergence $||m_{n,2}-m_{02}||_{\infty} \rightarrow 0$ as $n\rightarrow \infty$ implies that
\begin{align*}
\frac{1}{n}\sum_{i=1}^{n}(m_{n,2}(w_{i})-m_{02}(w_{i}))^{2} \longrightarrow 0 \quad \text{and} \quad \left|\frac{1}{n}\sum_{i=1}^{n}(m_{n,2}(w_{i})-m_{02}(w_{i}))m_{02}(w_{i})\right| \longrightarrow 0
\end{align*}
conditionally given $P_{0,W}^{\infty}$-almost every realization $\{w_{i}\}_{i \geq 1}$ of $\{W_{i}\}_{i \geq 1}$ as $n\rightarrow \infty$. This verifies (\ref{eq:betaetaLAN2}).

\textit{Step 3.} The next step is to verify (12.14) of \cite{ghosal2017fundamentals}. In my setting, this requires
\begin{align*}
    \sup_{(\beta,\eta) \in \mathcal{B} \times \mathcal{H}_{n}: \max\{|\beta-\beta_{0}|,||\eta-\eta_{0}||_{n,2}\}<D\zeta_{n}}\frac{|\log (d\Pi_{n,\beta,\mathcal{H}}(\eta)/d\Pi_{\mathcal{H}}(\eta))|}{1+n|\beta-\beta_{0}|^{2}} \longrightarrow 0,
\end{align*}
where $\Pi_{n,\beta,\mathcal{H}}$ is the law of $\tilde{\eta}_{n}(\beta,\eta)$ under $\Pi_{\mathcal{H}}$ for $\beta$ fixed. Since $\{m_{n,2}\}_{n \geq 1}$ is a sequence in $(\mathbb{H}_{\eta}, ||\cdot||_{\mathbb{H}_{\eta}})$, I can apply the Cameron-Martin theorem to obtain
\begin{align*}
    \log \frac{d\Pi_{n,\beta,\mathcal{H}}(\eta)}{d\Pi_{\mathcal{H}}(\eta)} = (\beta-\beta_{0})G(m_{n,2},\eta)-\frac{1}{2}|\beta-\beta_{0}|^{2}||m_{n,2}||_{\mathbb{H}_{\eta}}^{2},
\end{align*}
where $G(m_{n,2},\eta) \sim \mathcal{N}(0,||m_{n,2}||_{\mathbb{H}_{\eta}}^{2})$ for each $n \geq 1$ under $\Pi_{\mathcal{H}}$. Since $\{\eta: |G(m_{n,2},\eta)| \leq 2 \sqrt{n}\zeta_{n}||m_{n,2}||_{\mathbb{H}_{\eta}}\} \subseteq \mathcal{H}_{n}$ and $||m_{n,2}||_{\mathbb{H}_{\eta}} \leq 2 \sqrt{n}\rho_{n}$, I know that 
\begin{align*}
  \sup_{(\beta,\eta) \in \mathcal{B} \times \mathcal{H}_{n}: \max\{|\beta-\beta_{0}|,||\eta-\eta_{0}||_{n,2}\}<D\zeta_{n}}\frac{|\log (d\Pi_{n,\beta,\mathcal{H}}(\eta)/d\Pi_{\mathcal{H}}(\eta))|}{1+n|\beta-\beta_{0}|^{2}} \leq 2 \zeta_{n}\rho_{n} + \frac{\rho_{n}^{2}}{n}= o(1)
\end{align*}
for $P_{0,W}^{\infty}$-almost every fixed realization $\{w_{i}\}_{i \geq 1}$ of $\{W_{i}\}_{i \geq 1}$ as $n\rightarrow \infty$ because $\rho_{n}\rightarrow 0$ and $\zeta_{n} \rightarrow 0$ as $n\rightarrow \infty$.

\textit{Step 4.} The results from Steps 1--3 permit an application of Theorem 12.9 of \cite{ghosal2017fundamentals} provided that I can show that the posterior satisfies
\begin{align*}
\sup_{\beta \in [-B,B]:|\beta-\beta_{0}| < D \zeta_{n}}\Pi\left(\tilde{\eta}_{n}(\beta,\{\eta \in \mathcal{H}_{n}:||\eta-\eta_{0}||_{n,2}< D\zeta_{n}\})^{c}\middle |\{(Y_{i},X_{i},w_{i})'\}_{i = 1}^{n},\beta = \beta_{0}\right) \overset{P_{0,YX|W}^{(n)}}{\longrightarrow } 0    
\end{align*} 
for $P_{0,W}^{\infty}$-almost every fixed realization $\{w_{i}\}_{i \geq 1}$ of $\{W_{i}\}_{i \geq 1}$ as $n\rightarrow \infty$. By definition of $\tilde{\eta}_{n}(\beta,\eta)$, I know that
\begin{align*}
\tilde{\eta}_{n}(\beta,\{\eta \in \mathcal{H}_{n}:||\eta-\eta_{0}||_{n,2}< D\zeta_{n}\}) &= \{\eta \in \tilde{\mathcal{H}}_{n}(\beta) : ||\eta-\eta_{0}||_{n,2} < D \zeta_{n}\} \\
&\quad \cap  \{\eta: |G(m_{n,2},\eta - (\beta-\beta_{0})m_{n,2})| \leq 2 \sqrt{n}\zeta_{n}||m_{n,2}||_{\mathbb{H}_{\eta}}\},
\end{align*}
where $\tilde{\mathcal{H}}_{n}(\beta) = \tilde{\mathcal{H}}_{n} + (\beta-\beta_{0})m_{n,2}$. Hence, the union bound implies that it suffices to show
\begin{align}\label{eq:step41}
\sup_{|\beta-\beta_{0}| < D \zeta_{n}}\Pi\left(\{\eta \in \tilde{\mathcal{H}}_{n}(\beta) : ||\eta-\eta_{0}||_{n,2} < D \zeta_{n}\}^{c}\middle |\{(Y_{i},X_{i},w_{i})'\}_{i = 1}^{n},\beta = \beta_{0}\right) \overset{P_{0,YX|W}^{(n)}}{\longrightarrow } 0    
\end{align} 
and
\begin{align}\label{eq:step42}
\sup_{|\beta-\beta_{0}| < D \zeta_{n}}\Pi\left(\{ |G(m_{n,2},\eta - (\beta-\beta_{0})m_{n,2})| \leq 2 \sqrt{n}\zeta_{n}||m_{n,2}||_{\mathbb{H}_{\eta}}\}^{c}\middle |\{(Y_{i},X_{i},w_{i})'\}_{i = 1}^{n},\beta = \beta_{0}\right) \overset{P_{0,YX|W}^{(n)}}{\longrightarrow } 0     
\end{align}
for $P_{0,W}^{\infty}$-almost every fixed realization $\{w_{i}\}_{i \geq 1}$ of $\{W_{i}\}_{i \geq 1}$ as $n\rightarrow \infty$. For (\ref{eq:step41}) (which corresponds to the second part of (C') in \cite{castillo2012semiparametric}), the sets $\tilde{\mathcal{H}}_{n}(\beta)$, where $|\beta-\beta_{0}| < D \zeta_{n}$, are, up to constants, the same as $\tilde{\mathcal{H}}_{n}$ because $\sup_{|\beta-\beta_{0}|<D\zeta_{n}}||(\beta-\beta_{0})m_{n,2}||_{\mathbb{H}_{\eta}} \leq 2D\sqrt{n}\zeta_{n}\rho_{n} = o(1)$. Consequently, a similar argument to Step 1 can be used to check (\ref{eq:step41}). For (\ref{eq:step42}), I apply Lemma 13 of \cite{ray2020semiparametric} to conclude that $G(m_{n,2},\eta - (\beta-\beta_{0})m_{n,2}) \sim \mathcal{N}((\beta-\beta_{0})||m_{n,2}||_{\mathbb{H}_{\eta}}^{2},||m_{n,2}||_{\mathbb{H}_{\eta}}^{2})$ for $\eta \sim \Pi_{\mathcal{H}}$ and $\beta$ fixed, and, as a result, $G(m_{n,2},\eta - (\beta-\beta_{0})m_{n,2})=G(m_{n,2},\eta)+(\beta-\beta_{0})||m_{n,2}||_{\mathbb{H}_{\eta}}^{2}$ for $\beta $ fixed. Since the inequality $\sup_{|\beta-\beta_{0}|<D\zeta_{n}}|\beta-\beta_{0}|\cdot ||m_{n,2}||_{\mathbb{H}_{\eta}} \leq 2D\sqrt{n}\zeta_{n}\rho_{n} \leq \sqrt{n}\zeta_{n}$ holds for large $n$ (as $\sqrt{n}\zeta_{n}\rho_{n} \rightarrow 0$ as $n\rightarrow \infty$ and $n\zeta_{n}^{2} \rightarrow \infty$ as $n\rightarrow \infty$), it follows that, for $n$ large,
\begin{align*}
    &\sup_{|\beta-\beta_{0}| < D \zeta_{n}}\Pi\left(\upsilon: |G(m_{n,2},\upsilon + (\beta-\beta_{0})m_{n,2})| > 2 \sqrt{n}\zeta_{n}||m_{n,2}||_{\mathbb{H}_{\eta}}\middle |\{(Y_{i},X_{i},w_{i})'\}_{i = 1}^{n},\beta = \beta_{0}\right) \\
    &\leq \Pi\left(\upsilon: |G(m_{n,2},\upsilon)| >  \sqrt{n}\zeta_{n}||m_{n,2}||_{n,2}\middle |\{(Y_{i},X_{i},w_{i})'\}_{i = 1}^{n},\beta = \beta_{0}\right) \overset{P_{0,YX|W}^{(n)}}{\longrightarrow} 0 
\end{align*}
for $P_{0,W}^{\infty}$-almost every fixed realization $\{w_{i}\}_{i \geq 1}$ of $\{W_{i}\}_{i \geq 1}$ as $n\rightarrow \infty$, where the convergence in probability holds by a similar argument to Step 1. This verifies all the conditions for Theorem 12.9 in \cite{ghosal2017fundamentals}, and, as a result, the proof is complete.
\end{proof}
\section{Proof of Theorem \ref{thm:KL}}
\begin{proof}[Proof of Theorem \ref{thm:KL}] The proof has two steps. \textit{Step 1} proves Part 1 by relating the objective function to least squares. \textit{Step 2} uses the results from Step 1 to prove the second statement of the theorem.

\textit{Step 1.} Let $Z = (Y,X)'$. I use the definition of Kullback Leibler divergence and the expression for $p_{\beta,m}$ to conclude that
\begin{align*}
&E_{P_{0}}\left[KL\left(p_{0,YX|W}(\cdot|W),p_{\beta,m}(\cdot|W)\right)\right] \\
&=E_{P_{0}}[\log p_{0,YX|W}(Y,X|W)]-E_{P_{0}}[\log p_{\beta,m}(Y,X|W)] \\
&=E_{P_{0}}[\log p_{0,YX|W}(Y,X|W)]+\frac{1}{2}\log 4\pi^{2} + \frac{1}{2} \log \det V(\beta)+ \frac{1}{2}E_{P_{0}}[(Z-m(W))'V^{-1}(\beta)(Z-m(W))].
\end{align*}
Note that the expectation is finite for all $(\beta,m) \in \mathcal{B} \times \mathcal{M}$ by Assumption \ref{as:dgp}, $\mathcal{M} \subseteq L^{2}(\mathcal{W})\times L^{2}(\mathcal{W})$, and the fact that $\det V(\beta) = \sigma_{01}^{2}\sigma_{02}^{2} > 0$. Next, I write $Z = m_{0}(W)+\varepsilon$ and note that $\varepsilon = Z-m_{0}(W)$ satisfies $E_{P_{0}}[\varepsilon|W] = 0$. Consequently, the law of iterated expectations implies
\begin{align*}
    &\frac{1}{2}E_{P_{0}}\left[(Z-m(W))'V(\beta)^{-1}(Z-m(W))\right]\\ 
    &\quad = \frac{1}{2}E_{P_{0}}\left[(\varepsilon+m_{0}(W)-m(W))'V(\beta)^{-1}(\varepsilon+ m_{0}(W)-m(W))\right] \\
    &\quad =\frac{1}{2}E_{P_{0}}[(m(W)-m_{0}(W))'V(\beta)^{-1}(m(W)-m_{0}(W))] +\frac{1}{2}E_{P_{0}}[\varepsilon'V(\beta)^{-1}\varepsilon].
\end{align*}
Assumption \ref{as:dgp} and $V(\beta)^{-1}$ being symmetric and positive-definite guarantees $E_{P_{0}}[\varepsilon'V(\beta)^{-1}\varepsilon]  \in (0,\infty)$. Positive-definiteness of $V(\beta)$ follows from Sylvester's criterion because $\sigma_{01}^{2}+\beta^{2}\sigma_{01}^{2}>0$ and $\det V(\beta) = \sigma_{01}^{2}\sigma_{02}^{2}>0$. Hence, I can combine this with the above to conclude that there is a constant $C(\beta) \in (-\infty,\infty)$ such that
\begin{align*}
&E_{P_{0}}\left[KL\left(p_{0}(\cdot|W),p_{\beta,m}(\cdot|W)\right)\right] = C(\beta) + \frac{1}{2}E_{P_{0}}[(m(W)-m_{0}(W))'V(\beta)^{-1}(m(W)-m_{0}(W))].  
\end{align*}
Consequently, it suffices to solve 
\begin{align}\label{eq:WLS}
    \min_{m \in \mathcal{M}}E_{P_{0}}\left[(m(W)-m_{0}(W))'V(\beta)^{-1}(m(W)-m_{0}(W))\right].
\end{align}
I establish that $m_{0}$ uniquely solves (\ref{eq:WLS}). First, $m_{0}$ minimizes the objective function because 
\begin{align*}
E_{P_{0}}\left[(m(W)-m_{0}(W))'V(\beta)^{-1}(m(W)-m_{0}(W))\right]\geq 0    
\end{align*}
for all $m \in \mathcal{M}$ with equality at $m = m_{0}$ and the nuisance parameter space is chosen so that $m_{0} \in \mathcal{M}$ (i.e., $m_{0}$ is feasible). For uniqueness, I note that the positive-definiteness of $V(\beta)^{-1}$ implies
\begin{align}\label{eq:mproblem2}
    &E_{P_{0}}\left[(m(W)-m_{0}(W))'V(\beta)^{-1}(m(W)-m_{0}(W))\right] \nonumber \\
    &\quad \geq \lambda_{min}(V(\beta)^{-1}) \left\{E_{P_{0}}[(m_{1}(W)-m_{01}(W))^{2}] + E_{P_{0}}[(m_{2}(W)-m_{02}(W))^{2}] \right\}
\end{align}
and
\begin{align}\label{eq:mproblem1}
&E_{P_{0}}\left[(m(W)-m_{0}(W))'V(\beta)^{-1}(m(W)-m_{0}(W))\right] \nonumber\\
&\quad \leq \lambda_{max}(V(\beta)^{-1}) \left\{E_{P_{0}}[(m_{1}(W)-m_{01}(W))^{2}] + E_{P_{0}}[(m_{2}(W)-m_{02}(W))^{2}] \right\},   
\end{align}
Using the definition of the $L^{2}$-norm, it follows that 
\begin{align*}
    E_{P_{0}}[(m_{1}(W)-m_{01}(W))^{2}] + E_{P_{0}}[(m_{2}(W)-m_{02}(W))^{2}] >0
\end{align*}
if and only if there exists $j \in \{1,2\}$ such that
\begin{align*}
P_{0}(m_{j}(W) \neq m_{0j}(W))>0.     
\end{align*}
From this, I conclude that $m_{0}$ is the unique solution to (\ref{eq:WLS}) in an almost-sure sense.

\textit{Step 2.} Based on the results of \textit{Step 1}, it is sufficient to solve
\begin{align*}
\min_{\beta \in \mathcal{B}}E_{P_{0}}[KL(p_{0}(\cdot|W),p_{\beta,m_{0}}(\cdot|W))].
\end{align*}
to obtain the second assertion of the theorem. The factorization of a bivariate normal distribution into its conditional and marginal distributions offers further simplification because it reduces the problem to solving
\begin{align*}
\min_{\beta \in \mathcal{B}}E_{P_{0}}\left[\frac{1}{2\sigma_{01}^{2}}\left(Y-m_{01}(W)-(X-m_{02}(W))\beta\right)^{2} \right].   
\end{align*}
Adding and subtracting $(X-m_{02}(W))\beta_{0}$ and invoking Part 4 of Assumption \ref{as:dgp}, I can find a constant $C \in (-\infty,\infty)$ that does not depend on $\beta$ such that
\begin{align*}
E_{P_{0}}\left[\frac{1}{2\sigma_{01}^{2}}\left(Y-m_{01}(W)-(X-m_{02}(W))\beta\right)^{2} \right] = C + \frac{1}{2\sigma_{01}^{2}}(\beta-\beta_{0})^{2}E_{P_{0}}[(X-m_{02}(W))^{2}],
\end{align*}
and, as a result, it suffices to solve the least squares problem
\begin{align}\label{eq:betaprob}
 \min_{\beta\in \mathcal{B}}\left\{\frac{1}{2\sigma_{01}^{2}}(\beta-\beta_{0})^{2}E_{P_{0}}[(X-m_{02}(W))^{2}]\right\}.   
\end{align}
Since the objective function is nonnegative and equal to zero when $\beta = \beta_{0}$ (feasible because $\beta_{0} \in \mathcal{B}$), it follows that $\beta_{0}$ is a solution to (\ref{eq:betaprob}). Uniqueness follows from Part 2 of Assumption \ref{as:dgp} because the restriction $E_{P_{0}}[(X-m_{02}(W))^{2}] \in [\underline{c},\overline{c}]$ implies that the objective function is equal to $0$ if and only if $\beta = \beta_{0}$.
\end{proof}
\section{Multivariate Extension}\label{ap:multivariate}
The main text assumes that $\sigma_{01}^{2}$ is known and treats $X$ as a scalar random variable, however, this was largely for expositional reasons and can be relaxed with minor modifications to the assumptions. This section presents an extension of Theorem \ref{thm:BVM} to allow for unknown $\sigma_{01}^{2}$ and multivariate $X$.
\subsection{DGP, Bayesian Model, and Assumptions about Posterior Concentration}
\subsubsection{Data-Generating Process}
Assumption \ref{as:dgpmulti} extends Assumption \ref{as:dgp} to allow for multivariate $X$ and unknown $\sigma_{01}^{2}$. The existence of fourth moments for the projection errors $Y-E_{P_{0}}[Y|X]$ (i.e., the first condition in Part 4) relates to the estimation of $\sigma_{01}^{2}$. For notation, the symbol $||\cdot||_{2}$ refers to the Euclidean norm.
\begin{assumption}\label{as:dgpmulti} 
The distribution of the data $P_{0}$ satisfies the following restrictions:
\begin{enumerate}
\item The distribution of $Y$ given $X$ and $W$ satisfies $E_{P_{0}}[Y|X,W]= X'\beta_{0}+\eta_{0}(W)$ and $Var_{P_{0}}[Y|X,W] = \sigma_{01}^{2}$ for some $(\beta_{0},\eta_{0}, \sigma_{01}^{2}) \in \mathcal{B} \times \mathcal{H} \times \mathbb{R}_{++}$, where $\mathcal{B} \subseteq \mathbb{R}^{d_{x}}$, $\mathcal{H} \subseteq L^{2}(\mathcal{W})$. 
\item There exists constants $0<\underline{c}< \overline{c}<\infty$ such that $\underline{c}<\lambda_{min}(E_{P_{0}}[(X-E_{P_{0}}[X|W])(X-E_{P_{0}}[X|W])'])$ and $\lambda_{max}(E_{P_{0}}[(X-E_{P_{0}}[X|W])(X-E_{P_{0}}[X|W])']) \leq \overline{c}$.
\item The conditional distribution of $(Y,X')'$ given $W$ has a density $p_{0,YX|W}$ with respect to the Lebesgue measure and $E_{P_{0}}|\log p_{0,YX|W}(Y,X|W)| \in (0,\infty)$
\item The projection errors $Y-E_{P_{0}}[Y|W]$ and $X- E_{P_{0}}[X|W]$ are such that $E_{P_{0}}[(Y-E_{P_{0}}[Y|W])^{4}] \in (0,\infty)$ and $E_{P_{0}}[||X-E_{P_{0}}[X|W]||_{2}^{4}] \in (0,\infty)$.
\end{enumerate} 
\end{assumption}
\subsubsection{Bayesian Model}
I extend the Bayesian model from Section \ref{sec:DGP} to accommodate multivariate $X$ and unknown $\sigma_{01}^{2}$. I parametrize the sampling model in terms of precision $\xi_{0} = 1/\sigma_{01}^{2}$, which means that the extension of the sampling model is
\begin{align}
    &Y_{i}|\{X_{i},W_{i}\}_{i=1}^{n},\beta,m,\xi \overset{ind}{\sim} \mathcal{N}(m_{1}(W_{i})+ (X_{i}-m_{2}(W_{i}))'\beta,\xi^{-1}) \label{eq:samplingYmulti}\\
    &X_{i}|\{W_{i}\}_{i=1}^{n},\beta,m,\xi\overset{ind}{\sim} \mathcal{N}(m_{2}(W_{i}),\Sigma_{02}) \label{eq:samplingXmulti}
\end{align}
for $i=1,...,n$, where $\beta \in \mathcal{B} \subseteq \mathbb{R}^{d_{x}}$, $m = (m_{1},m_{2}) \in \mathcal{M}_{1} \times \prod_{k=1}^{d_{x}}\mathcal{M}_{2,k} \subseteq L^{2}(\mathcal{W})^{d_{x}+1}$, $\xi \in \Xi$ with $\Xi = [\overline{\sigma}^{-2},\underline{\sigma}^{-2}]$ for some $0 < \underline{\sigma} < \overline{\sigma} < \infty$, and $\Sigma_{02}$ is a known positive-definite matrix. The first display imposes the multivariate version of the \cite{robinson1988root} transformation. The second display is included solely for identifiability of $m_{02}$, and, as a result, it is without loss of generality to treat $\Sigma_{02}$ as known (e.g., $\Sigma_{02} = I_{d_{x}}$) as it has no intrinsic meaning (like $\sigma_{02}^{2}$ in the main text). Let $L_{n}(\beta,\xi,m)$ be the (quasi-)likelihood implied by (\ref{eq:samplingYmulti})--(\ref{eq:samplingXmulti}) and let $\ell_{n}(\beta,\xi,m) = \log L_{n}(\beta,\xi,m)$. For the prior, I assume that $(\beta,\xi) \sim \Pi_{\mathcal{B}\times \Xi}$, $m \sim \Pi_{\mathcal{M}}$, and $(\beta,\xi) \independent m$. Consequently, the joint prior is $\Pi = \Pi_{\mathcal{B}\times \Xi} \otimes \Pi_{\mathcal{M}}$. The following extends Assumption \ref{as:prior}.
\begin{assumption}\label{as:priormultivariate}
The following conditions hold:
\begin{enumerate}
\item $\int_{\mathcal{B}\times \mathcal{M}}L_{n}(\beta,\xi,m)d \Pi(\beta,\xi,m) \in (0,\infty)$ a.e. $[P_{0}^{\infty}]$.
\item $\Pi_{\mathcal{B}\times \Xi}$ has a Lebesgue density $\pi_{\mathcal{B}\times \Xi}$ that is continuous and positive over a neighborhood of $(\beta_{0},\xi_{0})$.
\end{enumerate}
\end{assumption}
Assumption \ref{as:priormultivariate} implies that the posterior $\Pi((\beta,\xi,m) \in \cdot | \{(Y_{i},X_{i}',W_{i}')'\}_{i=1}^{n})$ is obtained via Bayes rule,
\begin{align*}
\Pi((\beta,\xi,m) \in A|\{(Y_{i},X_{i},W_{i}')'\}_{i=1}^{n}) = \frac{\int_{A} L_{n}(\beta,\xi,m) d \Pi(\beta,\xi,m)}{\int_{\mathcal{B}\times \mathcal{M}}L_{n}(\beta,\xi,m)d\Pi(\beta,\xi,m)},   
\end{align*}
where $A$ is a measurable set.
\subsubsection{Assumptions on Nuisance Posterior}
The next two assumptions extend Assumptions \ref{as:consistency} and \ref{as:emp_process} to accommodate multivariate $X$ and unknown $\sigma_{01}^{2}$. The latter explains why Assumption \ref{as:emp_processmultivariate} is stated slightly differently to Assumption \ref{as:emp_process} because estimating the scale parameter $\sigma_{01}^{2}$ introduces an additional multiplier empirical process that involves the interaction between $U$ and $m_{1}-m_{01}$. However, the equicontinuity conditions stated in Assumption \ref{as:emp_processmultivariate} nests Assumption \ref{as:emp_process} because $U = \varepsilon_{1} - \varepsilon_{2}'\beta_{0}$. For notation, $B_{n,\mathcal{M}}(m_{0},\delta) = \{m \in \mathcal{M}: \max\{||m_{1}-m_{01}||_{n,2},||m_{2,1}-m_{02,1}||_{n,2},...,||m_{2,d_{x}}-m_{02,d_{x}}||_{n,2}\}< \delta\}$ with $m_{2,k}$, $k \in \{1,...,d_{x}\}$, being components of $m_{2}$, and $||\cdot||_{op}$ is the operator norm of a matrix induced by the Euclidean norm $||\cdot||_{2}$ (i.e., the spectral norm).
\begin{assumption}\label{as:consistencymultivariate}
There exists a sequence $\{\delta_{n}\}_{n \geq 1}$ such that 
\begin{align*}
\delta_{n} = o(n^{-1/4})    
\end{align*}
and
\begin{align*}
\Pi(B_{n,\mathcal{M}}(m_{0},D\delta_{n})|\{(Y_{i},X_{i},w_{i}')'\}_{i=1}^{n}) \overset{P_{0,YX|W}^{(n)}}{\longrightarrow} 1     
\end{align*}
for $P_{0,W}^{\infty}$-almost every fixed realization $\{w_{i}\}_{i \geq 1}$ of $\{W_{i}\}_{i \geq 1}$ as $n\rightarrow \infty$, where $D>0$ is a large constant.
\end{assumption}
\begin{assumption}\label{as:emp_processmultivariate}
There are sets $\{\mathcal{M}_{n}\}_{n \geq 1}$ such that 
\begin{align*}
&\Pi(\mathcal{M}_{n}|\{(Y_{i},X_{i},w_{i}')'\}_{i=1}^{n}) \overset{P_{0,YX|W}^{(n)}}{\longrightarrow} 1 \\    
\sup_{m \in \mathcal{M}_{n}\cap B_{n,\mathcal{M}}(m_{0},D\delta_{n})}&\left|\frac{1}{\sqrt{n}}\sum_{i=1}^{n}\varepsilon_{i1}(m_{1}(w_{i})-m_{01}(w_{i}))\right| \overset{P_{0,YX|W}^{(n)}}{\longrightarrow} 0 \\
\sup_{m \in \mathcal{M}_{n}\cap B_{n,\mathcal{M}}(m_{0},D\delta_{n})}&\left|\left|\frac{1}{\sqrt{n}}\sum_{i=1}^{n}\varepsilon_{i2}(m_{1}(w_{i})-m_{01}(w_{i}))\right| \right|_{2}\overset{P_{0,YX|W}^{(n)}}{\longrightarrow} 0 \\
\sup_{m \in \mathcal{M}_{n}\cap B_{n,\mathcal{M}}(m_{0},D\delta_{n})}&\left| \left|\frac{1}{\sqrt{n}}\sum_{i=1}^{n}\varepsilon_{i1}(m_{2}(w_{i})-m_{02}(w_{i}))\right| \right|_{2} \overset{P_{0,YX|W}^{(n)}}{\longrightarrow} 0  \\
\sup_{m \in \mathcal{M}_{n}\cap B_{n,\mathcal{M}}(m_{0},D\delta_{n})}&\left| \left|\frac{1}{\sqrt{n}}\sum_{i=1}^{n}\varepsilon_{i2}(m_{2}(w_{i})-m_{02}(w_{i}))'\right| \right|_{op} \overset{P_{0,YX|W}^{(n)}}{\longrightarrow} 0  
\end{align*}
for $P_{0,W}^{\infty}$-almost every fixed realization $\{w_{i}\}_{i \geq 1}$ of $\{W_{i}\}_{i \geq 1}$ as $n\rightarrow \infty$.
\end{assumption}
\subsection{Extensions of Lemma \ref{lem:technical} and \ref{lem:rootn} to Multivariate $X$ and Unknown $\sigma_{01}^{2}$}
This section extends Lemmas \ref{lem:technical} and \ref{lem:rootn} to allow for unknown $\sigma_{01}^{2}$ and multivariate $X$. I start with some notation. Let 
\begin{align*}
\tilde{\ell}_{n}(\beta,\xi,m) = \begin{pmatrix} \tilde{\ell}_{n,1}(\beta,\xi,m) \\
\tilde{\ell}_{n,2}(\beta,\xi,m),
\end{pmatrix}
\end{align*}
where
\begin{align*}
    \tilde{\ell}_{n,1}(\beta,\xi,m) = \frac{\partial}{\partial \beta'}\ell_{n}(\beta,\xi,m) = \xi\sum_{i=1}^{n}(Y_{i}-m_{1}(w_{i})-(X_{i}-m_{2}(w_{i}))'\beta)(X_{i}-m_{2}(w_{i}))
\end{align*}
and
\begin{align*}
  \tilde{\ell}_{n,2}(\beta,\xi,m) =  \frac{\partial}{\partial \xi}\ell_{n}(\beta,\xi,m) = \frac{n}{2\xi}-\frac{1}{2}\sum_{i=1}^{n}(Y_{i}-m_{1}(w_{i})-(X_{i}-m_{2}(w_{i}))'\beta)^{2}.
\end{align*}
Let
\begin{align*}
    \tilde{I}_{n}(\beta,\xi,m) = \begin{pmatrix} \tilde{I}_{n,11}(\beta,\xi,m) & \tilde{I}_{n,12}(\beta,\xi,m) \\
    \tilde{I}_{n,21}(\beta,\xi,m) & \tilde{I}_{n,22}(\beta,\xi,m) \end{pmatrix},
\end{align*}
be a $(d_{x}+1)\times (d_{x}+1)$ symmetric matrix, where
\begin{align*}
    \tilde{I}_{n,11}(\beta,\xi,m) &= -\frac{\partial^{2}}{\partial \beta \partial \beta'}\frac{1}{n}\ell_{n}(\beta,\xi,m) = \xi \frac{1}{n}\sum_{i=1}^{n}(X_{i}-m_{2}(w_{i}))(X_{i}-m_{2}(w_{i}))' \\
   \tilde{I}_{n,12}(\beta,\xi,m) &= -\frac{\partial^{2}}{\partial \beta \partial \xi}\frac{1}{n}\ell_{n}(\beta,\xi,m) = -\frac{1}{n}\sum_{i=1}^{n}(Y_{i}-m_{1}(w_{i})-(X_{i}-m_{2}(w_{i}))'\beta)(X_{i}-m_{2}(w_{i})) \\
   \tilde{I}_{n,22}(\beta,\xi,m) &=-\frac{\partial^{2}}{\partial \xi^{2}}\frac{1}{n}\ell_{n}(\beta,\xi,m) = \frac{1}{2\xi^{2}}.
\end{align*}
The first result extends Lemma \ref{lem:technical} to accommodate multivariate $X$ and unknown $\sigma_{01}^{2}$.
\begin{lemma}\label{lem:technicalmultivariate}
Let $\{\mathcal{M}_{n} \cap B_{n,\mathcal{M}}(m_{0},D\delta_{n})\}_{n=1}^{\infty}$ be the sets defined in Assumptions \ref{as:consistencymultivariate} and \ref{as:emp_processmultivariate}. If Assumption \ref{as:dgpmulti} holds, then the following holds:
\begin{enumerate}
\item The collection of functions $\{\tilde{\ell}_{n}(\beta_{0},\xi_{0},m): m \in \mathcal{M}\}$ satisfies
\begin{align*}
  \sup_{m \in B_{n,\mathcal{M}}(m_{0},D\delta_{n})\cap \mathcal{M}_{n}}\left|\left| \frac{1}{\sqrt{n}}\tilde{\ell}_{n}(\beta_{0},\xi_{0},m) - \frac{1}{\sqrt{n}}\tilde{\ell}_{n}(\beta_{0},\xi_{0},m_{0}) \right| \right|_{2}  \overset{P_{0,YX|W}^{(n)}}{\longrightarrow} 0
\end{align*}
for $P_{0,W}^{\infty}$-almost every fixed realization $\{w_{i}\}_{i \geq 1}$ of $\{W_{i}\}_{i \geq 1}$ as $n\rightarrow \infty$.
\item The collection of functions $\{\tilde{I}_{n,11}(\beta_{0},\xi_{0},m): m \in \mathcal{M}\}$ satisfies
\begin{align*}
 \sup_{m \in B_{n,\mathcal{M}}(m_{0},D\delta_{n})\cap \mathcal{M}_{n}}\left| \left| \tilde{I}_{n,11}(\beta_{0},\xi_{0},m)-\tilde{I}_{n,11}(\beta_{0},\xi_{0},m_{0}) \right| \right|_{op} \overset{P_{0,YX|W}^{(n)}}{\longrightarrow} 0   
\end{align*}
for $P_{0,W}^{\infty}$-almost every fixed realization $\{w_{i}\}_{i \geq 1}$ of $\{W_{i}\}_{i \geq 1}$ as $n\rightarrow \infty$.
\item Let $\underline{c}_{0}=\underline{c}/\sigma_{01}^{2}$ and $\overline{c}_{0} = \overline{c}/\sigma_{01}^{2}$, where $\overline{c}$ and $\underline{c}$ are the constants Part 2 of Assumption \ref{as:dgpmulti}. The following statements hold:
\begin{align*}
   P_{0,YX|W}^{(n)}\left( \inf_{m \in \mathcal{M}_{n} \cap B_{\mathcal{M},\infty}(m_{0},D\delta_{n})}\lambda_{min}(\tilde{I}_{n,11}(\beta_{0},\xi_{0},m))\geq \underline{c}_{0} \right) \rightarrow 1
   \end{align*}
   and
   \begin{align*}
    P_{0,YX|W}^{(n)}\left(\sup_{m \in \mathcal{M}_{n} \cap B_{\mathcal{M},\infty}(m_{0},D\delta_{n})}\lambda_{max}(\tilde{I}_{n,11}(\beta_{0},\xi_{0},m))\leq \overline{c}_{0}\right) \rightarrow 
\end{align*}
for $P_{0,W}^{\infty}$-almost every fixed realization $\{w_{i}\}_{i \geq 1}$ of $\{W_{i}\}_{i \geq 1}$ as $n\rightarrow \infty$.
\item Let $\theta = (\beta',\xi)'$ and let $R_{n}(\theta,m)$ be given by
\begin{align*}
R_{n}(\theta,m) = \ell_{n}(\theta,m)-\ell_{n}(\theta_{0},m)- \frac{1}{\sqrt{n}}\tilde{\ell}_{n}(\theta_{0},m_{0})'\sqrt{n}(\theta-\theta_{0})+\frac{n}{2}(\theta-\theta_{0})'\tilde{I}_{n}(\theta_{0},m_{0})(\theta-\theta_{0}).
\end{align*}
If $M_{n}\rightarrow \infty$ arbitrarily slowly, then
\begin{align*}
\sup_{(\theta,m) \in B_{\mathcal{B}\times \Xi}(\theta_{0},M_{n}/\sqrt{n})\times (B_{\mathcal{M},\infty}(m_{0},D\delta_{n})\cap \mathcal{M}_{n})}\left | R_{n}(\theta,m) \right| \overset{P_{0,YX|W}^{(n)}}{\longrightarrow} 0
\end{align*}
for $P_{0,W}^{\infty}$-almost every fixed realization $\{w_{i}\}_{i \geq 1}$ of $\{W_{i}\}_{i \geq 1}$ as $n\rightarrow \infty$.
\end{enumerate}
\end{lemma}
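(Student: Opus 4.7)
The plan is to mirror the four-part structure of Lemma \ref{lem:technical}, handling the new features of the multivariate/unknown-$\sigma_{01}^{2}$ setting through componentwise reductions and an additional third-order Taylor term. Throughout, let $\varepsilon_{i1}=Y_{i}-m_{01}(w_{i})-(X_{i}-m_{02}(w_{i}))'\beta_{0}$ (so $\varepsilon_{i1}=U_{i}$) and $\varepsilon_{i2}=X_{i}-m_{02}(w_{i})\in\mathbb{R}^{d_{x}}$.

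For Part 1, I would substitute $Y_{i}=m_{01}(w_{i})+\varepsilon_{i2}'\beta_{0}+U_{i}$ and $X_{i}=m_{02}(w_{i})+\varepsilon_{i2}$ into the two score components and expand. For $\tilde{\ell}_{n,1}$ the expansion produces (i) bilinear deterministic terms of the form $\sqrt{n}\langle m_{1}-m_{01},m_{2}-m_{02}\rangle_{n,2}$ and $\sqrt{n}\|m_{2}-m_{02}\|_{n,2}^{2}\beta_{0}$, each bounded by $\sqrt{n}\delta_{n}^{2}=o(1)$ using Assumption \ref{as:consistencymultivariate} and Cauchy--Schwarz coordinate by coordinate, and (ii) multiplier empirical processes driven by $\varepsilon_{i1}(m_{2,k}-m_{02,k})(w_{i})$ and $\varepsilon_{i2,k}(m_{2,k'}-m_{02,k'})(w_{i})$, controlled uniformly by Assumption \ref{as:emp_processmultivariate}. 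The new scalar score $\tilde{\ell}_{n,2}$ involves squared residuals; expanding $(\varepsilon_{i1}-(m_{1}-m_{01})(w_{i})+(m_{2}-m_{02})(w_{i})'\beta_{0})^{2}$ introduces, in addition to deterministic terms of order $\sqrt{n}\delta_{n}^{2}$, multiplier empirical processes of the forms in Assumption \ref{as:emp_processmultivariate}, including the new term $n^{-1/2}\sum_{i}\varepsilon_{i1}(m_{1}-m_{01})(w_{i})$ that Assumption \ref{as:emp_processmultivariate} is explicitly designed to cover.

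For Part 2, I would write $(X_{i}-m_{2}(w_{i}))(X_{i}-m_{2}(w_{i}))'-\varepsilon_{i2}\varepsilon_{i2}'$ as a sum of a rank-one term $(m_{02}-m_{2})(m_{02}-m_{2})'(w_{i})$ and two symmetric cross terms $\varepsilon_{i2}(m_{02}-m_{2})(w_{i})'$. Using $\|\sum_{i}v_{i}v_{i}'\|_{op}\leq\sum_{i}\|v_{i}\|_{2}^{2}$, the deterministic rank-one average is bounded by $\sum_{k}\|m_{2,k}-m_{02,k}\|_{n,2}^{2}\lesssim\delta_{n}^{2}=o(1)$ on $B_{n,\mathcal{M}}(m_{0},D\delta_{n})$, and the cross-term operator norm is controlled by the multiplier process bound $n^{-1/2}\|\sum_{i}\varepsilon_{i2}(m_{2}-m_{02})(w_{i})'\|_{op}\cdot n^{-1/2}\to 0$ from Assumption \ref{as:emp_processmultivariate}. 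Part 3 then follows by combining Part 2 with an SLLN argument applied to $\tilde{I}_{n,11}(\beta_{0},\xi_{0},m_{0})=\xi_{0}n^{-1}\sum_{i}\varepsilon_{i2}\varepsilon_{i2}'$: under Part 4 of Assumption \ref{as:dgpmulti} the sample second-moment matrix converges in probability to $\xi_{0}E_{P_{0}}[\varepsilon_{2}\varepsilon_{2}'|W]$ (averaged over $W$) whose eigenvalues lie in $[\underline{c}_{0},\overline{c}_{0}]$ by Part 2 of Assumption \ref{as:dgpmulti}, so Weyl's inequality gives the claimed sandwich bound.

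Part 4 is where the main obstacle lies, as it must accommodate the nonquadratic dependence of $\ell_{n}$ on $\xi$. Since $\ell_{n}(\beta,\xi,m)$ is exactly quadratic in $\beta$ (for fixed $\xi,m$) and equals $\tfrac{n}{2}\log\xi-\tfrac{\xi}{2}S(\beta,m)+\mathrm{const}_{X}$, only two families of residual terms appear beyond the quadratic Taylor polynomial: the pure cubic $\xi$-term from $\log\xi$, of order $n|\xi-\xi_{0}|^{3}\lesssim M_{n}^{3}/\sqrt{n}$, and the mixed cubic $(\xi-\xi_{0})(\beta-\beta_{0})'\{n^{-1}\sum_{i}(X_{i}-m_{2}(w_{i}))(X_{i}-m_{2}(w_{i}))'\}(\beta-\beta_{0})\cdot n$, bounded via Part 3 by a constant times $M_{n}^{3}/\sqrt{n}$; both tend to zero provided $M_{n}$ grows slowly enough (which the hypothesis permits). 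What remains, namely $\sqrt{n}(\theta-\theta_{0})'[n^{-1/2}\tilde{\ell}_{n}(\theta_{0},m)-n^{-1/2}\tilde{\ell}_{n}(\theta_{0},m_{0})]$ and $\tfrac{n}{2}(\theta-\theta_{0})'[\tilde{I}_{n}(\theta_{0},m)-\tilde{I}_{n}(\theta_{0},m_{0})](\theta-\theta_{0})$, is handled by Parts 1 and 2 combined with $|\sqrt{n}(\theta-\theta_{0})|\leq M_{n}$; note that $\tilde{I}_{n,12}(\theta_{0},m)$ equals $-(n\xi_{0})^{-1}\tilde{\ell}_{n,1}(\theta_{0},m)$ so its uniform stability in $m$ follows a fortiori from Part 1, while $\tilde{I}_{n,22}(\theta_{0},m)=1/(2\xi_{0}^{2})$ is independent of $m$ and contributes nothing. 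The principal care is thus in verifying that $M_{n}^{3}/\sqrt{n}=o(1)$ is compatible with the slow-divergence requirement on $M_{n}$, which it is.
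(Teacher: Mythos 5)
Your proposal is correct and follows essentially the same route as the paper for Parts 1--3: direct substitution of $Y_{i}=m_{01}(w_{i})+\varepsilon_{i2}'\beta_{0}+U_{i}$ and $X_{i}=m_{02}(w_{i})+\varepsilon_{i2}$, separation into deterministic bilinear terms of order $\sqrt{n}\delta_{n}^{2}$ and the multiplier empirical processes listed in Assumption \ref{as:emp_processmultivariate}, the rank-one-plus-cross-terms decomposition of $\tilde{I}_{n,11}$, and transfer of the eigenvalue bounds from $\tilde{I}_{n,11}(\beta_{0},\xi_{0},m_{0})$ to the uniform neighborhood via Part 2. (One small omission in your sketch of Part 1: the expansion of $\tilde{\ell}_{n,1}$ also produces the process $n^{-1/2}\sum_{i}\varepsilon_{i2}(m_{1}-m_{01})(w_{i})$, which is precisely the second display of Assumption \ref{as:emp_processmultivariate}; your list of driving terms skips it, but the assumption covers it.)

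For Part 4 your organization differs mildly from the paper's. The paper writes an exact second-order Taylor expansion in $\theta$ with the Hessian evaluated at an intermediate point $\tilde{\theta}_{n}$, and then bounds $\|\tilde{I}_{n}(\tilde{\theta}_{n},m)-\tilde{I}_{n}(\theta_{0},m_{0})\|_{op}$ uniformly, using that the entries of $\tilde{I}_{n}(\theta,m)-\tilde{I}_{n}(\theta,m_{0})$ are linear in $\theta$ together with continuity of $\theta\mapsto\tilde{I}_{n}(\theta,m_{0})$. You instead exploit the exact polynomial structure of $\ell_{n}$ in $(\beta,\xi)$ apart from the $\tfrac{n}{2}\log\xi$ term, isolating the two cubic remainders (the pure $\xi$ cubic from $\log\xi$ and the mixed $(\xi-\xi_{0})\times(\beta-\beta_{0})^{\otimes 2}$ term) and bounding both by $O_{P}(M_{n}^{3}/\sqrt{n})$ via Part 3; the identity $\tilde{I}_{n,12}(\theta_{0},m)=-(n\xi_{0})^{-1}\tilde{\ell}_{n,1}(\theta_{0},m)$ correctly reduces the cross-block stability to Part 1, and $\tilde{I}_{n,22}$ is $m$-free. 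Both treatments are valid and rest on the same inputs; yours makes the order of the Taylor remainder fully explicit ($M_{n}^{3}/\sqrt{n}$, compatible with $M_{n}\rightarrow\infty$ arbitrarily slowly), while the paper's mean-value form avoids tracking third-order terms at the cost of an intermediate-point argument.
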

\begin{proof}
The proof has four steps with each step corresponding to the appropriate part of the lemma.

\textit{Step 1.} Applying the triangle inequality, I know that
\begin{align*}
    \left | \left | \frac{1}{\sqrt{n}}\tilde{\ell}_{n}(\beta_{0},\xi_{0},m) - \frac{1}{\sqrt{n}}\tilde{\ell}_{n}(\beta_{0},\xi_{0},m_{0}) \right | \right|_{2} &\leq      \left | \left | \frac{1}{\sqrt{n}}\tilde{\ell}_{n,1}(\beta_{0},\xi_{0},m) - \frac{1}{\sqrt{n}}\tilde{\ell}_{n,1}(\beta_{0},\xi_{0},m_{0}) \right | \right|_{2} \\
    &\quad + \left | \frac{1}{\sqrt{n}}\tilde{\ell}_{n,2}(\beta_{0},\xi_{0},m) - \frac{1}{\sqrt{n}}\tilde{\ell}_{n,2}(\beta_{0},\xi_{0},m_{0})\right|
\end{align*}
Consequently, it suffices to show that the following holds 
\begin{align}\label{eq:score1}
  \sup_{m \in B_{n,\mathcal{M}}(m_{0},D\delta_{n})\cap \mathcal{M}_{n}}\left | \left | \frac{1}{\sqrt{n}}\tilde{\ell}_{n,1}(\beta_{0},\xi_{0},m) - \frac{1}{\sqrt{n}}\tilde{\ell}_{n,1}(\beta_{0},\xi_{0},m_{0}) \right | \right|_{2} \overset{P_{0,YX|W}^{(n)}}{\longrightarrow} 0  
\end{align}
and
\begin{align}\label{eq:score2}
  \sup_{m \in B_{n,\mathcal{M}}(m_{0},D\delta_{n})\cap \mathcal{M}_{n}}\left | \frac{1}{\sqrt{n}}\tilde{\ell}_{n,2}(\beta_{0},\xi_{0},m) - \frac{1}{\sqrt{n}}\tilde{\ell}_{n,2}(\beta_{0},\xi_{0},m_{0})\right| \overset{P_{0,YX|W}^{(n)}}{\longrightarrow} 0  
\end{align}
for $P_{0,W}^{\infty}$-almost every fixed realization $\{w_{i}\}_{i \geq 1}$ of $\{W_{i}\}_{i \geq 1}$ as $n\rightarrow \infty$. For (\ref{eq:score1}), I directly expand $\tilde{\ell}_{n,1}(\beta_{0},\xi_{0},m)-\tilde{\ell}_{n,1}(\beta_{0},\xi_{0},m_{0})$ and apply the triangle inequality to conclude
\begin{align*}
   &\sup_{m \in B_{n,\mathcal{M}}(m_{0},D\delta_{n})\cap \mathcal{M}_{n}}\left | \left | \frac{1}{\sqrt{n}}\tilde{\ell}_{n,1}(\beta_{0},\xi_{0},m) - \frac{1}{\sqrt{n}}\tilde{\ell}_{n,1}(\beta_{0},\xi_{0},m_{0}) \right | \right|_{2} \\
   &\quad \leq    \sup_{m \in B_{n,\mathcal{M}}(m_{0},D\delta_{n})\cap \mathcal{M}_{n}} \left | \left | \frac{1}{\sqrt{n}}\sum_{i=1}^{n}(m_{1}(w_{i})-m_{01}(w_{i}))\varepsilon_{i2}\right| \right|_{2} \\
   &\quad \quad + \sup_{m \in B_{n,\mathcal{M}}(m_{0},D\delta_{n})\cap \mathcal{M}_{n}} \left | \left | \frac{1}{\sqrt{n}}\sum_{i=1}^{n}(m_{2}(w_{i})-m_{02}(w_{i}))(U_{i}-\varepsilon_{i2}'\beta_{0})\right| \right|_{2} \\
   &\quad \quad +\sup_{m \in B_{n,\mathcal{M}}(m_{0},D\delta_{n})\cap \mathcal{M}_{n}} \left | \left | \frac{1}{\sqrt{n}}\sum_{i=1}^{n}(m_{1}(w_{i})-m_{01}(w_{i}))(m_{2}(w_{i})-m_{02}(w_{i}))\right| \right|_{2} \\
   &\quad \quad+ \sup_{m \in B_{n,\mathcal{M}}(m_{0},D\delta_{n})\cap \mathcal{M}_{n}} \left | \left | \frac{1}{\sqrt{n}}\sum_{i=1}^{n}(m_{2}(w_{i})-m_{02}(w_{i}))(m_{2}(w_{i})-m_{02}(w_{i}))'\beta_{0}\right| \right|_{2} 
\end{align*}
Assumptions \ref{as:consistencymultivariate} and \ref{as:emp_processmultivariate} then imply that
\begin{align*}
\sup_{m \in B_{n,\mathcal{M}}(m_{0},D\delta_{n})\cap \mathcal{M}_{n}}\left | \left | \frac{1}{\sqrt{n}}\tilde{\ell}_{n,1}(\beta_{0},\xi_{0},m) - \frac{1}{\sqrt{n}}\tilde{\ell}_{n,1}(\beta_{0},\xi_{0},m_{0}) \right | \right|_{2} \overset{P_{0,YX|W}^{(n)}}{\longrightarrow} 0
\end{align*}
for $P_{0,W}^{\infty}$-almost every fixed realization $\{w_{i}\}_{i \geq 1}$ of $\{W_{i}\}_{i \geq 1}$. For (\ref{eq:score2}), let
\begin{align*}
U_{i}(m) = Y_{i}-m_{1}(w_{i})-(X_{i}-m_{2}(w_{i}))'\beta_{0}    
\end{align*}
and observe that
\begin{align*}
\frac{1}{\sqrt{n}}\tilde{\ell}_{n,2}(\beta_{0},\xi_{0},m) &=\frac{\sqrt{n}}{2\xi_{0}}- \frac{1}{2\sqrt{n}}\sum_{i=1}^{n}U_{i}^{2}  - \frac{1}{2\sqrt{n}}\sum_{i=1}^{n}\left((m_{1}(w_{i})-m_{01}(w_{i}))+(m_{2}(w_{i})-m_{02}(w_{i}))'\beta_{0}\right)^{2} \\
&\quad +\frac{2}{\sqrt{n}}\sum_{i=1}^{n}U_{i}[(m_{1}(w_{i})-m_{01}(w_{i}))+(m_{2}(w_{i})-m_{02}(w_{i}))'\beta_{0}] \\
&=\frac{1}{\sqrt{n}}\tilde{\ell}_{n,2}(\beta_{0},\xi_{0},m_{0}) - \frac{1}{2\sqrt{n}}\sum_{i=1}^{n}\left((m_{1}(w_{i})-m_{01}(w_{i}))+(m_{2}(w_{i})-m_{02}(w_{i}))'\beta_{0}\right)^{2} \\
&\quad +\frac{2}{\sqrt{n}}\sum_{i=1}^{n}U_{i}[(m_{1}(w_{i})-m_{01}(w_{i}))+(m_{2}(w_{i})-m_{02}(w_{i}))'\beta_{0}],
\end{align*}
where I have used that $U_{i}(m_{0}) = U_{i}$. Assumptions \ref{as:consistencymultivariate} and \ref{as:emp_processmultivariate} then imply that
\begin{align*}
   \sup_{m \in B_{n,\mathcal{M}}(m_{0},D\delta_{n})\cap \mathcal{M}_{n}}\left | \frac{1}{\sqrt{n}}\tilde{\ell}_{n,2}(\beta_{0},\xi_{0},m) - \frac{1}{\sqrt{n}}\tilde{\ell}_{n,2}(\beta_{0},\xi_{0},m_{0})\right| \overset{P_{0,YX|W}^{(n)}}{\longrightarrow} 0     
\end{align*}
for $P_{0,W}^{\infty}$-almost every fixed realization $\{w_{i}\}_{i \geq 1}$ of $\{W_{i}\}_{i \geq 1}$ as $n\rightarrow \infty$. This verifies (\ref{eq:score2}).

\textit{Step 2.} Adding and subtracting $m_{02}(w_{i})$, the block $\tilde{I}_{n,11}(\beta_{0},\xi_{0},m)$ admits the decomposition:
\begin{align*}
 \tilde{I}_{n,11}(\beta_{0},\xi_{0},m) &= \tilde{I}_{n}(\beta_{0},\xi_{0},m_{0}) + \frac{\xi_{0}}{n}\sum_{i=1}^{n}(m_{2}(w_{i})-m_{02}(w_{i}))(m_{2}(w_{i})-m_{02}(w_{i}))' \\
 &\quad -\frac{\xi_{0}}{n}\sum_{i=1}^{n}\varepsilon_{i2}(m_{2}(w_{i})-m_{02}(w_{i}))'-\frac{\xi_{0}}{n}\sum_{i=1}^{n}(m_{2}(w_{i})-m_{02}(w_{i}))\varepsilon_{i2}'
\end{align*}
Consequently, the triangle inequality and the fact that $\lambda_{max}(A) = \lambda_{max}(A')$ reveals
\begin{align*}
   \left | \left |  \tilde{I}_{n,11}(\beta_{0},\xi_{0},m)-\tilde{I}_{n,11}(\beta_{0},\xi_{0},m_{0}) \right | \right |_{op} &\lesssim \left | \left | \frac{1}{n}\sum_{i=1}^{n}(m_{2}(w_{i})-m_{02}(w_{i}))(m_{2}(w_{i})-m_{02}(w_{i}))'\right | \right |_{op} \\
   &\quad + \left | \left | \frac{1}{n}\sum_{i=1}^{n}\varepsilon_{i2}(m_{2}(w_{i})-m_{02}(w_{i}))' \right| \right|_{op}.
\end{align*}
Since $\delta_{n} = o(1)$,
\begin{align*}
     \sup_{m \in \mathcal{M}_{n} \cap B_{n,\mathcal{M}}(m_{0},D\delta_{n})}\left | \left | \frac{1}{n}\sum_{i=1}^{n}(m_{2}(w_{i})-m_{02}(w_{i}))(m_{2}(w_{i})-m_{02}(w_{i}))'\right | \right |_{op}  \longrightarrow 0
\end{align*}
for $P_{0,W}^{\infty}$-almost every fixed realization $\{w_{i}\}_{i \geq 1}$ of $\{W_{i}\}_{i \geq 1}$ as $n\rightarrow \infty$. Moreover, Assumption \ref{as:emp_processmultivariate} implies that
\begin{align*}
 \sup_{m \in \mathcal{M}_{n} \cap B_{n,\mathcal{M}}(m_{0},D\delta_{n})}\left | \left | \frac{1}{n}\sum_{i=1}^{n}\varepsilon_{i2}(m_{2}(w_{i})-m_{02}(w_{i}))'\right | \right |_{op}     \overset{P_{0,YX|W}^{(n)}}{\longrightarrow } 0
\end{align*}
for $P_{0,W}^{\infty}$-almost every fixed realization $\{w_{i}\}_{i \geq 1}$ of $\{W_{i}\}_{i \geq 1}$ as $n\rightarrow \infty$. This establishes Part 2.

\textit{Step 3.} I first show the result for the minimum eigenvalue. Since 
\begin{align*}
\lambda_{min}(\tilde{I}_{n,11}(\beta_{0},\xi_{0},m)) = ||\tilde{I}_{n,11}^{-1}(\beta_{0},\xi_{0},m)||_{op},    
\end{align*} 
it suffices to show that
\begin{align}\label{eq:convergenceofinverses}
\sup_{m \in \mathcal{M}_{n} \cap B_{n,\mathcal{M}}(m_{0},D\delta_{n})} \left| \left | \tilde{I}_{n,11}^{-1}(\beta_{0},\xi_{0},m) - \tilde{I}_{n,11}^{-1}(\beta_{0},\xi_{0},m_{0}) \right| \right|_{op} \overset{P_{0,YX|W}^{(n)}}{\longrightarrow } 0   
\end{align}
conditionally given $P_{0,W}^{\infty}$-almost every realization $\{w_{i}\}_{i \geq 1}$ of $\{W_{i}\}_{i \geq 1}$. Indeed, Parts 2 and 4 of Assumption \ref{as:dgpmulti}, Chebyshev's inequality, and the strong law of large numbers implies that
\begin{align*}
    \left| \left| \tilde{I}_{n,11}^{-1}(\beta_{0},\xi_{0},m_{0}) \right| \right|_{op} \geq \underline{c}_{0}
\end{align*}
with $P_{0,YX|W}^{(n)}$-probability approaching one, conditionally given $P_{0,W}^{\infty}$-almost every realization $\{w_{i}\}_{i \geq 1}$ of $\{W_{i}\}_{i \geq 1}$. The convergence (\ref{eq:convergenceofinverses}) holds by Part 2 of the lemma and the continuous mapping theorem. Next, I show the result for the maximum eigenvalue. Part 4 of Assumption \ref{as:dgpmulti} and Chebyshev's inequality implies 
\begin{align*}
||\tilde{I}_{n,11}(\beta_{0},\xi_{0},m_{0})||_{op} \leq \overline{c}_{0}    
\end{align*}
with $P_{0,YX|W}^{(n)}$-probability approaching one for $P_{0,W}^{\infty}$-almost every fixed realization $\{w_{i}\}_{i \geq 1}$ of $\{W_{i}\}_{i \geq 1}$ as $n\rightarrow \infty$. Consequently, it follows by Part 2 of this lemma, the triangle inequality, and the fact that $||A||_{op} = \lambda_{max}(A)$ that with $P_{0,YX|W}^{(n)}$-probability approaching one, $$\sup_{m \in \mathcal{M}_{n} \cap B_{n,\mathcal{M}}(m_{0},D\delta_{n})} \lambda_{max}\left(\tilde{I}_{n,11}(\beta_{0},\xi_{0},m)\right) \leq \overline{c}_{0}$$ conditionally given $P_{0,W}^{\infty}$-almost every realization $\{w_{i}\}_{i \geq 1}$ of $\{W_{i}\}_{i \geq 1}$ as $n\rightarrow \infty$.

\textit{Step 4.} Let $\theta = (\beta',\xi)'$. Applying a second-order Taylor expansion, I know that
\begin{align*}
    \ell_{n}(\theta,m) - \ell_{n}(\theta_{0},m) = \sqrt{n}(\theta-\theta_{0})'\frac{1}{\sqrt{n}}\tilde{\ell}_{n}(\theta_{0},m)-\frac{1}{2}n(\theta-\theta_{0})'\tilde{I}_{n}(\tilde{\theta}_{n},m)(\theta-\theta_{0}),
\end{align*}
where $\tilde{\theta}_{n}$ is on the line segment joining $\theta$ and $\theta_{0}$. Consequently, I want to show that
\begin{align}\label{eq:taylor1}
 \sup_{(\theta,m) \in B_{\mathcal{B}\times \Xi}(\theta_{0},\frac{M_{n}}{\sqrt{n}}) \times (\mathcal{M}_{n} \cap B_{n,\mathcal{M}}(m_{0},D\delta_{n}))}\left|\sqrt{n}(\theta-\theta_{0})'\left(\frac{1}{\sqrt{n}}\tilde{\ell}_{n}(\theta_{0},m) - \frac{1}{\sqrt{n}}\tilde{\ell}_{n}(\theta_{0},m)\right)\right| = o_{P_{0,YX|W}^{(n)}}(1)
\end{align}
and
\begin{align}\label{eq:taylor2}
\sup_{(\theta,m) \in B_{\mathcal{B}\times \Xi}(\theta_{0},\frac{M_{n}}{\sqrt{n}}) \times (\mathcal{M}_{n} \cap B_{n,\mathcal{M}}(m_{0},D\delta_{n}))}\left|\frac{1}{2}n(\theta-\theta_{0})'\left(\tilde{I}_{n}(\tilde{\theta}_{n},m)-\tilde{I}_{n}(\theta_{0},m_{0})\right)(\theta-\theta_{0})\right| = o_{P_{0,YX|W}^{(n)}}(1)
\end{align}
conditionally given $P_{0,W}^{\infty}$-almost every sequence $\{w_{i}\}_{i \geq 1}$ of $\{W_{i}\}_{i \geq 1}$.
For (\ref{eq:taylor1}), Part 1 of the lemma and the fact that $M_{n} \rightarrow \infty$ arbitrarily slowly as $n\rightarrow \infty$ implies that
\begin{align*}
    &\sup_{(\theta,m) \in B_{\mathcal{B}\times \Xi}(\theta_{0},\frac{M_{n}}{\sqrt{n}}) \times (\mathcal{M}_{n} \cap B_{n,\mathcal{M}}(m_{0},D\delta_{n}))}\left|\sqrt{n}(\theta-\theta_{0})'\frac{1}{\sqrt{n}}\tilde{\ell}_{n}(\theta_{0},m) - \sqrt{n}(\theta-\theta_{0})'\frac{1}{\sqrt{n}}\tilde{\ell}_{n}(\theta_{0},m_{0})\right| \\
    &\quad \leq M_{n} \sup_{m\in \mathcal{M}_{n} \cap B_{n,\mathcal{M}}(m_{0},D\delta_{n})}\left| \frac{1}{\sqrt{n}}\tilde{\ell}_{n}(\theta_{0},m)-\frac{1}{\sqrt{n}}\tilde{\ell}_{n}(\theta_{0},m_{0})\right| \\
    &\quad = o_{P_{0,YX|W}^{(n)}}(1)
\end{align*}
for $P_{0,W}^{\infty}$-almost every fixed realization $\{w_{i}\}_{i \geq 1}$ of $\{W_{i}\}_{i \geq 1}$ as $n\rightarrow \infty$. For (\ref{eq:taylor2}), I apply the Cauchy-Schwarz inequality, $||Ax||_{2} \leq ||A||_{op} ||x||_{2}$ for a matrix $A$ and Euclidean vector $x$, and the triangle inequality to conclude that
\begin{align*}
    &\sup_{(\theta,m) \in B_{\mathcal{B}\times \Xi}(\theta_{0},\frac{M_{n}}{\sqrt{n}}) \times (\mathcal{M}_{n} \cap B_{n,\mathcal{M}}(m_{0},D\delta_{n}))}\left|\frac{1}{2}n(\theta-\theta_{0})'\left(\tilde{I}_{n}(\tilde{\theta}_{n},m)-\tilde{I}_{n}(\theta_{0},m_{0})\right)(\theta-\theta_{0})\right| \\
    &\quad \leq M_{n}^{2} \sup_{(\theta,m) \in B_{\mathcal{B}\times \Xi}(\theta_{0},\frac{M_{n}}{\sqrt{n}}) \times (\mathcal{M}_{n} \cap B_{n,\mathcal{M}}(m_{0},D\delta_{n}))}\left|\left|\tilde{I}_{n}(\tilde{\theta}_{n},m)-\tilde{I}_{n}(\theta_{0},m_{0})\right| \right|_{op} \\
    &\quad \leq M_{n}^{2}\sup_{(\theta,m) \in B_{\mathcal{B}\times \Xi}(\theta_{0},\frac{M_{n}}{\sqrt{n}}) \times (\mathcal{M}_{n} \cap B_{n,\mathcal{M}}(m_{0},D\delta_{n}))}\left|\left|\tilde{I}_{n}(\tilde{\theta}_{n},m)-\tilde{I}_{n}(\tilde{\theta}_{n},m_{0})\right| \right|_{op}\\
    &\quad \quad+ M_{n}^{2}\sup_{\theta \in B_{\mathcal{B}\times \Xi}(\theta_{0},\frac{M_{n}}{\sqrt{n}})}\left|\left|\tilde{I}_{n}(\tilde{\theta}_{n},m_{0})-\tilde{I}_{n}(\theta_{0},m_{0})\right| \right|_{op}
\end{align*}
Since $\sup_{\theta \in B_{\mathcal{B}\times \Xi}(\theta_{0},M_{n}/\sqrt{n})}||\tilde{\theta}_{n}||_{2} = O(1)$ and the elements of $\tilde{I}_{n}(\theta,m)-\tilde{I}_{n}(\theta,m_{0})$ are linear in $\theta$, Part 2 of the lemma implies the following convergence
\begin{align*}
    \sup_{(\theta,m) \in B_{\mathcal{B}\times \Xi}(\theta_{0},\frac{M_{n}}{\sqrt{n}}) \times (\mathcal{M}_{n} \cap B_{n,\mathcal{M}}(m_{0},D\delta_{n}))}\left|\left|\tilde{I}_{n}(\tilde{\theta}_{n},m)-\tilde{I}_{n}(\tilde{\theta}_{n},m_{0})\right| \right|_{op} = o_{P_{0,YX|W}^{(n)}}(1)
\end{align*}
for $P_{0,W}^{\infty}$-almost every fixed realization $\{w_{i}\}_{i \geq 1}$ of $\{W_{i}\}_{i \geq 1}$ as $n\rightarrow \infty$. Moreover, since the mapping $\theta \mapsto \tilde{I}_{n}(\theta,m_{0})$ is continuous over $B_{\mathcal{B}\times \Xi}(\theta_{0},M_{n}/\sqrt{n})$,
\begin{align*}
  \sup_{\theta \in B_{\mathcal{B}\times \Xi}(\theta_{0},\frac{M_{n}}{\sqrt{n}}) }  \left|\left|\tilde{I}_{n}(\tilde{\theta}_{n},m_{0})-\tilde{I}_{n}(\theta_{0},m_{0})\right| \right|_{op} = o_{P_{0,YX|W}^{(n)}}(1)
\end{align*}
for $P_{0,W}^{\infty}$-almost every fixed realization $\{w_{i}\}_{i \geq 1}$ of $\{W_{i}\}_{i \geq 1}$ as $n\rightarrow \infty$. Consequently, I conclude that if $M_{n} \rightarrow \infty$ arbitrarily slowly as $n\rightarrow \infty$, then
\begin{align*}
    \sup_{(\theta,m) \in B_{\mathcal{B}\times \Xi}(\theta_{0},\frac{M_{n}}{\sqrt{n}}) \times (\mathcal{M}_{n} \cap B_{n,\mathcal{M}}(m_{0},D\delta_{n}))}\left|\frac{1}{2}n(\theta-\theta_{0})'\left(\tilde{I}_{n}(\tilde{\theta}_{n},m)-\tilde{I}_{n}(\theta_{0},m_{0})\right)(\theta-\theta_{0})\right|  \overset{P_{0,YX|W}^{(n)}}{\longrightarrow } 0
\end{align*}
conditionally given $P_{0,W}^{\infty}$-almost every realization $\{w_{i}\}_{i \geq 1}$ of $\{W_{i}\}_{i \geq 1}$ as $n\rightarrow \infty$. The proof is complete.
\end{proof}
\begin{lemma}\label{lem:rootnmultivariate}
Let $\{\mathcal{M}_{n} \cap B_{n,\mathcal{M}}(m_{0},D\delta_{n})\}_{n=1}^{\infty}$ be the sets defined in Assumptions \ref{as:consistencymultivariate} and \ref{as:emp_processmultivariate}. If Assumptions \ref{as:dgpmulti}, \ref{as:priormultivariate}, \ref{as:consistencymultivariate}, and \ref{as:emp_processmultivariate} hold, then for every $M_{n}\rightarrow \infty$,
\begin{align*}
    \sup_{m \in \mathcal{M}_{n} \cap B_{n,\mathcal{M}}(m_{0},D\delta_{n})}\Pi\left((\beta,\xi) \in B_{\mathcal{B}\times \Xi}((\beta_{0},\xi_{0}),M_{n}/\sqrt{n})^{c}|\{(Y_{i},X_{i},W_{i}')'\}_{i=1}^{n},m\right) \overset{P_{0,YX|W}^{(n)}}{\longrightarrow} 0
\end{align*}
conditionally given $P_{0,W}^{\infty}$-almost every realization $\{w_{i}\}_{i \geq 1}$ of $\{W_{i}\}_{i \geq 1}$ as $n\rightarrow \infty$, where $B_{\mathcal{B}\times \Xi}((\beta_{0},\xi_{0}),M_{n}/\sqrt{n})$ is a Euclidean ball centered at $(\beta_{0},\xi_{0})$ with radius $M_{n}/\sqrt{n}$.
\end{lemma}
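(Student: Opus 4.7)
The plan is to extend the three-step argument of Lemma \ref{lem:rootn} to the $(d_{x}+1)$-dimensional parameter $\theta = (\beta',\xi)'$, replacing scalar absolute values with Euclidean norms and the scalar Fisher information with the matrix $\tilde{I}_{n}(\theta_{0},m)$. Writing $\theta_{0} = (\beta_{0}',\xi_{0})'$, the conditional posterior admits the representation
\begin{align*}
    \Pi\bigl(\theta \in B_{\mathcal{B}\times\Xi}(\theta_{0},M_{n}/\sqrt{n})^{c} \bigm| \{(Y_{i},X_{i},w_{i}')'\}_{i=1}^{n},\, m \bigr) = \frac{\int_{B_{\mathcal{B}\times\Xi}(\theta_{0},M_{n}/\sqrt{n})^{c}}\exp(\ell_{n}(\theta,m)-\ell_{n}(\theta_{0},m))\,d\Pi_{\mathcal{B}\times\Xi}(\theta)}{\int_{\mathcal{B}\times\Xi}\exp(\ell_{n}(\theta,m)-\ell_{n}(\theta_{0},m))\,d\Pi_{\mathcal{B}\times\Xi}(\theta)},
\end{align*}
and the task reduces to bounding the numerator uniformly from above by $\exp(-\underline{\lambda}M_{n}^{2}/4)$ and the denominator uniformly from below by $\exp(-\underline{\lambda}M_{n}^{2}/8)$, where the sup/inf over $m$ runs over $\mathcal{M}_{n}\cap B_{n,\mathcal{M}}(m_{0},D\delta_{n})$, $\underline{\lambda}>0$ is a uniform lower bound on $\lambda_{min}(\tilde{I}_{n}(\tilde{\theta},m))$ along the relevant neighborhood, and $\overline{\lambda}<\infty$ is a corresponding uniform upper bound on $\lambda_{max}(\tilde{I}_{n}(\tilde{\theta},m))$. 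The Taylor expansion underlying Part 4 of Lemma \ref{lem:technicalmultivariate} yields $\ell_{n}(\theta,m)-\ell_{n}(\theta_{0},m) = \sqrt{n}(\theta-\theta_{0})'\tilde{\ell}_{n}(\theta_{0},m)/\sqrt{n} - (n/2)(\theta-\theta_{0})'\tilde{I}_{n}(\tilde{\theta},m)(\theta-\theta_{0})$ for an intermediate $\tilde{\theta}$, which supplies the quadratic structure needed for both bounds.

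For the numerator, I would apply the quadratic upper bound $\ell_{n}(\theta,m)-\ell_{n}(\theta_{0},m) \leq \sqrt{n}(\theta-\theta_{0})'\tilde{\ell}_{n}(\theta_{0},m)/\sqrt{n} - (n\underline{\lambda}/2)\|\theta-\theta_{0}\|_{2}^{2}$ and invoke Part 1 of Lemma \ref{lem:technicalmultivariate} together with a Chebyshev argument (using Parts 2 and 4 of Assumption \ref{as:dgpmulti}) to show that $\sup_{m}\|\tilde{\ell}_{n}(\theta_{0},m)/\sqrt{n}\|_{2}$ is eventually smaller than $(\underline{\lambda}/4)M_{n}$ with $P_{0,YX|W}^{(n)}$-probability approaching one. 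Maximizing the resulting concave quadratic $(\underline{\lambda}/4)M_{n}\sqrt{n}\|\theta-\theta_{0}\|_{2} - (\underline{\lambda}/2)n\|\theta-\theta_{0}\|_{2}^{2}$ subject to $\sqrt{n}\|\theta-\theta_{0}\|_{2} \geq M_{n}$ yields the required $\exp(-\underline{\lambda}M_{n}^{2}/4)$ bound, exactly as in Step 2 of Lemma \ref{lem:rootn}. For the denominator, I would restrict the integral to a Euclidean ball $\mathcal{S}_{C}=\{\theta:\sqrt{n}\|\theta-\theta_{0}\|_{2}\leq\sqrt{C}\}$ with $C=\underline{\lambda}/(8\overline{\lambda})$, use Part 2 of Assumption \ref{as:priormultivariate} to obtain a uniform positive lower bound on $\pi_{\mathcal{B}\times\Xi}$ over $\mathcal{S}_{C}$ for large $n$, apply Jensen's inequality to the uniform probability measure on $\mathcal{S}_{C}$, and control the resulting linear term using the fact that $\tilde{\ell}_{n}(\theta_{0},m_{0})/\sqrt{n}$ has variance matrix of bounded operator norm by Part 4 of Assumption \ref{as:dgpmulti}. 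These are the $(d_{x}+1)$-dimensional analogues of Steps 2--3 of Lemma \ref{lem:rootn}.

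The main obstacle is establishing the uniform lower bound $\lambda_{min}(\tilde{I}_{n}(\tilde{\theta},m)) \geq \underline{\lambda}$, because Part 3 of Lemma \ref{lem:technicalmultivariate} controls only the $(1,1)$ block $\tilde{I}_{n,11}$. To handle this, I would show that the Hessian is asymptotically block-diagonal at $(\theta_{0},m_{0})$: the off-diagonal block satisfies $\tilde{I}_{n,12}(\beta_{0},\xi_{0},m_{0}) = -n^{-1}\sum_{i}U_{i}\varepsilon_{i2}$, which has mean zero under $P_{0,YX|W}^{(n)}$ because Part 1 of Assumption \ref{as:dgpmulti} gives $E_{P_{0}}[U\mid X,W]=0$ and hence $E_{P_{0}}[U\varepsilon_{2}\mid W]=0$, so Chebyshev yields $\|\tilde{I}_{n,12}(\beta_{0},\xi_{0},m_{0})\|_{2} \overset{P_{0,YX|W}^{(n)}}{\longrightarrow}0$; an expansion analogous to the one proving Part 2 of Lemma \ref{lem:technicalmultivariate} extends this convergence uniformly over $m\in\mathcal{M}_{n}\cap B_{n,\mathcal{M}}(m_{0},D\delta_{n})$ via Assumption \ref{as:emp_processmultivariate}. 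The $(2,2)$ block $\tilde{I}_{n,22}(\theta,m)=1/(2\xi^{2})$ is a continuous positive function of $\xi$ alone, hence bounded away from zero on a neighborhood of $\xi_{0}\in\mathrm{int}(\Xi)$. Combining these observations with Part 3 of Lemma \ref{lem:technicalmultivariate} and Weyl's inequality for eigenvalue perturbation yields the required uniform lower bound $\underline{\lambda}>0$ (and symmetrically $\overline{\lambda}<\infty$), after which the argument reproduces Lemma \ref{lem:rootn} with scalar absolute values replaced by Euclidean norms throughout.
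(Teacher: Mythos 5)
Your overall architecture --- bound the numerator of the conditional posterior above by $\exp(-cM_n^2)$ and the denominator below by $\exp(-cM_n^2/2)$, uniformly over $m\in\mathcal{M}_n\cap B_{n,\mathcal{M}}(m_0,D\delta_n)$ --- is the paper's, and your denominator step (change of variables to $s=\sqrt{n}(\theta-\theta_0)$, restriction to a fixed Euclidean ball, prior positivity, Jensen, Chebyshev) is essentially correct: there the intermediate points of any expansion stay within $O(n^{-1/2})$ of $\theta_0$, where your observation that $\tilde{I}_{n,12}(\beta_0,\xi_0,m)\to 0$ uniformly over the sieve does deliver two-sided eigenvalue bounds. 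The gap is in the numerator. That integral runs over $B_{\mathcal{B}\times\Xi}(\theta_0,M_n/\sqrt{n})^{c}$, i.e.\ over all of the compact set $\mathcal{B}\times\Xi$ minus a shrinking ball, so the intermediate point $\tilde{\theta}$ in your Taylor expansion ranges over segments reaching arbitrary points of the parameter space. Your proposed uniform bound $\lambda_{min}(\tilde{I}_n(\tilde{\theta},m))\geq\underline{\lambda}>0$ fails there: the block-diagonality you establish holds only at $\theta_0$, whereas at a general $\beta$ one has $\tilde{I}_{n,12}(\beta,\xi,m_0)=-n^{-1}\sum_i U_i\varepsilon_{i2}+n^{-1}\sum_i\varepsilon_{i2}\varepsilon_{i2}'(\beta-\beta_0)$, which converges to $\Sigma(\beta-\beta_0)\neq 0$ rather than to zero. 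Worse, the Schur complement of the $(1,1)$ block of the limiting information matrix is approximately $1/(2\xi^2)-\xi^{-1}(\beta-\beta_0)'\Sigma(\beta-\beta_0)$, which turns negative once $\|\beta-\beta_0\|_2$ exceeds a fixed threshold: the Gaussian location--precision log-likelihood is concave in $\beta$ for fixed $\xi$ and in $\xi$ for fixed $\beta$, but it is not jointly concave over a fixed compact parameter set. Hence no $\underline{\lambda}>0$ exists on the region you need, and Weyl's inequality cannot rescue the argument.

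The paper sidesteps the joint Hessian entirely by writing $\ell_n(\beta,\xi,m)-\ell_n(\beta_0,\xi_0,m)=[\ell_n(\beta,\xi,m)-\ell_n(\beta_0,\xi,m)]+[\ell_n(\beta_0,\xi,m)-\ell_n(\beta_0,\xi_0,m)]$. The first bracket equals $(\xi/\xi_0)[\ell_n(\beta,\xi_0,m)-\ell_n(\beta_0,\xi_0,m)]$, which is exactly quadratic in $\beta$ with curvature proportional to $\tilde{I}_{n,11}(\beta_0,\xi_0,m)$, uniformly positive definite by Part 3 of Lemma \ref{lem:technicalmultivariate} and the fact that $\xi/\xi_0$ is bounded away from zero and infinity on the compact $\Xi$. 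The second bracket is a one-dimensional function of $\xi$ whose Taylor remainder has curvature $n/(2\tilde{\xi}_n^2)$ bounded below on $\Xi$. Each piece is then dispatched by the scalar concave-quadratic maximization from Step 2 of Lemma \ref{lem:rootn}. If you insist on a single joint quadratic bound, you would first need to localize the $(\beta,\xi)$-posterior to a fixed small neighborhood of $\theta_0$ uniformly over $m$ in the sieve --- a separate consistency argument you have not supplied; the coordinatewise decomposition is the cheaper and cleaner fix.
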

\begin{proof}
The proof has two steps. The first step derives a uniform upper bound for the numerator of the conditional posterior. The second step derives a uniform lower bound for the denominator of the conditional posterior that when combined with the first step implies the result. The argument is conceptually similar to the proof of Lemma \ref{lem:rootn}.

\textit{Step 1.} I start with a unifom upper bound for the numerator of the conditional posterior. Specifically, I show that there exists a constant $C>0$ such that
\begin{align*}
   P_{0,YX|W}^{(n)}\left( \sup_{m \in \mathcal{M}_{n} \cap  B_{n,\mathcal{M}}(m_{0},D\delta_{n})}\int_{B_{\mathcal{B}\times \Xi}\left((\beta_{0},\xi_{0}),\frac{M_{n}}{\sqrt{n}}\right)^{c}}\exp\left(\ell_{n}(\beta,\xi,m)-\ell_{n}(\beta_{0},\xi_{0},m)\right)d \Pi_{\mathcal{B}\times \Xi}(\beta,\xi) \leq \exp(-CM_{n}^{2}) \right) \rightarrow 1
\end{align*}
for $P_{0,W}^{\infty}$-almost every fixed realization $\{w_{i}\}_{i \geq 1}$ of $\{W_{i}\}_{i\geq 1}$ as $n\rightarrow \infty$. Adding/subtracting $\ell_{n}(\beta_{0},\xi,m)$ to obtain
\begin{align*}
    \ell_{n}(\beta,\xi,m)-\ell_{n}(\beta_{0},\xi_{0},m) = \ell_{n}(\beta,\xi,m) - \ell_{n}(\beta_{0},\xi,m) + \ell_{n}(\beta_{0},\xi,m)-\ell_{n}(\beta_{0},\xi_{0},m)
\end{align*}
reveals that it suffices to show that there exists constants $C_{1}>0$ and $C_{2} > 0$ such that
\begin{align}\label{eq:firstterm}
P_{0,YX|W}^{(n)}\left(\sup_{m \in \mathcal{M}_{n} \cap B_{n,\mathcal{M}}(m_{0},D\delta_{n})}\int_{B_{\mathcal{B}\times \Xi}\left((\beta_{0},\xi_{0}),\frac{M_{n}}{\sqrt{n}}\right)^{c}}\exp\left(\ell_{n}(\beta,\xi,m)-\ell_{n}(\beta_{0},\xi,m)\right)d \Pi_{\mathcal{B}\times \Xi}(\beta,\xi) \leq \exp(-C_{1}M_{n}^{2})\right) \rightarrow 1
\end{align}
and
\begin{align}\label{eq:secondterm}
P_{0,YX|W}^{(n)}\left(\sup_{m \in \mathcal{M}_{n} \cap B_{n,\mathcal{M}}(m_{0},D\delta_{n})}\int_{B_{\mathcal{B}\times \Xi}\left((\beta_{0},\xi_{0}),\frac{M_{n}}{\sqrt{n}}\right)^{c}}\exp\left(\ell_{n}(\beta_{0},\xi,m)-\ell_{n}(\beta_{0},\xi_{0},m)\right)d \Pi_{\mathcal{B}\times \Xi}(\beta,\xi) \leq \exp(-C_{2}M_{n}^{2}) \right) \rightarrow 1  
\end{align}
conditionally given $P_{0,W}^{\infty}$-almost every realization $\{w_{i}\}_{i \geq 1}$ of $\{W_{i}\}_{i \geq 1}$ as $n\rightarrow \infty$. For the bound (\ref{eq:firstterm}), an expansion of the log-likelihood implies
\begin{align*}
    &\ell_{n}(\beta,\xi,m)-\ell_{n}(\beta_{0},\xi,m) \\
    &\quad= \frac{\xi}{\xi_{0}}\left(\sqrt{n}(\beta-\beta_{0})'\frac{1}{\sqrt{n}}\tilde{\ell}_{n,1}(\beta_{0},\xi_{0},m)- \frac{1}{2}\sqrt{n}(\beta-\beta_{0})'\tilde{I}_{n,11}(\beta_{0},\xi_{0},m)\sqrt{n}(\beta-\beta_{0}) \right) \\
    &\quad =\frac{\xi}{\xi_{0}}\left(\ell_{n}(\beta,\xi_{0},m)-\ell_{n}(\beta_{0},\xi_{0},m)\right)
\end{align*}
Consequently, I can apply a symmetric argument to that of Step 2 of Lemma \ref{lem:rootn} to conclude that
\begin{align*}
  P_{0,YX|W}^{(n)}\left(  \sup_{m \in \mathcal{M}_{n} \cap B_{n,\mathcal{M}}(m_{0},D\delta_{n})}\sup_{\beta \in \mathcal{B}: ||\beta-\beta_{0}||_{2} \geq \frac{M_{n}}{\sqrt{n}}}\exp\left(\ell_{n}(\beta,\xi_{0},m)-\ell_{n}(\beta_{0},\xi_{0},m)\right) \leq \exp\left(-\frac{\underline{c}_{0}}{4}M_{n}^{2}\right)\right)\rightarrow 1
\end{align*}
for $P_{0,W}^{\infty}$-almost every fixed realization $\{w_{i}\}_{i \geq 1}$ of $\{W_{i}\}_{i \geq 1}$ as $n\rightarrow \infty$, where $\underline{c}_{0}$ is the constant defined in Part 3 of Lemma \ref{lem:technicalmultivariate}. Since $\Xi= [\overline{\sigma}^{-2},\underline{\sigma}^{-2}]$, I able to further conclude that there exists a constant $C_{1}>0$ such that
\begin{align*}
    P_{0,YX|W}^{(n)}\left(  \sup_{(\beta,\xi) \in B((\beta_{0},\xi_{0}),M_{n}/\sqrt{n})^{c}}\exp( \ell_{n}(\beta,\xi,m)-\ell_{n}(\beta_{0},\xi,m) ) \lesssim \exp(-C_{1}M_{n}^{2})\right) \rightarrow 1
\end{align*}
 for $P_{0,W}^{\infty}$ almost every fixed realization $\{w_{i}\}_{i \geq 1}$ of $\{W_{i}\}_{i \geq 1}$ as $n\rightarrow \infty$. For (\ref{eq:secondterm}), expanding the log-likelihood reveals that
\begin{align*}
    \ell_{n}(\beta_{0},\xi,m)-\ell_{n}(\beta_{0},\xi_{0},m) = \frac{n}{2}(\log \xi-\log \xi_{0})-\frac{n}{2}\left(\xi-\xi_{0}\right)\frac{1}{n}\sum_{i=1}^{n}\left(Y_{i}-m_{1}(w_{i})-(X_{i}-m_{2}(w_{i}))'\beta_{0}\right)^{2}
\end{align*}
A second order Taylor expansion of $x \mapsto \log x$ reveals that
\begin{align*}
    \frac{n}{2}(\log \xi - \log \xi_{0}) = \frac{n}{2\xi_{0}}(\xi-\xi_{0})-\frac{n}{2\tilde{\xi}^{2}_{n}}(\xi-\xi_{0})^{2},
\end{align*}
where $\tilde{\xi}_{n}$ on the line segment joining $\xi$ and $\xi_{0}$. Since $\tilde{\xi}_{n} \geq \overline{\sigma}^{-2}$ uniformly over $\Xi$, it follows that
\begin{align*}
    \ell_{n}(\beta_{0},\xi,m)-\ell_{n}(\beta_{0},\xi_{01},m) \leq  \frac{\sqrt{n}}{2}\left(\frac{1}{\xi_{0}}-\frac{1}{n}\sum_{i=1}^{n}U_{i}^{2}(m)\right)\sqrt{n}(\xi-\xi_{0})-\frac{1}{2\overline{\sigma}^{2}}(\sqrt{n}(\xi-\xi_{0}))^{2},
\end{align*}
where $U_{i}(m) =Y_{i}-m_{1}(w_{i})-(X_{i}-m_{2}(w_{i}))'\beta_{0}$ for $i=1,...,n$. The proof of Part 1 of Lemma \ref{lem:technicalmultivariate} shows that
\begin{align*}
    \sup_{m \in \mathcal{M}_{n} \cap B_{n,\mathcal{M}}(m_{0},D\delta_{n})}\left|\frac{1}{\sqrt{n}}\sum_{i=1}^{n}U_{i}^{2} - \frac{1}{\sqrt{n}}\sum_{i=1}^{n}U_{i}^{2}(m)\right| \overset{P_{0,YX|W}^{(n)}}{\longrightarrow } 0
\end{align*}
for $P_{0,W}^{\infty}$-almost every fixed realization $\{w_{i}\}_{i \geq 1}$ of $\{W_{i}\}_{i \geq 1}$ as $n\rightarrow \infty$. Moreover, Chebyshev's inequality reveals
\begin{align*}
\frac{1}{n}\sum_{i=1}^{n}U_{i}^{2}-\frac{1}{\xi_{0}} = O_{P_{0,YX|W}^{(n)}}\left(\frac{1}{\sqrt{n}}\right)    
\end{align*}
for $P_{0,W}^{\infty}$-almost every fixed realization $\{w_{i}\}_{i \geq 1}$ of $\{W_{i}\}_{i \geq 1}$ as $n\rightarrow \infty$. Consequently,
\begin{align*}
  P_{0,YX|W}^{(n)}\left(   \sup_{m \in \mathcal{M}_{n} \cap B_{n,\mathcal{M}}(m_{0},D\delta_{n})}\left|\frac{1}{\sqrt{n}}\sum_{i=1}^{n}\left(U_{i}^{2}(m)-\frac{1}{\xi_{0}}\right)\right| \leq \frac{M_{n}}{2\overline{\sigma}^{2}}\right) \longrightarrow 1
\end{align*}
for $P_{0,W}^{\infty}$-almost every fixed realization $\{w_{i}\}_{i \geq 1}$ of $\{W_{i}\}_{i \geq 1}$ as $n\rightarrow \infty$. Consequently, with $P_{0,YX|W}^{(n)}$-probability approaching one, the following holds
\begin{align*}
  \sup_{(\xi,m): |\xi-\xi_{0}| \geq \frac{M_{n}}{\sqrt{n}}, m \in \mathcal{M}_{n} \cap B_{n,\mathcal{M}}(m_{0},D\delta_{n})}\exp\left(\ell_{n}(\beta_{0},\xi,m)-\ell_{n}(\beta_{0},\xi_{0},m)\right) \leq \exp\left(\sup_{t \in \mathbb{R}: t \geq M_{n}}\left\{\frac{M_{n}}{4\overline{\sigma}^{2}}t-\frac{1}{2\overline{\sigma}^{2}}t^{2}\right\}\right),
\end{align*}
where $t = |\sqrt{n}(\xi-\xi_{0})|$. Solving the maximization problem to obtain $t = M_{n}$, I conclude that there is a constant $C_{2}>0$ such that
\begin{align*}
  P_{0,YX|W}^{(n)}\left(  \sup_{m \in \mathcal{M}_{n} \cap B_{n,\mathcal{M}}(m_{0},D\delta_{n})}\int_{B_{\mathcal{B}\times \Xi}\left((\beta_{0},\xi_{0}),\frac{M_{n}}{\sqrt{n}}\right)^{c}}\exp\left(\ell_{n}(\beta_{0},\xi,m)-\ell_{n}(\beta_{0},\xi_{0},m)\right)d \Pi_{\mathcal{B}\times \Xi}(\beta,\xi) \leq \exp(-C_{2}M_{n}^{2})\right) \rightarrow 1
\end{align*}
for $P_{0,W}^{\infty}$-almost every fixed realization $\{w_{i}\}_{i \geq 1}$ of $\{W_{i}\}_{i \geq 1}$ as $n\rightarrow \infty$. Letting $C = C_{1}+C_{2}$, I conclude that
\begin{align*}
   P_{0,YX|W}^{(n)}\left(  \sup_{m \in \mathcal{M}_{n} \cap B_{n,\mathcal{M}}(m_{0},D\delta_{n})}\int_{B_{\mathcal{B}\times \Xi}\left((\beta_{0},\xi_{0}),\frac{M_{n}}{\sqrt{n}}\right)^{c}}\exp\left(\ell_{n}(\beta,\xi,m)-\ell_{n}(\beta_{0},\xi_{0},m)\right)d \Pi_{\mathcal{B}\times \Xi}(\beta,\xi) \leq \exp(-CM_{n}^{2})\right) \rightarrow 1
\end{align*}
 for $P_{0,W}^{\infty}$-almost every fixed realization $\{w_{i}\}_{i \geq 1}$ of $\{W_{i}\}_{i \geq 1}$ as $n\rightarrow \infty$.

\textit{Step 2.} Given the result of Step 1, it is sufficient to show that
\begin{align*}
    P_{0,YX|W}^{(n)}\left(\int_{\mathcal{B} \times \Xi}\exp\left(\ell_{n}(\beta,\xi,m)-\ell_{n}(\beta_{0},\xi_{0},m)\right)d \Pi_{\mathcal{B}\times \Xi}(\beta,\xi) \geq \exp\left(-\frac{C}{2}M_{n}^{2}\right)\right) \longrightarrow 1
\end{align*}
for $P_{0,W}^{\infty}$-almost every fixed realization $\{w_{i}\}_{i \geq 1}$ of $\{W_{i}\}_{i \geq 1}$ as $n\rightarrow \infty$. Let $\theta = (\beta',\xi)'$, let $s = \sqrt{n}(\theta-\theta_{0})$, and let $\mathcal{S}$ be the image of $\mathcal{B} \times \Xi$ under the mapping $\theta \mapsto s$. Applying the change of variables $\theta \mapsto s$, the posterior normalizing constant satisfies
\begin{align*}
    \int_{\mathcal{B} \times \Xi}\exp\left(\ell_{n}(\beta,\xi,m)-\ell_{n}(\beta_{0},\xi_{0},m)\right)d \Pi(\beta,\sigma_{1}^{2})  &=\int_{\mathcal{S}}\exp\left(\ell_{n}^{*}(s,m)-\ell_{n}^{*}(0,m)\right)d\Pi_{\mathcal{S}}(s) \\
    & \geq \int_{\mathcal{S}_{\tilde{C}}}\exp\left(\ell_{n}^{*}(s,m)-\ell_{n}^{*}(0,m)\right)d\Pi_{\mathcal{S}}(s) \\
    &\geq K\int_{\mathbb{R}^{d_{x}+1}}\exp\left(\ell_{n}^{*}(s,m)-\ell_{n}^{*}(0,m)\right)\pi_{\mathcal{S}_{\tilde{C}}}(s)ds,
\end{align*}
where $\ell_{n}^{*}(s,m) = \ell_{n}(\theta_{0} + s/\sqrt{n},m)$, $\mathcal{S}_{\tilde{C}} = \{s \in \mathcal{S}: ||s||_{2} \leq \tilde{C}\}$ for $\tilde{C}>0$, $\pi_{\mathcal{S}_{\tilde{C}}}(s) = \mathbf{1}\{s \in \mathcal{S}_{\tilde{C}}\}/\lambda(\mathcal{S}_{\tilde{C}})$, and $K=\inf_{s \in \mathcal{S}_{\tilde{C}}}\pi_{\mathcal{S}}(s)/\lambda(\mathcal{S}_{\tilde{C}})$. Part 2 of Assumption \ref{as:priormultivariate} guarantees that $K \in (0,\infty)$. Consequently, the following inequality holds
\begin{align*}
    &P_{0,YX|W}^{(n)}\left(\int_{\mathcal{B} \times \Xi}\exp\left(\ell_{n}(\beta,\xi,m)-\ell_{n}(\beta_{0},\xi_{0},m)\right)d \Pi_{\mathcal{B}\times \Xi}(\beta,\xi) \leq \exp\left(-\frac{C}{2}M_{n}^{2}\right)\right) \\
    &\leq P_{0,YX|W}^{(n)}\left(\int_{\mathbb{R}^{d_{x}+1}}\exp\left(\ell_{n}^{*}(s,m)-\ell_{n}^{*}(0,m)\right)\pi_{\mathcal{S}_{\tilde{C}}}(s)ds \leq K^{-1}\exp\left(-\frac{C}{2}M_{n}^{2}\right)\right).
\end{align*}
for $P_{0,W}^{\infty}$-almost every fixed realization $\{w_{i}\}_{i \geq 1}$ of $\{W_{i}\}_{i \geq 1}$. Letting $s = (s_{1}',s_{2})'$ with $s_{1}$ corresponding to $\beta$ and $s_{2}$ corresponding to $\xi$, I know that
\begin{align*}
    \ell_{n}^{*}(s,m)-\ell_{n}^{*}(0,m) = \left(\ell_{n}^{*}(s,m)-\ell_{n}^{*}((0,s_{2}),m)\right)+ \left(\ell_{n}^{*}((0,s_{2}),m)-\ell_{n}^{*}(0,m)\right)
\end{align*}
Taylor expansions of each term reveals that
\begin{align*}
\ell_{n}^{*}(s,m)-\ell_{n}^{*}((0,s_{2}),m) = s_{1}'\frac{1}{\sqrt{n}}\tilde{\ell}_{n,1}\left(\beta_{0},\xi_{0}+\frac{s_{2}}{\sqrt{n}},m\right)-\frac{1}{2}s_{1}'\tilde{I}_{n,11}\left(\beta_{0},\xi_{0}+\frac{s_{2}}{\sqrt{n}},m\right)s_{1}    
\end{align*}
and
\begin{align*}
  \ell_{n}^{*}((0',s_{2})',m)-\ell_{n}^{*}((0,s_{2})',m) &= \frac{n}{2}\left(\log \left(\xi_{0}+ \frac{s_{2}}{\sqrt{n}}\right)-\log \left(\xi_{0}\right)\right)-s_{2}\frac{1}{2\sqrt{n}}\sum_{i=1}^{n}U_{i}^{2}(m) \\
    &=s_{2}\frac{\sqrt{n}}{2}\left(\frac{1}{\xi_{0}}-\frac{1}{n}\sum_{i=1}^{n}U_{i}^{2}(m)\right)-\frac{1}{2\xi_{0}^{2}}s_{2}^{2} + O\left(\frac{s_{2}^{3}}{\sqrt{n}}\right)    
\end{align*}
Using the definition of $\tilde{\ell}_{n,1}(\beta,\xi,m)$ and $\tilde{I}_{n,11}(\beta,\xi,m)$, Lemma \ref{lem:technicalmultivariate}, and that $||s||_{2} \leq \tilde{C}$, I know that the following holds:
\begin{align*}
    \sup_{s \in \mathcal{S}_{\tilde{C}}}\sup_{m \in \mathcal{M}_{n} \cap B(m_{0},D\delta_{n})}\left|s_{1}'\frac{1}{\sqrt{n}}\tilde{\ell}_{n,1}\left(\beta_{0},\xi_{0}+\frac{s_{2}}{\sqrt{n}},m\right)- s_{1}'\frac{1}{\sqrt{n}}\tilde{\ell}_{n,1}(\beta_{0},\xi_{0},m_{0})\right| = o_{P_{0,YX|W}^{(n)}}(1)
\end{align*}
for $P_{0,W}^{\infty}$-almost every fixed realization $\{w_{i}\}_{i \geq 1}$ of $\{W_{i}\}_{i \geq 1}$ as $n\rightarrow \infty$, and, as a result,
\begin{align*}
  P_{0,YX|W}^{(n)}\left(  \inf_{s \in \mathcal{S}_{\tilde{C}}}\inf_{m \in \mathcal{M}_{n} \cap B_{n,\mathcal{M}}(m_{0},D\delta_{n})}s_{1}'\frac{1}{\sqrt{n}}\tilde{\ell}_{n,1}\left(\beta_{0},\xi_{0}+\frac{s_{2}}{\sqrt{n}},m\right) \geq - \frac{C}{16}M_{n}^{2} + s_{1}'\frac{1}{\sqrt{n}}\tilde{\ell}_{n,1}(\beta_{0},\xi_{0},m_{0})\right) \longrightarrow 1
\end{align*}
for $P_{0,W}^{\infty}$-almost every fixed realization $\{w_{i}\}_{i \geq 1}$ of $\{W_{i}\}_{i \geq 1}$ as $n\rightarrow \infty$. Similarly,
\begin{align*}
 \sup_{s \in \mathcal{S}_{\tilde{C}}}\sup_{m \in \mathcal{M}_{n} \cap B(m_{0},D\delta_{n})}\left| \frac{\sqrt{n}}{2}\left(\frac{1}{\xi_{0}}-\frac{1}{n}\sum_{i=1}^{n}U_{i}^{2}(m)\right) - s_{2}\frac{1}{\sqrt{n}}\tilde{\ell}_{n,2}(\beta_{0},\xi_{0},m_{0})\right|=o_{P_{0,YX|W}^{(n)}}(1)
\end{align*}
for $P_{0,W}^{\infty}$-almost every fixed realization $\{w_{i}\}_{i \geq 1}$ of $\{W_{i}\}_{i \geq 1}$ as $n\rightarrow \infty$, and, as a result,
\begin{align*}
 P_{0,YX|W}^{(n)}\left(  \inf_{s \in \mathcal{S}_{\tilde{C}}}\inf_{m \in \mathcal{M}_{n} \cap B_{n,\mathcal{M}}(m_{0},D\delta_{n})}s_{2}\frac{1}{\sqrt{n}}\tilde{\ell}_{n,2}\left(\beta_{0},\xi_{0},m\right) \geq - \frac{C}{16}M_{n}^{2} + s_{2}\frac{1}{\sqrt{n}}\tilde{\ell}_{n,2}(\beta_{0},\xi_{0},m_{0})\right) \longrightarrow 1    
\end{align*}
for $P_{0,W}^{\infty}$-almost every fixed realization $\{w_{i}\}_{i \geq 1}$ of $\{W_{i}\}_{i \geq 1}$ as $n\rightarrow \infty$. Moreover, Lemma \ref{lem:technicalmultivariate} implies that 
\begin{align*}
    P_{0,YX|W}^{(n)}\left(  \sup_{s \in \mathcal{S}_{\tilde{C}}}\sup_{m \in \mathcal{M}_{n} \cap B(m_{0},D\delta_{n})}\frac{1}{2}s_{1}'\tilde{I}_{n,11}\left(\beta_{0},\xi_{0}+\frac{s_{2}}{\sqrt{n}},m\right)s_{1} \leq \frac{C}{16}M_{n}^{2} \right) \longrightarrow 1
\end{align*}
conditionally given $P_{0,W}^{\infty}$-almost every fixed realization $\{w_{i}\}_{i \geq 1}$ of $\{W_{i}\}_{i \geq 1}$ as $n\rightarrow \infty$, and, since $|s_{2}| \leq \tilde{C}$ and $\xi_{01}^{2} \in (0,\infty)$, $\sup_{s:||s||_{2}\leq \tilde{C}}\frac{s_{2}^{2}}{2\xi_{0}^{2}}\leq \frac{C}{16}M_{n}^{2}$ for large $n$. Consequently, I am able to conclude that
\begin{align*}
    &P_{0,YX|W}^{(n)}\left(\inf_{m \in \mathcal{M}_{n} \cap B_{n,\mathcal{M}}(m_{0},D\delta_{n})}\int_{\mathbb{R}^{d_{x}+1}}\exp\left(\ell_{n}^{*}(s,m)-\ell_{n}^{*}(0,m)\right)\pi_{\mathcal{S}_{\tilde{C}}}(s)ds \leq K^{-1}\exp\left(-\frac{C}{2}M_{n}^{2}\right)\right) \\
    &\leq P_{0,YX|W}^{(n)}\left(\left\{\int_{\mathbb{R}^{d_{x}+1}}\exp\left(s'\frac{1}{\sqrt{n}}\tilde{\ell}_{n}(\beta_{0},\xi_{0},m_{0})\right)\pi_{\mathcal{S}_{\tilde{C}}}(s)ds \leq K^{-1}\exp\left(-\frac{C}{4}M_{n}^{2}\right)\right\}\right) + o(1) \\
    &\leq P_{0,YX|W}^{(n)}\left(\frac{1}{\sqrt{n}}\tilde{\ell}_{n}(\beta_{0},\xi_{0},m_{0})'\int_{\mathbb{R}^{d_{x}+1}}s\pi_{\mathcal{S}_{\tilde{C}}}(s)ds  \leq -\frac{C}{8}M_{n}^{2}\right)+ o(1),
\end{align*}
conditionally given $P_{0,W}^{\infty}$-almost every realization $\{w_{i}\}_{i \geq 1}$ of $\{W_{i}\}_{i \geq 1}$, where the last inequality is an application of Jensen's inequality and the fact that $-\log K \leq \frac{C}{8}M_{n}^{2}$ for large $n$. Applying Chebyshev's inequality and the Cauchy-Schwarz inequality, I obtain the following inequalities
\begin{align*}
    &P_{0,YX|W}^{(n)}\left(\frac{1}{\sqrt{n}}\tilde{\ell}_{n}(\beta_{0},\xi_{0},m_{0})'\int_{\mathbb{R}^{d_{x}+1}}s\pi_{\mathcal{S}_{\tilde{C}}}(s)ds  \leq -\frac{C}{8}M_{n}^{2}\right) \\
    &\quad \leq P_{0,YX|W}^{(n)}\left(\left|\frac{1}{\sqrt{n}}\tilde{\ell}_{n}(\beta_{0},\xi_{0},m_{0})'\int_{\mathbb{R}^{d_{x}+1}}s\pi_{\mathcal{S}_{\tilde{C}}}(s)ds\right|  \geq \frac{C}{8}M_{n}^{2}\right) \\
    &\quad \lesssim \frac{1}{M_{n}^{4}}\left| \left| \int_{\mathbb{R}^{d_{x}+1}}s\pi_{\mathcal{S}_{\tilde{C}}}(s)ds\right| \right|_{2}^{2}E_{P_{0,YX|W}^{(n)}}\left|\left| \frac{1}{\sqrt{n}}\tilde{\ell}_{n}(\beta_{0},\sigma_{01}^{2},m_{0})\right| \right|_{2}^{2} \\
    &\quad = O(M_{n}^{-4}),
\end{align*}
for $P_{0,W}^{\infty}$-almost every fixed realization $\{w_{i}\}_{i \geq 1}$ of $\{W_{i}\}_{i \geq 1}$. Since $M_{n} \rightarrow \infty$ as $n\rightarrow \infty$, I can then conclude that
\begin{align*}
P_{0,YX|W}^{(n)}\left(\inf_{m \in \mathcal{M}_{n} \cap B(m_{0},D\delta_{n})}\int_{\mathcal{B} \times \Xi}\exp\left(\ell_{n}(\beta,\xi,m)-\ell_{n}(\beta_{0},\xi_{0},m)\right)d \Pi(\beta,\sigma_{1}^{2}) \leq \exp\left(-\frac{C}{2}M_{n}^{2}\right)\right)\longrightarrow 0
\end{align*}
for $P_{0,W}^{\infty}$-almost every fixed realization $\{w_{i}\}_{i\geq 1}$ of $\{W_{i}\}_{i \geq 1}$ as $n\rightarrow \infty$. This completes the proof.
\end{proof}
\subsection{Extension of Theorem \ref{thm:BVM} to Multivariate $X$ and Unknown $\sigma_{01}^{2}$}
The next result extends Theorem \ref{thm:BVM} to allow for multivariate $X$ and unknown $\sigma_{01}^{2}$. The result is stated in terms of the joint posterior for $(\beta',\xi)'$, however, it implies the same result as Theorem \ref{thm:BVM} for the marginal posterior for $\beta$.
\begin{theorem}\label{thm:BVMmultivariate}
Let $\theta = (\beta',\xi)'$ and let $\tilde{\Delta}_{n,0}= \tilde{I}_{n}^{-1}(\beta_{0},\xi_{0},m_{0})n^{-1/2}\tilde{\ell}_{n}(\beta_{0},\xi_{0},m_{0})$. If Assumptions \ref{as:dgpmulti}, \ref{as:priormultivariate}, \ref{as:consistencymultivariate}, and \ref{as:emp_processmultivariate} hold and $\theta_{0} \in \text{int}(\mathcal{B}\times \Xi)$, then
\begin{align*}
  \left | \left |\Pi\left(\theta \in \cdot |\{(Y_{i},X_{i},w_{i}')'\}_{i=1}^{n}\right) - \mathcal{N}\left(\theta_{0}+\frac{\tilde{\Delta}_{n,0}}{\sqrt{n}},\frac{1}{n}\tilde{I}_{n}^{-1}(\beta_{0},\xi_{0},m_{0}) \right) \right | \right |_{TV} \overset{P_{0,YX|W}^{(n)}}{\longrightarrow} 0
\end{align*}
for $P_{0,W}^{\infty}$-almost every fixed realization $\{w_{i}\}_{i \geq 1}$ of $\{W_{i}\}_{i \geq 1}$ as $n\rightarrow \infty$.
\end{theorem}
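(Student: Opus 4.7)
The plan is to replicate the four-step structure used in the proof of Theorem \ref{thm:BVM}, but with $\theta=(\beta',\xi)'$ in place of the scalar $\beta$, leveraging Lemmas \ref{lem:technicalmultivariate} and \ref{lem:rootnmultivariate} where the scalar analysis previously invoked Lemmas \ref{lem:technical} and \ref{lem:rootn}. First, I would use the law of total probability to decompose the marginal posterior $\Pi(\theta\in A|\{(Y_i,X_i,w_i')'\}_{i=1}^n)$ as a mixture of the conditional posteriors $\Pi(\theta\in A|\{(Y_i,X_i,w_i')'\}_{i=1}^n,m)$ weighted by the nuisance posterior, splitting the mass according to whether $m$ lies in $\mathcal{M}_n\cap B_{n,\mathcal{M}}(m_0,D\delta_n)$, its complement within $\mathcal{M}_n$, or $\mathcal{M}_n^c$. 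Assumptions \ref{as:consistencymultivariate} and \ref{as:emp_processmultivariate} make the last two pieces negligible in total variation, so it suffices to analyze a set function $F_{n,1}(A)$ that integrates the conditional posterior only over $\mathcal{M}_n\cap B_{n,\mathcal{M}}(m_0,D\delta_n)$.

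Next, by Lemma \ref{lem:rootnmultivariate}, for any $M_n\to\infty$ arbitrarily slowly I can further restrict $F_{n,1}$ to the local Euclidean ball $B_{\mathcal{B}\times\Xi}(\theta_0,M_n/\sqrt{n})$ at total variation cost $o_{P_{0,YX|W}^{(n)}}(1)$. Since $\theta_0\in\mathrm{int}(\mathcal{B}\times\Xi)$ and $\pi_{\mathcal{B}\times\Xi}$ is continuous and positive in a neighborhood of $\theta_0$ by Assumption \ref{as:priormultivariate}, the ratio of the supremum and infimum of $\pi_{\mathcal{B}\times\Xi}$ over $B_{\mathcal{B}\times\Xi}(\theta_0,M_n/\sqrt{n})$ is $1+o(1)$, so the prior density cancels within the local ball. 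Together with Part 4 of Lemma \ref{lem:technicalmultivariate} applied to the remainder
\begin{align*}
R_n(\theta,m)=\ell_n(\theta,m)-\ell_n(\theta_0,m)-\sqrt{n}(\theta-\theta_0)'\frac{1}{\sqrt{n}}\tilde{\ell}_n(\theta_0,m_0)+\frac{n}{2}(\theta-\theta_0)'\tilde{I}_n(\theta_0,m_0)(\theta-\theta_0),
\end{align*}
this reduces the problem to showing that, uniformly in $A$,
\begin{align*}
\frac{\int_{A\cap B_{\mathcal{B}\times\Xi}(\theta_0,M_n/\sqrt{n})}\exp\left(\sqrt{n}(\theta-\theta_0)'\tfrac{1}{\sqrt{n}}\tilde{\ell}_n(\theta_0,m_0)-\tfrac{n}{2}(\theta-\theta_0)'\tilde{I}_n(\theta_0,m_0)(\theta-\theta_0)\right)d\theta}{\int_{B_{\mathcal{B}\times\Xi}(\theta_0,M_n/\sqrt{n})}\exp\left(\sqrt{n}(\theta-\theta_0)'\tfrac{1}{\sqrt{n}}\tilde{\ell}_n(\theta_0,m_0)-\tfrac{n}{2}(\theta-\theta_0)'\tilde{I}_n(\theta_0,m_0)(\theta-\theta_0)\right)d\theta}
\end{align*}
matches the Gaussian measure $\mathcal{N}(\theta_0+\tilde{\Delta}_{n,0}/\sqrt{n},\tilde{I}_n^{-1}(\theta_0,m_0)/n)$ restricted to $A$ up to $o_{P_{0,YX|W}^{(n)}}(1)$.

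Completing the square in the multivariate quadratic form yields exactly that restricted Gaussian density (with the mean and precision as stated in the theorem), so only the tail piece remains. By Chebyshev's inequality and Parts 2 and 4 of Assumption \ref{as:dgpmulti}, $n^{-1/2}\tilde{\ell}_n(\beta_0,\xi_0,m_0)=O_{P_{0,YX|W}^{(n)}}(1)$, and Part 3 of Lemma \ref{lem:technicalmultivariate} together with the smoothness of $\theta\mapsto\tilde{I}_n(\theta,m_0)$ gives that the eigenvalues of $\tilde{I}_n(\theta_0,m_0)$ are bounded away from zero and infinity in probability. Hence $\tilde{\Delta}_{n,0}=O_{P_{0,YX|W}^{(n)}}(1)$, and since $M_n\to\infty$, the Gaussian $\mathcal{N}(\theta_0+\tilde{\Delta}_{n,0}/\sqrt{n},\tilde{I}_n^{-1}(\theta_0,m_0)/n)$ assigns $1-o_{P_{0,YX|W}^{(n)}}(1)$ mass to $B_{\mathcal{B}\times\Xi}(\theta_0,M_n/\sqrt{n})$, which controls the restriction error exactly as in Step 4 of the proof of Theorem \ref{thm:BVM}. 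The main obstacle is purely notational: one has to carry the $(d_x+1)\times(d_x+1)$ information matrix through the quadratic algebra and verify that the cross-term $\tilde{I}_{n,12}(\beta_0,\xi_0,m_0)$, which is the empirical mean of $-U_i(X_i-m_{02}(w_i))$ and thus $O_{P_{0,YX|W}^{(n)}}(n^{-1/2})$, does not disturb the uniform control of $\tilde{I}_n(\theta_0,m_0)$'s spectrum; this is delivered by Chebyshev's inequality combined with Part 1 of Assumption \ref{as:dgpmulti} (conditional mean zero of $U$) and Part 4 (fourth moments), so the scalar argument extends without new analytical input.
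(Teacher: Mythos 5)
Your proposal is correct and follows essentially the same route as the paper, which itself proves Theorem \ref{thm:BVMmultivariate} by repeating Steps 1--4 of the proof of Theorem \ref{thm:BVM} with Lemmas \ref{lem:technicalmultivariate} and \ref{lem:rootnmultivariate} substituted for their scalar counterparts, completing the square in the multivariate quadratic form, and using $\tilde{\Delta}_{n,0}=O_{P_{0,YX|W}^{(n)}}(1)$ together with boundedness of $\tilde{I}_{n}(\theta_{0},m_{0})$ to control the localization error. Your additional observation that the off-diagonal block $\tilde{I}_{n,12}(\beta_{0},\xi_{0},m_{0})$ is $O_{P_{0,YX|W}^{(n)}}(n^{-1/2})$ and hence does not threaten positive-definiteness of the full information matrix is a detail the paper leaves implicit, and it is handled correctly.
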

\begin{proof}
The argument is very similar to Theorem \ref{thm:BVM} except with appropriate modifications. Following symmetric arguments as Steps 1--3 of the proof of Theorem 1, Assumptions \ref{as:dgpmulti}--\ref{as:emp_processmultivariate} lead me to deduce that
\begin{align*}
    &\Pi\left(\theta \in A\middle | \{(Y_{i},X_{i},w_{i}')'\}_{i=1}^{n}\right) \\
    &\quad= \frac{\int_{A \cap B_{\mathcal{B}\times \Xi}(\theta_{0},\frac{M_{n}}{\sqrt{n}})}\exp\left(\frac{1}{\sqrt{n}}\tilde{\ell}_{n}(\theta_{0},m_{0})'\sqrt{n}(\theta-\theta_{0})-\frac{n}{2}(\theta-\theta_{0})'\tilde{I}_{n}(\theta_{0},m_{0})(\theta-\theta_{0})\right)d \theta}{\int_{B_{\mathcal{B}\times \Xi}(\theta_{0},\frac{M_{n}}{\sqrt{n}})}\exp\left(\frac{1}{\sqrt{n}}\tilde{\ell}_{n}(\theta_{0},m_{0})'\sqrt{n}(\theta-\theta_{0})-\frac{n}{2}(\theta-\theta_{0})'\tilde{I}_{n}(\theta_{0},m_{0})(\theta-\theta_{0})\right)d \theta} + o_{P_{0,YX|W}^{(n)}}(1)
\end{align*}
for all events $A$, conditionally given $P_{0,W}^{\infty}$-almost every realization $\{w_{i}\}_{i \geq 1}$ of $\{W_{i}\}_{i \geq 1}$. Recalling that $\tilde{\Delta}_{n,0} = n^{-\frac{1}{2}}\tilde{I}_{n}^{-1}(\theta_{0},m_{0})\tilde{\ell}_{n}(\theta_{0},m_{0})$, I complete the square to obtain
\begin{align*}
    &\frac{\int_{A \cap B_{\mathcal{B}\times \Xi}(\theta_{0},\frac{M_{n}}{\sqrt{n}})}\exp\left(\frac{1}{\sqrt{n}}\tilde{\ell}_{n}(\theta_{0},m_{0})'\sqrt{n}(\theta-\theta_{0})-\frac{n}{2}(\theta-\theta_{0})'\tilde{I}_{n}(\theta_{0},m_{0})(\theta-\theta_{0})\right)d \theta}{\int_{B_{\mathcal{B}\times \Xi}(\theta_{0},\frac{M_{n}}{\sqrt{n}})}\exp\left(\frac{1}{\sqrt{n}}\tilde{\ell}_{n}(\theta_{0},m_{0})'\sqrt{n}(\theta-\theta_{0})-\frac{n}{2}(\theta-\theta_{0})'\tilde{I}_{n}(\theta_{0},m_{0})(\theta-\theta_{0})\right)d \theta} \\
    &\quad =\frac{\int_{A \cap B_{\mathcal{B}\times \Xi}(\theta_{0},\frac{M_{n}}{\sqrt{n}})}\exp\left(-\frac{1}{2}(\theta-\theta_{0}-n^{-\frac{1}{2}}\tilde{\Delta}_{n,0})'n\tilde{I}_{n}(\theta_{0},m_{0})(\theta-\theta_{0}-n^{-\frac{1}{2}}\tilde{\Delta}_{n,0})\right)d \theta}{\int_{B_{\mathcal{B}\times \Xi}(\theta_{0},\frac{M_{n}}{\sqrt{n}})}\exp\left(-\frac{1}{2}(\theta-\theta_{0}-n^{-\frac{1}{2}}\tilde{\Delta}_{n,0})'n\tilde{I}_{n}(\theta_{0},m_{0})(\theta-\theta_{0}-n^{-\frac{1}{2}}\tilde{\Delta}_{n,0})\right)d \theta} \\
    &\quad = \frac{\int_{A \cap B_{\mathcal{B}\times \Xi}(\theta_{0},\frac{M_{n}}{\sqrt{n}})}\phi_{\theta_{0}+ n^{-\frac{1}{2}}\tilde{\Delta}_{n,0},n^{-1}\tilde{I}_{n}^{-1}(\theta_{0},m_{0})}(\theta)d \theta}{\int_{B_{\mathcal{B}\times \Xi}(\theta_{0},\frac{M_{n}}{\sqrt{n}})}\phi_{\theta_{0}+ n^{-\frac{1}{2}}\tilde{\Delta}_{n,0},n^{-1}\tilde{I}_{n}^{-1}(\theta_{0},m_{0})}(\theta)d \theta},
\end{align*}
where $\phi_{\theta_{0}+ n^{-\frac{1}{2}}\tilde{\Delta}_{n,0},n^{-1}\tilde{I}_{n}^{-1}(\theta_{0},m_{0})}$ is the probability density function of $\mathcal{N}(\theta_{0}+n^{-\frac{1}{2}}\tilde{\Delta}_{n,0},n^{-1}\tilde{I}_{n}^{-1}(\theta_{0}))$. This expression is analogous to that obtained in the final step of the proof of Theorem 1 (cf. equation (\ref{eq:gauss})). Since $\tilde{\Delta}_{n,0}= O_{P_{0,YX|W}^{(n)}}(1)$ and $\tilde{I}_{n}(\theta_{0},m_{0}) = O_{P_{0,YX|W}^{(n)}}(1)$ conditionally given $P_{0,W}^{\infty}$-almost every realization $\{w_{i}\}_{i \geq 1}$ of $\{W_{i}\}_{i \geq 1}$, it can be shown using analogous arguments to the proof of Theorem \ref{thm:BVM} that, if $M_{n}\rightarrow \infty$ arbitrarily slowly, then
\begin{align*}
   \sup_{A}\left| \frac{\int_{A \cap B_{\mathcal{B}\times \Xi}(\theta_{0},\frac{M_{n}}{\sqrt{n}})}\phi_{\theta_{0}+ n^{-\frac{1}{2}}\tilde{\Delta}_{n,0},n^{-1}\tilde{I}_{n}^{-1}(\theta_{0},m_{0})}(\theta)d \theta}{\int_{B_{\mathcal{B}\times \Xi}(\theta_{0},\frac{M_{n}}{\sqrt{n}})}\phi_{\theta_{0}+ n^{-\frac{1}{2}}\tilde{\Delta}_{n,0},n^{-1}\tilde{I}_{n}^{-1}(\theta_{0},m_{0})}(\theta)d \theta}-\int_{A}\phi_{\theta_{0}+ n^{-\frac{1}{2}}\tilde{\Delta}_{n,0},n^{-1}\tilde{I}_{n}^{-1}(\theta_{0},m_{0})}(\theta)d \theta\right| = o_{P_{0,YX|W}^{(n)}}(1)
\end{align*}
for $P_{0,W}^{\infty}$-almost every fixed realization $\{w_{i}\}_{i \geq 1}$ of $\{W_{i}\}_{i \geq 1}$ as $n\rightarrow \infty$. Consequently, an application of the triangle inequality allows me to conclude that
\begin{align*}
    \sup_{A}\left|\Pi\left(\theta \in A\middle | \{(Y_{i},X_{i},w_{i}')'\}_{i=1}^{n}\right) - \int_{A}\phi_{\theta_{0}+ n^{-\frac{1}{2}}\tilde{\Delta}_{n,0},n^{-1}\tilde{I}_{n}^{-1}(\theta_{0},m_{0})}(\theta)d \theta\right| \overset{P_{0,YX|W}^{(n)}}{\longrightarrow} 0
\end{align*}
for $P_{0,W}^{\infty}$-almost every fixed realization $\{w_{i}\}_{i \geq 1}$ of $\{W_{i}\}_{i \geq 1}$ as $n\rightarrow \infty$. This completes the proof.
\end{proof}

\end{document}